\theoremstyle{definition}
\newtheorem{definition}{Definition}[section]
\newtheorem{example}[definition]{Example}
\newtheorem{assumption}[definition]{Assumption}
\theoremstyle{plain}
\newtheorem{proposition}[definition]{Proposition}
\newtheorem{theorem}[definition]{Theorem}
\newtheorem{lemma}[definition]{Lemma}
\newtheorem{corollary}[definition]{Corollary}
\theoremstyle{remark}
\newtheorem{remark}[definition]{Remark}
\numberwithin{equation}{section}
\newcommand{\E}{\mathbb{E}}
\newcommand{\G}{\mathbb{G}}
\newcommand{\N}{\mathbb{N}}
\renewcommand{\P}{\mathbb{P}}
\newcommand{\R}{\mathbb{R}}
\newcommand{\U}{\mathbb{U}}
\newcommand{\X}{\mathbb{X}}
\newcommand{\Z}{\mathbb{Z}}
\newcommand{\cB}{\mathcal{B}}
\newcommand{\cD}{\mathcal{D}}
\newcommand{\cF}{\mathcal{F}}
\newcommand{\cG}{\mathcal{G}}
\newcommand{\cL}{\mathcal{L}}
\newcommand{\cP}{\mathcal{P}}
\newcommand{\cU}{\mathcal{U}}
\newcommand{\cV}{\mathcal{V}}
\newcommand{\bG}{\mathbf{G}}
\newcommand{\bV}{\mathbf{V}}
\newcommand{\bX}{\mathbf{X}}
\newcommand{\bY}{\mathbf{Y}}
\newcommand{\bZ}{\mathbf{Z}}
\newcommand{\1}{\mathbf{1}}
\renewcommand{\d}{\mathrm{d}}
\newcommand{\dd}{\,\mathrm{d}}
\renewcommand{\epsilon}{\varepsilon}
\newcommand{\edot}{\boldsymbol{\cdot}}
\newcommand{\tp}{\tilde{p}}
\newcommand{\tY}{\widetilde{Y}}
\newcommand{\tN}{\widetilde{N}}
\newcommand{\tbX}{\widetilde{\mathbf{X}}}
\newcommand{\tM}{\widetilde{M}}
\newcommand{\tX}{\widetilde{X}}
\newcommand{\tW}{\widetilde{W}}
\newcommand{\tV}{\widetilde{V}}
\newcommand{\tbbX}{\widetilde{\mathbb{X}}}
\newcommand{\D}{\mathrm{D}}
\newcommand{\sV}{\mathscr{V}}
\newcommand{\ty}{\widetilde{y}}
\newcommand{\tP}{\widetilde{\mathbb{P}}}
\newcommand{\tbV}{\widetilde{\bV}}
\begin{document}

\title{Rough SDEs and Robust Filtering for Jump-Diffusions}
\author{Andrew L.~Allan\footnote{Department of Mathematical Sciences, Durham University, andrew.l.allan{\fontfamily{ptm}\selectfont @}durham.ac.uk} \hspace{-2pt}, Jost Pieper\footnote{Department of Mathematical Sciences, Durham University, jost.pieper{\fontfamily{ptm}\selectfont @}durham.ac.uk} \hspace{-0.5pt} and Josef Teichmann\footnote{Department of Mathematics, ETH Z\"urich, josef.teichmann{\fontfamily{ptm}\selectfont @}math.ethz.ch}\\
~\vspace{-5pt}\\
}
\date{\today}
\maketitle

\vspace{-15pt}

\begin{abstract}
We investigate the existence of a robust, i.e., continuous, representation of the conditional distribution in a stochastic filtering model for multidimensional correlated jump-diffusions. Even in the absence of jumps, it is known that in general such a representation can only be continuous with respect to rough path topologies, leading us naturally to express the conditional dynamics as a rough stochastic differential equation with jumps. Via the analysis of such equations, including exponential moments, Skorokhod continuity, and randomisation of the rough path, we establish several novel robustness results for stochastic filters.
\end{abstract}

\vspace{8pt}

\noindent
Keywords: c\`adl\`ag rough paths, rough stochastic differential equations, John--Nirenberg inequality, robust stochastic filtering.

\noindent
MSC 2020 classification:
60L20, 
60G35. 

\tableofcontents

\section{Introduction}

Stochastic filtering is concerned with determining the conditional distribution of an unobserved signal process $X$ from observations of another process $Y$, as both processes evolve continuously in time. Problems involving the filtering of stochastic systems arise frequently in numerous applications, including in finance, economics, defence and aerospace, and such problems have been studied extensively in a wide variety of settings. For a detailed account, we refer to some of the many excellent monographs on the subject, such as \cite{BainCrisan2009}, \cite{CrisanRozovskiu2011}, \cite{Kallianpur1980} and \cite{LipsterShirayev1977}.

At least as far back as the late seventies, Clark \cite{Clark1978} pointed out that it would be natural and desirable, particularly in the context of real-world applications, to obtain a \emph{robust} representation of a stochastic filter, by which one typically means a \emph{continuous} function of the observed path (or some features thereof), which coincides with the conditional distribution. More precisely, for a suitable class of functions $f$, one seeks a continuous adapted function $\Theta^f$ on the pathspace of $Y$, such that
\begin{equation*}
\Theta^f_t(Y) = \E [f(X_t,Y_t) \,|\, \cF^Y_t]
\end{equation*}
holds almost surely, where we write $\cF^Y_t$ for the $\sigma$-algebra generated by $(Y_s)_{s \leq t}$. Such a representation is important for at least two reasons. First, it guarantees that, provided the model we suppose for the observation noise is close (in a suitable weak sense) to its true law, the resulting estimation error will be correspondingly small. In particular, it ensures that small errors in our dynamical model and in our observed data should only result in small errors in the resulting filter, and moreover that approximating the observation process by discrete-time data---as is inevitably the case in practice---leads to an accurate approximation of the conditional distribution. Second, it allows one to apply learning techniques with pointwise errors in mind (in contrast to mean squared errors) to stochastic filters, where the filter is trained as a neural network of relevant features (typically related to the topology with respect to which the filter is continuous) on training data.

In this paper, we investigate the existence of such a robust representation of the conditional distribution in a stochastic filtering model for correlated multidimensional jump-diffusion processes of the form
\begin{equation}\label{eq: intro- SDE}
\begin{split}
\d X_t &= b_1(t,X_t,Y_t) \dd t + \sigma_0(t,X_t,Y_t) \dd B_t + \sigma_1(t,X_t,Y_t) \dd W_t\\
&\quad + \int_{\U_1} f_1(t,X_{t-},Y_{t-},u) \, \tN_1(\d t,\d u) + \int_{\U_2} f_2(t,X_{t-},Y_{t-},u) \, \tN_2(\d t,\d u),\\
\d Y_t &= b_2(t,X_t,Y_t) \dd t + \sigma_2(t,Y_t) \dd W_t + \int_{\U_2} f_3(t,Y_{t-},u) \, \tN_2(\d t,\d u),
\end{split}
\end{equation}
under essentially optimal regularity assumptions on the coefficients, where here $B$ and $W$ are independent Brownian motions, $N_1$ is a Poisson random measure, and $N_2$ is an integer-valued random measure whose compensator depends on $X$. In particular, we consider a \emph{proper} filtering problem, i.e., an equivalent change of measure exists which makes the signal disappear from the dynamics of the observation process. The case of improper filtering problems is more delicate (and generally easier).

In \cite{Clark1978}, Clark considered $X$ and $Y$ as diffusions driven by independent Brownian motions, and expressed the conditional distribution as a ratio of expectations via the Kallianpur--Striebel formula, under a new probability measure. The associated change of measure introduces a term involving the exponential of a stochastic integral, and finding a robust representation in the uncorrelated noise case fundamentally boils down to finding a robust version of this term. Clark suggested such a version via a formal integration by parts, and the continuity of this representation with respect to the uniform topology was also confirmed by Kushner \cite{Kushner1979} under appropriate locally uniform exponential integrability assumptions, and Clark and Crisan \cite{ClarkCrisan2005} provided a rigorous treatment of the measurability issues that arose therein. We refer to \cite[Ch.~5]{BainCrisan2009} for a comprehensive overview of these results.

In the correlated noise case, one needs to find a robust version of the signal itself, in terms of the observation path, in addition to the exponential terms in the Kallianpur--Striebel formula. For scalar observation, Davis used a Doss--Sussman type flow transformation to express $X$ as the composition of a flow of a deterministic ODE driven by the sample path of the observation $Y$, and an SDE with coefficients which are continuous in $Y$. Paired with another formal integration by parts, this provides a robust version of the conditional distribution; for details, see \cite{Davis1980}, \cite{Davis1982}, \cite{Davis2005} and \cite{DavisSpathopoulos1987}. These results were extended to a multidimensional setting by Elliott and Kohlmann \cite{ElliottKohlmann1981}, under some additional commutativity assumptions on the vector fields.

In a general continuous multidimensional correlated noise setting, however, it is known that there cannot exist a robust representation of the conditional distribution on the space of continuous paths endowed with the uniform topology (see \cite[Example~1]{CrisanDiehlFrizOberhauser2013}). Instead, Crisan, Diehl, Friz and Oberhauser \cite{CrisanDiehlFrizOberhauser2013} showed that, in such a setting, upon lifting the observation path to a rough path, there exists a representation of the filter which is continuous with respect to a suitable rough path topology.

Initiated by Lyons \cite{Lyons1998}, the theory of rough paths generalizes classical notions of integration and controlled ODEs to handle highly oscillatory multidimensional paths. A \emph{rough path} may be viewed as a path $X$ which has been enhanced with its iterated integrals, which is sufficient to ensure the well-posedness and stability of solutions to nonlinear differential equations driven by such a path. In the context of stochastic analysis, such rough differential equations (RDEs) provide a robust version of the solution to the corresponding stochastic differential equation (SDE). Unlike its stochastic counterpart, such an RDE is well-posed for a fixed realisation of the driving noise, and the solution is a continuous function of the driving rough path; see, e.g., \cite{FrizHairer2020} or \cite{FrizVictoir2010} for a comprehensive exposition of the theory.

Upon conditioning on the observation $\sigma$-algebra $\cF^Y_t$, it is natural to fix a lifted (in the rough path sense) realization of $(Y_s)_{s \in [0,t]}$, and consider the resulting rough dynamics. However, since the unconditioned noise also remains, the resulting equation is actually driven by both rough \emph{and} stochastic noise. To make sense of such equations, in \cite{CrisanDiehlFrizOberhauser2013} the authors used a flow transformation to make sense of such mixed \emph{rough stochastic differential equations} (rough SDEs), driven by a H\"older continuous rough path and a Brownian motion. They then established locally uniform exponential moments to make sense of the exponential terms appearing in the Kallianpur--Striebel formula, and hence also a locally Lipschitz continuous version of the conditional distribution.

Due to the use of flow transformations, this solution theory for rough SDEs does not provide an intrinsic solution to the equation, and comes with excessive regularity requirements on its coefficients. Over the last decade, alternative approaches to the analysis of rough SDEs have been proposed, such as the \emph{random rough path} approach of, e.g., Diehl, Oberhauser and Riedel \cite{DiehlOberhauserRiedel2015}, in which the rough SDE is treated as an RDE driven by the random joint rough path lift of the stochastic noise together with the rough path. See also Friz and Zorin-Kranich \cite{FrizKranich2023} for a more recent and extensive analysis of the random rough path approach.

Another recent approach was introduced by Friz, Hocquet and L\^e \cite{frizHocLe2021}, in which the stochastic sewing lemma is used to give intrinsic meaning to rough SDEs under optimal regularity requirements on the coefficients. Since its inception, the theory of rough SDEs has received a number of theoretical extensions, such as Malliavin differentiability \cite{BuginiCoghiNilssen2024}, parameter dependent rough SDEs \cite{BuginiFrizStannat2024}, and rough SDEs with jumps \cite{AllanPieper2026}, along with various applications to, e.g., pathwise stochastic control \cite{FrizLeZhang2024}, \cite{HorstZhang2025}, stochastic volatility models \cite{BankBayerFrizPelizzari2025}, and rough McKean--Vlasov equations in a filtering setting \cite{CoghiNilssenNüskenReich2023}.

Motivated in particular by applications in mathematical finance (e.g., \cite{FreyRunggaldier2010}, \cite{MeyerProske2004}), stochastic filtering for jump-diffusions has been studied extensively, and the corresponding filtering equations have been derived in a number of settings (\cite{Poklukar2006}, \cite{PopaSritharan2009}, \cite{QiaoDuan2015}). In particular, cases with correlated noise have been treated in both scalar (\cite{CeciColaneri2012}, \cite{CeciColaneri2014}, \cite{FernandoHausenblas2018}) and multidimensional settings (\cite{DavieGermGyongy2024}, \cite{GermGyongy2025PartI}, \cite{GermGyongy2025}, \cite{GrigelionisMikulevicius1982}, \cite{KliemannKochMarchetti1990}, \cite{Qiao2021}, \cite{Qiao2023}).

Extensions of the robustness result to filtering settings with jumps have been developed by Kushner \cite{Kushner1997} for independent noise, and by Grigelionis and Mikulevicius \cite{GrigelionisMikulevicius1982} in a multidimensional correlated noise setting under a commutativity assumption on the signal dynamics. However, a robustness result for general multidimensional correlated jump-diffusions of the form \eqref{eq: intro- SDE}, in the spirit of \cite{CrisanDiehlFrizOberhauser2013}, has not yet been established. In the present paper we fill this gap, by utilizing the recent theory of rough SDEs with jumps, as developed in \cite{AllanPieper2026}.

A key step in the procedure detailed in both \cite{CrisanDiehlFrizOberhauser2013} and the present work is to condition on one of the two noise components, in what one may refer to as a ``doubly stochastic DE'', to obtain a (random) rough SDE. While it is intuitively clear that the solutions to these equations should coincide, making this rigorous requires one to establish measurability of the solution to the ``randomised'' rough SDE.\footnote{In \cite{CrisanDiehlFrizOberhauser2013} this argument was omitted, but it has become apparent that this is a substantially more delicate task than the authors of \cite{CrisanDiehlFrizOberhauser2013} suggest. Indeed, this problem appears to have been resolved satisfactorily for the first time only very recently in \cite{FrizLeZhang2024}. See Section~\ref{sec: Consistency RSDEs and SDEs} below for an alternative argument.}

Concurrently with the present work, Friz, L\^e and Zhang derived a resolution to this problem in \cite{FrizLeZhang2024} and \cite{FrizLeZhang2025}. Considering solutions to rough SDEs with the driving rough path as a parameter, they establish measurability of the solution with respect to the parameter space by the application of suitable measurable selection theorems, thus obtaining the desired measurability of the randomised rough SDE. Although this approach can undoubtedly be adapted to incorporate jumps, we instead provide an alternative perspective. Rather than resorting to measurable selection, we directly adapt the classical procedure for proving well-posedness of rough SDEs to handle random rough paths, by ``randomising'' the metric space on which we establish an invariant contraction. This allows us to construct the solution to both the randomised rough SDE and to the corresponding doubly stochastic DE \emph{simultaneously}, thus establishing that both solutions are well-defined and measurable, and that they coincide. The proof is very much in the spirit of the classical $\alpha$-slicing construction for the existence of solutions to SDEs driven by c\`adl\`ag semimartingales (see, e.g., \cite[Ch.~16]{Cohen2015}).

Another key ingredient we require is integrability of the exponential moments which appear in the Kallianpur--Striebel formula. The existence of exponential moments for solutions to rough SDEs has been addressed for H\"older continuous noise in \cite{frizHocLe2021}, and a more general result of L\^e \cite{Le2022a} provides exponential moments for BMO processes via a novel version of the John--Nirenberg inequality. We extend the results of \cite{Le2022a} to obtain bounds on so-called $\textup{BMO}^{p\textup{-var}}$ processes, which in particular include c\`adl\`ag rough stochastic integrals.

With these ingredients in place, we establish the existence of a robust version of the conditional distribution associated with the filtering model in \eqref{eq: intro- SDE}. In general, continuity is established with respect to the rough path lift of the relevant noise components (Theorem~\ref{theorem: robustness result}), and in the case of additive noise in the observation we obtain a robustness result akin to and generalizing those in \cite{CrisanDiehlFrizOberhauser2013} and \cite{DiehlOberhauserRiedel2015} (Corollary~\ref{corollary: additive noise robustness requires only continuous and jump part}). Moreover, by considering the filter as a function on the space of random rough paths, we obtain a novel result on the robustness of stochastic filters with respect to model uncertainty (Theorem~\ref{theorem: Lipschitz continuity of Theta in L^m}).

The paper is organised as follows. In Section~\ref{sec: Preliminaries} we discuss the necessary preliminaries from the theory of rough SDEs, as well as extensions of this theory to incorporate integrals against integer-valued random measures, and continuity with respect to Skorokhod topology. We also provide in Theorem~\ref{theorem: a p-variation kolmogorov result} a Kolmogorov continuity-type criterion for processes with c\`adl\`ag paths. In Section~\ref{sec: Consistency RSDEs and SDEs} we establish the consistency between solutions to doubly stochastic DEs and their rough SDE counterparts, and exponential moments are then discussed in Section~\ref{sec: John-Nirenberg}. Finally, in Section~\ref{sec: application to filtering} we combine the ideas developed in the earlier sections to construct a robust representation of the conditional distribution in our stochastic filtering model.

\bigskip

\noindent
\textbf{Acknowledgement:} The authors would like to thank Jannis Dause and Peter Friz for helpful discussions, particularly on Theorem~\ref{theorem: a p-variation kolmogorov result} and Example~\ref{example: p<q is sharp for kolmogorov}.

\section{Preliminaries}\label{sec: Preliminaries}

\subsection{Frequently used notation}

For $0 < T < \infty$, we write $\Delta_{[0,T]} = \{(s,t) \in [0,T]^2 : s \leq t\}$. We call a function $w \colon \Delta_{[0,T]} \to [0,\infty)$ a \emph{control} on $\Delta_{[0,T]}$ if it is superadditive, in the sense that $w(s,u) + w(u,t) \leq w(s,t)$ for all $s \leq u \leq t$. Note that for any control $w$, the map $s \mapsto w(s,t)$ is non-increasing, $t \mapsto w(s,t)$ is non-decreasing, and $w(t,t) = 0$ for all $t$.

We write $w(s,t+) := \lim_{u \searrow t} w(s,u)$ and
\begin{equation*}
w(s,t-) := \begin{cases}
\lim_{u \nearrow t} w(s,u) & \text{if } \, s < t,\\
0 & \text{if } \, s = t,
\end{cases}
\end{equation*}
and define $w(s+,t)$ and $w(s-,t)$ analogously.

\smallskip

By a \emph{partition} $\cP$ of a given interval $[s,t]$, we mean a finite sequence of times $\cP = \{s = t_0 < t_1 < \cdots < t_n = t\}$. We also denote by $|\cP| := \max_{0 \leq i < n} |t_{i+1} - t_i|$ the mesh size of a partition $\cP$. We will also sometimes abuse notation slightly by writing $[u,v] \in \cP$ to identify two consecutive times $u, v \in \cP$, i.e., when $u = t_i$ and $v = t_{i+1}$ for some $i$.

\smallskip

For $p \in [1,\infty)$, given a two-parameter function $F \colon \Delta_{[0,T]} \to E$, taking values in any normed vector space $(E,|\cdot|)$, the \emph{$p$-variation} of $F$ over the interval $[s,t] \subseteq [0,T]$ is defined as
\begin{equation*}
\|F\|_{p,[s,t]} := \bigg(\sup_{\cP \subset [s,t]} \sum_{[u,v] \in \cP} |F_{u,v}|^p\bigg)^{\hspace{-2pt}\frac{1}{p}},
\end{equation*}
where the supremum is taken over all possible partitions $\cP$ of the interval $[s,t]$. We say that $F$ has finite $p$-variation if $\|F\|_{p,[0,T]} < \infty$. We will sometimes also consider $p$-variation over general intervals, e.g., $\|F\|_{p,[s,t)} := \lim_{u \nearrow t} \|F\|_{p,[s,u]}$.

Given a path $A \colon [0,T] \to E$, we write $\delta A$ for its increment function, so that
\begin{equation*}
\delta A_{s,t} = A_t - A_s
\end{equation*}
for all $(s,t) \in \Delta_{[0,T]}$. The $p$-variation of $A$ over $[s,t]$ is defined as the $p$-variation of its increment process, i.e., $\|A\|_{p,[s,t]} := \|\delta A\|_{p,[s,t]}$, and we write $V^p = V^p([0,T];E)$ for the space of (deterministic) c\`adl\`ag paths with finite $p$-variation.

\smallskip

Given a c\`adl\`ag path or process $Y = (Y_t)_{t \in [0,T]}$, we will write $\Delta Y_t := Y_t - Y_{t-}$ for the jump of $Y$ at time $t$.

\smallskip

For $k \in \N$ and a function $f$, we write $\D^k f$ for the $k$-th order Fr\'echet derivative of $f$. We write $C^n_b$ for the space of functions $f$ which are $(n-1)$-times continuously differentiable, such that $f$ and all its derivatives up to order $n-1$ are bounded, and such that the $(n-1)$-th order derivative $\D^{n-1} f$ is Lipschitz continuous. We denote the corresponding norm by $\|f\|_{C^n_b}$.

We will also write, e.g., $\cL(\R^\ell;\R^m)$ for the space of linear maps from $\R^\ell \to \R^m$.

\smallskip

During proofs, we will often use the symbol $\lesssim$ to indicate inequality up to a multiplicative constant. When deriving estimates, this implicit constant will depend on the same variables as the constant specified in the statement of the corresponding estimate.

\subsection{Rough stochastic analysis}

We let $(\Omega,\cF,(\cF_t)_{t \in [0,T]},\P)$ be a filtered probability space, and we will always assume that the filtration satisfies the usual conditions. For $r \in [1,\infty]$, we write $\|\cdot\|_{L^r}$ for the standard Lebesgue norm on $(\Omega,\cF,\P)$. We also adopt the shorthand $\E_s [\, \cdot \,] := \E [\, \cdot \,|\, \cF_s]$ for the conditional expectation at time $s \in [0,T]$.

\smallskip

As in \cite{AllanPieper2026}, for $q \in [1,\infty)$, $r \in [q,\infty]$, $s \in [0,T]$ and a random variable $Y$, we write
\[ \|Y\|_{q,r,s} := \big\| \E_s [|Y|^q]^{\frac{1}{q}} \big\|_{L^r} \]
and we write $L^{q,r}_s$ for the space of random variables $Y$ such that $\|Y\|_{q,r,s} < \infty$.

\smallskip

For convenience, we recall the following properties of the $\|\cdot\|_{q,r,s}$ norm, the proofs of which may be found in \cite[Proposition~2.2]{AllanPieper2026}.

\begin{itemize}
\item $(L^{q,r}_s,\|\cdot\|_{q,r,s})$ is a Banach space.
\item We have that $\|Y\|_{L^q} \leq \|Y\|_{q,r,s} \leq \|Y\|_{L^r}$ and $\| \E_s[Y] \|_{L^r} \leq \|Y\|_{q,r,s}$.
\item The norm $\|Y\|_{q,r,s}$ is non-decreasing in each of the variables $q, r$ and $s$.
\item The following version of H\"older's inequality,
\begin{equation}\label{eq: Holder's inequality for q,r,s norm}
\big\|\E_s[|Y||Z|]\big\|_{L^\ell} \leq \big\|\E_s[|Y|^p]^{\frac{1}{p}}\big\|_{L^{p \ell}} \big\|\E_s[|Z|^q]^{\frac{1}{q}}\big\|_{L^{q \ell}} = \|Y\|_{p,p \ell,s} \|Z\|_{q,q \ell,s},
\end{equation}
holds for any $\ell \in [1,\infty]$ whenever $p, q \in (1,\infty)$ with $\frac{1}{p} + \frac{1}{q} = 1$.
\end{itemize}

For $p, q \in [1,\infty)$, $r \in [q,\infty]$, and a two-parameter stochastic process $F = (F_{s,t})_{(s,t) \in \Delta_{[0,T]}}$, we write
\begin{equation*}
\|F\|_{p,q,r,[s,t]} := \bigg(\sup_{\cP \subset [s,t]} \sum_{[u,v] \in \cP} \|F_{u,v}\|_{q,r,u}^p\bigg)^{\hspace{-2pt}\frac{1}{p}} = \bigg(\sup_{\cP \subset [s,t]} \sum_{[u,v] \in \cP} \big\|\E_u[|F_{u,v}|^q]^{\frac{1}{q}}\big\|_{L^r}^p\bigg)^{\hspace{-2pt}\frac{1}{p}}
\end{equation*}
for $(s,t) \in \Delta_{[0,T]}$. For a stochastic process $Y = (Y_t)_{t \in [0,T]}$, we then let
\begin{equation*}
\|Y\|_{p,q,r,[s,t]} := \|\delta Y\|_{p,q,r,[s,t]}.
\end{equation*}

We write $V^p L^{q,r} = V^p L^{q,r}([0,T];E)$ for the space of stochastic processes $Y \colon \Omega \times [0,T] \to E$ which are c\`adl\`ag almost surely, and are such that $Y_0 \in L^q$ and $\|Y\|_{p,q,r,[0,T]} < \infty$.

In the special case when $r = q$, we also write
\begin{equation}\label{eq: defn p,infty,[0,T] norm}
\|F\|_{p,q,[s,t]} := \|F\|_{p,q,q,[s,t]} = \bigg(\sup_{\cP \subset [s,t]} \sum_{[u,v] \in \cP} \|F_{u,v}\|_{L^q}^p\bigg)^{\hspace{-2pt}\frac{1}{p}},
\end{equation}
with $\|Y\|_{p,q,[s,t]} := \|\delta Y\|_{p,q,[s,t]}$ and $V^p L^q := V^p L^{q,q}$. Of course, we can also define $\|\cdot\|_{p,\infty,[s,t]}$ and $V^p L^\infty$, by simply replacing the $L^q$ norm in \eqref{eq: defn p,infty,[0,T] norm} with an $L^\infty$ norm.

\smallskip

We consider pairs $\bX = (X,\X)$, consisting of a c\`adl\`ag path $X \colon [0,T] \to \R^d$ and a c\`adl\`ag two-parameter function $\X \colon \Delta_{[0,T]} \to \R^{d \times d}$ (where here c\`adl\`ag is understood for each time parameter separately), such that $\|X\|_{p,[0,T]} < \infty$ and $\|\X\|_{\frac{p}{2},[0,T]} < \infty$. For such pairs, and any $(s,t) \in \Delta_{[0,T]}$, we use the seminorm\footnote{We choose this slightly non-standard definition of the rough path norm for the convenient property that the map $(s,t) \mapsto \|\bX\|_{p,[s,t]}^p$ is a control.}
\begin{equation*}
\|\bX\|_{p,[s,t]} := \big( \|X\|_{p,[s,t]}^p + \|\X\|_{\frac{p}{2},[s,t]}^p \big)^{\frac{1}{p}},
\end{equation*}
which induces the pseudometric
\begin{equation*}
(\bX,\tbX) \, \mapsto \, \|\bX - \tbX\|_{p,[s,t]} = \big( \|X - \tX\|_{p,[s,t]}^p + \|\X - \tbbX\|_{\frac{p}{2},[s,t]}^p \big)^{\frac{1}{p}}
\end{equation*}
for pairs $\bX = (X,\X)$ and $\tbX = (\tX,\tbbX)$.

As defined in \cite{FrizZhang2018}, for a given $p \in [2,3)$, a \emph{c\`adl\`ag rough path} is such a pair $\bX = (X,\X)$, such that $\|\bX\|_{p,[0,T]} < \infty$, and such that Chen's relation $\X_{s,t} = \X_{s,u} + \X_{u,t} + \delta X_{s,u} \otimes \delta X_{u,t}$ holds for all $0 \leq s \leq u \leq t \leq T$. We write $\sV^p = \sV^p([0,T];\R^d)$ for the space of c\`adl\`ag rough paths.

\smallskip

We will sometimes write $\Delta \X_t := \X_{t-,t}$ for the ``jump'' of $\X$ at time $t$, and we will also use the shorthand $|\Delta \bX_t| := |\Delta X_t| + |\Delta \X_t|$.

\smallskip

Given a two-parameter process $A = (A_{s,t})_{(s,t) \in \Delta_{[0,T]}}$, we write $\E_{\edot} A$ for the two-parameter process given by
\begin{equation*}
(\E_{\edot} A)_{s,t} := \E_s [A_{s,t}]
\end{equation*}
for every $(s,t) \in \Delta_{[0,T]}$.

\begin{definition}[Definition~4.1 in \cite{AllanPieper2026}]\label{defn: stochastic controlled path}
Let $p \in [2,3)$, $q \in [2,\infty)$ and $r \in [q,\infty]$, and let $X \in V^p$. We call a pair of processes $(Y,Y')$ a \emph{stochastic controlled path} (relative to $X$), if $Y$ and $Y'$ are both adapted, $Y \in V^p L^{q,r}$, $Y' \in V^p L^{q,r}$, $\sup_{s \in [0,T]} \|Y'_s\|_{L^r} < \infty$, and $\|\E_{\edot} R^Y\|_{\frac{p}{2},r,[0,T]} < \infty$, where the two-parameter process $R^Y = (R^Y_{s,t})_{(s,t) \in \Delta_{[0,T]}}$ is defined by
\begin{equation*}
\delta Y_{s,t} = Y'_s \delta X_{s,t} + R^Y_{s,t}
\end{equation*}
for every $(s,t) \in \Delta_{[0,T]}$.

We write $\cV^{p,q,r}_X$ for the space of stochastic controlled paths relative to $X$.
\end{definition}

Given a rough path $\bX \in \sV^p$ and a stochastic controlled path $(Y,Y') \in \cV^{p,q,r}_X$, one can consider the \emph{rough stochastic integral} of $(Y,Y')$ with respect to $\bX$, as defined in the following lemma.

\begin{lemma}[Lemmas~4.3 and 4.6 in \cite{AllanPieper2026}]\label{lemma: rough stochastic integral}
Let $p \in [2,3)$, $q \in [2,\infty)$ and $r \in [q,\infty]$. Let $\bX = (X,\X) \in \sV^p$ be a c\`adl\`ag rough path, and let $(Y,Y') \in \cV^{p,q,r}_X$ be a stochastic controlled path. Then there exists an $L^q$-integrable adapted c\`adl\`ag process $\int_0^\cdot Y_u \dd \bX_u$, such that, for every $(s,t) \in \Delta_{[0,T]}$,
\begin{equation*}
\lim_{|\cP| \to 0} \bigg\|\int_s^t Y_u \dd \bX_u - \sum_{[u,v] \in \cP} \big(Y_u \delta X_{u,v} + Y'_u \X_{u,v}\big)\bigg\|_{q,r,s} = 0,
\end{equation*}
where the limit holds along partitions $\cP$ of the interval $[s,t]$ as the mesh size tends to zero.
\end{lemma}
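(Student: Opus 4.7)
The plan is to construct the integral by applying the c\`adl\`ag stochastic sewing lemma (available in the c\`adl\`ag variational setting of \cite{AllanPieper2024}) to the natural two-parameter germ
\[
\Xi_{s,t} := Y_s \, \delta X_{s,t} + Y'_s \, \X_{s,t}, \qquad (s,t) \in \Delta_{[0,T]}.
\]
Each term of $\Xi_{s,t}$ is $\cF_s$-measurable, so the candidate compensated sum is adapted. The limit process $I_t = \int_0^t Y_u \dd \bX_u$ will then be produced as the output of stochastic sewing; adaptedness, $L^q$-integrability, and the c\`adl\`ag regularity are furnished by the sewing construction.

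First I would compute the defect $\delta \Xi_{s,u,t} := \Xi_{s,t} - \Xi_{s,u} - \Xi_{u,t}$. Inserting the controlled-path decomposition $\delta Y_{s,u} = Y'_s \delta X_{s,u} + R^Y_{s,u}$ and invoking Chen's relation $\X_{s,t} = \X_{s,u} + \X_{u,t} + \delta X_{s,u} \otimes \delta X_{u,t}$, the cancellations yield
\[
\delta \Xi_{s,u,t} = -\, R^Y_{s,u}\, \delta X_{u,t} \;-\; \delta Y'_{s,u}\, \X_{u,t}.
\]
This is the crucial algebraic identity that matches the two ``bad'' pieces of the germ against the quantities for which the stochastic controlled path structure provides quantitative control.

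Next I would verify the two hypotheses of stochastic sewing. For the pointwise bound, using H\"older in the $\|\cdot\|_{q,r,s}$ norm together with $\|R^Y_{s,u}\|_{q,r,s} \lesssim \omega(s,u)^{1/p}$ (which follows from $Y, Y' \in V^p L^{q,r}$ and $\sup_s \|Y'_s\|_{L^{qr}} < \infty$), $\|\delta Y'_{s,u}\|_{q,r,s} \lesssim \omega(s,u)^{1/p}$, and the rough path bounds $|\delta X_{u,t}| \leq \|X\|_{p,[u,t]}$, $|\X_{u,t}| \leq \|\X\|_{p/2,[u,t]}$, one obtains $\|\delta \Xi_{s,u,t}\|_{q,r,s} \lesssim \omega(s,t)^{2/p}$ with exponent $2/p > 1/2$ since $p < 3$. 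For the conditional expectation bound, because $\bX$ is deterministic,
\[
\E_s[\delta \Xi_{s,u,t}] = -\, \E_s[R^Y_{s,u}]\, \delta X_{u,t} \;-\; \E_s[\delta Y'_{s,u}]\, \X_{u,t},
\]
and the extra integrability $\|\E_{\edot} R^Y\|_{p/2, qr, [0,T]} < \infty$ built into Definition~\ref{defn: stochastic controlled path} gives $\|\E_s[R^Y_{s,u}]\|_{L^{qr}} \lesssim \omega(s,u)^{2/p}$; for the second summand we simply use $\|\E_s[\delta Y'_{s,u}]\|_{L^{qr}} \leq \|\delta Y'\|_{p, qr, [s,u]}$. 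Both contributions then carry total exponent $3/p > 1$, as required.

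Once the two sewing conditions hold, the c\`adl\`ag stochastic sewing lemma delivers a unique adapted c\`adl\`ag process $I$ with $I_0 = 0$ such that $\|I_t - I_s - \Xi_{s,t}\|_{q,r,s}$ is dominated by a control to a power strictly greater than one, which in particular implies the stated Riemann-type convergence along any sequence of partitions with mesh tending to zero. The main obstacle is the careful bookkeeping in step two: one must pair the right factor of $\bX$ (either $\delta X$ or $\X$) with the right controlled-path quantity ($R^Y$ or $\delta Y'$) so as to exploit the tower property and land on exponents $> 1$ and $> 1/2$ respectively, while simultaneously ensuring the implicit controls are genuinely superadditive c\`adl\`ag controls on $\Delta_{[0,T]}$ (so that the sewing machinery applies without continuity assumptions on $X$ or $Y$).
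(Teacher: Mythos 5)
Your approach is the right one and matches what the cited reference does: construct the rough stochastic integral via stochastic sewing applied to the germ $\Xi_{s,t} = Y_s\,\delta X_{s,t} + Y'_s\,\X_{s,t}$. The defect computation is correct: using Chen's relation and $\delta Y_{s,u} = Y'_s\,\delta X_{s,u} + R^Y_{s,u}$, the terms $Y'_s\,\delta X_{s,u}\otimes\delta X_{u,t}$ cancel and one is left with $\delta\Xi_{s,u,t} = -R^Y_{s,u}\,\delta X_{u,t} - \delta Y'_{s,u}\,\X_{u,t}$. The two sewing hypotheses are also verified correctly: since $\bX$ is deterministic, the factors $\delta X_{u,t}$ and $\X_{u,t}$ come out of the norms, and the pointwise bound uses the rough $V^p L^{q,r}$ regularity of $Y$, $Y'$ (giving exponent $2/p>1/2$) while the conditional-expectation bound exploits $\|\E_{\edot} R^Y\|_{\frac{p}{2},qr}<\infty$ together with $\|\E_s[\delta Y'_{s,u}]\|_{L^{qr}}\le\|\delta Y'_{s,u}\|_{q,r,s}$ (giving exponent $3/p>1$). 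That all matches the construction in the cited Lemma~3.7.

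Where you are too quick is the final claim that the sewing estimate ``in particular implies the stated Riemann-type convergence along any sequence of partitions with mesh tending to zero.'' In the c\`adl\`ag $p$-variation setting this does not follow automatically. The sewing lemma yields a bound of the schematic form $\|I_t-I_s-\sum_{[u,v]\in\cP}\Xi_{u,v}\|_{q,r,s}\lesssim\big(\max_{[u,v]\in\cP}\omega(u,v)\big)^{\theta}\,\omega(s,t)$ for some $\theta>0$, but the controls $\omega$ appearing here (built from $\|\bX\|_{p,[\cdot,\cdot]}^p$, $\|Y\|_{p,q,r,[\cdot,\cdot]}^p$, $\|\E_{\edot}R^Y\|_{\frac{p}{2},qr,[\cdot,\cdot]}^{p/2}$) are \emph{not} continuous: if $X$ jumps at $t_0$ then $\omega(u,v)\ge|\Delta X_{t_0}|^p$ whenever $u<t_0\le v$, so $\max_{[u,v]\in\cP}\omega(u,v)$ does not tend to zero as $|\cP|\to 0$. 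Consequently, mesh-Riemann--Stieltjes (MRS) convergence along refining partitions comes for free, but the Riemann--Stieltjes (RS) convergence asserted in the lemma --- along \emph{arbitrary} partitions with mesh tending to zero --- needs a separate argument that isolates the finitely many large jumps, verifies that the germ $\Xi_{u,v}$ actually recovers the correct jump contribution $Y_{t_0-}\Delta X_{t_0}+Y'_{t_0-}\Delta\X_{t_0}$ as $u\nearrow t_0$, $v\searrow t_0$, and controls the residual. This is precisely why the statement is attributed to \emph{two} lemmas, 3.7 and 3.8, in the cited reference: 3.7 is the sewing construction, and 3.8 is the RS convergence. Your proof covers the first and skips the second.
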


\subsection{Rough SDEs with random measures}

In our application to stochastic filtering in Section~\ref{sec: application to filtering}, it will be convenient to express the stochastic noise generated by a pure jump process in the language of random measures. We refer to, e.g., \cite[Ch.~13]{Cohen2015} or \cite[Ch.~II]{JacodShiryaev2003} for a detailed exposition on random measures and their associated stochastic integration. For convenience, in this section we will begin by fixing some standard notation, and recalling a few fundamental properties.

We will write $(\U,\cU)$ to denote a given Blackwell space, and consider an integer-valued random measure $N$ on $[0,T] \times \U$. Let us write $\nu$ for the predictable compensator of $N$. We recall that there exists a predictable, non-decreasing and integrable process $A$, and a kernel $K$, such that
\begin{equation}\label{eq: decomposition of nu}
\nu(\d t,\d u) = K(t,\d u) \dd A_t.
\end{equation}
In general the process $A$ here is c\`adl\`ag, but, for our purposes, it will always be assumed to be continuous. In particular, this means that, almost surely, $\nu(\{t\} \times \U) = 0$ for each $t \in [0,T]$.

If $N$ is a Poisson measure, then its compensator $\nu$ is deterministic, and if $N$ is a homogeneous Poisson measure, then $\nu(\d t,\d u) = F(\d u) \dd t$ for some (deterministic) $\sigma$-finite measure $F$.

\begin{lemma}[Ch.~II, Proposition~1.14 in \cite{JacodShiryaev2003}]
Let $N$ be an integer-valued random measure on $[0,T] \times \U$. Then there exists an optional $\U$-valued process $\beta$, and a thin random set $D$, such that, for almost every $\omega \in \Omega$,
\begin{equation}\label{eq: definition D_i and beta^i}
N(\omega;\d t,\d u) = \sum_{0 < s \leq T} \1_D(\omega,s) \delta_{(s,\beta_s(\omega))}(\d t,\d u),
\end{equation}
where $\delta_{(s,u)}$ denotes the Dirac measure at the point $(s,u)$.
\end{lemma}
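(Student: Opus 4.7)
My plan is to exploit the integer-valued property of $N$ to reduce the representation problem to a measurable selection of jump marks on a thin set of jump times. The key structural observation is that, by the definition of an integer-valued random measure, $N(\omega;\{t\} \times \U) \in \{0,1\}$ for all $\omega$ and $t$, so that for $\P$-a.e.\ $\omega$ the measure $N(\omega;\cdot)$ is a sum of Dirac masses whose support projects onto an at most countable subset of $[0,T]$.

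First I would set
$$D := \{(\omega,t) \in \Omega \times [0,T] : N(\omega;\{t\} \times \U) = 1\},$$
and verify that $D$ is a thin optional set. Optionality follows from that of $N$, since the map $(\omega,t) \mapsto N(\omega;\{t\} \times \U)$ is itself optional and $D$ is its preimage of $\{1\}$. Thinness, i.e.\ the existence of a sequence of stopping times $(T_n)$ whose disjoint graphs exhaust $D$, then follows from a standard exhaustion argument using that the $\omega$-sections of $D$ are at most countable. Next, for $(\omega,s) \in D$, I would define $\beta_s(\omega)$ to be the unique $u \in \U$ with $N(\omega;\{s\} \times \{u\}) = 1$, extending $\beta$ arbitrarily (say, to a fixed $u_0 \in \U$) off $D$. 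Existence and uniqueness of this mark are immediate from the integer-valued property; the substantive task is to show that $\beta$ can be chosen to be optional.

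Here the Blackwell structure on $(\U,\cU)$ is decisive: it guarantees that the graph
$$\{(\omega,s,u) \in \Omega \times [0,T] \times \U : N(\omega;\{s\} \times \{u\}) = 1\}$$
admits a measurable section, via a measurable selection theorem such as Jankov--von Neumann, applied on each stopping time graph $\llbracket T_n \rrbracket$ of the exhaustion. Aggregating the resulting selections yields the desired optional process $\beta$. With $D$ and $\beta$ in hand, the claimed identity holds on product sets by construction and extends to all of $\cB([0,T]) \otimes \cU$ by a monotone class argument. The main obstacle throughout is the measurable selection step: without the Blackwell hypothesis on $(\U,\cU)$ one could not realise $\beta$ as a globally optional process, which is precisely why that assumption is standing.
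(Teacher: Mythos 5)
The paper does not prove this lemma---it is cited verbatim from \cite[Ch.~II, Proposition~1.14]{JacodShiryaev2003}---so there is no internal proof to compare against, but your sketch is a faithful reconstruction of the standard argument. The definition of $D$, its optionality (as the preimage of $\{1\}$ under the optional process $t \mapsto N(\omega;\{t\}\times\U) = \Delta(\1 * N)_t$), and its thinness (an optional set with at most countable $\omega$-sections is thin) are all correct and match the reference. The one place you reach for heavier machinery than is needed is the selection of $\beta$: invoking Jankov--von Neumann treats the graph merely as an analytic set, whereas the Blackwell hypothesis yields $\beta$ directly and more elementarily. Since $\cU$ is countably generated by some $(U_n)_n$ and separating, the atom of the Dirac measure $N(\omega;\{T_k(\omega)\}\times\cdot)$ on each graph $\llbracket T_k \rrbracket$ of the exhausting stopping times is uniquely determined by the $\{0,1\}$-valued sequence $\big(N(\omega;\{T_k(\omega)\}\times U_n)\big)_n$, each component of which is $\cF_{T_k}$-measurable; the Blackwell structure turns this sequence measurably into a point of $\U$, giving $\beta$ on $\llbracket T_k \rrbracket$ without any uniformization theorem. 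This is exactly the role the Blackwell assumption plays in Jacod--Shiryaev, so your identification of it as the crux is right, even if your proposed tool for exploiting it is somewhat stronger than necessary.
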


In this setting (in particular with $A$ assumed to be continuous), given an integer-valued random measure $N$, we denote by $G_{\textup{loc}}(N)$ the set of predictable functions $\zeta$ on $\Omega \times [0,T] \times \U$ such that the process $(\sum_{s \leq \cdot} |\zeta(\omega;s,\beta_s(\omega))|^2 \1_D(\omega,s))^{\frac{1}{2}}$ is locally integrable. Given $\zeta \in G_{\textup{loc}}(N)$, we call the \emph{stochastic integral} of $\zeta$ with respect to $\widetilde{N} := N - \nu$, denoted $\int_0^\cdot \int_{\U} \zeta(s,u) \, \widetilde{N}(\d s,\d u)$, the purely discontinuous local martingale whose jumps coincide with the process $\zeta(\cdot,\beta) \1_D$.

\begin{assumption}\label{assumption: Regularity of measure integral for RSDE}
Let $\nu$ be a predictable random measure on $[0,T] \times \U$, and suppose that $A$ and $K$ are a process and kernel such that \eqref{eq: decomposition of nu} holds. We assume that the process $A$ is almost surely continuous, and that $A \in V^{\frac{p}{2}} L^{\frac{q}{2},\infty} \cap V^{\frac{p}{q}} L^{1,\infty}$, for some $2 \leq q \leq p < 3$. Let $g$ be a measurable function on $[0,T] \times \R^m \times \U$. We assume that
\begin{equation*}
\bigg\| \sup_{s \in [0,T]} \int_{\U} \big( |g(s,0,u)|^2 \vee |g(s,0,u)|^q \big) \, K(s,\d u) \bigg\|_{L^\infty} < \infty,
\end{equation*}
and, additionally, that there exists a constant $C > 0$ such that, for each $\ell \in \{2,q\}$, any $y, \ty \in \R^m$ and any $s \in [0,T]$,
\begin{equation*}
\int_{\U} |g(s,y,u) - g(s,\ty,u)|^\ell \, K(s,\d u) \leq C (1 \wedge |y - \ty|^\ell)
\end{equation*}
holds $\P$-almost surely.
\end{assumption}

In particular, if $N$ is an integer-valued random measure with compensator $\nu$, and if $g$ is a measurable function on $[0,T] \times \R^m \times \U$, then, under Assumption~\ref{assumption: Regularity of measure integral for RSDE}, it is straightforward to see that $(g(s,Z_s,\cdot))_{s \in [0,T]} \in G_{\textup{loc}}(N)$ for any $\R^m$-valued predictable process $Z$.

\smallskip

We consider the rough SDE given by
\begin{equation}\label{eq: RSDE with measure}
Y_t = y_0 + \int_0^t b(Y_s) \dd s + \int_0^t \sigma(Y_{s-}) \dd M_s + \int_0^t \int_{\U} g(s,Y_{s-},u) \, \tN(\d s,\d u) + \int_0^t f(Y_s) \dd \bX_s
\end{equation}
for $t \in [0,T]$, where $M$ is a c\`adl\`ag martingale, $\widetilde{N} = N - \nu$ is a compensated integer-valued random measure, and $\bX$ is a c\`adl\`ag rough path. In particular, the second integral is an It\^o integral against $M$, the third integral is a stochastic integral against $\widetilde{N}$ in the sense of random measures, and the last integral is the rough stochastic integral of $(f(Y),\D f(Y) Y')$ against $\bX$, in the sense of Lemma~\ref{lemma: rough stochastic integral}.

The main result of \cite{AllanPieper2026} was to establish existence, uniqueness and stability of solutions to rough SDEs driven by a c\`adl\`ag martingale and a c\`adl\`ag rough path. It is straightforward to extend this result to include an integer-valued random measure, which we state precisely in the theorem below. Since the proof is just a straightforward adaptation of the proof of \cite[Theorem~5.2]{AllanPieper2026}, using the estimate in part~(ii) of Lemma~\ref{lemma: application of conditional BDG for jump measures}, we omit the proof here for brevity.

\begin{theorem}\label{theorem: existence and estimates for solutions to RSDEs with measures}
Let $2 \leq q \leq p < 3$, $b \in C^1_b$, $\sigma \in C^1_b$, $f \in C^3_b$ and $g \colon [0,T] \times \R^m \times \U \to \R^m$. Let $y_0 \in L^q$ be $\cF_0$-measurable, $\bX = (X,\X) \in \sV^p$ be a c\`adl\`ag rough path, and let $M \in V^p L^{q,\infty}$ be a c\`adl\`ag martingale. Further, let $N$ be an integer-valued random measure on $[0,T] \times \U$ for some Blackwell space $(\U,\cU)$, with compensator $\nu$, write $\tN = N - \nu$ for the corresponding compensated random measure, and suppose that $\nu$ and $g$ satisfy Assumption~\ref{assumption: Regularity of measure integral for RSDE}.

Then there exists a process $Y$, which is unique up to indistinguishability, such that $Y$ has almost surely c\`adl\`ag sample paths, $(Y,Y') \in \cV^{p,q,\infty}_X$ is a stochastic controlled path, where $Y' = f(Y)$, and such that, almost surely, \eqref{eq: RSDE with measure} holds for every $t \in [0,T]$.

Moreover, if $y_0, \ty_0 \in L^q$ are $\cF_0$-measurable, $\bX, \tbX \in \sV^p$ are two c\`adl\`ag rough paths, and $M, \tM \in V^p L^{q,\infty}$ are c\`adl\`ag martingales, such that the norms $\|\bX\|_{p,[0,T]}$, $\|\tbX\|_{p,[0,T]}$, $\|M\|_{p,q,\infty,[0,T]}$ and $\|\tM\|_{p,q,\infty,[0,T]}$ are all bounded by some constant $L > 0$, and if $Y, \tY$ are the solutions of \eqref{eq: RSDE with measure} corresponding to the data $(y_0,M,N,\bX)$ and $(\ty_0,\tM,N,\tbX)$ respectively, then we have that
\begin{equation}\label{eq: Lipschitz continuity of solution map}
\begin{split}
\|Y &- \tY\|_{p,q,[0,T]} + \|Y' - \tY'\|_{p,q,[0,T]} + \|\E_{\edot} (R^Y - R^{\tY})\|_{\frac{p}{2},q,[0,T]}\\
&\leq C \big(\|y_0 - \ty_0\|_{L^q} + \|M - \tM\|_{p,q,[0,T]} + \|\bX - \tbX\|_{p,[0,T]}\big),
\end{split}
\end{equation}
where the constant $C$ depends only on $p, q, \|b\|_{C^1_b}, \|\sigma\|_{C^1_b}, \|f\|_{C^3_b}, g, \nu, T$ and $L$.
\end{theorem}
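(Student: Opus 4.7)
The plan is to follow the classical fixed-point scheme for rough SDEs, mirroring the proof of \cite[Theorem~4.1]{AllanPieper2024}, and treat the compensated random measure term as an additional martingale-type contribution that slots into the existing framework via the conditional BDG estimate promised in Lemma~\ref{lemma: application of conditional BDG for jump measures}(ii).

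\textbf{Setup and fixed-point map.} First, I would fix a time interval $[0,\tau] \subseteq [0,T]$ to be chosen small and work in the closed ball $\cB_L$ of stochastic controlled paths $(Y,Y') \in \cV^{p,q,\infty}_X$ with $Y_0 = y_0$, $Y'_0 = f(y_0)$, and whose norm $\|Y\|_{p,q,\infty,[0,\tau]} + \|Y'\|_{p,q,\infty,[0,\tau]} + \|\E_{\edot} R^Y\|_{\frac{p}{2},q,[0,\tau]}$ is bounded by some $L > 0$. Define the map $\Phi$ which sends $(Y,Y')$ to $(\widehat{Y}, f(Y))$, where $\widehat{Y}_t$ is the right-hand side of \eqref{eq: RSDE with measure} with $Y$ inside the coefficients and Gubinelli derivative $(f(Y), \D f(Y) f(Y))$ for the rough integral. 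The objective is to show $\Phi$ maps $\cB_L$ to itself and is a contraction for $\tau$ small.

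\textbf{Invariance and the new jump term.} The drift, It\^o, and rough integral contributions to $\widehat{Y}$ are already controlled exactly as in the proof of \cite[Theorem~4.1]{AllanPieper2024}. For the new term
$$J_t := \int_0^t \int_{\U} g(s,Y_{s-},u) \, \tN(\d s,\d u),$$
observe that $J$ is a purely discontinuous martingale, so $\E_s[\delta J_{s,t}] = 0$ and $J$ contributes \emph{nothing} to $\E_{\edot} R^{\widehat{Y}}$; in particular this term never appears in the ``third component'' of the controlled-path norm. Its contribution to the $\|\cdot\|_{p,q,\infty,[s,t]}$ norm of $\widehat{Y}$ will be handled by the conditional BDG bound from Lemma~\ref{lemma: application of conditional BDG for jump measures}(ii), which gives (schematically)
$$\|\delta J_{s,t}\|_{q,\infty,s}^{q} \lesssim \Big\|\E_s\Big[\int_s^t \!\! \int_{\U} |g(r,Y_{r-},u)|^2 K(r,\d u) \dd A_r\Big]^{q/2}\Big\|_{L^\infty} + \Big\|\E_s\Big[\int_s^t\!\!\int_{\U}|g(r,Y_{r-},u)|^q K(r,\d u)\dd A_r\Big]\Big\|_{L^\infty}.$$
Under Assumption~\ref{assumption: Regularity of measure integral for RSDE}, the Lipschitz bound in $y$ together with the boundedness at $y = 0$ lets one dominate the integrands by $1 + |Y_{r-}|^\ell$, and the assumed regularity $A \in V^{p/2} L^{q/2,\infty} \cap V^{p/q} L^{1,\infty}$ then converts these into $p$-variation in the $\|\cdot\|_{q,\infty,s}$ norm with a factor $\|A\|_{p/2,q/2,\infty,[s,t]} + \|A\|_{p/q,1,\infty,[s,t]}^{q/2}$ that is small when $t - s$ is small. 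This is precisely the analogue of the It\^o-integral bound, so $J$ fits seamlessly into the fixed-point estimate, and together with an $L^q$-based estimate for $J_0 = 0$, one obtains $\widehat{Y} \in \cB_L$ for $\tau$ small.

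\textbf{Contraction, concatenation, and stability.} The contraction estimate is obtained by the same route applied to the difference of two iterates $(Y^1,Y'^1)$ and $(Y^2,Y'^2)$. For the jump term, the Lipschitz bound in Assumption~\ref{assumption: Regularity of measure integral for RSDE} directly yields
$$\int_{\U}|g(s,Y^1_{s-},u) - g(s,Y^2_{s-},u)|^\ell K(s,\d u) \leq C|Y^1_{s-} - Y^2_{s-}|^\ell,$$
which, combined with the conditional BDG estimate and the controls on $A$, gives a contraction constant proportional to a positive power of $\|A\|_{\cdot,\cdot,\infty,[0,\tau]}$; shrinking $\tau$ yields a genuine contraction. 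Existence and uniqueness on $[0,\tau]$ follow from Banach's theorem, and the solution is then extended to $[0,T]$ by concatenation along a finite partition, exactly as in \cite{AllanPieper2024}; the required continuity of $A$ and the quadratic variation $[M]$ ensures no pathological jumps obstruct the partitioning. Finally, the stability estimate \eqref{eq: Lipschitz continuity of solution map} is obtained by running the contraction argument for two data tuples $(y_0,M,\bX)$ and $(\ty_0,\tM,\tbX)$ simultaneously, observing that since $N$ (and hence $\tN$) is shared between the two equations, the new jump term contributes only terms of the form $\int |g(s,Y_{s-},u) - g(s,\tY_{s-},u)|^\ell K(s,\d u)$, which by the Lipschitz assumption are absorbed into the fixed-point iteration and do \emph{not} produce any extra ``data'' term on the right-hand side of \eqref{eq: Lipschitz continuity of solution map}.

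\textbf{Expected main obstacle.} The only genuinely delicate bookkeeping is ensuring that the two different $A$-controls in Assumption~\ref{assumption: Regularity of measure integral for RSDE}---the $V^{p/2}L^{q/2,\infty}$ control for the quadratic part and the $V^{p/q}L^{1,\infty}$ control for the $q$-th power part of BDG---combine coherently with the $\|\cdot\|_{p,q,\infty,[s,t]}$ norm on $Y$ to give estimates that are (i) superadditive in $(s,t)$ (so the fixed-point argument scales correctly with the interval length) and (ii) uniform in the parameters so that the Lipschitz constant $C$ in \eqref{eq: Lipschitz continuity of solution map} depends only on the quantities listed. This is the place where one must be careful, but it is bookkeeping rather than a new idea, which is exactly why the authors feel justified in omitting the proof.
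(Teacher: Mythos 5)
Your proposal is correct and follows essentially the same route the paper intends: the paper explicitly declines to give a proof precisely because it is a straightforward adaptation of \cite[Theorem~4.1]{AllanPieper2024} in which the compensated random measure term is controlled by the conditional BDG estimate of Lemma~\ref{lemma: application of conditional BDG for jump measures}(ii), and you identify exactly this decomposition (the jump integral contributes to the $\|\cdot\|_{p,q,\infty}$ norm but, being a martingale, not to $\E_{\edot}R^{\widehat Y}$), along with the Lipschitz-in-$y$ bound from Assumption~\ref{assumption: Regularity of measure integral for RSDE} for the contraction and stability steps. The only inaccuracy is cosmetic: the powers on the $A$-controls should be $\|A\|_{\frac{p}{2},\frac{q}{2},\infty,[s,t]}^{\frac{1}{2}} + \|A\|_{\frac{p}{q},1,\infty,[s,t]}^{\frac{1}{q}}$ (as in Lemma~\ref{lemma: application of conditional BDG for jump measures}(ii)) rather than the exponents you wrote, but since both are positive powers of quantities that vanish as $t - s \to 0$, this does not affect the argument.
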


\subsection{Skorokhod continuity for rough SDEs}

We denote by $\Lambda$ the set of increasing bijective functions from $[0,T] \to [0,T]$. Given a $\lambda \in \Lambda$ and a rough path $\bX = (X,\X) \in \sV^p$, we write $\bX \circ \lambda := (X \circ \lambda, \X \circ (\lambda,\lambda))$ for the rough path obtained from $\bX$ by the ``time change'' $\lambda$.

As in \cite{FrizZhang2018}, given rough paths $\bX, \bZ \in \sV^p$, we define the $p$-variation J1-Skorokhod distance by
\begin{equation} \label{eq: defn skorokhod metric}
\sigma_{p,[0,T]}(\bX,\bZ) := \inf_{\lambda \in \Lambda} \big\{ |\lambda| \vee \|\bX \circ \lambda - \bZ\|_{p,[0,T]} \big\},
\end{equation}
where $|\lambda| := \sup_{t \in [0,T]} |\lambda(t) - t|$.

\begin{proposition}\label{prop: Skorokhod continuity for RSDEs}
Let $2 \leq q \leq p < 3$, $b \in C^1_b$, $\sigma \in C^1_b$, $f \in C^3_b$ and $g \colon [0,T] \times \R^m \times \U \to \R^m$. Let $\bX$ and $(\bX^n)_{n \in \N}$ be c\`adl\`ag rough paths, let $y_0 \in L^q$ be $\cF_0$-measurable, let $N$ be an integer-valued random measure with compensator $\nu$, such that $\nu$ and $g$ satisfy Assumption~\ref{assumption: Regularity of measure integral for RSDE}, and let $M \in V^p L^{q,\infty}$ be a c\`adl\`ag martingale which is continuous at deterministic times.

Let us write $Y$ and $Y^n$ for the solutions to the rough SDE \eqref{eq: RSDE with measure} with data $(y_0,M,N,\bX)$ and $(y_0,M,N,\bX^n)$ respectively. If $\sigma_{p,[0,T]}(\bX^n,\bX) \to 0$ as $n \to \infty$, then $Y^n_T \to Y_T$ in $L^q$ as $n \to \infty$.
\end{proposition}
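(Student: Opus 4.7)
My approach is to reduce Skorokhod convergence of the driving rough paths to genuine $p$-variation convergence through a deterministic time change, and then invoke an extension of the Lipschitz stability estimate \eqref{eq: Lipschitz continuity of solution map}. For each $n$, I would choose $\lambda_n \in \Lambda$ achieving the infimum in \eqref{eq: defn skorokhod metric} up to an error of $1/n$, so that $|\lambda_n| \to 0$ and $\|\bX^n \circ \lambda_n - \bX\|_{p,[0,T]} \to 0$. Because $\lambda_n(T) = T$, the time-changed process $\hat Y^n_t := Y^n_{\lambda_n(t)}$ satisfies $\hat Y^n_T = Y^n_T$, which reduces the problem to showing $\hat Y^n_T \to Y_T$ in $L^q$.

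Next, by the change-of-variables formulae for Lebesgue--Stieltjes, It\^o, random-measure, and rough stochastic integrals, $\hat Y^n$ satisfies a rough SDE of the form \eqref{eq: RSDE with measure} with respect to the (usual-conditions) time-changed filtration $\hat\cF^n_t := \cF_{\lambda_n(t)}$, driven by $\hat\bX^n := \bX^n \circ \lambda_n$, the time-changed martingale $\hat M^n_t := M_{\lambda_n(t)}$, the random measure $\hat N^n$ obtained by pulling $N$ back via $\lambda_n$, and with drift measure $\dd \lambda_n$ in place of $\dd r$ (together with the time-shifted coefficient $g(\lambda_n(r),\cdot,\cdot)$). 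The key observation is then that every component of the time-changed data converges to its counterpart in the limit equation: $\hat\bX^n \to \bX$ in $p$-variation by construction; $[\hat M^n] = [M] \circ \lambda_n \to [M]$ in $V^{p/2} L^{q/2,\infty}$, which is precisely where the hypothesis that $M$ be continuous at each deterministic time enters, together with $|\lambda_n| \to 0$; the compensator of $\hat N^n$ converges to $\nu$ in the relevant norms of Assumption~\ref{assumption: Regularity of measure integral for RSDE}, using continuity of $A$; and $\dd \lambda_n \to \dd r$ in total variation.

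The final step is to obtain an extension of \eqref{eq: Lipschitz continuity of solution map} that also allows the drift measure, martingale, random measure, \emph{and} underlying filtration to vary along the sequence, so that, together with the convergences listed above, one deduces $\|\hat Y^n - Y\|_{p,q,[0,T]} \to 0$. I would establish such an extension by retracing the proof of Theorem~\ref{theorem: existence and estimates for solutions to RSDEs with measures}: the stochastic sewing step, BDG-type inequalities, and fixed-point contraction all go through with constants that remain uniform in $n$, because the conditional expectations and quadratic variations under $\hat\cF^n$ are merely deterministic time-changes of their originals, and hence satisfy the same a priori bounds. I expect the main obstacle to lie exactly here: one must carefully verify that the adaptedness conditions in Definition~\ref{defn: stochastic controlled path} transfer correctly to the new filtration, that the sewing constants do not blow up as $n \to \infty$, and that the comparison between objects measurable with respect to $\hat\cF^n$ and the original $\cF$ is handled cleanly at the terminal time $T$, where the two filtrations agree.
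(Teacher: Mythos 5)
Your reduction through the time change $\hat Y^n_t := Y^n_{\lambda_n(t)}$ is a natural first move, and the observation $\hat Y^n_T = Y^n_T$ is correct. But the approach has a genuine gap that is more serious than the adaptedness and uniformity-of-constants issues you flag at the end: the time-changed stochastic data simply do not converge in the norm that the stability estimate requires. Estimate \eqref{eq: Lipschitz continuity of solution map} controls the difference of solutions through $\|[M - \tM]\|_{\frac{p}{2},\frac{q}{2},[0,T]}^{1/2}$, the quadratic variation of the \emph{difference} of martingales, and the hypothesis ``continuous at deterministic times'' only forbids fixed jump times, not random ones. If $M$ jumps at a random time $\tau$, then $M - M\circ\lambda_n$ has two jumps of size $|\Delta M_\tau|$, at $\tau$ and at $\lambda_n^{-1}(\tau)$, so $[M - M\circ\lambda_n]_T \geq 2(\Delta M_\tau)^2$ does not vanish as $|\lambda_n| \to 0$. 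Your claimed convergence $[\hat M^n] \to [M]$ in $V^{\frac{p}{2}}L^{\frac{q}{2},\infty}$ fails for the same reason, and so does the analogous claim for the compensator of the pulled-back random measure $\hat N^n$. This is exactly the phenomenon the example following the proposition is designed to illustrate.

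The paper therefore does \emph{not} time-change $M$ or $N$ at all. It first approximates $\bX$ by a piecewise-constant rough path $\bX^\cP$ in a finer $p'$-variation norm (a separate interpolation step you would also need), and then compares the two solutions driven by $\bX^\cP$ and $\bX^\cP\circ\lambda_n$, both of which live over the \emph{same} $M$, $N$, and filtration. Between the finitely many (slightly shifted) jump times of these step rough paths, the equations reduce to SDEs against the common $M$ and $N$, and the condition ``continuous at deterministic times'' (together with continuity of $A$) enters only through \eqref{eq: condition on martingale for Skorokhod continuity}: it makes $\|[M]\|_{\frac{p}{2},\frac{q}{2},[s,t]}$ and the corresponding $A$-norms vanish uniformly as $t-s \to 0$ over deterministic windows, which is what controls the drift between $t_i$ and $\lambda_n^{-1}(t_i)$. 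It does \emph{not} imply $[M]\circ\lambda_n \to [M]$ in any $p$-variation sense. To salvage your scheme you would need an analogous localisation or step-function approximation of $\bX$ before any stability estimate is applied; without it, the time change of $M$ and $N$ destroys the ``difference of martingales has small bracket'' structure that \eqref{eq: Lipschitz continuity of solution map} exploits.
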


\begin{remark}
We note that the assumption in Proposition~\ref{prop: Skorokhod continuity for RSDEs} that $M$ is continuous at deterministic times, and the assumption that the process $A$ in \eqref{eq: decomposition of nu} is continuous, are both necessary. Indeed, the result of Proposition~\ref{prop: Skorokhod continuity for RSDEs} is not true in general if either $M$ or $N$ has a positive probability of having a jump at any deterministic time $t \in (0,T]$. Essentially, in this case, time-changing the driving rough path $\bX$ can change the \emph{order of events}, which fundamentally alters the solution, violating the desired continuity, as illustrated in the following example. Of course, this assumption on $M$ is satisfied, for instance, by any centred L\'evy process.
\end{remark}

\begin{example}
Let $\xi$ be an $\cF_{t_0}$-measurable integrable random variable for some $t_0 \in (0,T)$, such that $\E [\xi \,|\, \cF_{t_0-}] = 0$, so that the process $M = (M_t)_{t \in [0,T]}$ given by $M_t := \xi \1_{[t_0,T]}(t)$ is a martingale. Further, let $X_t = \1_{[t_0,T]}(t)$ and $X^n_t = \1_{[t_0 - \frac{1}{n},T]}(t)$ so that, in particular, $X^n \to X$ in Skorokhod topology. Let $Y^n$ be the solution to the SDE
\begin{equation*}
\d Y^n_t = Y^n_{t-} \dd M_t + \d X^n_t,
\end{equation*}
and let $Y$ be the solution to the same SDE with $X^n$ replaced by $X$. One can then directly check that $Y_T = 1$, while $Y^n_T = 1 + \xi$ for every $n \in \N$, so that $Y^n_T$ does not converge to $Y_T$. This example illustrates that, if $M$ (or $N$) has a jump at a deterministic time, then we cannot expect the solution to a rough SDE to be continuous in the sense of Proposition~\ref{prop: Skorokhod continuity for RSDEs}.
\end{example}

\begin{proof}[Proof of Proposition~\ref{prop: Skorokhod continuity for RSDEs}]
We first note that, since $\Delta_{[0,T]}$ is compact, it follows from our assumptions that the function $\Delta_{[0,T]} \ni (s,t) \mapsto \|M\|_{p,q,[s,t]} + \|A\|_{\frac{p}{2},\frac{q}{2},[s,t]} + \|A\|_{\frac{p}{q},1,[s,t]}$ is uniformly continuous, so that, in particular,
\begin{equation}\label{eq: condition on martingale for Skorokhod continuity}
\lim_{h \searrow 0} \, \sup_{|t - s| < h} \big(\|M\|_{p,q,[s,t]} + \|A\|_{\frac{p}{2},\frac{q}{2},[s,t]} + \|A\|_{\frac{p}{q},1,[s,t]}\big) = 0.
\end{equation}

Let $C > 0$ be the Lipschitz constant in \eqref{eq: Lipschitz continuity of solution map}, which may be chosen such that \eqref{eq: Lipschitz continuity of solution map} holds for $\bX$ and $\bX^n$ for every $n \in \N$.

Since $\sigma_{p,[0,T]}(\bX^n,\bX) \to 0$ as $n \to \infty$, there exists a sequence $(\lambda_n)_{n \in \N} \subset \Lambda$ such that
\[|\lambda_n| \to 0 \qquad \text{and} \qquad \|\bX^n - \bX \circ \lambda_n\|_{p,[0,T]} \to 0 \quad \text{as} \quad n \to \infty.\]

Let $\epsilon > 0$, and let $Y^{(\lambda_n)}$ be the solution to the rough SDE \eqref{eq: RSDE with measure} with data $(y_0,M,N,\bX \circ \lambda_n)$. Then
\begin{equation}\label{eq: Y^n_T - Y^lambda_T est}
\|Y^n_T - Y^{(\lambda_n)}_T\|_{L^q} \leq \|Y^n - Y^{(\lambda_n)}\|_{p,q,[0,T]} \leq C \|\bX^n - \bX \circ \lambda_n\|_{p,[0,T]} < \epsilon
\end{equation}
for all sufficiently large $n$.

Given a partition $\cP = \{0 = t_0 < t_1 < \cdots < t_N = T\}$ of the interval $[0,T]$, we write $\bX^\cP$ for the piecewise constant approximation of $\bX$ along $\cP$. That is, $\bX^\cP = (X^\cP,\X^\cP)$, where $X^\cP_u = X_{t_i}$ and $\X^\cP_{u,v} = \X_{t_i,t_j}$ whenever $u \in [t_i,t_{i+1})$ and $v \in [t_j,t_{j+1})$.

It is straightforward to see\footnote{To see this, one can simply take a partition $\cP = \{t_i\}$ of $[0,T]$ such that $\|\bX\|_{p,[t_i,t_{i+1})}$ is small for each $i$, which allows one to make the uniform distance $\|\bX^\cP - \bX\|_{\infty,[0,T]}$ arbitrarily small, and then use interpolation of $p$-variation norms to obtain a bound on $\|\bX^\cP - \bX\|_{p',[0,T]}$ for $p' > p$.} that, for any $p' \in (p,3)$, there exists a partition $\cP = \{0 = t_0 < t_1 < \cdots < t_N = T\}$ of the interval $[0,T]$ such that
\[\|\bX^\cP - \bX\|_{p',[0,T]} < \frac{\epsilon}{C}.\]
Since $p$-variation is invariant under time changes, we have that $\|\bX^\cP \circ \lambda_n - \bX \circ \lambda_n\|_{p',[0,T]} < \frac{\epsilon}{C}$ also holds for each $n \in \N$. Writing $Y^\cP$ and $Y^{\lambda_n^{-1}(\cP)}$ for the solutions to the rough SDE with data $(y_0,M,N,\bX^\cP)$ and $(y_0,M,N,\bX^{\cP} \circ \lambda_n)$ respectively, we then have that
\begin{equation}\label{eq: Y^cP_T - Y_T estimate}
\|Y^\cP_T - Y_T\|_{L^q} \leq \|Y^\cP - Y\|_{p',q,[0,T]} \leq C \|\bX^\cP - \bX\|_{p',[0,T]} < \epsilon
\end{equation}
and, similarly,
\begin{equation}\label{eq: Y^lambdacP_T - Y^lambda_T estimate}
\|Y^{\lambda_n^{-1}(\cP)}_T - Y^{(\lambda_n)}_T\|_{L^q} \leq \|Y^{\lambda_n^{-1}(\cP)} - Y^{(\lambda_n)}\|_{p',q,[0,T]} \leq C \|\bX^{\cP} \circ \lambda_n - \bX \circ \lambda_n\|_{p',[0,T]} < \epsilon.
\end{equation}

We now take $n \in \N$ sufficiently large such that $|\lambda_n| < \frac{1}{2} \min_{0 \leq i < N} |t_{i+1} - t_i|$. This means in particular that $t_{i-1} \vee \lambda_n^{-1}(t_{i-1}) < t_i \wedge \lambda_n^{-1}(t_i)$ for each $i$. Let us consider the solutions $Y^\cP$ and $Y^{\lambda_n^{-1}(\cP)}$ of the rough SDE driven by the piecewise constant rough paths $\bX^\cP$ and $\bX^\cP \circ \lambda_n$ respectively. In particular, $\bX^\cP$ only jumps at the times $t_i$ for $i = 1, \ldots, N$, and $\bX^\cP \circ \lambda_n$ only jumps at the times $\lambda_n^{-1}(t_i)$ for $i = 1, \ldots, N$, and the jump sizes are given by
\[\Delta X^\cP_{t_i} = \Delta (X^\cP \circ \lambda_n)_{\lambda_n^{-1}(t_i)} = \delta X_{t_{i-1},t_i} \quad \text{and} \quad \Delta \X^{\cP}_{t_i} = \Delta (\X^{\cP} \circ (\lambda_n,\lambda_n))_{\lambda_n^{-1}(t_i)} = \X_{t_{i-1},t_i}.\]

For some $i$, let us suppose that $t_i < \lambda_n^{-1}(t_i)$. We then have that
\begin{align*}
Y^\cP_{\lambda_n^{-1}(t_i)} &= Y^\cP_{t_i-} + f(Y^\cP_{t_i-}) \delta X_{t_{i-1},t_i} + \D f(Y^\cP_{t_i-}) f(Y^\cP_{t_i-}) \X_{t_{i-1},t_i}\\
&\quad + \int_{t_i}^{\lambda_n^{-1}(t_i)} b(Y^\cP_s) \dd s + \int_{t_i}^{\lambda_n^{-1}(t_i)} \sigma(Y^\cP_s) \dd M_s + \int_{t_i}^{\lambda^{-1}_n(t_i)} \int_{\U} g(s,Y^\cP_s,u) \, \tN(\d s,\d u)
\end{align*}
and
\begin{align*}
&Y^{\lambda_n^{-1}(\cP)}_{\lambda_n^{-1}(t_i)} = Y^{\lambda_n^{-1}(\cP)}_{t_i-} + f\big(Y^{\lambda_n^{-1}(\cP)}_{\lambda_n^{-1}(t_i)-}\big) \delta X_{t_{i-1},t_i} + \D f\big(Y^{\lambda_n^{-1}(\cP)}_{\lambda_n^{-1}(t_i)-}\big) f\big(Y^{\lambda_n^{-1}(\cP)}_{\lambda_n^{-1}(t_i)-}\big) \X_{t_{i-1},t_i}\\
&+ \int_{t_i}^{\lambda_n^{-1}(t_i)} b(Y^{\lambda_n^{-1}(\cP)}_s) \dd s + \int_{t_i}^{\lambda_n^{-1}(t_i)} \sigma(Y^{\lambda_n^{-1}(\cP)}_s) \dd M_s + \int_{t_i}^{\lambda^{-1}_n(t_i)} \int_{\U} g(s,Y^{\lambda_n^{-1}(\cP)}_s,u) \, \tN(\d s,\d u).
\end{align*}

By the BDG inequality and \cite[Lemma~2.12]{AllanPieper2026}, it is straightforward to see that
\begin{equation*}
\bigg\|\int_{t_i}^{\lambda_n^{-1}(t_i)} b(Y^\cP_s) \dd s + \int_{t_i}^{\lambda_n^{-1}(t_i)} \sigma(Y^\cP_s) \dd M_s\bigg\|_{L^q} \lesssim (\lambda_n^{-1}(t_i) - t_i) + \|M\|_{p,q,[t_i,\lambda_n^{-1}(t_i)]},
\end{equation*}
and similarly, applying part~(ii) of Lemma~\ref{lemma: application of conditional BDG for jump measures} with $r = q$, that
\begin{equation*}
\bigg\|\int_{t_i}^{\lambda^{-1}_n(t_i)} \int_{\U} g(s,Y^\cP_s,u) \, \tN(\d s,\d u)\bigg\|_{L^q} \lesssim \|A\|_{\frac{p}{2},\frac{q}{2},[t_i,\lambda^{-1}_n(t_i)]}^{\frac{1}{2}} + \|A\|_{\frac{p}{q},1,[t_i,\lambda^{-1}_n(t_i)]}^{\frac{1}{q}}.
\end{equation*}
Clearly, the same holds true with $Y^\cP$ replaced by $Y^{\lambda_n^{-1}(\cP)}$, and, by the condition in \eqref{eq: condition on martingale for Skorokhod continuity}, the right-hand side above can be made arbitrarily small by taking $n$ sufficiently large.

We also have that
\begin{align*}
&\big\|f(Y^\cP_{t_i-}) \delta X_{t_{i-1},t_i} + \D f(Y^\cP_{t_i-}) f(Y^\cP_{t_i-}) \X_{t_{i-1},t_i}\\
&\quad - f\big(Y^{\lambda_n^{-1}(\cP)}_{\lambda_n^{-1}(t_i)-}\big) \delta X_{t_{i-1},t_i} - \D f\big(Y^{\lambda_n^{-1}(\cP)}_{\lambda_n^{-1}(t_i)-}\big) f\big(Y^{\lambda_n^{-1}(\cP)}_{\lambda_n^{-1}(t_i)-}\big) \X_{t_{i-1},t_i}\big\|_{L^q}\\
&\lesssim \big\|Y^\cP_{t_i-} - Y^{\lambda_n^{-1}(\cP)}_{\lambda_n^{-1}(t_i)-}\big\|_{L^q} \big(|\delta X_{t_{i-1},t_i}| + |\X_{t_{i-1},t_i}|\big)\\
&\leq \bigg\|Y^\cP_{t_i-} - Y^{\lambda_n^{-1}(\cP)}_{t_i-} - \int_{t_i}^{\lambda_n^{-1}(t_i)} b(Y^{\lambda_n^{-1}(\cP)}_s) \dd s\\
&\qquad - \int_{t_i}^{\lambda_n^{-1}(t_i)} \sigma(Y^{\lambda_n^{-1}(\cP)}_s) \dd M_s - \int_{t_i}^{\lambda^{-1}_n(t_i)} \int_{\U} g(s,Y^{\cP}_s,u) \, \tN(\d s,\d u)\bigg\|_{L^q} \|\bX\|_{p,[0,T]}.
\end{align*}
It follows that
\begin{equation*}
\big\|Y^\cP_{\lambda_n^{-1}(t_i)} - Y^{\lambda_n^{-1}(\cP)}_{\lambda_n^{-1}(t_i)}\big\|_{L^q} \leq c \big\|Y^\cP_{t_i-} - Y^{\lambda_n^{-1}(\cP)}_{t_i-}\big\|_{L^q} + \eta,
\end{equation*}
where $c > 0$ is a constant which depends only on $\|f\|_{C^3_b}$ and $\|\bX\|_{p,[0,T]}$, and $\eta > 0$ is a constant which can be chosen arbitrarily small by taking $n$ sufficiently large. The cases when $t_i > \lambda_n^{-1}(t_i)$ and when $t_i = \lambda_n^{-1}(t_i)$ may be treated similarly, and hence, in every case, we have that
\begin{equation}\label{eq: bound with X and eta}
\big\|Y^\cP_{t_i \vee \lambda_n^{-1}(t_i)} - Y^{\lambda_n^{-1}(\cP)}_{t_i \vee \lambda_n^{-1}(t_i)}\big\|_{L^q} \leq c \big\|Y^\cP_{(t_i \wedge \lambda_n^{-1}(t_i))-} - Y^{\lambda_n^{-1}(\cP)}_{(t_i \wedge \lambda_n^{-1}(t_i))-}\big\|_{L^q} + \eta.
\end{equation}
Since $\bX^\cP$ and $\bX^\cP \circ \lambda_n$ are constant on the interval $[t_{i-1} \vee \lambda_n^{-1}(t_{i-1}),t_i \wedge \lambda_n^{-1}(t_i))$ for each $i$, by again using the stability of solutions to rough SDEs, we can simply estimate
\begin{equation}\label{eq: bound with C}
\big\|Y^\cP_{(t_i \wedge \lambda_n^{-1}(t_i))-} - Y^{\lambda_n^{-1}(\cP)}_{(t_i \wedge \lambda_n^{-1}(t_i))-}\big\|_{L^q} \leq C \big\|Y^\cP_{t_{i-1} \vee \lambda_n^{-1}(t_{i-1})} - Y^{\lambda_n^{-1}(\cP)}_{t_{i-1} \vee \lambda_n^{-1}(t_{i-1})}\big\|_{L^q}.
\end{equation}

Since $Y^\cP_0 - Y^{\lambda_n^{-1}(\cP)}_0 = 0$, by combining the estimates in \eqref{eq: bound with X and eta} and \eqref{eq: bound with C} for each $i = 1, \ldots, N$, we deduce that $\|Y^\cP_T - Y^{\lambda_n^{-1}(\cP)}_T\|_{L^q} \lesssim \eta$, where the implicit multiplicative constant depends only on $C$, $\|f\|_{C^3_b}$ and $\|\bX\|_{p,[0,T]}$. Thus, by choosing $n$ sufficiently large, we can ensure that $\eta$ is sufficiently small to guarantee that $\|Y^\cP_T - Y^{\lambda_n^{-1}(\cP)}_T\|_{L^q} < \epsilon$. By combining this with \eqref{eq: Y^n_T - Y^lambda_T est}, \eqref{eq: Y^cP_T - Y_T estimate} and \eqref{eq: Y^lambdacP_T - Y^lambda_T estimate}, we deduce that $\|Y^n_T - Y_T\|_{L^q} < 4 \epsilon$ for all sufficiently large $n$, so that $Y^n_T \to Y_T$ in $L^q$ as $n \to \infty$.
\end{proof}

\begin{remark}\label{remark: Skorokhod continuity for rough stochastic integrals}
Using the same approach, the proof of Proposition~\ref{prop: Skorokhod continuity for RSDEs} can be extended slightly to also obtain the convergence of rough stochastic integrals with respect to the Skorokhod topology. More precisely, under the assumptions of Proposition~\ref{prop: Skorokhod continuity for RSDEs}, for any $h \in C^2_b$, if $\sigma_{p,[0,T]}(\bX^n,\bX) \to 0$ as $n \to \infty$, then $\int_0^T h(Y^n_s) \dd \bX^n_s \to \int_0^T h(Y_s) \dd \bX_s$ in $L^q$ as $n \to \infty$.
\end{remark}

\subsection{A Kolmogorov criterion for c\`adl\`ag processes}

We conclude this section with a variant of the Kolmogorov continuity criterion adapted to the setting of discontinuous paths with finite $p$-variation. Such a result appears to be unexplored in the rough path literature, but the authors are aware of an analogous result in a discrete-time setting in \cite[Corollary~12.16]{Pisier2016}. It is claimed in \cite{PisierXu1987} that a similar result holds in continuous-time, but we are not aware of any publicly available sources for such a result. The corresponding result for continuous paths\footnote{Although the result in \cite{Koch2014} includes certain discontinuous paths, it does not appear to apply to general càdlàg paths.} is already contained in corresponding Besov-space embeddings; see, e.g., \cite[Proposition~2.3]{FrizSeegerKranich2022} or \cite[Lemma 5]{Koch2014}. Using these embeddings, the authors of \cite[Proposition~3.9]{ChevyrevFrizKorepanovMelbourneZhang2022} establish a Kolmogorov-type criterion for piecewise constant random rough paths with finitely many deterministic jumps. Our result allows one to extend this result to the general c\`adl\`ag setting, without requiring either finitely many deterministic jumps or a loss of regularity depending on the integrability assumptions.

We emphasize that our proof is much more constructive and akin to classical treatments of $p$-variation than the ones in the previously mentioned works. In particular, we do not resort to interpolation spaces or duality-type arguments, but instead work directly at the level of paths and controls. One may also interpret the result as a form of Minkowski's integral inequality for $p$-variation paths in Lebesgue spaces.

\begin{theorem}\label{theorem: a p-variation kolmogorov result}
Let $1 \leq p < q < \infty$, and let $Y$ be a stochastic process which is right-continuous in probability and such that $\|Y\|_{p,q,[0,T]} < \infty$. Then there exists a modification of $Y$ with sample paths which are almost surely c\`adl\`ag and such that, for every $\tp > p$, we have that
\begin{equation}\label{eq: Kolmogorov estimate}
\big\| \|Y\|_{\tp,[0,T]} \big\|_{L^q} \leq C \|Y\|_{p,q,[0,T]},
\end{equation}
where the constant $C$ depends only on $p$ and $\tp$.

Conversely, if $1 \leq q \leq p < \infty$, and if $Y$ is any process with almost surely c\`adl\`ag sample paths, then
\begin{equation*}
\|Y\|_{p,q,[0,T]} \leq \big\| \|Y\|_{p,[0,T]} \big\|_{L^q}.
\end{equation*}
\end{theorem}

The proof of Theorem~\ref{theorem: a p-variation kolmogorov result} is given in Appendix~\ref{section: Kolmogorov result}. We note that both of the conditions $p < q$ and $\tp > p$ in \eqref{eq: Kolmogorov estimate} are sharp. Indeed, in Example~\ref{example: p<q is sharp for kolmogorov} we provide a counterexample for when either of these conditions is not satisfied.

\section{Consistency between doubly stochastic DEs and rough SDEs}\label{sec: Consistency RSDEs and SDEs}

Throughout this section, we let $(\Omega, \cF, (\cF_t)_{t \in [0,T]}, \P)$ and $(\bar{\Omega}, \bar{\cF}, (\bar{\cF}_t)_{t \in [0,T]}, \bar{\P})$ be two filtered probability spaces, and denote by
\[(\hat{\Omega}, \hat{\cF}, (\hat{\cF}_t)_{t \in [0,T]}, \hat{\P}) := (\Omega \times \bar{\Omega}, \cF \otimes \bar{\cF}, (\cF_t \otimes \bar{\cF}_t)_{t \in [0,T]}, \P \otimes \bar{\P})\]
the corresponding filtered product probability space.\footnote{We implicitly assume that $\hat{\cF}_t$ has been suitably augmented, so that $(\hat{\cF}_t)_{t \in [0,T]}$ satisfies the usual conditions.} In particular, $\hat{\Omega}$ indicates that we are considering a product of two spaces (bringing them under one roof). In contrast to many probabilistic results, we actually do need this inner structure of the probability space, since on the one hand we are dealing with It\^{o} rough path lifts of semimartingales (whose randomness lives on $\Omega$), and on the other hand we have additional randomness (which lives on $\bar{\Omega}$), and these need to be separated and handled with care.

In the following, we will write, e.g., $\|\cdot\|_{q,r,s,\bar{\Omega}}$ for the norm $\|\cdot\|_{q,r,s}$ when we wish to emphasize that the norm is taken with respect to the probability space $(\bar{\Omega}, \bar{\cF}, (\bar{\cF}_t)_{t \in [0,T]}, \bar{\P})$. Similarly, we will write $\cV^{p,q,r,\bar{\Omega}}_X$ for the space of stochastic controlled paths (in the sense of Definition~\ref{defn: stochastic controlled path}) defined with respect to $(\bar{\Omega}, \bar{\cF}, (\bar{\cF}_t)_{t \in [0,T]}, \bar{\P})$.

Further, for any process $Y = (Y_t)_{t \in [0,T]}$ defined on $(\hat{\Omega}, \hat{\cF}, \hat{\P})$, and any two $\cF$-measurable random times $\tau_1, \tau_2$ on $(\Omega, \cF, \P)$ such that $\tau_1 \leq \tau_2$ holds $\P$-almost surely, we define $\|Y\|_{p,q,\infty, \llbracket \tau_1, \tau_2 \rrbracket, \bar{\Omega}}$ as the map from $\Omega \to [0,\infty]$ given by
\begin{equation}\label{eq: defn p,q,infty tau1 tau2 barOmega norm}
\Omega \ni \omega \mapsto \|Y(\omega, \cdot)\|_{p,q,\infty, [\tau_1(\omega), \tau_2(\omega)], \bar{\Omega}},
\end{equation}
and, with a slight abuse of notation, similarly define $\sup_{s \in \llbracket \tau_1, \tau_2 \rrbracket} \|Y_s\|_{L^r(\bar{\Omega})}$ as the map given by
\begin{equation}\label{eq: defn sup Ys L^r norm}
\Omega \ni \omega \mapsto \sup_{s \in [\tau_1(\omega),\tau_2(\omega)]} \|Y_s(\omega, \cdot)\|_{L^r(\bar{\Omega})}.
\end{equation}

Although it is not immediate that the maps defined in \eqref{eq: defn p,q,infty tau1 tau2 barOmega norm} and \eqref{eq: defn sup Ys L^r norm} are $\cF$-measurable, and thus well-defined random variables, this is indeed the case, as stated precisely in the following lemma. The proofs of this and the following lemma are straightforward consequences of standard measurability and Fubini-type arguments, and are therefore omitted.

\begin{lemma}\label{lemma: measurability on the product space of the p,q,r norms}
Let $p, q \in [1,\infty)$, $r \in [q,\infty]$, $\nu > 0$, and let $\tau_1, \tau_2 \colon \Omega \to [0,T]$ be $\cF$-measurable random times such that $\tau_1 \leq \tau_2$. The following then hold.
\begin{itemize}
\item[(i)] If $Z$ is an $\hat{\cF}$-measurable random variable such that $\|Z(\omega, \cdot)\|_{q,\infty,s,\bar{\Omega}} \leq \nu$ for $\P$-almost every $\omega \in \Omega$, then $\|Z\|_{q,r,s,\bar{\Omega}}$ is an $\cF$-measurable random variable.
\item[(ii)] If $Y = (Y_t)_{t \in [0,T]}$ is a $\hat{\P}$-almost surely c\`adl\`ag $\hat{\cF} \otimes \cB([0,T])$-measurable process, such that $\|Y(\omega, \cdot)\|_{p,q,\infty,[0,T],\bar{\Omega}} \leq \nu$ for $\P$-almost every $\omega \in \Omega$, then $\|Y\|_{p,q,r,\llbracket \tau_1, \tau_2 \rrbracket,\bar{\Omega}}$ is an $\cF$-measurable random variable.
\item[(iii)] Let $Y$ be as in case (ii) above. Then $\| 1 \wedge \sup_{t \in \llbracket \tau_1, \tau_2 \llbracket } |Y_t| \|_{L^q(\bar{\Omega})}$ is also $\cF$-measurable.
\item[(iv)] Let $Y$ be as in case (ii) above. If, in addition, $\sup_{t \in [0,T]} \|Y_t(\omega, \cdot)\|_{L^{\infty}(\bar{\Omega})} < \infty$ for $\P$-almost every $\omega \in \Omega$, then $\sup_{t \in \llbracket \tau_1, \tau_2 \llbracket} \|Y_t\|_{L^{\infty}(\bar{\Omega})}$ is also $\cF$-measurable.
\end{itemize}
\end{lemma}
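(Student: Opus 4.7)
The plan is to reduce each claim, via standard Fubini-type arguments on the product space $\hat\Omega = \Omega \times \bar\Omega$, to the measurability of countable suprema of $\cF$-measurable quantities, and then to replace the uncountable suprema (over partitions in (ii), over times in (iii)--(iv)) appearing in the definitions by countable ones, using the c\`adl\`ag regularity of $Y$ together with the uniform bounds assumed in the hypotheses as dominating controls.

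For part~(i), the key observation is that, since $\hat{\P} = \P \otimes \bar{\P}$, the pointwise conditional expectation $\phi(\omega,\bar{\omega}) := \bar{\E}_s[|Z(\omega,\cdot)|^q](\bar{\omega})$ is a version of $\hat{\E}[|Z|^q \mid \cF \otimes \bar{\cF}_s]$ on $\hat\Omega$. This is checked first on simple functions of the form $\1_A(\omega)\1_B(\bar{\omega})$ with $A \in \cF$, $B \in \bar{\cF}$, and extended to general $\hat{\cF}$-measurable integrands by a monotone class argument. In particular $\phi$ is jointly $\cF \otimes \bar{\cF}_s$-measurable. For $r \in [1,\infty)$, Fubini then makes $\omega \mapsto \bar{\E}[\phi(\omega,\cdot)^r]$ an $\cF$-measurable function of $\omega$, which gives the claim. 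The case $r = \infty$ follows from the uniform bound $\phi \leq \nu^q$ and the monotone convergence $\|\phi(\omega,\cdot)\|_{L^r(\bar{\P})} \nearrow \|\phi(\omega,\cdot)\|_{L^\infty(\bar{\P})}$ as $r \to \infty$ (valid for bounded functions on a finite measure space).

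For part~(ii), I would first note that, by part~(i), for each fixed $(u,v) \in \Delta_{[0,T]}$ the map $\omega \mapsto G(\omega,u,v) := \|\delta Y_{u,v}(\omega,\cdot)\|_{q,r,u,\bar\Omega}^p$ is $\cF$-measurable and uniformly bounded by $\nu^p$ (using $\|\cdot\|_{q,r,u} \leq \|\cdot\|_{q,\infty,u}$ together with the assumed bound on $\|Y(\omega,\cdot)\|_{p,q,\infty,[0,T],\bar\Omega}$). Next I would approximate the true $p$-variation on the random interval $[\tau_1(\omega),\tau_2(\omega)]$ by a supremum over the countable family of partitions whose interior points lie in $\Q \cap [0,T]$, with endpoints adjoined at $\tau_1(\omega)$ and $\tau_2(\omega)$. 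To render this $\cF$-measurable one splits $\Omega$ along the countable family of events $\{\tau_1 \leq q_0 < \cdots < q_k \leq \tau_2\}$, indexed by ordered tuples of rationals, on each of which a fixed finite collection of rationals is active and the associated partial sum is a finite sum of $\cF$-measurable terms. Equality of this countable sup with the true $p$-variation follows from right-continuity of $v \mapsto Y_v$ in $\|\cdot\|_{q,r,u,\bar\Omega}$, which is a consequence of the $\bar{\P}$-a.s.~c\`adl\`ag property of $Y(\omega,\cdot)$ together with dominated convergence on $\bar\Omega$ (using the uniform bound $\nu$ as dominating function); this yields right-continuity of $G$ in each variable and hence allows approximation of an arbitrary partition by one with rational interior points without loss in the sum.

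Parts~(iii) and~(iv) follow the same template, replacing the supremum over $t \in \llbracket \tau_1, \tau_2 \llbracket$ by the countable sup over $\{\tau_1\} \cup (\Q \cap [\tau_1,\tau_2))$, again invoking right-continuity of $Y$ and the respective uniform bounds---applied inside the $\|\cdot\|_{L^q(\bar{\P})}$ norm for (iii) (where the truncation $1 \wedge \cdot$ makes part~(i) applicable directly with $\|\cdot\|_{q,\infty,s,\bar\Omega} \leq 1$), and inside the $L^\infty(\bar{\P})$ norm for (iv) via the same monotone passage $r \to \infty$ as in part~(i). The main technical obstacle throughout is the careful handling of the $\omega$-dependent interval $[\tau_1(\omega),\tau_2(\omega)]$: one must ensure that the rational approximation is assembled from $\cF$-measurable pieces, which is achieved by the countable event decomposition described above. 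Beyond this bookkeeping, no new analytic input is needed beyond Fubini and standard progressive-measurability techniques, which is why the proof is reasonably considered routine and omitted in the main text.
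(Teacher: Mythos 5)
The paper does not actually give a proof of this lemma: it states that the proofs of Lemmas~\ref{lemma: measurability on the product space of the p,q,r norms} and \ref{lemma: measurability on the product space of the remainder} are ``straightforward consequences of standard measurability and Fubini-type arguments, and are therefore omitted.'' So there is no paper argument to compare against line by line; your sketch is the natural attempt to flesh out that remark, and parts (i), (iii) and (iv) of your plan are fine (in (iv) the relevant fact is lower semicontinuity of $t \mapsto \|Y_t(\omega,\cdot)\|_{L^\infty(\bar\P)}$ along $\bar\P$-a.s.\ convergent sequences, rather than right-continuity, but the countable-sup reduction goes through).

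There is, however, a genuine gap in your argument for part (ii). Your reduction to a supremum over rational partitions relies on right-continuity of $v \mapsto Y_v(\omega,\cdot)$ in the $\|\cdot\|_{q,r,u,\bar\Omega}$ norm, which you justify by ``dominated convergence on $\bar\Omega$ using the uniform bound $\nu$ as dominating function.'' But $\nu$ bounds the \emph{variation norm} $\|Y(\omega,\cdot)\|_{p,q,\infty,[0,T],\bar\Omega}$, not a $\bar\omega$-pointwise dominating function for $|\delta Y_{u,v}(\omega,\bar\omega)|^q$, so the theorem you invoke does not apply as stated. Indeed, a $\bar\P$-a.s.\ right-continuous process need not be right-continuous in $L^q(\bar\P)$: one can construct a c\`adl\`ag process which is $\bar\P$-a.s.\ continuous at an interior time $t_0$, has $\|\delta Y_{t_0,v}\|_{L^q(\bar\P)} \equiv 1$ for $v > t_0$ near $t_0$ by concentrating mass on shrinking sets, and still satisfies $\|Y\|_{p,q,\infty,[0,T],\bar\Omega} < \infty$ when $p > q$, so the $\|\cdot\|_{q,\infty,\cdot,\bar\Omega}$-variation control $w(t_0,\cdot)$ has a right-discontinuity at $t_0$. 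In that situation the real-valued argument that replaces a partition point by a nearby rational breaks down, because you cannot make $\|\delta Y_{q_i,q_{i+1}}\|_{q,r,q_i,\bar\Omega}$ close to $\|\delta Y_{t_i,t_{i+1}}\|_{q,r,t_i,\bar\Omega}$ by sliding $q_i \searrow t_i$. To close the gap you would need either an additional argument that right-continuity of $Y$ in $L^q(\bar\P)$ holds in the situations where the lemma is actually invoked (it does for rough SDE solutions, which are right-continuous in $V^p L^{q,\infty}$), a direct combinatorial argument that the countable supremum nonetheless equals the full supremum, or a heavier tool such as a measurable projection theorem — the last of which the paper goes out of its way to avoid elsewhere. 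As written, the step ``uniform bound $\nu$ as dominating function'' does not establish the claimed right-continuity.
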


Similarly, one can also show the following.

\begin{lemma}\label{lemma: measurability on the product space of the remainder}
For some $p \in [2,3)$, $q \in [2,\infty)$ and some constant $\nu > 0$, suppose that $X$ is an $\cF \otimes \cB([0,T])$-measurable process such that $X(\omega) \in V^p$ for $\P$-almost every $\omega \in \Omega$, and that $(Y,Y')$ is a pair of $\hat{\cF} \otimes \cB([0,T])$-measurable processes, such that $(Y(\omega, \cdot),Y'(\omega, \cdot)) \in \cV^{p,q,\infty,\bar{\Omega}}_{X(\omega)}$ with $\|(\bar{\E}_{\edot} R^Y)(\omega, \cdot)\|_{\frac{p}{2},\infty,[0,T],\bar{\Omega}} \leq \nu$ for $\P$-almost every $\omega \in \Omega$.

Then, for any two $\cF$-measurable random times $\tau_1 \leq \tau_2$, and any $r \in [q,\infty]$, we have that $\|\bar{\E}_{\edot} R^Y\|_{\frac{p}{2},r,\llbracket \tau_1, \tau_2 \rrbracket,\bar{\Omega}}$ is an $\cF$-measurable random variable.
\end{lemma}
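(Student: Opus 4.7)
The plan is to express $\|\bar{\E}_{\edot} R^Y\|_{\frac{p}{2},r,\llbracket \tau_1, \tau_2 \rrbracket,\bar{\Omega}}$ as a countable supremum of $\cF$-measurable quantities. First, for each fixed deterministic $(u,v) \in \Delta_{[0,T]}$, I would check that $\omega \mapsto \|\bar{\E}_u[R^Y_{u,v}](\omega,\cdot)\|_{L^r(\bar{\Omega})}$ is $\cF$-measurable. Since $R^Y_{u,v} = \delta Y_{u,v} - Y'_u \delta X_{u,v}$ is jointly $\hat{\cF}$-measurable, one selects a jointly measurable regular version $(\omega,\bar{\omega}) \mapsto \bar{\E}_u[R^Y_{u,v}(\omega,\cdot)](\bar{\omega})$ of the conditional expectation, which is controlled in $L^\infty(\bar{\Omega})$ uniformly in $\omega$ via the hypothesis $\|\bar{\E}_{\edot} R^Y(\omega,\cdot)\|_{\frac{p}{2},\infty,[0,T],\bar{\Omega}} \leq \nu$. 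Fubini then yields $\cF$-measurability of the $L^r(\bar{\Omega})$-norm, in the same spirit as part (i) of Lemma~\ref{lemma: measurability on the product space of the p,q,r norms}.

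Next I would reduce the supremum over partitions of the random interval to a countable one. Since $Y$ and $Y'$ are $\hat{\P}$-a.s.\ c\`adl\`ag, and $X$ is $\P$-a.s.\ c\`adl\`ag, the two-parameter map $(u,v) \mapsto \bar{\E}_u[R^Y_{u,v}](\omega,\cdot)$ is c\`adl\`ag enough in the $L^r(\bar{\Omega})$ sense that its $(p/2)$-variation norm on any closed subinterval equals the supremum over those partitions whose interior vertices lie in a fixed countable dense set $\cD \subset [0,T]$, together with the interval's endpoints. For the random interval $\llbracket \tau_1, \tau_2 \rrbracket$, I would then enumerate all finite increasing tuples $(d_1 < \cdots < d_k)$ from $\cD$ and, for each tuple, form the partition $\{\tau_1\} \cup \{d_i : \tau_1 < d_i < \tau_2\} \cup \{\tau_2\}$ of $[\tau_1(\omega), \tau_2(\omega)]$. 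The associated sum is $\cF$-measurable because the inclusion tests $\{\tau_1 < d_i < \tau_2\}$ are $\cF$-measurable and each surviving summand is a finite combination of the measurable building blocks from the first step; the countable supremum over all such tuples then recovers the desired norm.

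The main obstacle is the first step, namely choosing a jointly measurable realisation of the family of conditional expectations $\{\bar{\E}_u[R^Y_{u,v}]\}_{(u,v) \in \Delta_{[0,T]}}$ on the product space $\hat{\Omega}$, so that the $L^r(\bar{\Omega})$-norms vary measurably in $\omega$ and in the time variables simultaneously. This is precisely where the standing usual-conditions hypothesis on $(\hat{\cF}_t)$ and the uniform bound $\|\bar{\E}_{\edot} R^Y(\omega,\cdot)\|_{\frac{p}{2},\infty,[0,T],\bar{\Omega}} \leq \nu$ carry the weight; once this selection has been made, the remainder of the argument reduces to a routine combination of Fubini and a countable supremum, entirely analogous to the proof of Lemma~\ref{lemma: measurability on the product space of the p,q,r norms} that has been omitted.
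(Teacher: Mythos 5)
Your overall plan — reduce to $\cF$-measurability of the per-increment norm via a jointly measurable version of the conditional expectation, then pass to a countable supremum over partitions whose interior vertices come from a dense set — is the standard Fubini-plus-countable-supremum argument that the paper's remark preceding Lemma~\ref{lemma: measurability on the product space of the p,q,r norms} is alluding to when it calls the proofs ``straightforward consequences of standard measurability and Fubini-type arguments''. Since the paper omits the proof, there is no text to compare against; your strategy is the natural (and, I believe, intended) one.

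There are, however, two places where you assert rather than check something that genuinely carries weight, and which I would want to see made explicit. First, the jointly measurable version of $(\omega,\bar\omega)\mapsto \bar\E_u[R^Y_{u,v}(\omega,\cdot)](\bar\omega)$ is most cleanly obtained by observing that, for $\hat\P = \P\otimes\bar\P$, the product-space conditional expectation $\hat\E[R^Y_{u,v}\mid\cF\otimes\bar\cF_u]$ is $\hat\cF$-measurable and, by Fubini, coincides $\hat\P$-a.s.\ with the $\omega$-slice conditional expectation; ``one selects a jointly measurable regular version'' leaves this implicit. Second, and more importantly, your step reducing to a countable supremum rests on the claim that $(u,v)\mapsto \|\bar\E_u R^Y_{u,v}(\omega,\cdot)\|_{L^{qr}(\bar\Omega)}$ is right-continuous in both arguments, and you justify this only by the c\`adl\`ag-ness of $Y$, $Y'$, $X$. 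That addresses right-continuity of the integrand, but the dependence on $u$ also runs through the conditioning $\sigma$-algebra $\bar\cF_u$; right-continuity there requires reverse martingale convergence $\bar\E_{u'}\to\bar\E_u$ as $u'\searrow u$ (valid under the usual conditions), combined with the uniform pointwise bound $\|\bar\E_u R^Y_{u,v}(\omega,\cdot)\|_{L^\infty(\bar\Omega)}\le\nu$ coming from the hypothesis, which is what upgrades a.s.\ (or $L^1$) convergence to convergence of the $L^{qr}$-norms. For $r=\infty$ this argument does not directly give $L^\infty$-norm continuity, and one should instead deduce measurability from the finite-$r$ case via $\|\cdot\|_{L^\infty} = \sup_{n\in\N}\|\cdot\|_{L^n}$ together with a monotone-convergence argument on partition sums. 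Finally, the endpoint terms involving $\tau_1$ and $\tau_2$ in your partition $\{\tau_1\}\cup\{d_i:\tau_1<d_i<\tau_2\}\cup\{\tau_2\}$ need the same right-approximation by $\cF$-measurable simple functions taking values in $\cD$; this is unproblematic given the right-continuity just discussed, but worth stating. None of these is a wrong step — the plan goes through — but the countable-reduction claim as written is not self-evident and is where the proof would fail if the reverse martingale ingredient were omitted.
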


\subsection{A suitable metric space}

For any two stopping times $\tau_1 \leq \tau_2$, and any process $X$, we let $X^{(\tau_1, \tau_2-)} := (X - X^{\tau_1})^{\tau_2-}$, so that $X^{(\tau_1,\tau_2-)} = 0$ on the event $\{\tau_1 = \tau_2\}$, and on $\{\tau_1 < \tau_2\}$ we have that
\begin{align*}
X^{(\tau_1, \tau_2-)}_t := \begin{cases}
0 & \text{ if~~} t < \tau_1,\\
\delta X_{\tau_1, t} & \text{ if~~} \tau_1 \leq t < \tau_2,\\
\delta X_{\tau_1, \tau_2-} & \text{ if~~} t \geq \tau_2.
\end{cases}
\end{align*}
We note that this construction is analogous to that used in $\alpha$-slicing for classical It\^o SDEs; cf.~\cite[Definition~16.3.8]{Cohen2015}. Further, if $X$ is a semimartingale, we let $\bX^{(\tau_1, \tau_2-)} = (X^{(\tau_1, \tau_2-)}, \X^{(\tau_1, \tau_2-)})$ be the It\^o rough path lift of $X^{(\tau_1, \tau_2-)}$, i.e.,
\begin{equation*}
\X^{(\tau_1, \tau_2-)}_{s,t} := \int_s^t \delta X^{(\tau_1, \tau_2-)}_{s,u-} \otimes \d X^{(\tau_1, \tau_2-)}_u
\end{equation*}
for $(s,t) \in \Delta_{[0,T]}$.

\begin{remark}\label{remark: consistency of slicing roughs stochastic integrals}
We note that, if $X$ is a semimartingale on $(\Omega, \cF, \P)$, and $\bX = (X,\X)$ is its It\^o rough path lift, and if $\tau_1 \leq \tau_2$ are $\cF_t$-stopping times, then for $\P$-almost any $\omega \in \Omega$ and any $(s,t) \in \Delta_{[0,T]}$, we have that
\begin{equation*}
\X_{s,t}^{(\tau_1, \tau_2-)}(\omega) = \begin{cases}
0 , & \text{ if~~} 0 \leq t < \tau_1(\omega),\\
\X_{\tau_1(\omega) \vee s, t}(\omega), & \text{ if~~} \tau_1(\omega) \leq t < \tau_2(\omega),\\
\X_{\tau_1(\omega) \vee s, \tau_2(\omega)-}(\omega) & \text{ if~~} s < \tau_2(\omega) \leq t \leq T,\\
0 & \text{ if~~} \tau_2(\omega) \leq s \leq T.
\end{cases}
\end{equation*}
Moreover, if $(Y,Y')$ is a pair of c\`adl\`ag $\hat{\cF}_t$-adapted processes, then we see that the condition
$$(Y(\omega, \cdot), Y'(\omega, \cdot)) \in \cV_{X^{(\tau_1, \tau_2-)}(\omega)}^{p,q,\infty,\bar{\Omega}}([0,T])$$
is equivalent to
$$(Y(\omega, \cdot), Y'(\omega, \cdot)) \in \cV^{p,q,\infty,\bar{\Omega}}_{X(\omega)}([\tau_1(\omega), \tau_2(\omega))).$$
Thus, if these conditions hold for $\P$-almost every $\omega \in \Omega$, then for almost every $\omega \in \Omega$ and every $t \in [\tau_1(\omega),\tau_2(\omega))$, the rough stochastic integrals
\begin{equation*}
\int_0^t Y_s(\omega, \cdot) \dd \bX^{(\tau_1, \tau_2-)}_s(\omega) = \int_{\tau_1(\omega)}^t Y_s(\omega, \cdot) \dd \bX_s(\omega)
\end{equation*}
coincide. Similarly, for any (semi)martingale $M$ and predictable process $Z$, we have that
\begin{equation*}
\int_0^t Z_s \dd M^{(\tau_1, \tau_2-)}_s = \bigg(\int_0^t Z_s \dd M_s\bigg)^{\hspace{-2pt}(\tau_1, \tau_2-)}
\end{equation*}
as well as $[M^{(\tau_1, \tau_2-)}]_t = [M]_t^{(\tau_1, \tau_2-)}$.
\end{remark}

Let $X$ be a process defined on $(\Omega, \cF, \P)$ such that $X(\omega) \in V^p$ for $\P$-almost every $\omega \in \Omega$. In the following we will consider pairs of processes $(Y,Y')$ defined on the product space $(\hat{\Omega},\hat{\cF},\hat{\P})$, such that $(Y(\omega, \cdot), Y'(\omega, \cdot)) \in \cV^{p,q,\infty,\bar{\Omega}}_{X^{(\tau_1, \tau_2-)}(\omega)}([\tau_1(\omega),\tau_2(\omega)])$ for $\P$-almost every $\omega \in \Omega$, for some $\cF_t$-stopping times $\tau_1 \leq \tau_2$. For any two such pairs $(Y,Y')$, $(\tY,\tY')$, and some $\eta > 1$, we define
\begin{equation}\label{eq: definition of metric on B_T for contraction}
\begin{split}
d_{\tau_1, \tau_2}^{\eta, \bar{\Omega}} \big( (Y,&Y'), (\tY,\tY') \big) := \|Y' - \tY'\|_{p,q,\llbracket \tau_1, \tau_2 \rrbracket,\bar{\Omega}} + \Big\| 1 \wedge \sup_{u \in \llbracket \tau_1, \tau_2 \rrbracket} |Y'_u - \tY'_u| \Big\|_{L^q(\bar{\Omega})}\\
&+ \eta \Big( \|Y - \tY\|_{p,q,\llbracket \tau_1, \tau_2 \rrbracket,\bar{\Omega}} + \|\bar{\E}_{\edot} (R^Y - R^{\tY})\|_{\frac{p}{2},q,\llbracket \tau_1, \tau_2 \rrbracket,\bar{\Omega}} + \Big\| 1 \wedge \sup_{u \in \llbracket \tau_1, \tau_2 \rrbracket} |Y_u - \tY_u| \Big\|_{L^q(\bar{\Omega})} \Big).
\end{split}
\end{equation}

\begin{remark}
We note that this almost coincides with the metric defined in the proof of \cite[Theorem~5.2]{AllanPieper2026}, except that here we additionally include the term $\| 1 \wedge \sup_{u \in \llbracket \tau_1, \tau_2 \rrbracket} |Y'_u - \tY'_u| \|_{L^q(\bar{\Omega})}$. This term is included to ensure that limits under this metric are almost surely c\`adl\`ag, which is a necessary condition in Lemma~\ref{lemma: measurability on the product space of the p,q,r norms} to ensure the measurability of $\sup_{u \in \llbracket \tau_1, \tau_2 \llbracket} \|Y'_u\|_{L^\infty(\bar{\Omega})}$.
\end{remark}

Let $\tau_1 \leq \tau_2$ be $\cF_t$-stopping times, and let $\xi, \xi'$ be $\hat{\cF}_{\tau_1}$-measurable random variables. Let $X = (X_t)_{t \in [0,T]}$ be a process defined on $(\Omega, \cF, \P)$ which is adapted to $(\cF_t)_{t \in [0,T]}$, and such that $X(\omega) \in V^p$ for $\P$-almost every $\omega \in \Omega$. For constants $\nu_1, \nu_2 > 0$, we write
$$\mathbf{B}_{\tau_1, \tau_2}(\nu_1, \nu_2, \xi, \xi')$$
for the set of all pairs of processes $(Y,Y')$ such that
\begin{itemize}
\item $Y$ and $Y'$ are both $\hat{\cF}_t$-adapted and $\hat{\P}$-almost surely c\`adl\`ag,
\item $\hat{\P}$-almost surely, we have that $(Y,Y') = (0,0)$ on $\llbracket 0, \tau_1 \llbracket$, with $(Y_{\tau_1},Y'_{\tau_1}) = (\xi,\xi')$, the processes $Y$ and $Y'$ are continuous at $\tau_2$, and they are constant on $\llbracket \tau_2, T \rrbracket$,
\item and we have that $(Y(\omega, \cdot), Y'(\omega, \cdot)) \in \cV^{p,q,\infty,\bar{\Omega}}_{X^{(\tau_1, \tau_2-)}(\omega)}([\tau_1(\omega),\tau_2(\omega)])$, and satisfy
\begin{equation*}
\|Y'(\omega, \cdot)\|_{p,q,\infty,[0,T],\bar{\Omega}} \vee \sup_{u \in [0,T]} \|Y'_u(\omega, \cdot)\|_{L^\infty(\bar{\Omega})} \leq \nu_1
\end{equation*}
and
\begin{equation*}
\|Y(\omega, \cdot)\|_{p,q,\infty,[0,T],\bar{\Omega}} \vee \|\bar{\E}_{\edot} R^Y(\omega, \cdot)\|_{\frac{p}{2},\infty,[0,T],\bar{\Omega}} \leq \nu_2
\end{equation*}
for $\P$-almost every $\omega \in \Omega$.
\end{itemize}
For any $\eta > 1$ and any $(Y,Y'), (\tY,\tY') \in \mathbf{B}_{\tau_1, \tau_2}(\nu_1, \nu_2, \xi, \xi')$, we define
\begin{equation*}
\mathbf{d}^\eta_{\tau_1, \tau_2} \big( (Y,Y'), (\tY,\tY') \big) := \big\| d^{\eta, \bar{\Omega}}_{\tau_1, \tau_2} \big( (Y, Y'), (\tY, \tY') \big) \big\|_{L^\infty(\Omega)}.
\end{equation*}

\begin{lemma}\label{lemma: B_tau is a closed space with respect to hat d}
Let $X = (X_t)_{t \in [0,T]}$ be a process defined on $(\Omega, \cF, \P)$ which is $\cF_t$-adapted, and such that $X(\omega) \in V^p$ for $\P$-almost every $\omega \in \Omega$. Let $\tau_1 \leq \tau_2$ be $\cF_t$-stopping times, let $\xi, \xi'$ be $\hat{\cF}_{\tau_1}$-measurable random variables, and let $\nu_1, \nu_2 > 0$ and $\eta > 1$.

Then $\mathbf{B}_{\tau_1, \tau_2}(\nu_1, \nu_2, \xi, \xi')$ and $\mathbf{d}^\eta_{\tau_1, \tau_2}$ are both well-defined, and $(\mathbf{B}_{\tau_1, \tau_2}(\nu_1, \nu_2, \xi, \xi'), \mathbf{d}^\eta_{\tau_1, \tau_2})$ is a complete metric space.
\end{lemma}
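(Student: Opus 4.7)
The plan is to establish, in turn, the well-definedness of $\mathbf{B}_{\tau_1,\tau_2}(\nu_1,\nu_2,\xi,\xi')$ and $\mathbf{d}^\eta_{\tau_1,\tau_2}$, the metric axioms, and finally completeness. For well-definedness, any pair $(Y, Y') \in \mathbf{B}_{\tau_1, \tau_2}(\nu_1, \nu_2, \xi, \xi')$ automatically satisfies the hypotheses of Lemma~\ref{lemma: measurability on the product space of the p,q,r norms} (with $\nu = \nu_1$ for $Y'$ and $\nu = \nu_2$ for $Y$), and similarly Lemma~\ref{lemma: measurability on the product space of the remainder}; this guarantees the $\cF$-measurability of each of the five $\bar{\Omega}$-norms appearing in the definition of $d^{\eta, \bar{\Omega}}_{\tau_1, \tau_2}$. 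Applying these lemmas to the differences $Y - \tY$ and $Y' - \tY'$ (whose relevant norms are bounded by $2\nu_1, 2\nu_2$ via the triangle inequality) shows that $d^{\eta, \bar{\Omega}}_{\tau_1, \tau_2}((Y,Y'),(\tY,\tY'))$ is a non-negative, finite $\cF$-measurable random variable, so that $\mathbf{d}^\eta_{\tau_1,\tau_2}$ is a well-defined $[0,\infty)$-valued function on $\mathbf{B}_{\tau_1,\tau_2}(\nu_1,\nu_2,\xi,\xi')^2$.

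The metric axioms are then straightforward: symmetry is immediate, the triangle inequality follows componentwise from the triangle inequalities of the constituent seminorms combined with that of $\|\cdot\|_{L^\infty(\Omega)}$, and for non-degeneracy, $\mathbf{d}^\eta_{\tau_1,\tau_2}((Y,Y'),(\tY,\tY')) = 0$ forces $Y = \tY$ and $Y' = \tY'$ $\hat{\P}$-almost surely on $\llbracket \tau_1, \tau_2 \rrbracket$ via the $\|1 \wedge \sup|\cdot|\|_{L^q(\bar{\Omega})}$ terms; since both pairs vanish on $\llbracket 0, \tau_1 \llbracket$ and are $\hat{\P}$-a.s.\ constant on $\llbracket \tau_2, T \rrbracket$, they therefore coincide $\hat{\P}$-a.s.\ on $[0,T]$.

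For completeness, given a Cauchy sequence $((Y^n, (Y^n)'))_{n \in \N}$ in $\mathbf{B}_{\tau_1, \tau_2}(\nu_1, \nu_2, \xi, \xi')$, I would pass to a subsequence whose pairwise $\mathbf{d}^\eta_{\tau_1, \tau_2}$-distances are summable. By Fubini, off a $\P$-null set in $\omega$ the sequence $((Y^n(\omega, \cdot), (Y^n)'(\omega, \cdot)))_{n \in \N}$ is Cauchy in the metric $d^{\eta, \bar{\Omega}}_{\tau_1(\omega), \tau_2(\omega)}$ on the space of stochastic controlled paths on $\bar{\Omega}$ relative to $X^{(\tau_1, \tau_2-)}(\omega)$. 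An argument analogous to the one in the proof of \cite[Theorem~4.1]{AllanPieper2024}, combined with the extra $\|1 \wedge \sup |(Y^n)'_u - (Y^m)'_u|\|_{L^q(\bar{\Omega})}$ term (which delivers $\bar{\P}$-almost sure uniform convergence of a further subsequence of $(Y^n)'$ and thereby ensures that the limit process is $\bar{\P}$-a.s.\ c\`adl\`ag), yields a limit pair $(Y(\omega, \cdot), Y'(\omega, \cdot)) \in \cV^{p,q,\infty,\bar{\Omega}}_{X^{(\tau_1, \tau_2-)}(\omega)}$.

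The remaining step, which I expect to be the main obstacle, is verifying that this $\omega$-slicewise limit genuinely defines an element of $\mathbf{B}_{\tau_1, \tau_2}(\nu_1, \nu_2, \xi, \xi')$. Adaptedness to $(\hat{\cF}_t)$ and the c\`adl\`ag property of the sample paths on $\hat{\Omega}$ follow from the fact that $\hat{\P}$-almost sure uniform convergence (along a further subsequence) preserves both. The quantitative bounds $\nu_1, \nu_2$ pass to the limit by lower semicontinuity of the $p$-variation and $L^r$-norms under pointwise convergence, using Fatou's lemma for the nested conditional expectations that appear in $\|\cdot\|_{p,q,\infty,[0,T],\bar{\Omega}}$ and $\|\bar{\E}_{\edot} R^Y\|_{\frac{p}{2},\infty,[0,T],\bar{\Omega}}$. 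Finally, the boundary conditions---vanishing on $\llbracket 0, \tau_1 \llbracket$, the prescribed values $\xi, \xi'$ at $\tau_1$, continuity at $\tau_2$, and constancy on $\llbracket \tau_2, T \rrbracket$---are all preserved under almost sure uniform convergence along the chosen subsequence.
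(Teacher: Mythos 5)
Your proposal is correct and follows essentially the same strategy the paper sketches: well-definedness via the measurability lemmas (Lemmas~\ref{lemma: measurability on the product space of the p,q,r norms} and \ref{lemma: measurability on the product space of the remainder}), and completeness via the almost-sure uniform convergence on $\hat{\Omega}$ that the $\|1 \wedge \sup_u|\cdot|\|_{L^q(\bar{\Omega})}$ terms deliver. One point worth tightening: you invoke a ``further subsequence'' (apparently $\omega$ by $\omega$) to obtain $\bar{\P}$-a.s.\ uniform convergence of $(Y^n)'$; an $\omega$-dependent extraction would compromise joint measurability of the limit on $\hat{\Omega}$, which is precisely the obstacle you flag. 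Fortunately no further extraction is needed: once the initial subsequence has summable $\mathbf{d}^\eta_{\tau_1,\tau_2}$-increments, then for $\P$-a.e.\ $\omega$ the quantities $\|1 \wedge \sup_u |(Y^{n+1})'_u - (Y^n)'_u|(\omega,\cdot)\|_{L^q(\bar{\Omega})}$ are summable, so by Minkowski in $L^q(\bar{\Omega})$ the series $\sum_n \big(1 \wedge \sup_u |(Y^{n+1})'_u - (Y^n)'_u|\big)$ is $\bar{\P}$-a.s.\ finite, giving $\hat{\P}$-a.s.\ uniform convergence along that same fixed subsequence. This is exactly what the paper's remark that $\mathbf{d}^\eta_{\tau_1,\tau_2}$-Cauchy sequences are u.c.p.\ Cauchy on $(\hat{\Omega},\hat{\cF},\hat{\P})$ encapsulates, so the measurable limit on the product space is obtained in one step rather than slicewise.
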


To see that $\mathbf{B}_{\tau_1, \tau_2}(\nu_1, \nu_2, \xi, \xi')$ and $\mathbf{d}^\eta_{\tau_1, \tau_2}$ are well-defined is simply a matter of noting that all the norms involved are $\cF$-measurable by Lemmas~\ref{lemma: measurability on the product space of the p,q,r norms} and \ref{lemma: measurability on the product space of the remainder}, so that it is then valid to take the $L^\infty(\Omega)$ norm. Verifying that $(\mathbf{B}_{\tau_1, \tau_2}(\nu_1, \nu_2, \xi, \xi'), \mathbf{d}^\eta_{\tau_1, \tau_2})$ is a complete metric space then follows standard arguments, similar to the proofs of \cite[Lemmas~2.9 and 2.10]{AllanPieper2026}. In particular, if a sequence is Cauchy with respect to $\mathbf{d}^\eta_{\tau_1, \tau_2}$, then it is also Cauchy in the u.c.p.~topology on $(\hat{\Omega}, \hat{\cF}, \hat{\P})$, so that the existence and measurability of the limit follow immediately. The full proof of Lemma~\ref{lemma: B_tau is a closed space with respect to hat d} is omitted for brevity.

\subsection{The consistency result}

The following result establishes consistency between It\^o stochastic integrals and rough stochastic integrals against It\^o rough path lifts of semimartingales. It will be unsurprising to readers familiar with rough analysis, but we nonetheless provide a proof in Appendix~\ref{appendix: rough paths}.

\begin{proposition}\label{proposition: consistency rough and stochastic integrals for semimartingales}
Let $p \in (2,3)$, $q \in [2,\infty)$ and $r \in [q,\infty]$, and let $(\Omega \times \bar{\Omega}, \cF \otimes \bar{\cF}, \P \otimes \bar{\P})$ be a product probability space which satisfies the usual conditions. Let $X$ be a c\`adl\`ag semimartingale on $(\Omega, \cF, \P)$, and let $\bX = (X,\X)$ be its It\^o rough path lift, so that $\X_{s,t} = \int_s^t \delta X_{s,u-} \otimes \d X_u$ for every $(s,t) \in \Delta_{[0,T]}$, and $\bX(\omega) \in \sV^p$ for $\P$-almost every $\omega \in \Omega$. We note that $X$ is also a semimartingale on the product space (by identifying $X_t(\omega, \bar{\omega}) = X_t(\omega)$ for all $(\omega,\bar{\omega}) \in \Omega \times \bar{\Omega}$ and $t \in [0,T]$). Let $Y$ and $Y'$ be adapted processes on $(\Omega \times \bar{\Omega}, \cF \otimes \bar{\cF}, \P \otimes \bar{\P})$ which are almost surely c\`adl\`ag, and suppose that $(Y(\omega, \cdot),Y'(\omega, \cdot)) \in \cV_{X(\omega)}^{p,q,r,\bar{\Omega}}$ for $\P$-almost every $\omega \in \Omega$.

Then, for $\P$-almost every $\omega \in \Omega$, the processes
\begin{equation*}
\bigg( \int_0^\cdot Y_{u-} \dd X_u \bigg)(\omega, \cdot) = \int_0^\cdot Y_u(\omega, \cdot) \dd \bX_u(\omega),
\end{equation*}
are $\bar{\P}$-indistinguishable, where on the left-hand side is the It\^o integral of $Y_-$ against $X$, and on the right-hand side is the rough stochastic integral of $(Y(\omega, \cdot),Y'(\omega, \cdot))$ against $\bX(\omega)$.
\end{proposition}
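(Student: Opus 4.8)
The plan is to show that the rough stochastic integral $\int_0^\cdot Y_u(\omega,\cdot)\dd\bX_u(\omega)$, defined $\P$-almost-surely via the compensated Riemann sums of Lemma~\ref{lemma: rough stochastic integral}, agrees with the It\^o integral $\int_0^\cdot Y_{u-}\dd X_u$. The natural strategy is to compare both objects against the \emph{same} sequence of Riemann sums and take limits. For a partition $\cP$ of $[0,t]$, write the rough compensated Riemann sum $\sum_{[u,v]\in\cP}(Y_u\delta X_{u,v}+Y'_u\X_{u,v})$; the It\^o integral, on the other hand, is the u.c.p.\ (hence, along a subsequence, $\bar{\P}$-a.s.\ uniform) limit of the left-point sums $\sum_{[u,v]\in\cP}Y_u\delta X_{u,v}$. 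Thus the whole matter reduces to showing that the ``correction'' term $\sum_{[u,v]\in\cP}Y'_u\X_{u,v}$ vanishes in an appropriate sense as $|\cP|\to 0$. The key point is that, since $\bX$ is the \emph{It\^o} lift, $\X_{u,v}=\int_u^v\delta X_{u,w-}\otimes\d X_w$, which is itself a stochastic integral over a small interval and is therefore small in $L^2(\bar\P)$-type norms on $[u,v]$; more precisely $\E_u[|\X_{u,v}|] \lesssim \|[X]\|_{1,[u,v]}$ (or an analogous $\frac p2$-variation quantity), so that summing over $\cP$ and using superadditivity of the control gives a bound that tends to $0$ as the mesh shrinks, on a fixed $\omega\in\Omega$ with $X(\omega)\in V^p$.

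Concretely, the steps I would carry out are as follows. First, fix $\omega\in\Omega$ in the full-measure set where $X(\omega)\in\sV^p$ and $(Y(\omega,\cdot),Y'(\omega,\cdot))\in\cV^{p,q,r,\bar\Omega}_{X(\omega)}$, and where moreover $X(\omega)$ is the realisation of the semimartingale $X$ whose bracket $[X](\omega)$ has finite $\frac p2$-variation (such an $\omega$ has full $\P$-measure). Second, by Lemma~\ref{lemma: rough stochastic integral} applied on the space $(\bar\Omega,\bar\cF,\bar\P)$, choose a sequence of partitions $\cP_n$ of $[0,T]$ with $|\cP_n|\to 0$ along which the rough Riemann sums converge to $\int_0^\cdot Y_u(\omega,\cdot)\dd\bX_u(\omega)$ in $\|\cdot\|_{q,r,s,\bar\Omega}$, hence (passing to a further subsequence) $\bar\P$-a.s.\ pointwise in $t$ along a countable dense set, and by c\`adl\`ag-ness everywhere. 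Third, on the same sequence, the left-point sums $\sum Y_u\delta X_{u,v}$ converge in u.c.p.\ on $\bar\Omega$ to the It\^o integral $\int_0^\cdot Y_{u-}\dd X_u$ — this is the standard approximation of the It\^o integral of a c\`adl\`ag integrand by left-point Riemann sums, valid because $X(\omega)$ is a deterministic c\`adl\`ag finite-$p$-variation path (in fact one should phrase this as a u.c.p.\ statement on the product space and then slice to $\omega$, or invoke that $X$ is a semimartingale on the product space and the sum converges there). Fourth, estimate the correction: using $\X_{u,v}=\int_u^v\delta X_{u,w-}\otimes\d X_w$ and the conditional BDG inequality on $\bar\Omega$, together with $Y'\in V^pL^{q,r}$ and $\sup_s\|Y'_s\|_{L^{qr}}<\infty$, bound $\|\sum_{[u,v]\in\cP_n}Y'_u\X_{u,v}\|_{q,r,s,\bar\Omega}$ by a quantity of the form $\big(\sup_s\|Y'_s\|\big)\cdot\big(\sum_{[u,v]\in\cP_n}(\text{control})(u,v)^{\text{something}>1}\big)$, which tends to $0$ because the control is superadditive and the mesh vanishes. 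Combining, both limits exist and coincide $\bar\P$-a.s., and since they are c\`adl\`ag they are $\bar\P$-indistinguishable; this holds for $\P$-a.e.\ $\omega$.

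The main obstacle I anticipate is the fourth step — controlling the quadratic correction sum $\sum Y'_u\X_{u,v}$ — and doing so with the right norm bookkeeping: one must play off the (deterministic, per-$\omega$) $\frac p2$-variation smallness of $\X$ on small intervals against the stochastic integrability of $Y'$ on $\bar\Omega$, and the estimate for $\E_u[|\X_{u,v}|]$ should really be of the form $\E_u[|\X_{u,v}|^{q/2}]^{2/q}\lesssim$ a control that is a genuine control in $(u,v)$ with exponent strictly bigger than $1$ when raised to the relevant power, so that superadditivity forces the sum to $0$. A secondary technical nuisance is the bookkeeping between "a.s.\ convergence on $\bar\Omega$ for fixed $\omega$" versus "u.c.p.\ on the product space": cleanest is probably to establish convergence of both Riemann-sum sequences on the product space $(\hat\Omega,\hat\cF,\hat\P)$ (where $X$ is a semimartingale and the rough integral also makes sense since the controlled-path property holds a.s.), obtain a common $\hat\P$-a.s.\ subsequential limit, and then invoke Fubini to slice down to $\P$-a.e.\ $\omega$. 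Beyond these points, everything is routine: the c\`adl\`ag regularity of both sides is inherited from Lemma~\ref{lemma: rough stochastic integral} and the standard theory of It\^o integrals, and indistinguishability then follows from a.s.\ equality on a dense set.
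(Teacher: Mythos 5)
Your high-level strategy --- compare compensated rough Riemann sums against left-point It\^o sums, show the correction $\sum_{[u,v]\in\cP}Y'_u\X_{u,v}$ vanishes, then pass to the limit and slice by Fubini --- matches the paper's. But the step you single out as the ``key point'', controlling the correction sum, would not go through as written, and no choice of exponent in a conditional BDG estimate on $\bar{\Omega}$ can repair it.

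The per-interval smallness of $\X_{u,v}$ sits exactly at the critical exponent, so ``summing and superadditivity'' cannot produce decay. Pathwise, for fixed $\omega$, one has $|\X_{u,v}(\omega)|\leq\|\X(\omega)\|_{\frac{p}{2},[u,v]}$, a quantity of order $w(u,v)^{2/p}$ for some control $w$; since $2/p<1$, the sum $\sum_{[u,v]\in\cP}w(u,v)^{2/p}$ is not even bounded as $|\cP|\to 0$, let alone vanishing. Conditional BDG on $\bar{\Omega}$ gives nothing extra because $\X(\omega)$ is deterministic on $\bar{\Omega}$ for fixed $\omega$: there is no noise in the $\bar{\Omega}$ direction to average. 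Even working on the product space, where $X$ is a genuine semimartingale and conditional BDG does give $\E_u[|\X_{u,v}|^q]^{1/q}\lesssim\|[X]\|_{1,[u,v]}$, this is a control with exponent exactly $1$, so summing absolute values yields a bounded quantity of order $\|[X]\|_{1,[0,T]}$, not a vanishing one. The vanishing of the correction sum is a \emph{cancellation} phenomenon, not a smallness phenomenon: because $\E_u[\X_{u,v}]=0$ for the martingale part of $X$, the terms $Y'_{u-}\X_{u,v}$ are orthogonal martingale increments on the product space, and it is the quadratic summability $\sum\|Y'_{u-}\X_{u,v}\|_{L^2}^2\lesssim\sum(v-u)^2\to 0$ --- rather than the linear one --- that forces the partial sums to zero. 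The paper invokes a semimartingale-lift convergence lemma (Chevyrev--Friz) at precisely this point because that cancellation lives on the product space and is not recoverable from pathwise slices.

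There is also a secondary omission: you treat $Y'_u$ as a single object, whereas the martingale cancellation applies to $Y'_{u-}$ (the left limits, which are predictable). The residual jump sum $\sum\Delta Y'_u\X_{u,v}$ needs a different mechanism altogether: no cancellation, but the jumps of $Y'$ are $p$-summable in $L^q(\bar{\Omega})$ for fixed $\omega$, which supplies the extra factor to push the exponent strictly above $1$. Concretely $\|\Delta Y'_u\X_{u,v}\|_{L^q(\bar{\Omega})}\leq\delta(u)^{1/p}w(u,v)^{2/p}$ with $\sum_u\delta(u)<\infty$, and a pathwise sewing argument (Friz--Zhang) then gives convergence on $\bar{\Omega}$ for fixed $\omega$. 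A single uniform per-term BDG bound on $Y'_u\X_{u,v}$ covers neither mechanism, and this split is what makes the paper's proof work.
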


The following theorem is the main result of this section, and shows that the solution to a \emph{doubly stochastic DE} is also the solution to the corresponding (random) \emph{rough SDE}. The basic strategy is to construct a contraction mapping on the metric space in Lemma~\ref{lemma: B_tau is a closed space with respect to hat d}.

\begin{remark}\label{remark: consistency also works with random measures}
For simplicity of notation, here we present the result without including an integral against a random measure. However, using Assumption~\ref{assumption: Regularity of measure integral for RSDE} and Lemma~\ref{lemma: application of conditional BDG for jump measures}, it is straightforward to generalize the proof of Theorem~\ref{theorem: consistency between doubly stochastic SDEs and RSDEs} to show that the result also holds when an integral against a compensated random measure is included in the equation, as in \eqref{eq: RSDE with measure}.
\end{remark}

\begin{theorem} \label{theorem: consistency between doubly stochastic SDEs and RSDEs}
Let $p \in (2,3)$, $q \in [2,\infty)$, $b \in C^1_b$, $\sigma \in C^1_b$ and $f \in C^3_b$, and let
\begin{equation*}
(\hat{\Omega}, \hat{\cF}, (\hat{\cF}_t)_{t \in [0,T]}, \hat{\P}) = (\Omega \times \bar{\Omega}, \cF \otimes \bar{\cF}, (\cF_t \otimes \bar{\cF}_t)_{t \in [0,T]}, \P \otimes \bar{\P})
\end{equation*}
be a product probability space which satisfies the usual conditions.

Let $y_0$ be an $\hat{\cF}_0$-measurable random variable such that $\|y_0(\omega, \cdot)\|_{L^q(\bar{\Omega})} < \infty$ for $\P$-almost every $\omega \in \Omega$. Let $X$ be an $\cF_t$-adapted c\`adl\`ag semimartingale on $(\Omega, \cF, \P)$, and let $M \in V^p L^{q,\infty}(\bar{\Omega})$ be an $\bar{\cF}_t$-adapted c\`adl\`ag martingale on $(\bar{\Omega}, \bar{\cF}, \bar{\P})$.

Let $Y$ be the unique strong solution to the SDE
\begin{equation}\label{eq: the doubly stochastic SDE}
Y_t = y_0 + \int_0^t b(Y_s) \dd s + \int_0^t \sigma(Y_{s-}) \dd M_s + \int_0^t f(Y_{s-}) \dd X_s, \qquad t \in [0,T],
\end{equation}
on $(\hat{\Omega}, \hat{\cF}, \hat{\P})$, which in particular is $\hat{\cF}_t$-adapted and has $\hat{\P}$-almost surely c\`adl\`ag sample paths.

Then, for $\P$-almost every $\omega \in \Omega$, the pair $(Y(\omega, \cdot),f(Y(\omega, \cdot))) \in \cV^{p,q,\infty,\bar{\Omega}}_{X(\omega)}$ is a stochastic controlled path relative to $X(\omega)$, and $Y(\omega, \cdot)$ is the solution to the rough SDE
\begin{equation}\label{eq: the corresponding RSDE}
Y_t(\omega, \cdot) = y_0(\omega, \cdot) + \int_0^t b(Y_s(\omega, \cdot)) \dd s + \int_0^t \sigma(Y_{s-}(\omega, \cdot)) \dd M_s + \int_0^t f(Y_s(\omega, \cdot)) \dd \bX_s(\omega)
\end{equation}
for $t \in [0,T]$, where $\bX$ is the It\^o rough path lift of $X$.
\end{theorem}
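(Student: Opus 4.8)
The plan is to construct, \emph{simultaneously}, the solution to the doubly stochastic DE \eqref{eq: the doubly stochastic SDE} and a family of stochastic controlled paths on the product space, via a Picard-type contraction argument carried out on the ``randomised'' metric spaces $(\mathbf{B}_{\tau_1,\tau_2}(\nu_1,\nu_2,\xi,\xi'),\mathbf{d}^\eta_{\tau_1,\tau_2})$ from Lemma~\ref{lemma: B_tau is a closed space with respect to hat d}. The key point is that a contraction in the $\mathbf{d}^\eta$-metric is, slicewise in $\omega \in \Omega$, exactly the contraction one would use to solve the rough SDE \eqref{eq: the corresponding RSDE} for the fixed rough path $\bX(\omega)$; since the $L^\infty(\Omega)$-norm preserves the contraction property, we get a fixed point whose $\omega$-slices solve the rough SDEs and which is jointly measurable on $\hat\Omega$ by construction. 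The uniqueness of the strong solution to \eqref{eq: the doubly stochastic SDE}, together with Proposition~\ref{proposition: consistency rough and stochastic integrals for semimartingales}, will then force this fixed point to agree with $Y$.

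First I would set up the map $\cM_{\tau_1,\tau_2}$ on $\mathbf{B}_{\tau_1,\tau_2}(\nu_1,\nu_2,\xi,\xi')$ sending $(Z,Z')$ to $(Y,f(Y))$, where, for $t \in \llbracket\tau_1,\tau_2\rrbracket$,
\begin{equation*}
Y_t := \xi + \int_{\tau_1}^t b(Z_s)\dd s + \int_{\tau_1}^t \sigma(Z_{s-})\dd M_s + \int_{\tau_1}^t f(Z_s)\dd \bX_s^{(\tau_1,\tau_2-)},
\end{equation*}
and $(Y,f(Y))$ is extended to be $0$ on $\llbracket 0,\tau_1\llbracket$ and constant on $\llbracket\tau_2,T\rrbracket$. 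Here $Y' = f(Y)$ with Gubinini derivative $Y'' = \D f(Z)Z'$ in the sense of the stochastic controlled path structure, and the rough stochastic integral is the one from Lemma~\ref{lemma: rough stochastic integral}, applied slicewise in $\omega$ thanks to Remark~\ref{remark: consistency of slicing roughs stochastic integrals}. The estimates from \cite[Theorem~4.1]{AllanPieper2024} (applied on $(\bar\Omega,\bar\cF,\bar\P)$ for each fixed $\omega$) show that, for $\nu_1,\nu_2$ chosen large enough depending on $\|f\|_{C^3_b}$, $|\xi|$, $|\xi'|$ and the relevant control norms over $\llbracket\tau_1,\tau_2\rrbracket$, the map $\cM_{\tau_1,\tau_2}$ sends $\mathbf{B}_{\tau_1,\tau_2}$ into itself; and it is a contraction in $d^{\eta,\bar\Omega}_{\tau_1,\tau_2}$, with contraction constant strictly less than $1$, provided $\|[M]\|_{\frac p2,\frac q2,\infty,\llbracket\tau_1,\tau_2\rrbracket,\bar\Omega}$ and $\|\bX\|_{p,\llbracket\tau_1,\tau_2\rrbracket}$ are small (uniformly in $\omega$) and $\eta$ is chosen large. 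Crucially, all of these bounds are $\cF$-measurable by Lemmas~\ref{lemma: measurability on the product space of the p,q,r norms} and \ref{lemma: measurability on the product space of the remainder}, and they are \emph{uniform in} $\omega$ on $\Omega$ once we localise; taking the $L^\infty(\Omega)$-norm then yields a genuine contraction in $\mathbf{d}^\eta_{\tau_1,\tau_2}$, whose fixed point, by Lemma~\ref{lemma: B_tau is a closed space with respect to hat d}, is a well-defined $\hat\cF_t$-adapted pair of processes.

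Next I would handle the \textbf{globalisation}, which is the main obstacle. A single pair of stopping times cannot in general make the control norms $\|\bX\|_{p,\llbracket\tau_1,\tau_2\rrbracket}$, $\|[M]\|_{\cdots}$ and $\|A\|_{\cdots}$ small on \emph{all} of $\Omega$ at once, because $\|\bX(\omega)\|_{p,[0,T]}$ is unbounded over $\omega$ in general. The remedy is the $\alpha$-slicing construction (cf.\ \cite[Ch.~16]{Cohen2015}, \cite[Definition~16.3.8]{Cohen2015}): fix a small threshold $\alpha > 0$ and define recursively $\tau_0 := 0$ and
\begin{equation*}
\tau_{k+1} := \inf\big\{ t > \tau_k : \|\bX\|_{p,[\tau_k,t]}^p + \|[M]\|_{\frac p2,\frac q2,\infty,[\tau_k,t]} + \|A\|_{\frac p2,\frac q2,\infty,[\tau_k,t]} + \|A\|_{\frac pq,1,[\tau_k,t]} \ge \alpha \big\} \wedge T,
\end{equation*}
which are $\cF_t$-stopping times (recalling that $[M]$ and $A$ live on $\Omega$, and that $(s,t)\mapsto\|\bX\|_{p,[s,t]}^p$ is a control). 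Because these quantities are finite-variation controls, for $\P$-almost every $\omega$ only finitely many $\tau_k(\omega)$ are strictly less than $T$; a possible subtlety is that jumps of $\bX$ (or of $[M]$) can exceed $\alpha$, so one should allow the ``slice'' $\llbracket\tau_k,\tau_{k+1}\llbracket$ to be genuinely half-open and treat the jump at $\tau_{k+1}$ separately — this is precisely why the spaces $\mathbf{B}_{\tau_1,\tau_2}$ were set up with the half-open conventions and the $X^{(\tau_1,\tau_2-)}$ slicing. On each slice I apply the contraction to obtain $Y$ on $\llbracket\tau_k,\tau_{k+1}\llbracket$ with terminal data $\xi = \lim_{t\uparrow\tau_{k+1}} Y_t$, then add the jump contribution $f(Y_{\tau_{k+1}-})\Delta X_{\tau_{k+1}} + \D f(Y_{\tau_{k+1}-})f(Y_{\tau_{k+1}-})\Delta\X_{\tau_{k+1}}$ (plus the $M$- and $\dd s$-increments across $\{\tau_{k+1}\}$, which are a.s.\ zero by the continuity-at-deterministic/stopping-times structure — or rather handled exactly as in the proof of \cite[Theorem~4.1]{AllanPieper2024}) to pass to the next slice. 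Concatenating over $k$ produces a process $\tilde Y$ on $\hat\Omega$ which is $\hat\cF_t$-adapted, a.s.\ c\`adl\`ag, and whose $\omega$-slices $\tilde Y(\omega,\cdot)$ are, by Remark~\ref{remark: consistency of slicing roughs stochastic integrals}, stochastic controlled paths relative to $X(\omega)$ solving the rough SDE \eqref{eq: the corresponding RSDE}.

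Finally I would prove that $\tilde Y = Y$. By Proposition~\ref{proposition: consistency rough and stochastic integrals for semimartingales}, for $\P$-a.e.\ $\omega$ the rough stochastic integral $\int_0^\cdot f(\tilde Y_s(\omega,\cdot))\dd\bX_s(\omega)$ coincides $\bar\P$-a.s.\ with the It\^o integral $\int_0^\cdot f(\tilde Y_{s-})\dd X_s$ evaluated at $\omega$; hence $\tilde Y$ satisfies \eqref{eq: the doubly stochastic SDE} on $\hat\Omega$. Since $b,\sigma,f$ are Lipschitz (being in $C^1_b \subset C^3_b$) with bounded derivatives, \eqref{eq: the doubly stochastic SDE} has a unique strong solution, so $\tilde Y$ and $Y$ are $\hat\P$-indistinguishable. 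In particular $Y(\omega,\cdot) = \tilde Y(\omega,\cdot)$ for $\P$-a.e.\ $\omega$, which gives both the stochastic controlled path property $(Y(\omega,\cdot),f(Y(\omega,\cdot)))\in\cV^{p,q,\infty,\bar\Omega}_{X(\omega)}$ and the fact that $Y(\omega,\cdot)$ solves \eqref{eq: the corresponding RSDE}, completing the proof. The delicate point throughout — and the reason the half-open $\alpha$-slicing machinery is unavoidable — is keeping the measurability on $\Omega$ intact while patching slices, so that at every stage the relevant norms over the random intervals $\llbracket\tau_k,\tau_{k+1}\rrbracket$ remain $\cF$-measurable (Lemmas~\ref{lemma: measurability on the product space of the p,q,r norms} and \ref{lemma: measurability on the product space of the remainder}) and the $L^\infty(\Omega)$-contraction argument goes through.
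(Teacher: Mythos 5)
Your plan is very close in spirit to the paper's, but the ordering of the two key ingredients is reversed in a way that opens a genuine gap at the single most delicate point of the whole theorem: the measurability of the contraction map's output on the product space.

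You define the map $\cM_{\tau_1,\tau_2}$ using the rough stochastic integral $\int_{\tau_1}^t f(Z_s)\dd\bX_s^{(\tau_1,\tau_2-)}$ ``applied slicewise in $\omega$,'' and you only invoke Proposition~\ref{proposition: consistency rough and stochastic integrals for semimartingales} at the very end, to identify the fixed point $\tilde Y$ with $Y$. But for the Banach fixed-point argument to even start, you must first show that $\cM_{\tau_1,\tau_2}$ maps $\mathbf{B}_{\tau_1,\tau_2}$ into itself, and membership in $\mathbf{B}_{\tau_1,\tau_2}$ requires the output process to be $\hat\cF_t$-adapted and $\hat\P$-almost surely c\`adl\`ag, i.e.\ \emph{jointly measurable on} $\hat\Omega$. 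The rough stochastic integral of Lemma~\ref{lemma: rough stochastic integral}, taken slicewise in $\omega$, does not a priori produce such an object: for each fixed $\omega$ you obtain a $\bar\P$-modification class via a limit in $\|\cdot\|_{q,\infty,s,\bar\Omega}$ of Riemann sums, and there is no canonical way to glue these $\omega$-by-$\omega$ choices into a single jointly measurable c\`adl\`ag process. Remark~\ref{remark: consistency of slicing roughs stochastic integrals} does not help here — it concerns the compatibility of the $(\tau_1,\tau_2-)$-slicing with rough stochastic integration, not measurability on the product space. And Lemmas~\ref{lemma: measurability on the product space of the p,q,r norms}--\ref{lemma: measurability on the product space of the remainder} only give $\cF$-measurability of the \emph{norms}, and they assume joint measurability of the underlying process as a hypothesis. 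Establishing this joint measurability is precisely the obstacle the whole theorem is built to overcome; assuming it tacitly is circular.

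The paper's proof sidesteps this by defining the contraction map $\Psi$ using \emph{It\^o integrals on the product space} — which are automatically $\hat\cF_t$-adapted, c\`adl\`ag, and jointly measurable — and only then invoking Proposition~\ref{proposition: consistency rough and stochastic integrals for semimartingales} (\emph{inside} the contraction argument, not at the end) to identify the It\^o integral slicewise with the rough stochastic integral, so that the slicewise invariance and contraction estimates from \cite[Theorem~4.1]{AllanPieper2024} apply. Once you make this substitution — define $\cM$ via the It\^o integral $\int_{\tau_1}^t f(Z_{s-})\dd X_s^{(\tau_1,\tau_2-)}$ on $\hat\Omega$, then apply the consistency proposition to conclude it slicewise equals the rough integral — the rest of your argument (invariance, contraction, $\alpha$-slicing, jump-gluing, and the final identification with $Y$ via uniqueness of the strong It\^o solution) goes through essentially as you wrote it. Two minor remarks: the localisation control should also include the time increment $t - \tau_k$ (to bound the $\dd s$-integral); and the $\|A\|$-terms in your stopping times belong to the random-measure extension (Remark~\ref{remark: consistency also works with random measures}), not to the theorem as stated.
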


\begin{proof}
We let $(\tau_i)_{i \in \N}$ be the sequence of $\cF_t$-stopping times defined recursively by $\tau_0 = 0$, and
\begin{equation*}
\tau_{i+1} = T \wedge \inf \big\{ t > \tau_i \, \big| \, (t - \tau_i) + \|M\|_{p,q,\infty,\llbracket \tau_i, t \rrbracket,\bar{\Omega}} + \|\bX\|_{p,\llbracket \tau_i, t \rrbracket} \geq \alpha \big\}
\end{equation*}
for each $i \in \N$, where $\alpha \in (0,1]$ is a constant which we will specify later.

Let us fix an $i \in \N$. In particular, we then have that
\begin{equation}\label{eq: small interval bound}
(\tau_{i+1} - \tau_i) + \|M^{(\tau_i, \tau_{i+1}-)}\|_{p,q,\infty,\llbracket \tau_i, \tau_{i+1} \rrbracket,\bar{\Omega}} + \|\bX^{(\tau_i, \tau_{i+1}-)}\|_{p,\llbracket \tau_i, \tau_{i+1} \rrbracket} \leq \alpha
\end{equation}
$\P$-almost surely. We also let $\xi_i$ be an $\hat{\cF}_{\tau_i}$-measurable random variable such that $\xi_i(\omega, \cdot) \in L^q(\bar{\Omega})$ for $\P$-almost every $\omega \in \Omega$.

For $(Y,Y') \in \mathbf{B}_{\tau_i,\tau_{i+1}}(\|f\|_{C^3_b}, 1, \xi_i, f(\xi_i))$, we let
\begin{equation*}
\begin{split}
&\Psi(Y,Y') := \big(\psi(Y),\psi(Y)'\big)\\
&:= \bigg( \xi_i \1_{\llbracket \tau_i, T \rrbracket} + \int_{\tau_i \wedge \cdot}^{\tau_{i+1} \wedge \cdot} b(Y_u) \dd u + \int_0^\cdot \sigma(Y_{u-}) \dd M^{(\tau_i, \tau_{i+1}-)}_u + \int_0^\cdot f(Y_{u-}) \dd X^{(\tau_i, \tau_{i+1}-)}_u, f(Y) \1_{\llbracket \tau_i, T \rrbracket} \bigg)
\end{split}
\end{equation*}
where the latter two integrals are both defined as It\^o integrals on the product space.

We will show that the space $\mathbf{B}_{\tau_i,\tau_{i+1}}(\|f\|_{C^3_b}, 1, \xi_i, f(\xi_i))$ is non-empty and invariant under the map $\Psi$, and then that $\Psi$ is a contraction on $\mathbf{B}_{\tau_i,\tau_{i+1}}(\|f\|_{C^3_b}, 1, \xi_i, f(\xi_i))$ with respect to the metric $\mathbf{d}^\eta_{\tau_i,\tau_{i+1}}$, for suitable choices of $\alpha \in (0,1]$ and $\eta > 1$.

We first note that, if we set
\[(Y_t,Y'_t) = \big( \big(\xi_i + f(\xi_i) X^{(\tau_i, \tau_{i+1}-)}_t\big) \1_{\llbracket \tau_i, T \rrbracket}, f(\xi_i) \1_{\llbracket \tau_i, T \rrbracket} \big)\]
for all $t \in [0,T]$, then, provided that $\alpha \leq \|f\|_{C^3_b}^{-1}$, we have that $(Y,Y') \in \mathbf{B}_{\tau_i,\tau_{i+1}}(\|f\|_{C^3_b}, 1, \xi_i, f(\xi_i))$, so that the space $\mathbf{B}_{\tau_i,\tau_{i+1}}(\|f\|_{C^3_b}, 1, \xi_i, f(\xi_i))$ is non-empty.

In the following we will recall some estimates from the proof of \cite[Theorem~5.2]{AllanPieper2026}. These are written in terms of a function $\Phi$, which, given a time interval $[0,T]$, an initial value $\xi$, a rough path $\bZ = (Z,\Z)$ and stochastic controlled path $(Y,Y') \in \cV^{p,q,\infty}_Z$, is defined by
\begin{equation*}
\Phi_{\bZ}(Y,Y') := \big( \phi_{\bZ}(Y), \phi_{\bZ}(Y)' \big) := \bigg( \xi + \int_0^\cdot b(Y_u) \dd u + \int_0^\cdot \sigma(Y_{u-}) \dd M_u + \int_0^\cdot f(Y_u) \dd \bZ_u, f(Y) \bigg).
\end{equation*}

By Proposition~\ref{proposition: consistency rough and stochastic integrals for semimartingales}, we have that, for $\P$-almost every $\omega \in \Omega$,
\begin{equation*}
\bigg( \int_0^\cdot f(Y_{u-}) \dd X^{(\tau_i, \tau_{i+1}-)}_u \bigg) (\omega, \cdot) = \int_0^\cdot f(Y_u(\omega, \cdot)) \dd \bX^{(\tau_i, \tau_{i+1}-)}_u(\omega)
\end{equation*}
$\bar{\P}$-almost surely, where on the right-hand side is the rough stochastic integral of the stochastic controlled path $(f(Y(\omega, \cdot)), \D f(Y(\omega, \cdot)) Y'(\omega, \cdot))$ with respect to the rough path $\bX^{(\tau_i, \tau_{i+1}-)}(\omega)$.

By considering convergence of the Riemann sums $\lim_{|\cP| \to 0} \sum_{[u,v] \in \cP} \sigma(Y_u) \delta M^{(\tau_i, \tau_{i+1}-)}_{u,v}$, it is also straightforward to see that, for $\P$-almost every $\omega \in \Omega$,
\begin{equation*}
\bigg( \int_0^\cdot \sigma(Y_{u-}) \dd M^{(\tau_i, \tau_{i+1}-)}_u \bigg) (\omega, \cdot) = \int_0^\cdot \sigma(Y_{u-}(\omega, \cdot)) \dd M^{(\tau_i(\omega), \tau_{i+1}(\omega)-)}_u
\end{equation*}
$\bar{\P}$-almost surely, noting that $M^{(\tau_i(\omega), \tau_{i+1}(\omega)-)}$ is a martingale with respect to $(\bar{\cF}_t)_{t \in [0,T]}$. This means that, for $\P$-almost every $\omega \in \Omega$,
\begin{align*}
\psi(Y)(\omega, \cdot) &= \xi_i \1_{\llbracket \tau_i, T \rrbracket} + \int_{\tau_i(\omega) \wedge \cdot}^{\tau_{i+1}(\omega) \wedge \cdot} b(Y_u(\omega, \cdot)) \dd u + \int_0^\cdot \sigma(Y_{u-}(\omega, \cdot)) \dd M^{(\tau_i(\omega), \tau_{i+1}(\omega)-)}_u\\
&\quad \, + \int_0^\cdot f(Y_u(\omega, \cdot)) \dd \bX^{(\tau_i, \tau_{i+1}-)}_u(\omega)\\
&= \phi_{\bX(\omega)}(Y(\omega, \cdot))
\end{align*}
$\bar{\P}$-almost surely, and it is also clear that $\psi(Y)'(\omega, \cdot) = f(Y(\omega, \cdot)) = \phi_{\bX(\omega)}(Y(\omega, \cdot))'$.

\emph{Invariance:}
To see that $\mathbf{B}_{\tau_i,\tau_{i+1}}(\|f\|_{C^3_b}, 1, \xi_i, f(\xi_i))$ is invariant under $\Psi$, we can now simply apply the estimates derived in an analogous context in Step~1 of the proof of \cite[Theorem~5.2]{AllanPieper2026}. Specifically, for $\P$-almost every $\omega \in \Omega$, we have that
\begin{align*}
&\|\psi(Y)'(\omega, \cdot)\|_{p,q,\infty,[\tau_i(\omega),\tau_{i+1}(\omega)],\bar{\Omega}} \vee \sup_{s \in [\tau_i(\omega),\tau_{i+1}(\omega)]}\|\psi(Y)'_s(\omega, \cdot)\|_{L^\infty(\bar{\Omega})}\\
&= \|\phi_{\bX(\omega)}(Y(\omega, \cdot))'\|_{p,q,\infty,[\tau_i(\omega),\tau_{i+1}(\omega)],\bar{\Omega}} \vee \sup_{s \in [\tau_i(\omega),\tau_{i+1}(\omega)]}\|\phi_{\bX(\omega)}(Y(\omega, \cdot))'_s\|_{L^\infty(\bar{\Omega})} \leq \|f\|_{C^3_b},
\end{align*}
and
\begin{align*}
&\|\psi(Y)(\omega, \cdot)\|_{p,q,\infty,[\tau_i(\omega),\tau_{i+1}(\omega)],\bar{\Omega}} \vee \|\bar{\E}_{\edot} R^{\psi(Y)(\omega, \cdot)}\|_{\frac{p}{2},\infty,[\tau_i(\omega),\tau_{i+1}(\omega)],\bar{\Omega}}\\
&= \|\phi_{\bX(\omega)}(Y(\omega, \cdot))\|_{p,q,\infty,[\tau_i(\omega),\tau_{i+1}(\omega)],\bar{\Omega}} \vee \|\bar{\E}_{\edot} R^{\phi_{\bX(\omega)}(Y(\omega, \cdot))}\|_{\frac{p}{2},\infty,[\tau_i(\omega),\tau_{i+1}(\omega)],\bar{\Omega}}\\
&\leq C_1 \Big((\tau_{i+1}(\omega) - \tau_i(\omega)) + \|M^{(\tau_i(\omega),\tau_{i+1}(\omega)-)}\|_{p,q,\infty,[\tau_i(\omega),\tau_{i+1}(\omega)],\bar{\Omega}}\\
&\qquad \quad + \|\bX^{(\tau_i(\omega),\tau_{i+1}(\omega)-)}(\omega)\|_{p,[\tau_i(\omega),\tau_{i+1}(\omega)]}\Big),
\end{align*}
where the constant $C_1$ depends only on $p, q, \|b\|_{C^1_b}, \|\sigma\|_{C^1_b}$ and $\|f\|_{C^3_b}$.\footnote{We may ignore the dependence of the constant $C_1$ (as well as the constant $C_2$ below) on $\|X(\omega)\|_{p,[\tau_i(\omega),\tau_{i+1}(\omega)]}$ here, thanks to \eqref{eq: small interval bound} and the fact that $\alpha \leq 1$.} Provided that we choose $\alpha \leq \frac{1}{C_1}$, it then follows from \eqref{eq: small interval bound} that
\begin{equation*}
\|\psi(Y)(\omega, \cdot)\|_{p,q,\infty,[\tau_i(\omega),\tau_{i+1}(\omega)],\bar{\Omega}} \vee \|\bar{\E}_{\edot} R^{\psi(Y)(\omega, \cdot)}\|_{\frac{p}{2},\infty,[\tau_i(\omega),\tau_{i+1}(\omega)],\bar{\Omega}} \leq 1.
\end{equation*}
Since these estimates hold for $\P$-almost every $\omega \in \Omega$, it is then clear that
\begin{equation*}
\big\| \|\psi(Y)'\|_{p,q,\infty,\llbracket \tau_i, \tau_{i+1} \rrbracket,\bar{\Omega}} \big\|_{L^\infty(\Omega)} \vee \Big\| \sup_{s \in \llbracket \tau_i, \tau_{i+1} \rrbracket} \|\psi(Y)'_s\|_{L^\infty(\bar{\Omega})} \Big\|_{L^\infty(\Omega)} \leq \|f\|_{C^3_b}
\end{equation*}
and
\begin{equation*}
\big\| \|\psi(Y)\|_{p,q,\infty,\llbracket \tau_i, \tau_{i+1} \rrbracket,\bar{\Omega}} \big\|_{L^\infty(\Omega)} \vee \big\| \|\bar{\E}_{\edot} R^{\psi(Y)}\|_{\frac{p}{2},\infty,\llbracket \tau_i, \tau_{i+1} \rrbracket,\bar{\Omega}} \big\|_{L^\infty(\Omega)} \leq 1.
\end{equation*}
It follows that $\Psi(Y,Y') \in \mathbf{B}_{\tau_i,\tau_{i+1}}(\|f\|_{C^3_b}, 1, \xi_i, f(\xi_i))$, and hence that $\mathbf{B}_{\tau_i,\tau_{i+1}}(\|f\|_{C^3_b}, 1, \xi_i, f(\xi_i))$ is invariant under the map $\Psi$.

\emph{Contraction:}
Similarly, to see that $\Psi$ is a contraction on $\mathbf{B}_{\tau_i,\tau_{i+1}}(\|f\|_{C^3_b}, 1, \xi_i, f(\xi_i))$ with respect to $\mathbf{d}^\eta_{\tau_i,\tau_{i+1}}$, we apply the estimates derived in an analogous setting in Step~2 of the proof of \cite[Theorem~5.2]{AllanPieper2026} (which rely on the invariance established above). Specifically, for $(Y,Y'), (\tY, \tY') \in \mathbf{B}_{\tau_i,\tau_{i+1}}(\|f\|_{C^3_b}, 1, \xi_i, f(\xi_i))$, and $\P$-almost any $\omega \in \Omega$, we have that
\begin{align*}
&d^{\eta,\bar{\Omega}}_{\tau_i,\tau_{i+1}} \big(\Psi(Y,Y')(\omega, \cdot), \Psi(\tY,\tY')(\omega, \cdot)\big)\\
&= d^{\eta,\bar{\Omega}}_{\tau_i,\tau_{i+1}} \big( \Phi_{\bX(\omega)}\big(Y(\omega, \cdot), Y'(\omega, \cdot)\big), \Phi_{\bX(\omega)}\big(\tY(\omega, \cdot), \tY'(\omega, \cdot)\big) \big)\\
&\leq C_2 \Big( \Big( \|Y(\omega, \cdot) - \tY(\omega, \cdot)\|_{p,q,[\tau_i(\omega),\tau_{i+1}(\omega)],\bar{\Omega}} + \Big\| 1 \wedge \sup_{u \in [\tau_i(\omega),\tau_{i+1}(\omega)]} |Y_u(\omega, \cdot) - \tY_u(\omega, \cdot)| \Big\|_{L^q(\bar{\Omega})} \Big)\\
&\quad + \eta \Big( \|Y(\omega, \cdot) - \tY(\omega, \cdot)\|_{p,q,[\tau_i(\omega),\tau_{i+1}(\omega)],\bar{\Omega}} + \|Y'(\omega, \cdot) - \tY'(\omega, \cdot)\|_{p,q,[\tau_i(\omega),\tau_{i+1}(\omega)],\bar{\Omega}}\\
&\qquad + \|\bar{\E}_{\edot} (R^{Y(\omega, \cdot)} - R^{\tY(\omega, \cdot)})\|_{\frac{p}{2},q,[\tau_i(\omega),\tau_{i+1}(\omega)],\bar{\Omega}} + \Big\| 1 \wedge \sup_{u \in [\tau_i(\omega),\tau_{i+1}(\omega)]} |Y_u(\omega, \cdot) - \tY_u(\omega, \cdot)| \Big\|_{L^q(\bar{\Omega})} \Big)\\
&\qquad \ \times \Big( (\tau_{i+1}(\omega) - \tau_i(\omega)) + \|M^{(\tau_i(\omega),\tau_{i+1}(\omega)-)}\|_{p,q,\infty,[\tau_i(\omega),\tau_{i+1}(\omega)],\bar{\Omega}}\\
&\qquad \qquad + \|\bX^{(\tau_i(\omega),\tau_{i+1}(\omega)-)}(\omega)\|_{p,[\tau_i(\omega),\tau_{i+1}(\omega)]} \Big) \Big),
\end{align*}
where the constant $C_2 > \frac{1}{2}$ depends only on $p, q, \|b\|_{C^1_b}, \|\sigma\|_{C^1_b}$ and $\|f\|_{C^3_b}$.\footnote{The extra term in the metric  in \eqref{eq: definition of metric on B_T for contraction}, compared to the metric used in the proof of \cite[Theorem~5.2]{AllanPieper2026}, results in a slightly different constant $C_2$ here, but this is immaterial.}

By choosing $\eta = 2 C_2 > 1$ and $\alpha \leq \frac{1}{4 C_2^2}$, it then follows from \eqref{eq: small interval bound} that
\begin{equation*}
d^{\eta,\bar{\Omega}}_{\tau_i,\tau_{i+1}} \big(\Psi(Y,Y')(\omega, \cdot), \Psi(\tY,\tY')(\omega, \cdot)\big) \leq \frac{\eta + 1}{2\eta} d^{\eta,\bar{\Omega}}_{\tau_i,\tau_{i+1}} \big( (Y,Y')(\omega, \cdot), (\tY,\tY')(\omega, \cdot)\big).
\end{equation*}
Since this inequality holds for $\P$-almost every $\omega \in \Omega$, we then have that
\begin{equation*}
\mathbf{d}^\eta_{\tau_i,\tau_{i+1}} \big(\Psi(Y,Y'), \Psi(\tY,\tY')\big) \leq \frac{\eta + 1}{2\eta} \mathbf{d}^\eta_{\tau_i,\tau_{i+1}} \big( (Y,Y'), (\tY,\tY')\big),
\end{equation*}
so that $\Psi$ is indeed a contraction on $\mathbf{B}_{\tau_i,\tau_{i+1}}(\|f\|_{C^3_b}, 1, \xi_i, f(\xi_i))$ with respect to $\mathbf{d}^\eta_{\tau_i,\tau_{i+1}}$. Since, by Lemma~\ref{lemma: B_tau is a closed space with respect to hat d}, this is a complete metric space, it follows from the Banach fixed-point theorem that there exists a unique fixed point $(Y,Y') \in \mathbf{B}_{\tau_i,\tau_{i+1}}(\|f\|_{C^3_b}, 1, \xi_i, f(\xi_i))$ of $\Psi$.

In particular, $\hat{\P}$-almost surely, whenever $t \in \llbracket \tau_i, \tau_{i+1} \llbracket$, we have that
\begin{equation*}
Y_t = \xi_i + \int_{\tau_i}^t b(Y_u) \dd u + \int_{\tau_i}^t \sigma(Y_{u-}) \dd M_u + \int_{\tau_i}^t f(Y_{u-}) \dd X_u,
\end{equation*}
and, recalling Remark~\ref{remark: consistency of slicing roughs stochastic integrals}, for $\P$-almost every $\omega \in \Omega$ and any $t \in [\tau_i(\omega), \tau_{i+1}(\omega))$,
\begin{align*}
Y_t(\omega, \cdot) = \xi_i(\omega, \cdot) + \int_{\tau_i}^t b(Y_u(\omega, \cdot)) \dd u + \int_{\tau_i}^t \sigma(Y_{u-}(\omega, \cdot)) \dd M_u + \int_{\tau_i}^t f(Y_u(\omega, \cdot)) \dd \bX_u(\omega)
\end{align*}
holds $\bar{\P}$-almost surely, so that $Y$ is the solution to both the doubly SDE \eqref{eq: the doubly stochastic SDE} and the rough SDE \eqref{eq: the corresponding RSDE} on the interval $\llbracket \tau_i, \tau_{i+1} \llbracket$, with the initial condition $Y_{\tau_i} = \xi_i$.

\emph{Global solution:}
Now let $(\xi_i)_{i \in \N}$ be a sequence of random variables such that, for each $i \in \N$, $\xi_i$ is $\hat{\cF}_{\tau_i}$-measurable and $\xi(\omega, \cdot) \in L^q(\bar{\Omega})$ for $\P$-almost every $\omega \in \Omega$. Of course, the argument above is valid on each of the intervals $\llbracket \tau_i, \tau_{i+1} \llbracket$ for $i \in \N$. Thus, for each $i \in \N$, we obtain a solution $Y$ on the interval $\llbracket \tau_i, \tau_{i+1} \llbracket$, with the initial condition $Y_{\tau_i} = \xi_i$.

We then extend this to a solution on $\llbracket \tau_i, \tau_{i+1} \rrbracket$ by introducing the jump at time $\tau_{i+1}$. That is, given the solution $Y$ on $\llbracket \tau_i, \tau_{i+1} \llbracket$, we let
\begin{equation*}
Y_{\tau_{i+1}} = Y_{\tau_{i+1}-} + \sigma(Y_{\tau_{i+1}-}) \Delta M_{\tau_{i+1}} + f(Y_{\tau_{i+1}-}) \Delta X_{\tau_{i+1}},
\end{equation*}
which is consistent with the canonical jump structure of both SDEs and rough SDEs, noting in particular that $\Delta \X_{\tau_{i+1}} = 0$ for any It\^o rough path lift.

We then obtain a global solution $Y$ by simply matching each initial value $\xi_i$ with the terminal value from the previous subinterval. That is, we let $\xi_0 = y_0$, and for each $i \geq 1$, we let $\xi_i = Y_{\tau_i}$, where $Y$ is the solution obtained above on the interval $\llbracket \tau_{i-1}, \tau_i \rrbracket$. Since, for $\P$-almost every $\omega \in \Omega$, we have that $\tau_i(\omega) = T$ for sufficiently large $i$, we see that this defines a process $Y$ on $[0,T]$, which is the unique solution to both the SDE in \eqref{eq: the doubly stochastic SDE} and the rough SDE in \eqref{eq: the corresponding RSDE}.
\end{proof}

\subsection{A stability estimate for doubly stochastic DEs}

In light of Theorem~\ref{theorem: consistency between doubly stochastic SDEs and RSDEs}, one may ask whether the stability of rough SDE solutions with respect to their driving noise carry over to the corresponding doubly stochastic setting. Indeed, the following result establishes a novel continuity estimate for solutions to doubly stochastic DEs.

\begin{proposition}\label{proposition: Lipschitz continuity of doubly stochastic DEs}
Let $p \in (2,3)$, $q \in [2,\infty)$, and let $m, n, r \in [1,\infty]$ such that $\frac{1}{n} + \frac{1}{r} = \frac{1}{m}$. Let $b \in C^1_b$, $\sigma \in C^1_b$ and $f \in C^3_b$, and let
\begin{equation*}
(\hat{\Omega}, \hat{\cF}, (\hat{\cF}_t)_{t \in [0,T]}, \hat{\P}) = (\Omega \times \bar{\Omega}, \cF \otimes \bar{\cF}, (\cF_t \otimes \bar{\cF}_t)_{t \in [0,T]}, \P \otimes \bar{\P})
\end{equation*}
be a product probability space which satisfies the usual conditions.

Let $y_0, \ty_0$ be $\hat{\cF}_0$-measurable random variables such that $\|y_0\|_{L^q(\bar{\Omega})}, \|\tilde{y}_0\|_{L^q(\bar{\Omega})} \in L^r(\Omega)$. Let $X$ and $\tX$ be $\cF_t$-adapted c\`adl\`ag semimartingales on $(\Omega, \cF, \P)$, such that their It\^o rough path lifts $\bX, \tbX$ satisfy $\|\bX\|_{p,[0,T]}, \|\tbX\|_{p,[0,T]} \in L^{2np}(\Omega) \cap L^r(\Omega)$, and let $M, \tM \in V^p L^{q,\infty}(\bar{\Omega})$ be $\bar{\cF}_t$-adapted c\`adl\`ag martingales on $(\bar{\Omega}, \bar{\cF}, \bar{\P})$, and suppose that the norms $\|\|\bX\|_{p,[0,T]}\|_{L^{2np}(\Omega)}$, $\|\|\tbX\|_{p,[0,T]}\|_{L^{2np}(\Omega)}$, $\|M\|_{p,q,\infty,[0,T],\bar{\Omega}}$ and $\|\tM\|_{p,q,\infty,[0,T],\bar{\Omega}}$ are all bounded by a constant $L > 0$.

Let $Y$ and $\tY$ be the unique strong solutions to the SDE \eqref{eq: the doubly stochastic SDE} with data $(y_0,M,X)$ and $(\ty_0,\tM,\tX)$ respectively. We then have that
\begin{equation*}
\begin{split}
\big\|&\|Y - \tY\|_{p,q,[0,T], \bar{\Omega}}\big\|_{L^m(\Omega)} + \big\|\|Y' - \tY'\|_{p,q,[0,T],\bar{\Omega}}\big\|_{L^m(\Omega)} + \big\|\|\E_{\edot} (R^Y - R^{\tY})\|_{\frac{p}{2},q,[0,T],\bar{\Omega}}\big\|_{L^m(\Omega)}\\
&\leq C \Big(\big\|\|y_0 - \ty_0\|_{L^q(\bar{\Omega})}\big\|_{L^{r}(\Omega)} + \|M - \tM\|_{p,q,[0,T],\bar{\Omega}} + \big\|\|\bX - \tbX\|_{p,[0,T]}\big\|_{L^{r}(\Omega)}\Big),
\end{split}
\end{equation*}
where the constant $C$ depends only on $p, q, \|b\|_{C^1_b}, \|\sigma\|_{C^1_b}, \|f\|_{C^3_b}, T$ and $L$.
\end{proposition}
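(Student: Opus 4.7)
The plan is to leverage the consistency result of Theorem~\ref{theorem: consistency between doubly stochastic SDEs and RSDEs} to reduce the claimed inequality to the pathwise (in $\omega \in \Omega$) Lipschitz estimate \eqref{eq: Lipschitz continuity of solution map} for rough SDEs, and then to integrate in $\omega$ via a H\"older's inequality argument on $\Omega$. By Theorem~\ref{theorem: consistency between doubly stochastic SDEs and RSDEs}, for $\P$-almost every $\omega \in \Omega$ the pair $(Y(\omega, \cdot), f(Y(\omega, \cdot)))$ is the stochastic controlled path solution on $(\bar{\Omega}, \bar{\cF}, \bar{\P})$ of the rough SDE driven by $(y_0(\omega, \cdot), M, \bX(\omega))$, and likewise for $\tY(\omega, \cdot)$ with data $(\ty_0(\omega, \cdot), \tM, \tbX(\omega))$; in particular, the two notions of the Gubinelli-type remainder $R^Y$ coincide for almost every $\omega$.

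\textbf{Pathwise estimate.} For each fixed $\omega \in \Omega$, apply the Lipschitz estimate \eqref{eq: Lipschitz continuity of solution map} of Theorem~\ref{theorem: existence and estimates for solutions to RSDEs with measures}, taken with underlying probability space $(\bar{\Omega}, \bar{\cF}, \bar{\P})$. Inspecting the contraction-mapping proof of \cite[Theorem 4.1]{AllanPieper2024} on which Theorem~\ref{theorem: existence and estimates for solutions to RSDEs with measures} is modelled, one verifies that the constant in that estimate grows polynomially, of degree at most $2p$, in the uniform bounds for the driving data. Consequently, there is a constant $C_0 > 0$ depending only on $p, q, \|b\|_{C^1_b}, \|\sigma\|_{C^1_b}, \|f\|_{C^3_b}, T$ and $L$, such that, $\P$-almost surely,
\begin{equation*}
\cN(\omega) \leq C_0 \big(1 + \|\bX(\omega)\|_{p,[0,T]} + \|\tbX(\omega)\|_{p,[0,T]}\big)^{2p} \Delta(\omega),
\end{equation*}
where $\cN(\omega)$ denotes the sum of the three $\bar{\Omega}$-norms appearing on the left-hand side of the claim, and
\begin{equation*}
\Delta(\omega) := \|y_0(\omega, \cdot) - \ty_0(\omega, \cdot)\|_{L^q(\bar{\Omega})} + \|[M - \tM]\|_{\frac{p}{2},\frac{q}{2},[0,T],\bar{\Omega}}^{\frac{1}{2}} + \|\bX(\omega) - \tbX(\omega)\|_{p,[0,T]}.
\end{equation*}

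\textbf{Integration and main obstacle.} Taking $L^m(\Omega)$ norms and applying H\"older's inequality with exponents $n, r$ satisfying $\tfrac{1}{n} + \tfrac{1}{r} = \tfrac{1}{m}$, the assumption that $\|\bX\|_{p,[0,T]}, \|\tbX\|_{p,[0,T]} \in L^{2np}(\Omega)$ with norm bounded by $L$ ensures that $(1 + \|\bX\|_{p,[0,T]} + \|\tbX\|_{p,[0,T]})^{2p}$ has $L^n(\Omega)$ norm bounded by a constant depending only on $L$, while $\|\Delta\|_{L^r(\Omega)}$ reproduces exactly the three data-difference terms on the right-hand side of the claim. The main technical obstacle is the careful extraction of the polynomial-in-$\|\bX\|_p$ dependence of the pathwise Lipschitz constant in \eqref{eq: Lipschitz continuity of solution map}: the subdivision-and-contraction scheme of \cite[Theorem 4.1]{AllanPieper2024} uses on the order of $\|\bX\|_{p,[0,T]}^p$ subintervals, and a naive iteration would yield exponential growth. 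One must instead exploit the superadditivity of the relevant controls $(s,t) \mapsto \|\bX\|_{p,[s,t]}^p + \|[M]\|_{\frac{p}{2},\frac{q}{2},\infty,[s,t],\bar{\Omega}}^{\frac{p}{2}}$ to combine the per-interval estimates additively, yielding the polynomial growth compatible with the assumed $L^{2np}$ integrability.
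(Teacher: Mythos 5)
Your overall strategy matches the paper's: reduce to the pathwise rough SDE Lipschitz estimate via Theorem~\ref{theorem: consistency between doubly stochastic SDEs and RSDEs}, then integrate in $\omega$ with H\"older's inequality on $\Omega$. However, the step labelled ``Pathwise estimate'' contains the entire substance of the proof, and you have asserted it rather than proved it. The claim that the Lipschitz constant in \eqref{eq: Lipschitz continuity of solution map} grows polynomially of degree at most $2p$ in $\|\bX\|_{p,[0,T]}, \|\tbX\|_{p,[0,T]}$ is not visible from Theorem~\ref{theorem: existence and estimates for solutions to RSDEs with measures} as stated (there the constant merely ``depends on $L$''), and you yourself concede that a naive iteration over subintervals gives exponential growth. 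Yet the remedy you offer — ``exploit superadditivity to combine the per-interval estimates additively'' — is only a slogan: the obstruction is not that the subinterval contributions fail to be additive (they are, that is what superadditivity of the control gives), but that the error in the initial condition must be propagated from one subinterval to the next, and each such propagation multiplies by a factor that need not be $\leq 1$. You have not explained how to avoid the $(1+C)^K$ blow-up that this iteration naively produces.

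The paper fills exactly this gap by explicitly re-running the local stability argument (Step~3 of the proof of Theorem~4.1 in the cited reference) on a partition $\{t_i\}_{i=0}^{K}$ chosen so that the control $w$ is at most $\epsilon$ on each $[t_i,t_{i+1})$, using superadditivity to bound the number of pieces by $K \lesssim 1 + \|\bX\|_{p,[0,T]}^p + \|\tbX\|_{p,[0,T]}^p$, and then assembling the global $p$-variation bound as $\|Y - \tY\|_{p,q,[0,T],\bar\Omega} \leq K^{(p-1)/p}\bigl(\sum_i \|Y-\tY\|_{p,q,[t_i,t_{i+1}],\bar\Omega}^p\bigr)^{1/p}$, arriving at a factor $K^2 \lesssim (1 + \|\bX\|_{p,[0,T]} + \|\tbX\|_{p,[0,T]})^{2p}$. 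Whether you reproduce this argument or instead extract the quantitative constant directly from the contraction-mapping proof in the cited reference, you must carry out that derivation; as written your proposal merely names the obstacle and the answer without bridging the two. Apart from this, the H\"older step and the choice of exponents $n,r$ with $\tfrac1n + \tfrac1r = \tfrac1m$ and the use of $\|\bX\|_{p,[0,T]} \in L^{2np}(\Omega)$ are correct and exactly as in the paper.
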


\begin{proof}
We let $w$ be the control given by
\begin{equation*}
w(s,t) := (t - s) + \|M\|_{p,q,\infty,[s,t],\bar{\Omega}}^p + \|\tM\|_{p,q,\infty,[s,t],\bar{\Omega}}^p + \|\bX\|_{p,[s,t]}^p + \|\tbX\|_{p,[s,t]}^p
\end{equation*}
for $(s,t) \in \Delta_{[0,T]}$. In the following, we will consider intervals $[s,t]$ such that $w(s,t) \leq 1$, meaning in particular that $\|\bX\|_{p,[s,t]}, \|\tbX\|_{p,[s,t]} \leq 1$.

By Theorem~\ref{theorem: consistency between doubly stochastic SDEs and RSDEs}, we know that $Y$ and $\tY$ are the solutions to the rough SDE in \eqref{eq: the corresponding RSDE} with data $(y_0,M,\bX)$ and $(\ty_0,\tM,\tbX)$ respectively. It was established in Step~3 of the proof of \cite[Theorem~5.2]{AllanPieper2026} that there exists a constant $\epsilon \in (0,1]$, which depends only on $p, q, \|b\|_{C^1_b}, \|\sigma\|_{C^1_b}$ and $\|f\|_{C^3_b}$, such that, for every $(s,t) \in \Delta_{[0,T]}$ with $w(s,t) \leq \epsilon$, we have the local estimate
\begin{equation}\label{eq: local robustness result}
\begin{split}
\|&Y - \tY\|_{p,q,[s,t],\bar{\Omega}} + \|Y' - \tY'\|_{p,q,[s,t],\bar{\Omega}} + \|\E_{\edot} (R^Y - R^{\tY})\|_{\frac{p}{2},q,[s,t],\bar{\Omega}}\\
&\lesssim \big( \|Y_s - \tY_s\|_{L^q(\bar{\Omega})} + \|M - \tM\|_{p,q,[s,t],\bar{\Omega}} + \|\bX - \tbX\|_{p,[s,t]} \big),
\end{split}
\end{equation}
where the implicit multiplicative constant also depends only on $p, q, \|b\|_{C^1_b}, \|\sigma\|_{C^1_b}$ and $\|f\|_{C^3_b}$. In particular, by taking $\epsilon \leq 1$, we ensure that $\|\bX\|_{p,[s,t]}, \|\tbX\|_{p,[s,t]} \leq 1$, so that these constants may be chosen to not depend on $\bX$ or $\tbX$.

To then extend this to a global estimate, we take a partition $\{t_i\}_{i=0}^K$ of the interval $[0,T]$ such that $w(t_i,t_{i+1}-) \leq \epsilon$, so that \eqref{eq: local robustness result} holds on $[t_i,t_{i+1})$ for each $i$. By the superadditivity of the control $w$, we may choose this partition such that
\begin{equation}\label{eq: bound on n without X and tX}
K \lesssim 1 + \|\bX\|_{p,[0,T]}^p + \|\tbX\|_{p,[0,T]}^p,
\end{equation}
where the multiplicative constant depends only on $p, q, \|b\|_{C^1_b}, \|\sigma\|_{C^1_b}, \|f\|_{C^3_b}$ and on the norms $\|M\|_{p,q,\infty,[0,T],\bar{\Omega}}$ and $\|\tM\|_{p,q,\infty,[0,T],\bar{\Omega}}$. Using the canonical jump structure of $Y$ and $\tY$, we have that
\begin{equation*}
\|Y - \tY\|_{p,q,[t_{i-1},t_i],\bar{\Omega}} \lesssim \|Y_{t_{i-1}} - \tY_{t_{i-1}}\|_{L^q(\bar{\Omega})} + \|M - \tM\|_{p,q,[t_{i-1},t_i],\bar{\Omega}} + \|\bX - \tbX\|_{p,[t_{i-1},t_i]}.
\end{equation*}
Since $\|Y_{t_{i-1}} - \tY_{t_{i-1}}\|_{L^q(\bar{\Omega})} \leq \|Y_{t_{i-2}} - \tY_{t_{i-2}}\|_{L^q(\bar{\Omega})} + \|Y - \tY\|_{p,q,[t_{i-2},t_{i-1}],\bar{\Omega}}$ for each $i$, we then deduce that
\begin{equation*}
\|Y - \tY\|_{p,q,[t_{i-1},t_i],\bar{\Omega}} \lesssim i \big(\|y_0 - \ty_0\|_{L^q(\bar{\Omega})} + \|M - \tM\|_{p,q,[0,t_i],\bar{\Omega}} + \|\bX - \tbX\|_{p,[0,t_i]}\big).
\end{equation*}
We then have
\begin{align*}
\|Y - \tY\|_{p,q,[0,T],\bar{\Omega}} &\leq K^{\frac{p-1}{p}} \bigg( \sum_{i=0}^{K-1} \|Y - \tY\|_{p,q,[t_i,t_{i+1}],\bar{\Omega}}^p \bigg)^{\frac{1}{p}}\\
&\lesssim K^2 \big( \|y_0 - \ty_0\|_{L^q(\bar{\Omega})} + \|M - \tM\|_{p,q,[0,T],\bar{\Omega}} + \|\bX - \tbX\|_{p,[0,T]} \big),
\end{align*}
where again the multiplicative constant is independent of $\bX$ and $\tbX$. Thus, applying H\"older's inequality in the case when $n, r < \infty$, we have that
\begin{align*}
&\big\|\|Y - \tY\|_{p,q,[0,T],\bar{\Omega}}\big\|_{L^m(\Omega)}\\
&\lesssim \|K^2\|_{L^n(\Omega)} \Big( \big\|\|y_0 - \ty_0\|_{L^q(\bar{\Omega})}\big\|_{L^r(\Omega)} + \|M - \tM\|_{p,q,[0,T],\bar{\Omega}} + \big\|\|\bX - \tbX\|_{p,[0,T]}\big\|_{L^r(\Omega)} \Big),
\end{align*}
where, by \eqref{eq: bound on n without X and tX}, we see that $\|K^2\|_{L^n(\Omega)} \lesssim 1 + \|\|\bX\|_{p,[0,T]}\|_{L^{2np}(\Omega)}^{2p} + \|\|\tbX\|_{p,[0,T]}\|_{L^{2np}(\Omega)}^{2p}$. An identical argument gives the same bound for $\|Y' - \tY'\|_{p,q,[0,T],\bar{\Omega}}$ and for $\|\E_{\edot} (R^Y - R^{\tY})\|_{\frac{p}{2},q,[0,T],\bar{\Omega}}$.
\end{proof}

\section{A John--Nirenberg inequality for $\textup{BMO}^{p\textup{-var}}$ processes}\label{sec: John-Nirenberg}

Integrability properties of random rough paths and associated RDEs have drawn significant interest; see, e.g., \cite{CassLittererLyons2013}, \cite{CassOgrodnik2017}, \cite{FrizGessGulisashviliRiedel2016}, \cite{FrizOberhauser2010} and \cite{FrizRiedel2013}. In particular, the existence of exponential moments for solutions to rough SDEs has been crucial for applications to robust stochastic filtering, as seen in \cite{CrisanDiehlFrizOberhauser2013} and \cite{DiehlOberhauserRiedel2015}. A particularly useful object in the study of exponential moments of solutions to RDEs turned out to be an object, typically denoted by $N_{\alpha, [s,t]}(w)$ for a continuous control $w$, first introduced in \cite{CassLittererLyons2013}, which counts how many times the control exceeds a given level $\alpha > 0$ on an interval $[s,t]$. This has been shown to often exhibit better tail behaviour than $\|\bX\|_{p,[0,T]}^p$ for the rough path lift of a semimartingale; see, e.g., \cite{FrizGessGulisashviliRiedel2016}.

In \cite[Section~3]{Le2022a}, the author discusses a relevant class of processes with vanishing mean oscillation, denoted by $\textup{VMO}^{p\textup{-var}}$, and their connection to rough SDEs. In particular, a John--Nirenberg-type inequality provides an explicit bound on the exponential moments of such processes (see \cite[Theorem~3.4]{Le2022a}), and the existence of exponential moments is established for the solutions to rough SDEs with continuous driving noise. In this section, we extend these results to discontinuous processes, and establish exponential moments of rough stochastic integrals and of solutions to rough SDEs with jumps, as required for our subsequent application to robust filtering. To this end, we first extend the notion of $\textup{VMO}^{p\textup{-var}}$ processes to more general $\textup{BMO}^{p\textup{-var}}$ ones, and relate the corresponding bounds to a generalization of $N_{\alpha,[s,t]}(w)$ to discontinuous controls. This both establishes the desired exponential moments for rough stochastic integrals, and will also allow us to establish corresponding moments in a doubly stochastic setting in Section~\ref{sec: application to filtering}.

\subsection{Definitions and preliminary results}

For convenience, we recall the notion of a BMO process, along with some relevant notation as introduced in \cite{Le2022a}.

\begin{definition}
Given a c\`adl\`ag adapted process $V = (V_t)_{t \in [0,T]}$, we define its \textit{modulus of mean variation} over an interval $[s,t] \subseteq [0,T]$ by
\begin{equation*}
\rho_{s,t}(V) := \sup_{s \leq \sigma \leq \tau \leq t} \big\|\E_{\sigma} [|\delta V_{\sigma-,\tau}|]\big\|_{L^\infty},
\end{equation*}
where the supremum is taken over all stopping times $\sigma, \tau$ such that $s \leq \sigma \leq \tau \leq t$. The process $V$ is said to be of \emph{bounded mean oscillation} (BMO) if $\rho_{0,T}(V) < \infty$.
\end{definition}

\begin{definition}
For $p \in [1,\infty)$, we write $\textup{BMO}^{p\textup{-var}}$ for the class of c\`adl\`ag adapted processes $V = (V_t)_{t \in [0,T]}$ such that
\begin{equation*}
\|V\|_{\textup{BMO}^{p\text{-var}},[0,T]} := \bigg(\sup_{\cP \subset [0,T]} \sum_{[s,t] \in \cP} \rho_{s,t}(V)^p\bigg)^{\hspace{-2pt}\frac{1}{p}} < \infty.
\end{equation*}
\end{definition}

The following result is a generalization of \cite[Proposition~4.2]{Le2022a} to the c\`adl\`ag setting.

\begin{proposition}\label{proposition: BMO-p-var is p,q,infty}
Let $p, q \in [1,\infty)$. Let $V$ be a c\`adl\`ag adapted process such that $V_0 \in L^q$, and define $\Gamma = (\Gamma_{s,t})_{(s,t) \in \Delta_{[0,T]}}$ by
\begin{equation}\label{eq: defn Gamma_st}
\Gamma_{s,t} := \sup_{u \in [s,t]} |\Delta V_u|
\end{equation}
for each $(s,t) \in \Delta_{[0,T]}$. Then there exists a constant $C > 0$, which depends only on $q$, such that
\begin{equation*}
\frac{1}{5} \|V\|_{\textup{BMO}^{p\textup{-var}},[0,T]} \leq \|V\|_{p,q,\infty,[0,T]} + \|\Gamma\|_{p,\infty,[0,T]} \leq C \|V\|_{\textup{BMO}^{p\textup{-var}},[0,T]}.
\end{equation*}
In particular, $V \in \textup{BMO}^{p\textup{-var}}$ if and only if $V \in V^p L^{q,\infty}$ and $\|\Gamma\|_{p,\infty,[0,T]} < \infty$.
\end{proposition}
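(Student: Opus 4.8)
The plan is to prove a two-sided comparison between the $\textup{BMO}^{p\textup{-var}}$ norm of $V$ and the quantity $\|V\|_{p,q,\infty,[0,T]} + \|\Gamma\|_{p,\infty,[0,T]}$ by comparing the per-interval quantities $\rho_{s,t}(V)$ and $\|\delta V_{s,t}\|_{q,\infty,s} \vee \Gamma_{s,t}$, and then noting that a pointwise comparison (up to constants) of two super-additive-type increment functionals passes to the supremum over partitions in each $p$-variation, since all three functionals $(s,t) \mapsto \|V\|_{p,q,\infty,[s,t]}^p$, $(s,t) \mapsto \|\Gamma\|_{p,\infty,[s,t]}^p$, and $(s,t) \mapsto \|V\|_{\textup{BMO}^{p\textup{-var}},[s,t]}^p$ are controls. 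The continuous-path case of this is \cite[Proposition~4.2]{Le2022a}, so the work is isolating the role of the jumps and showing they are captured by the $\|\Gamma\|_{p,\infty,[0,T]}$ term.

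\textbf{Upper bound.} First I would fix a partition $\cP$ and a single interval $[s,t] \in \cP$ and estimate $\rho_{s,t}(V)$ from above. Given stopping times $s \leq \sigma \leq \tau \leq t$, I would split $\delta V_{\sigma-,\tau} = \Delta V_\sigma + \delta V_{\sigma,\tau}$, so that $\|\E_\sigma[|\delta V_{\sigma-,\tau}|]\|_{L^\infty} \leq \|\Delta V_\sigma\|_{L^\infty} + \|\E_\sigma[|\delta V_{\sigma,\tau}|]\|_{L^\infty} \leq \Gamma_{s,t} + \|\delta V_{\sigma,\tau}\|_{1,\infty,\sigma} \leq \Gamma_{s,t} + \|\delta V_{\sigma,\tau}\|_{q,\infty,\sigma}$, using $\|\cdot\|_{1,\infty,\sigma} \leq \|\cdot\|_{q,\infty,\sigma}$ (monotonicity of the $\|\cdot\|_{q,r,s}$ norm in $q$, recalled in the preliminaries). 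Since $\|\delta V_{\sigma,\tau}\|_{q,\infty,\sigma}^p \leq \|V\|_{p,q,\infty,[s,t]}^p$ (this is the optional-sampling/refinement property of the $V^p L^{q,\infty}$ seminorm over the random subinterval $[\sigma,\tau] \subseteq [s,t]$, which follows from the definition of the $p$-variation as a supremum over partitions together with a conditioning argument — this is exactly the kind of property used throughout \cite{AllanPieper2024}), I would obtain $\rho_{s,t}(V) \leq \Gamma_{s,t} + \|V\|_{p,q,\infty,[s,t]}$. Raising to the $p$-th power, summing over $[s,t] \in \cP$, using $(a+b)^p \leq 2^{p-1}(a^p+b^p)$ and the super-additivity of $(s,t) \mapsto \|V\|_{p,q,\infty,[s,t]}^p$ and of $(s,t) \mapsto \|\Gamma\|_{p,\infty,[s,t]}^p \geq \sum_{[s,t]\in\cP} \Gamma_{s,t}^p$ (note $\Gamma$ itself is already super-additive as a supremum, so $\sum \Gamma_{s,t}^p \leq \|\Gamma\|_{p,\infty,[0,T]}^p$ trivially), then taking the supremum over $\cP$, yields $\|V\|_{\textup{BMO}^{p\textup{-var}},[0,T]} \leq C(q)\big(\|V\|_{p,q,\infty,[0,T]} + \|\Gamma\|_{p,\infty,[0,T]}\big)$ for a suitable constant depending only on $q$ (through the constant $2^{(p-1)/p}$ absorbed and, if one wishes a clean form, re-indexing). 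The constant $\tfrac15$ versus $C$ asymmetry in the statement just reflects that one direction is essentially sharp.

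\textbf{Lower bound.} For the reverse inequality I need both $\|V\|_{p,q,\infty,[0,T]} \lesssim \|V\|_{\textup{BMO}^{p\textup{-var}},[0,T]}$ and $\|\Gamma\|_{p,\infty,[0,T]} \lesssim \|V\|_{\textup{BMO}^{p\textup{-var}},[0,T]}$. For the jump part, observe that for any $u \in [s,t]$, taking the deterministic times $\sigma = \tau = u$ in the definition of $\rho_{s,t}$ gives $\|\Delta V_u\|_{L^\infty} = \|\E_u[|\delta V_{u-,u}|]\|_{L^\infty} \leq \rho_{s,t}(V)$, hence $\|\Gamma_{s,t}\|_{L^\infty} \leq \rho_{s,t}(V)$; summing over a partition gives $\|\Gamma\|_{p,\infty,[0,T]} \leq \|V\|_{\textup{BMO}^{p\textup{-var}},[0,T]}$. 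For $\|V\|_{p,q,\infty,[0,T]}$, I would use $\|\delta V_{s,t}\|_{q,\infty,s} \leq \|\delta V_{s,t}\|_{L^\infty}$ is too crude; instead the route is the one in \cite[Proposition~4.2]{Le2022a}: a John--Nirenberg / conditional-Burkholder argument shows $\|\delta V_{s,t}\|_{q,\infty,s} \lesssim \rho_{s,t}(V) + (\text{local } p\text{-variation BMO mass})$, and the only new ingredient in the c\`adl\`ag case is that $\delta V_{\sigma-,\tau}$ rather than $\delta V_{\sigma,\tau}$ appears, which is precisely what the $\Gamma$ term was introduced to control. I expect this to reduce, after the standard stopping-time decomposition, to the continuous-path estimate applied to the "continuous part" plus a direct bound on the jump contributions by $\|\Gamma\|_{p,\infty,[0,T]}$, which we have just bounded by $\|V\|_{\textup{BMO}^{p\textup{-var}}}$. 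Collecting the two pieces and using that both functionals are controls gives $\|V\|_{p,q,\infty,[0,T]} + \|\Gamma\|_{p,\infty,[0,T]} \geq \tfrac15 \|V\|_{\textup{BMO}^{p\textup{-var}},[0,T]}$.

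\textbf{Main obstacle.} The routine half is the upper bound; the delicate half is the lower bound $\|V\|_{p,q,\infty,[0,T]} \lesssim \|V\|_{\textup{BMO}^{p\textup{-var}},[0,T]}$, because one must reconstruct the $q$-th-moment control of increments from the $L^1$-type mean-oscillation data, which in the continuous case is exactly the content of the John--Nirenberg inequality and requires a careful iterated stopping-time (or good-$\lambda$) argument. The additional subtlety here is the left-limit $\sigma-$ in the definition of $\rho_{s,t}$: one must check that the presence of jumps does not break the optional-sampling manipulations, and that every jump term that is "peeled off" is uniformly dominated by $\Gamma_{s,t}$ so that it can be reabsorbed. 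I would handle this by following the proof of \cite[Proposition~4.2]{Le2022a} line by line, replacing each occurrence of a continuous increment over a random interval by its c\`adl\`ag analogue, inserting a $\Delta V$ term, and bounding that term by $\Gamma$; the super-additivity/control structure of all the relevant functionals (already recorded in the preliminaries and footnoted for $\|\bX\|_{p,[\cdot,\cdot]}^p$) then lets the local estimates be globalised by summing over an appropriate partition and taking the supremum.
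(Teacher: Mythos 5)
Your overall strategy — compare $\rho_{s,t}(V)$ with $\|\delta V_{s,t}\|_{q,\infty,s}$ and $\Gamma_{s,t}$ interval-by-interval and then pass to $p$-variation — is the same as the paper's, and the observation $\|\Gamma_{s,t}\|_{L^\infty} \leq \rho_{s,t}(V)$ (via deterministic $\sigma=\tau=u$) is exactly right. But two of your claimed steps hide the real content.

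For the direction $\|V\|_{\textup{BMO}^{p\textup{-var}}} \leq 5(\|V\|_{p,q,\infty} + \|\Gamma\|_{p,\infty})$, you assert $\|\delta V_{\sigma,\tau}\|_{q,\infty,\sigma} \leq \|V\|_{p,q,\infty,[s,t]}$ for \emph{stopping times} $s \leq \sigma \leq \tau \leq t$ as an ``optional-sampling/refinement property'' following from ``a conditioning argument.'' This is not a triviality and, in this form, is not even true with constant $1$: the seminorm $\|V\|_{p,q,\infty,[s,t]}$ is defined as a supremum over \emph{deterministic} partitions, and $\E_\sigma[|\delta V_{\sigma,\tau}|^q]$ with random $\tau \geq \sigma$ cannot simply be decomposed along the events $\{\sigma = u_i, \tau = u_j\}$, since $\{\tau = u_j\} \notin \cF_{u_i}$ for $j > i$. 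Controlling random intervals by the deterministic-partition seminorm is precisely the content of \cite[Proposition~2.2]{Le2022a}, which the paper cites to get $\rho_{s,t}(V) \leq 2\|V\|_{p,q,\infty,[s,t]} + 5\|\Gamma_{s,t}\|_{L^\infty}$ — note the constant $2$, not $1$, exactly because of this issue. So you have a genuine gap here: you need that lemma (or a reproof of it), not a ``conditioning argument.''

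For the direction you call delicate, $\|V\|_{p,q,\infty,[0,T]} \lesssim \|V\|_{\textup{BMO}^{p\textup{-var}},[0,T]}$, the paper's argument is lighter than the good-$\lambda$ / continuous-plus-jump-decomposition route you anticipate. One sets $A_u := \sup_{r\in[s,u]}|\delta V_{s,r}|$, applies \cite[Proposition~2.8]{Le2022a} (which is stated for c\`adl\`ag processes, so it already absorbs the jumps — no separate $\Gamma$ absorption is needed) to get $\rho_{s,t}(A) \leq 11\,\rho_{s,t}(V)$, and then \cite[Lemma~A.2]{Le2022a} (a Garsia--Neveu-type lemma) converts this into $\|\delta V_{s,t}\|_{q,\infty,s}^{\lceil q\rceil} \leq \|\E_s[|A_t|^{\lceil q\rceil}]\|_{L^\infty} \lesssim \rho_{s,t}(V)^{\lceil q\rceil}$. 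Summing $\rho_{s,t}(V)^p$ over a partition then gives the claim directly. Your proposed route of splitting into ``continuous part'' plus jump contributions and reabsorbing the latter into $\|\Gamma\|_{p,\infty}$ is unnecessary and would be harder to make rigorous (there is no canonical continuous/jump decomposition of an arbitrary c\`adl\`ag adapted process in this BMO setting). So: right shape, right reference, but the two key c\`adl\`ag-robust ingredients — Le's Proposition~2.2 for the random-interval issue and Proposition~2.8 / Lemma~A.2 for the moment upgrade — are not identified, and the first of these is a real gap rather than a routine step.
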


\begin{proof}
For some $(s,t) \in \Delta_{[0,T]}$, we let $A_u := \sup_{r \in [s,u]} |\delta V_{s,r}|$ for $u \in [s,t]$. Then, by \cite[Proposition~2.8]{Le2022a}, for any stopping time $s \leq \sigma \leq t$, we have that
\begin{equation*}
\big\|\E_\sigma [|\delta A_{\sigma-,t}|]\big\|_{L^\infty} \leq \rho_{s,t}(A) \leq 11 \rho_{s,t}(V).
\end{equation*}
Hence, by \cite[Lemma~A.2]{Le2022a}, we have that
\begin{equation*}
\|\delta V_{s,t}\|_{q,\infty,s}^{\lceil q \rceil} \leq \big\|\E_s \big[|A_t|^{\lceil q \rceil}\big]\big\|_{L^\infty} \lesssim \rho_{s,t}(V)^{\lceil q \rceil},
\end{equation*}
which implies that $\|V\|_{p,q,\infty,[0,T]} \lesssim \|V\|_{\textup{BMO}^{p\text{-var}},[0,T]}$. By \cite[Proposition~2.1]{Le2022a}, we also have that $\|\Gamma_{s,t}\|_{L^\infty} \leq \rho_{s,t}(V)$, so that $\|\Gamma\|_{p,\infty, [0,T]} \leq \|V\|_{\textup{BMO}^{p\text{-var}},[0,T]}$.

For the reverse inequality, we apply \cite[Proposition~2.2]{Le2022a} to find that
\begin{equation}\label{eq: bound on BMO-p-var}
\rho_{s,t}(V) \leq 2 \|V\|_{p,q,\infty,[s,t]} + 5 \|\Gamma_{s,t}\|_{L^\infty},
\end{equation}
so that $\|V\|_{\textup{BMO}^{p\text{-var}},[0,T]} \leq 5 (\|V\|_{p,q,\infty,[0,T]} + \|\Gamma\|_{p,\infty,[0,T]})$.
\end{proof}

The following corollary is an immediate consequence of Proposition~\ref{proposition: BMO-p-var is p,q,infty} and Theorem~\ref{theorem: a p-variation kolmogorov result}.

\begin{corollary}\label{corollary: V in VpLqinfty for all q}
Let $p \in [1,\infty)$. If $V \in \textup{BMO}^{p\textup{-var}}$ and $V_0 \in L^\infty$, then $V \in V^p L^{q,\infty}$ for every $q \in [1,\infty)$. In particular, for any $\tp > p$, we have that
\begin{equation*}
\big\| \|V\|_{\tp,[0,T]} \big\|_{L^q} < \infty
\end{equation*}
for every $q \in [1,\infty)$.
\end{corollary}

\begin{definition}\label{definition: regular from the inside}
We call a control $w$ on $[0,T]$ \emph{regular from the inside} if it is continuous from the inside, in the sense that $w(s+,t) = w(s,t) = w(s,t-)$ for all $(s,t) \in \Delta_{[0,T]}$, and also satisfies
\begin{equation*}
\lim_{t \searrow s} w(s,t) = 0
\end{equation*}
for every $s \in [0,T)$.
\end{definition}

The following definition and subsequent lemma may be found in \cite{FrizRiedel2013}, which itself builds on the results of \cite{CassLittererLyons2013}. Although here we consider slightly more general controls, the proof of Lemma~\ref{lemma: N_alpha for w_1 leq w_2} follows the corresponding proof in \cite{FrizRiedel2013} verbatim.

\begin{definition}
Let $w$ be a control which is regular from the inside. For a given $\alpha > 0$ and $(s,t) \in \Delta_{[0,T]}$, we write $t_0(\alpha) := s$, and
\begin{equation}\label{eq: definition tau_i(alpha)}
t_i(\alpha) := \inf \{u > t_{i-1}(\alpha) \ | \ w(t_{i-1}(\alpha),u) \geq \alpha\} \wedge t
\end{equation}
for $i \geq 1$, and then define
\begin{equation*}
N_{\alpha, [s,t]}(w) := \sup \{n \in \N \cup \{0\} \ | \ t_n(\alpha) < t\}.
\end{equation*}
\end{definition}

\begin{lemma}[Lemma~2 in \cite{FrizRiedel2013}]\label{lemma: N_alpha for w_1 leq w_2}
Let $w_1, w_2$ be controls which are regular from the inside, and let $\alpha > 0$ and $(s,t) \in \Delta_{[0,T]}$. Suppose that $w_1(u,v) \leq C w_2(u,v)$ holds for all $(u,v) \in \Delta_{[0,T]}$ such that $w_2(u,v) \leq \alpha$, for some constant $C > 0$. Then
\[N_{C\alpha, [s,t]}(w_1) \leq N_{\alpha, [s,t]}(w_2).\]
\end{lemma}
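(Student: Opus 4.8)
The plan is to follow the classical comparison argument for the greedy-partition counting functional, transcribed to our mildly more general (regular-from-the-inside) controls. Fix $\alpha>0$ and $(s,t)\in\Delta_{[0,T]}$, and let $(t_i)_{i\ge 0}$ denote the greedy times for $w_1$ at level $C\alpha$, as in \eqref{eq: definition tau_i(alpha)}, i.e. $t_0=s$ and $t_i=\inf\{u>t_{i-1}\mid w_1(t_{i-1},u)\ge C\alpha\}\wedge t$. Write $n:=N_{C\alpha,[s,t]}(w_1)$, so that $t_n<t$ by definition of $N$. The goal is to show that the greedy partition for $w_2$ at level $\alpha$ needs at least $n$ steps to reach $t$, i.e. $N_{\alpha,[s,t]}(w_2)\ge n$.

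The key step is a local comparison on each greedy block. For each $i\in\{1,\dots,n\}$ I would argue that $w_2(t_{i-1},t_i)\ge\alpha$. Suppose for contradiction that $w_2(t_{i-1},t_i)<\alpha$. Since $w_2$ is regular from the inside, the map $v\mapsto w_2(t_{i-1},v)$ is left-continuous, so in fact $w_2(t_{i-1},v)\le\alpha$ for all $v$ in a left-neighbourhood of $t_i$ (and more carefully, for all $v<t_i$ with $v$ close enough, using $w_2(t_{i-1},t_i-)=w_2(t_{i-1},t_i)<\alpha$); by the hypothesis this forces $w_1(t_{i-1},v)\le C\,w_2(t_{i-1},v)<C\alpha$ for such $v$, and by left-continuity of $w_1$ (again regularity from the inside), also $w_1(t_{i-1},t_i)\le C\alpha$. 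But by the greedy construction for $w_1$ at level $C\alpha$, and since $i\le n$ means $t_{i}$ was defined by the infimum actually being attained below $t$ (so $w_1(t_{i-1},t_i)\ge C\alpha$, using right-continuity of $u\mapsto w_1(t_{i-1},u)$ at the infimum together with $\lim_{v\searrow t_{i-1}}w_1(t_{i-1},v)=0$), we need $w_1(t_{i-1},t_i)\ge C\alpha$ — a contradiction unless $w_1(t_{i-1},t_i)=C\alpha$ exactly, in which case the comparison at the strict level still yields $w_2(t_{i-1},t_i)\ge\alpha$ by contraposition. Either way, $w_2(t_{i-1},t_i)\ge\alpha$.

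Given this, I would conclude by comparing the two greedy partitions step by step: the greedy partition for $w_2$ at level $\alpha$, started at $s$, reaches $t_1$ no later than the first greedy time of $w_2$ (since $w_2(s,t_1)\ge\alpha$ forces that time to be $\le t_1$), and inductively its $i$-th greedy time is $\le t_i$; since $t_n<t$, this shows the $w_2$-greedy partition has not reached $t$ after $n$ steps, hence $N_{\alpha,[s,t]}(w_2)\ge n=N_{C\alpha,[s,t]}(w_1)$.

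The main obstacle — and the only place any care beyond bookkeeping is needed — is handling the boundary behaviour of the controls at the greedy times, i.e. making rigorous the statements ``$w_1(t_{i-1},t_i)\ge C\alpha$ at the infimum'' and ``$w_2(t_{i-1},t_i)<\alpha$ propagates to a left-neighbourhood.'' This is exactly where regularity from the inside (left-continuity $w(s,t-)=w(s,t)$, together with $\lim_{v\searrow s}w(s,v)=0$) is used, and it is the reason the statement is phrased for such controls; since our controls are regular from the inside the argument from \cite{FrizRiedel2013} goes through essentially verbatim, and I would simply cite it while flagging that only left-/right-continuity of $v\mapsto w(s,v)$ is invoked.
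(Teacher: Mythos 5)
There is a genuine gap in your key step. You want to show, for each $i \le n$, that $w_2(t_{i-1},t_i) \ge \alpha$, and your route is to derive $w_1(t_{i-1},t_i) \ge C\alpha$ from the greedy definition of $t_i$, invoking ``right-continuity of $u\mapsto w_1(t_{i-1},u)$ at the infimum''. But Definition~\ref{definition: regular from the inside} does not supply right-continuity in the second variable: it gives $w(s+,t)=w(s,t)$, $w(s,t-)=w(s,t)$, and $\lim_{t\searrow s}w(s,t)=0$, but says nothing about $w(s,t+)=w(s,t)$ away from the diagonal. So $u\mapsto w_1(t_{i-1},u)$ can perfectly well have a right jump at $u=t_i$, in which case $w_1(t_{i-1},t_i)<C\alpha\le w_1(t_{i-1},t_i+)$ while $t_i$ remains the infimum of the upper set $\{u>t_{i-1}:w_1(t_{i-1},u)\ge C\alpha\}$. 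In that situation $w_2(t_{i-1},t_i)\ge\alpha$ is simply false: take $w_1=w_2=w$ on $[0,1]$ with $w(u,v)=1$ if $u<1/2<v$ and $w(u,v)=0$ otherwise, and $C=\alpha=1$. This $w$ is superadditive and regular from the inside; the greedy construction gives $t_1=1/2$, yet $w(0,1/2)=0<\alpha$. So it is your intermediate claim, not just its justification, that fails, and the final comparison (``$w_2(s,t_1)\ge\alpha$ forces the first greedy time of $w_2$ to be $\le t_1$'') does not go through as written.

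The lemma is nonetheless true and your overall strategy (compare the two greedy partitions and show $\sigma_j\le\tau_j$ by induction, where $(\tau_j)$, $(\sigma_j)$ are the greedy times for $w_1$ at level $C\alpha$ and for $w_2$ at level $\alpha$) is the right one; you only need to argue with points strictly to the right of $\tau_{j+1}$ rather than at $\tau_{j+1}$ itself. Since $v\mapsto w_1(\tau_j,v)$ is non-decreasing, $\{u>\tau_j:w_1(\tau_j,u)\ge C\alpha\}$ is an upper set, so $w_1(\tau_j,u)\ge C\alpha$ for \emph{every} $u>\tau_{j+1}$ whenever $\tau_{j+1}<t$ --- no continuity is required for this. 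Assuming inductively $\sigma_j\le\tau_j$, if $\tau_{j+1}=t$ there is nothing to prove; otherwise suppose for contradiction $\sigma_{j+1}>\tau_{j+1}$ and pick $u\in(\tau_{j+1},\sigma_{j+1})$. Then $w_2(\sigma_j,u)<\alpha$ (since $u<\sigma_{j+1}\le\inf\{v>\sigma_j:w_2(\sigma_j,v)\ge\alpha\}$), so the hypothesis gives $w_1(\sigma_j,u)\le Cw_2(\sigma_j,u)<C\alpha$, and monotonicity of $w_1$ in its left argument together with $\sigma_j\le\tau_j$ gives $w_1(\tau_j,u)\le w_1(\sigma_j,u)<C\alpha$, contradicting $w_1(\tau_j,u)\ge C\alpha$. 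Hence $\sigma_j\le\tau_j$ for all $j$, so $\sigma_n\le\tau_n<t$ and $N_{\alpha,[s,t]}(w_2)\ge n=N_{C\alpha,[s,t]}(w_1)$.
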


The following result is a generalisation of \cite[Lemma~3]{FrizRiedel2013}, where here we need to take care of the large jumps separately.

\begin{lemma}\label{lemma: N_alpha < (2 N_beta+1)}
Let $w$ be a control which is regular from the inside, and let $0 < \alpha \leq \beta$. Then
\begin{equation*}
N_{\alpha, [s,t]}(w) \leq C \big(N_{\beta, [s,t]}(w) + 1\big)
\end{equation*}
holds for any $(s,t) \in \Delta_{[0,T]}$, where the constant $C > 0$ depends only on $\alpha$ and $\beta$.
\end{lemma}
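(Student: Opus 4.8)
The statement to prove is Lemma~\ref{lemma: N_alpha < (2 N_beta+1)}: for a control $w$ regular from the inside and $0 < \alpha \leq \beta$, we have $N_{\alpha,[s,t]}(w) \leq C(N_{\beta,[s,t]}(w) + 1)$ with $C$ depending only on $\alpha,\beta$.

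The plan is to follow the strategy of \cite[Lemma~3]{FrizRiedel2013}, with the extra care the paper itself flags: the large jumps of $w$ must be handled separately. First I would observe the monotone quantity $K := \lceil \beta/\alpha \rceil$. The heuristic for continuous controls is that, over any interval on which $w$ does not already have a big jump, accumulating $\beta$ worth of $w$ requires roughly $K$ successive increments of size $\alpha$; so roughly $N_\alpha \lesssim K \cdot N_\beta$. To make this rigorous and to deal with jumps, I would split the greedy times $t_i(\alpha)$, $i = 0,1,\dots,N_{\alpha,[s,t]}(w)$, into two classes. A ``jump step'' is an index $i$ such that $w(t_i(\alpha)-,t_i(\alpha)) \geq \alpha$, i.e.\ the level $\alpha$ is reached essentially by a single jump of the control at the endpoint of that step; all other steps I call ``continuous steps.'' Since $w$ is regular from the inside, $w(t_{i-1}(\alpha),\cdot)$ is left-continuous and $\to 0$ as the upper endpoint decreases to $t_{i-1}(\alpha)$, so each step $[t_{i-1}(\alpha),t_i(\alpha))$ accumulates strictly less than $\alpha$ and the jump at $t_i(\alpha)$ pushes it over; the point of the dichotomy is whether that jump is itself already $\geq\alpha$.

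For the jump steps: each such step corresponds to a jump time $u$ with $w(u-,u)\geq\alpha$. Because $w$ is a control (superadditive), for any finite collection of disjoint intervals $[u_j-,u_j]$ we have $\sum_j w(u_j-,u_j) \leq w(s,t) \leq w(0,T) =: W < \infty$; hence there are at most $W/\alpha$ such jump times, so the number of jump steps is bounded by a constant $\lfloor W/\alpha\rfloor$ depending only on $\alpha$ (and the fixed control $w$, whose total mass is a fixed finite number). For the continuous steps: I would group the $\alpha$-greedy times into consecutive blocks of length $K$ and show that each block, unless it is the last (incomplete) one, forces the $\beta$-greedy sequence to advance by at least one. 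Concretely, if $t_{i}(\alpha),\dots,t_{i+K}(\alpha)$ are all ``continuous'' steps lying inside $[s,t]$, then by superadditivity $w(t_i(\alpha),t_{i+K}(\alpha)) \geq \sum_{j=1}^{K} w(t_{i+j-1}(\alpha),t_{i+j}(\alpha))$; I need each summand $\geq \alpha$ — which it is, because the $\alpha$-greedy construction reaches level $\alpha$ on $[t_{i+j-1}(\alpha),t_{i+j}(\alpha)]$, and for a continuous step the left-continuity of $w(t_{i+j-1}(\alpha),\cdot)$ gives $w(t_{i+j-1}(\alpha),t_{i+j}(\alpha)) \geq \alpha$. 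Hence $w(t_i(\alpha),t_{i+K}(\alpha)) \geq K\alpha \geq \beta$, which means a $\beta$-greedy step must terminate somewhere in $(t_i(\alpha),t_{i+K}(\alpha)]$. Thus consecutive disjoint blocks of $K$ continuous steps inject into distinct $\beta$-greedy steps, giving at most $K(N_{\beta,[s,t]}(w)+1)$ continuous steps in total. Adding the two bounds yields $N_{\alpha,[s,t]}(w) \leq K(N_{\beta,[s,t]}(w)+1) + \lfloor W/\alpha\rfloor \leq C(N_{\beta,[s,t]}(w)+1)$ for a constant $C = C(\alpha,\beta)$ (absorbing the fixed total mass $W$), as required.

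The main obstacle I anticipate is bookkeeping at the boundaries: correctly handling the last, possibly incomplete, block of continuous steps and the truncation-at-$t$ in the definition \eqref{eq: definition tau_i(alpha)}, as well as making sure a single $\beta$-greedy step is not ``charged'' by two different $K$-blocks of $\alpha$-steps (which is why I take the blocks disjoint and consecutive, and allow the extra $+1$). A secondary subtlety is the clean separation of jump vs.\ continuous steps — one must check that ``regular from the inside'' genuinely rules out the pathological case where $w(t_{i-1}(\alpha),\cdot)$ jumps over $\alpha$ strictly between grid points, i.e.\ that the only way a step accumulates level $\alpha$ without a continuous approach is via a jump located exactly at its right endpoint $t_i(\alpha)$; this is exactly what left-continuity $w(s,t)=w(s,t-)$ together with $\lim_{t\searrow s} w(s,t)=0$ provides. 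Everything else is routine superadditivity.
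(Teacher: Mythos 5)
Your proposed dichotomy does not work: the quantity $w(t_i(\alpha)-,t_i(\alpha))$ you use to classify a step as a ``jump step'' does not detect the jump that actually triggers the greedy step. For a control that is regular from the inside, the $\alpha$-level is typically crossed via a \emph{right-discontinuity of $t\mapsto w(s,t)$} at $t=t_i(\alpha)$ (for fixed $s=t_{i-1}(\alpha)$), and this is invisible to $\lim_{s'\nearrow t_i(\alpha)} w(s',t_i(\alpha))$. Concretely, take $[0,T]=[0,2]$ and $w(s,t):=\alpha$ if $s<1<t$, and $w(s,t):=0$ otherwise (this is the control $\|A\|_{p,[\cdot,\cdot)}^p$ for a path $A$ with a single jump at time~$1$, and one checks directly it is a control regular from the inside). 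Then $t_0(\alpha)=0$, $t_1(\alpha)=1$, and we have $w(1-,1)=0<\alpha$, so by your definition this step is a ``continuous step''; yet $w(t_0,t_1)=w(0,1)=0$, not $\geq\alpha$. So the inequality $w(t_{i+j-1}(\alpha),t_{i+j}(\alpha))\geq\alpha$ that your $K$-block argument relies on simply fails for continuous steps. Indeed, for \emph{every} greedy step one has $w(t_{i-1}(\alpha),t_i(\alpha))\leq\alpha$ (by left-continuity of $w(t_{i-1}(\alpha),\cdot)$), with equality not guaranteed; the defect is precisely $\Delta_{t_{i-1},t_i}w := w(t_{i-1},t_i+)-w(t_{i-1},t_i)\geq 0$, and this two-parameter right-discontinuity, not $w(t_i-,t_i)$, is the correct notion of ``jump'' to separate out. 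This is what the paper does: it counts the steps with $\Delta_{t_{i-1},t_i}w>\alpha/2$, and bounds that count per $\beta$-greedy interval using superadditivity of $(u,v)\mapsto w(u,v+)$ together with $w(t_i(\beta),t_{i+1}(\beta))\leq\beta$, while the remaining steps are handled by comparing $\sum_i w(t_i,t_{i+1})$ to $w_\beta(s,t)\leq\beta(2N_\beta+1)$.

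There is a second, independent gap: your bound on the number of jump steps, $\lfloor w(0,T)/\alpha\rfloor$, depends on the total mass of $w$. The lemma explicitly requires the constant $C$ to depend only on $\alpha$ and $\beta$, so ``absorbing the fixed total mass $W$'' is not allowed. (This one would be fixable in your framework by bounding the number of jump-triggering times per $\beta$-greedy interval rather than globally --- which is again what the paper does --- but the first gap is structural and is the real reason the argument needs to be set up around $\Delta_{u,v}w$ rather than $w(v-,v)$.)
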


\begin{proof}
For notational simplicity, we will write $\{t_i\}_{i=0}^{N_{\alpha, [s,t]}(w)}$ for the sequence of times in $[s,t)$ defined in \eqref{eq: definition tau_i(alpha)}. For $(u,v) \in \Delta_{[0,T]}$, we write
\begin{equation*}
\Delta_{u,v} w := w(u,v+) - w(u,v),
\end{equation*}
and we then set
\begin{equation*}
N^{\Delta}_{\frac{\alpha}{2}, [s,t]}(w) := \# \Big\{i \in \{0, \ldots, N_{\alpha, [s,t]}(w) - 1\} \ :  \Delta_{t_i,t_{i+1}} w > \frac{\alpha}{2} \Big\}.
\end{equation*}
(Here it is crucial that we consider the times $t_i = t_i(\alpha)$, rather than $t_i(\frac{\alpha}{2})$.) We also define
\begin{equation*}
w_{\alpha}(s,t) := \sup_{\substack{\cP = (u_i)_{i=1}^k \subset [s,t]\\
w(u_i,u_{i+1}) \leq \alpha \, \forall i}} \, \sum_{i=1}^{k-1} w(u_i,u_{i+1}).
\end{equation*}

We note that, for each $i$, either $w(t_i,t_{i+1}) = \alpha$ or $\Delta_{t_i,t_{i+1}} w > 0$. Thus,
\begin{align*}
\alpha N_{\alpha,[s,t]}(w) &\leq \sum_{i=0}^{N_{\alpha,[s,t]}(w)-1} w(t_i,t_{i+1}) + \alpha N^{\Delta}_{\frac{\alpha}{2},[s,t]}(w) + \sum_{i \, : \, \Delta_{t_i,t_{i+1}} w \leq \frac{\alpha}{2}} \Delta_{t_i,t_{i+1}} w\\
&\leq w_{\alpha}(s,t) + \alpha N^{\Delta}_{\frac{\alpha}{2},[s,t]}(w) + \frac{\alpha}{2} N_{\alpha, [s,t]}(w),
\end{align*}
so that, rearranging, we obtain
\begin{equation*}
N_{\alpha,[s,t]}(w) \leq \frac{2}{\alpha} \big(w_{\alpha}(s,t) + \alpha N^{\Delta}_{\frac{\alpha}{2}, [s,t]}(w)\big) \leq \frac{2}{\alpha} \big(w_{\beta}(s,t) + \alpha N^{\Delta}_{\frac{\alpha}{2}, [s,t]}(w)\big).
\end{equation*}
By the same proof as that of \cite[Proposition~4.11]{CassLittererLyons2013}, one can show that
\begin{equation*}
w_{\beta}(s,t) \leq \beta \big(2 N_{\beta, [s,t]}(w) + 1\big),
\end{equation*}
and it thus suffices to show that there exists a constant $\tilde{C} > 0$, depending only on $\alpha$ and $\beta$, such that
\begin{equation}\label{eq: N Delta alpha/2 bounded by N beta}
N_{\frac{\alpha}{2}, [s,t]}^{\Delta}(w) \leq \tilde{C} N_{\beta, [s,t]}(w).
\end{equation}
We will show that this holds with $\tilde{C} = \lceil \frac{2\beta}{\alpha} \rceil + 2$. To this end, we write
\begin{equation*}
(t^{\Delta}_j(\alpha))_{j=0}^{N^{\Delta}_{\frac{\alpha}{2},[s,t]}(w)} := \Big\{ t_j(\alpha) \ : \ \Delta_{t_j(\alpha),t_{j+1}(\alpha)} w > \frac{\alpha}{2} \Big\}
\end{equation*}
for the times when $w$ exceeds $\alpha$ with a jump larger than $\frac{\alpha}{2}$. For any $0 \leq i \leq N_{\beta, [s,t]}(w) - 1$, we then have that
\begin{equation*}
\# \Big\{j \ : \ t^{\Delta}_j(\alpha) \in [t_i(\beta),t_{i+1}(\beta)] \Big\} \leq \bigg\lceil \frac{2\beta}{\alpha} \bigg\rceil + 2,
\end{equation*}
since, otherwise, by the superadditivity of $w(\cdot, \cdot+)$, we would have that
\begin{equation*}
\beta < \bigg(\bigg\lceil \frac{2\beta}{\alpha} \bigg\rceil + 1\bigg) \frac{\alpha}{2} \leq \sum_{t_i(\beta) \leq t^{\Delta}_j(\alpha) < t^{\Delta}_{j+1}(\alpha) < t_{i+1}(\beta)} \Delta_{t^{\Delta}_j(\alpha), t^{\Delta}_{j+1}(\alpha)} w \leq w(t_i(\beta), t_{i+1}(\beta)) \leq \beta.
\end{equation*}
Since there are $N_{\beta, [s,t]}(w)$ intervals of the form $[t_i(\beta),t_{i+1}(\beta)]$, the bound in \eqref{eq: N Delta alpha/2 bounded by N beta} follows.
\end{proof}

\begin{lemma}\label{lemma: bound on N_alpha(w_1+ w_2)}
Let $w_1, w_2$ be controls which are regular from the inside, and let $\alpha > 0$ and $(s,t) \in \Delta_{[0,T]}$. Then
\begin{equation*}
N_{\alpha, [s,t]}(w_1 + w_2) \leq N_{\frac{\alpha}{2}, [s,t]}(w_1) + N_{\frac{\alpha}{2}, [s,t]}(w_2).
\end{equation*}
\end{lemma}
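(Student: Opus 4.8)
The plan is to prove this by a direct comparison of the greedy partition associated with $w_1 + w_2$ against the union of the greedy partitions associated with $w_1$ and $w_2$ separately, at half the threshold. Recall that $N_{\alpha,[s,t]}(w)$ counts the number of times $u$ with $t_0(\alpha) = s$ and $t_i(\alpha) = \inf\{u > t_{i-1}(\alpha) : w(t_{i-1}(\alpha),u) \geq \alpha\} \wedge t$ that satisfy $t_n(\alpha) < t$. Write $w := w_1 + w_2$ and let $s = t_0 < t_1 < \cdots < t_n$ be the first $n$ points of the greedy sequence for $w$ at level $\alpha$, so $n = N_{\alpha,[s,t]}(w)$ and $w(t_{i-1},t_i) \geq \alpha$ for $i = 1, \ldots, n$ (with $t_i < t$ for $i \leq n$; note $w(t_{i-1}, t_i)$ could a priori exceed $\alpha$ if there is a jump, but in any case it is $\geq \alpha$). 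Since $w = w_1 + w_2$, for each such $i$ we have $w_1(t_{i-1},t_i) \geq \frac{\alpha}{2}$ or $w_2(t_{i-1},t_i) \geq \frac{\alpha}{2}$.

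Next I would argue that the number of indices $i \in \{1,\ldots,n\}$ for which $w_1(t_{i-1},t_i) \geq \frac{\alpha}{2}$ is at most $N_{\frac{\alpha}{2},[s,t]}(w_1)$, and symmetrically for $w_2$; summing these two bounds then gives $n \leq N_{\frac{\alpha}{2},[s,t]}(w_1) + N_{\frac{\alpha}{2},[s,t]}(w_2)$, which is exactly the claim. To establish the first of these sub-claims, let $s \leq u_0 < u_1 < \cdots < u_k \leq t$ be the (ordered) subsequence of consecutive points $t_{i-1} < t_i$ of the $w$-partition at which $w_1(t_{i-1},t_i) \geq \frac{\alpha}{2}$; so $k$ is the quantity to be bounded and $w_1(u_{j-1}, u_j) \geq \frac{\alpha}{2}$ may fail, but $w_1$ restricted to each of these specific consecutive pairs is $\geq \frac{\alpha}{2}$ — more precisely the pairs are $(t_{i-1},t_i)$ with $w_1(t_{i-1},t_i)\ge \frac\alpha2$, and by superadditivity of $w_1$, if $a < b < c < d$ with $w_1(a,b)\ge \frac\alpha2$ and $w_1(c,d)\ge \frac\alpha2$, then $w_1(a,d)\ge w_1(a,b)+w_1(c,d)\ge\alpha$, but more to the point we can run the greedy algorithm for $w_1$ at level $\frac\alpha2$ and observe it produces at least as many steps. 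Concretely: starting from $s$, the greedy $w_1$-sequence $s = v_0 < v_1 < \cdots$ at level $\frac{\alpha}{2}$ satisfies $v_1 \leq t_{i_1}$ where $t_{i_1}$ is the right endpoint of the first $w$-interval with $w_1 \geq \frac{\alpha}{2}$, because $w_1(s, t_{i_1}) \geq \frac{\alpha}{2}$ forces $v_1 := \inf\{u : w_1(s,u) \geq \frac{\alpha}{2}\} \leq t_{i_1}$; inductively $v_j \leq t_{i_j}$, the right endpoint of the $j$-th such $w$-interval, and since there are $k$ such intervals all lying in $[s,t)$ we get $v_k \leq t_{i_k} < t$, whence $N_{\frac{\alpha}{2},[s,t]}(w_1) \geq k$.

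The main obstacle, such as it is, is the bookkeeping around the discontinuous controls: one must be slightly careful that the greedy sequence can ``overshoot'' the level $\alpha$ at a jump, and that the strict inequalities $t_i < t$ and the $\wedge t$ truncation are handled correctly so that all the points produced genuinely count toward $N$. The inductive comparison $v_j \leq t_{i_j}$ is the crux, and it follows cleanly from the definition of $\inf$ together with superadditivity of $w_1$ (which ensures $w_1(v_{j-1}, t_{i_j}) \geq w_1(t_{i_j - 1}, t_{i_j}) \geq \frac{\alpha}{2}$ once $v_{j-1} \leq t_{i_j - 1}$, which in turn holds because $v_{j-1} \leq t_{i_{j-1}} \leq t_{i_j - 1}$). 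I would write out this induction explicitly, note the symmetric statement for $w_2$, and conclude by addition. No appeal to Lemma~\ref{lemma: N_alpha for w_1 leq w_2} or Lemma~\ref{lemma: N_alpha < (2 N_beta+1)} is needed; this is a short self-contained argument of a few lines.
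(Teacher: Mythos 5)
Your strategy---pigeonhole on the greedy sequence for $w := w_1+w_2$, followed by an inductive comparison against the greedy sequences for $w_1$ and $w_2$ at level $\tfrac{\alpha}{2}$---is exactly the content of the paper's terse proof, which just cites ``the definition of $N_{\alpha,[s,t]}$'' together with the pigeonhole observation. So you are taking the same route and filling in the induction that the paper leaves implicit.

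However, there is a concrete error in your handling of the behaviour at $t_i$: you assert $w(t_{i-1},t_i) \geq \alpha$ for $i \leq n$, even remarking that it ``could a priori exceed $\alpha$ if there is a jump.'' In fact the \emph{reverse} inequality holds. Since $w_1,w_2$ are regular from the inside, so is $w$, and hence $w(t_{i-1},t_i) = w(t_{i-1},t_i-) = \lim_{u\nearrow t_i} w(t_{i-1},u)$; every $u \in (t_{i-1},t_i)$ satisfies $w(t_{i-1},u) < \alpha$ by minimality of $t_i$, so $w(t_{i-1},t_i) \leq \alpha$. The overshoot you anticipate can only occur on the \emph{right}: for $t_i < t$, the infimum definition together with monotonicity of $u \mapsto w(t_{i-1},u)$ gives $w(t_{i-1},u) \geq \alpha$ for every $u > t_i$, hence $w(t_{i-1},t_i+) \geq \alpha$, and it is this right limit to which the pigeonhole must be applied. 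This matters because your step ``$w_1(s,t_{i_1}) \geq \tfrac{\alpha}{2}$ forces $v_1 \leq t_{i_1}$'' relies on $t_{i_1}$ itself lying in $\{u : w_1(s,u) \geq \tfrac{\alpha}{2}\}$, which need not hold. The fix is straightforward and preserves your induction: from $w_1(t_{i_j-1},t_{i_j}+) \geq \tfrac{\alpha}{2}$ and monotonicity one gets $w_1(t_{i_j-1},u) \geq \tfrac{\alpha}{2}$ for \emph{every} $u > t_{i_j}$, and superadditivity together with $v_{j-1} \leq t_{i_{j-1}} \leq t_{i_j-1}$ then gives $w_1(v_{j-1},u) \geq w_1(t_{i_j-1},u) \geq \tfrac{\alpha}{2}$ for all $u > t_{i_j}$, so
\begin{equation*}
v_j = \inf\big\{u > v_{j-1} : w_1(v_{j-1},u) \geq \tfrac{\alpha}{2}\big\} \wedge t \leq t_{i_j} < t.
\end{equation*}
With this adjustment the counting argument goes through and the lemma follows as you intended.
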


\begin{proof}
This is simply a consequence of the definition of $N_{\alpha, [s,t]}$, and the fact that, if $w_1(t_i,u) + w_2(t_i,u) \geq \alpha$, then either $w_1(t_i,u) \geq \frac{\alpha}{2}$ or $w_2(t_i,u) \geq \frac{\alpha}{2}$.
\end{proof}

\subsection{The John--Nirenberg inequality}

\begin{theorem}[John--Nirenberg inequality]\label{theorem: John-Nirenberg inequality}
Let $V = (V_t)_{t \in [0,T]}$ be a $\textup{BMO}^{p\textup{-var}}$ process for some $p \in [1,\infty)$. Then, for any $\alpha > 0$, any $\lambda > 0$ and any $r \in [0,T]$, we have that
\begin{equation*}
\Big\| \E_r \Big[ \exp \Big( \lambda \sup_{t \in [r,T]} |\delta V_{r,t}| \Big) \Big] \Big\|_{L^\infty} \leq \exp \Big( C \Big(N_{\alpha,[r,T]} \big( \|V\|_{\textup{BMO}^{p\textup{-var}},(\cdot,\cdot)}^p \big) + 1 \Big) \Big(1 + \sup_{t \in (r,T]} \|\Delta V_t\|_{L^\infty}\Big) \Big).
\end{equation*}
where the constant $C$ depends only on $p, \alpha$ and $\lambda$.
\end{theorem}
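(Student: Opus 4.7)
My plan is to adapt the classical John--Nirenberg strategy to the $p$-variation setting, by slicing $[r,T]$ via the counting function $N_{\alpha,[r,T]}$ and then iterating a local exponential moment bound over the slices. First I would set up the control $w(s,t) := \|V\|_{\textup{BMO}^{p\textup{-var}},(s,t)}^p$; superadditivity is immediate from the partition-sup definition, and I would verify regularity from the inside (Definition~\ref{definition: regular from the inside}) using right-continuity of $V$ and the fact that, in the definition of $\rho_{s,t}$, we only evaluate $\E_\sigma[|\delta V_{\sigma-,\tau}|]$ with $\sigma > s$, which handles the point mass at $s$ itself.

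The key technical step is a \emph{local John--Nirenberg estimate}: there exist constants $\alpha_0 > 0$ and $\kappa \geq 1$, depending only on $p$ and $\lambda$, such that whenever $u \leq v$ satisfy $\|V\|_{\textup{BMO}^{p\textup{-var}},[u,v]}^p \leq \alpha_0$, one has
\begin{equation*}
\big\| \E_u \big[ \exp\big(\lambda \sup\nolimits_{t \in [u,v]} |\delta V_{u,t}|\big) \big] \big\|_{L^\infty} \leq \kappa.
\end{equation*}
To prove this I would Taylor-expand $\exp(\lambda x) = \sum_{n \geq 0} \lambda^n x^n/n!$, use $\sup_{t \in [u,v]} |\delta V_{u,t}| \leq \|V\|_{p,[u,v]}$, and control $\|\E_u[\|V\|_{p,[u,v]}^n]\|_{L^\infty}$ by a constant times $\|V\|_{p,n,\infty,[u,v]}^n$ via a discrete BDG-type comparison (valid for BMO increments); Proposition~\ref{proposition: BMO-p-var is p,q,infty} then gives $\|V\|_{p,n,\infty,[u,v]} \lesssim C(n,p) \|V\|_{\textup{BMO}^{p\textup{-var}},[u,v]}$, and the $n$-dependence in $C(n,p)$ is tame enough that choosing $\alpha_0$ sufficiently small makes the series sum to an absolute constant.

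Next I would apply this locally on the pieces $[t_i, t_{i+1})$ arising from \eqref{eq: definition tau_i(alpha)} for $w$ at threshold $\alpha_1 := \alpha_0/2$, so that $N = N_{\alpha_1,[r,T]}(w)$ controls the number of slices and $w(t_i, t_{i+1}-) \leq \alpha_1 < \alpha_0$. To close each interval at the right endpoint, I would use the bound
\begin{equation*}
\sup_{t \in [t_i, t_{i+1}]} |\delta V_{t_i, t}| \leq \sup_{t \in [t_i, t_{i+1})} |\delta V_{t_i, t}| + J, \qquad J := \sup_{t \in (r,T]} \|\Delta V_t\|_{L^\infty},
\end{equation*}
which upgrades the local bound to $\kappa e^{\lambda J}$. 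Since $|\delta V_{r,t}| \leq \sum_{i=0}^{N} \sup_{s \in [t_i, t_{i+1}]} |\delta V_{t_i, s}|$ by telescoping, setting $A_i := \exp(\lambda \sup_{s \in [t_i, t_{i+1}]} |\delta V_{t_i, s}|)$ and iterating the tower property at the stopping times $t_N, t_{N-1}, \ldots, t_0$ yields
\begin{equation*}
\big\| \E_r[A_0 A_1 \cdots A_N] \big\|_{L^\infty} \leq (\kappa e^{\lambda J})^{N+1} = \exp\big( (N+1)(\log\kappa + \lambda J) \big).
\end{equation*}
Finally, Lemma~\ref{lemma: N_alpha < (2 N_beta+1)} converts between the thresholds $\alpha_1$ and the given $\alpha$, absorbing the ratio into the constant $C$, and yields the stated form.

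The main obstacle is the local estimate in step one: controlling the conditional moments of $\|V\|_{p,[u,v]}$ by the $\|\cdot\|_{p,q,\infty}$ norm with constants that grow mildly enough in $q$ to permit the exponential sum. Handling the jumps at the slicing times is routine once this ingredient is in place; the $J$ term in the final bound reflects precisely the jump contributions that cannot be absorbed into the small-BMO regime.
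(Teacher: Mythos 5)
Your overall scheme tracks the paper's closely: slice $[r,T]$ via the counting function of a superadditive control, obtain a local conditional exponential moment bound on each slice, telescope the sup over slices, iterate via the tower property, separate out the jumps at the slicing times, and convert thresholds at the end with Lemma~\ref{lemma: N_alpha < (2 N_beta+1)}. The jump handling (factoring out $e^{\lambda J}$ from each slice rather than, as the paper does, modifying $V$ to a stopped process $\tV^i$ with the right-endpoint jump removed and then controlling $\rho_{t_i,t_{i+1}}(\tV^i)$) is equivalent. What differs is where the real work lives.

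The paper's proof does \emph{not} prove the local exponential moment estimate from scratch: it is cited directly from \cite[Theorem~1.3]{Le2022b}, which asserts that $\lambda\rho_{s,t}(V) \leq e^{-3}$ implies $\|\E_s[\exp(\lambda\sup_{u\in[s,t]}|\delta V_{s,u}|)]\|_{L^\infty} \leq M$ for an explicit absolute constant $M$. The paper then uses the bound \eqref{eq: bound on BMO-p-var} ($\rho_{s,t}(V)\leq 2\|V\|_{p,q,\infty,[s,t]} + 5\|\Gamma_{s,t}\|_{L^\infty}$) to make each slice satisfy the hypothesis of Le's theorem, slicing with the control $\bar w(s,t) = \|V\|^p_{p,1,\infty,[s,t)} + \|\Gamma\|^p_{p,\infty,(s,t)}$; the passage to the stated form with $\|V\|_{\textup{BMO}^{p\text{-var}},(\cdot,\cdot)}^p$ is then a Theorem~\ref{theorem: John-Nirenberg inequality for p,q,infty} plus Proposition~\ref{proposition: BMO-p-var is p,q,infty} reduction. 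You instead propose to re-derive the local estimate via Taylor expansion, claiming that $\|V\|_{p,n,\infty,[u,v]} \lesssim C(n,p)\|V\|_{\textup{BMO}^{p\text{-var}},[u,v]}$ with ``tame enough'' $C(n,p)$. But that tameness \emph{is} the John--Nirenberg inequality. The constant in Proposition~\ref{proposition: BMO-p-var is p,q,infty} (coming from \cite[Lemma~A.2]{Le2022a}) grows factorially in $q$, and what saves the Taylor series is exactly the borderline $n!$ cancellation that requires the careful stopping-time iteration in Le's proof, not a straight BDG-plus-Taylor argument. Also, your appeal to a ``discrete BDG-type comparison'' to control $\|\E_u[\|V\|_{p,[u,v]}^n]\|_{L^\infty}$ overshoots: you need moments of $\sup_{t}|\delta V_{u,t}|$, not of the pathwise $p$-variation norm, and the latter is strictly stronger and not what the BMO hypothesis gives you access to. As written, this step is a genuine gap; the honest fix is to cite \cite[Theorem~1.3]{Le2022b} as the paper does, or to reproduce its full iteration.

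Two smaller points. First, your regularity-from-inside argument for $w(s,t)=\|V\|^p_{\textup{BMO}^{p\text{-var}},(s,t)}$ asserts that the open interval forces $\sigma > s$; but in the definition of $\rho_{s,t}$ the supremum includes $\sigma = s$, and even working on the open interval $(s,t)$ the relevant quantity for the local estimate on the slice $[t_i,t_{i+1})$ involves $\sigma = t_i$, so you are measuring slightly the wrong thing. The fix is to work with the half-open control, as the paper effectively does via $\tV^i$ (which removes both boundary jumps, so that $\rho_{t_i,t_{i+1}}(\tV^i)$ is controlled by the interior of the slice). Second, your telescoping $|\delta V_{r,t}| \leq \sum_i \sup_{s\in[t_i,t_{i+1}]}|\delta V_{t_i,s}|$ is fine, but then each $\sup$ is over the \emph{closed} interval and silently includes the right-endpoint jump; you do catch this by factoring out $e^{\lambda J}$, which is the right move and parallels the $+\sum|\Delta V_{t_i}|$ term in the paper's decomposition.
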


Theorem~\ref{theorem: John-Nirenberg inequality} follows from the following theorem, combined with Proposition~\ref{proposition: BMO-p-var is p,q,infty} applied with $q = 1$ to the process $(\delta V_{0,t})_{t \in [0,T]}$.

\begin{theorem}\label{theorem: John-Nirenberg inequality for p,q,infty}
Let $p, q \in [1,\infty)$, and let $V \in V^p L^{q,\infty}$ such that $\|\Gamma\|_{p,\infty,[0,T]} < \infty$, where $\Gamma$ is the two-parameter process defined in \eqref{eq: defn Gamma_st}. Let $\alpha > 0$ and $\lambda > 0$. Then there exists a constant $C$, which depends only on $p, \alpha$ and $\lambda$, such that, for every $r \in [0,T]$, we have that
\begin{equation}\label{eq: JN bound for VpLqinfty}
\begin{split}
\Big\| &\E_r \Big[ \exp \Big(\lambda \sup_{t \in [r,T]} |\delta V_{r,t}|\Big) \Big] \Big\|_{L^\infty}\\
&\leq \exp \Big( C \Big(N_{\alpha,[r,T]} \big(\|V\|_{p,1,\infty, [\cdot,\cdot)}^p + \|\Gamma\|_{p,\infty,(\cdot,\cdot)}^p\big) + 1 \Big) \Big(1 + \sup_{t \in (r,T]} \|\Delta V_t\|_{L^\infty}\Big) \Big).
\end{split}
\end{equation}
\end{theorem}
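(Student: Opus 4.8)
The plan is to follow the strategy of Lê's proof of \cite[Theorem~3.4]{Le2022a}, but adapt it to the $p$-variation / $\textup{BMO}^{p\textup{-var}}$ setting and, crucially, account for the jumps of $V$, which break the ``continuous sewing'' on small intervals. First I would fix $r\in[0,T]$ and, for $\alpha>0$, let $\{t_i\}_{i=0}^{N}$ with $N=N_{\alpha,[r,T]}(w)$ be the times from \eqref{eq: definition tau_i(alpha)} associated to the control $w(s,t):=\|V\|_{p,1,\infty,[s,t)}^p+\|\Gamma\|_{p,\infty,(s,t)}^p$ (which is indeed a control and regular from the inside, since each summand is; one checks the map $(s,t)\mapsto\|V\|_{p,1,\infty,[s,t)}^p$ is superadditive and inner-continuous just as $p$-variation norms are). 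On each half-open interval $[t_i,t_{i+1})$ one has $w(t_i,t_{i+1}-)\le\alpha$, hence $\|V\|_{p,1,\infty,[t_i,t_{i+1})}\le\alpha^{1/p}$ and $\|\Gamma\|_{p,\infty,[t_i,t_{i+1})}\le\alpha^{1/p}$; in particular the process $\delta V_{t_i,\cdot}$ restricted to $[t_i,t_{i+1})$ has small $\textup{BMO}^{p\textup{-var}}$ norm by (the easy direction of) Proposition~\ref{proposition: BMO-p-var is p,q,infty}, i.e.\ $\rho_{t_i,t_{i+1}-}(V)\lesssim\alpha^{1/p}$.

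The core step is a \emph{one-interval exponential estimate}: there is a constant $\alpha=\alpha(p,\lambda)$ small enough that
\[
\Big\|\E_{t_i}\Big[\exp\Big(\lambda\sup_{t\in[t_i,t_{i+1})}|\delta V_{t_i,t}|\Big)\Big]\Big\|_{L^\infty}\le 2,
\]
say. I would obtain this exactly as in \cite{Le2022a}: apply the conditional $L^m$ bounds for $\sup_{t}|\delta V_{t_i,t}|$ coming from $\rho_{t_i,t_{i+1}-}(V)$ small (via \cite[Lemma~A.2]{Le2022a} / the John--Nirenberg argument of \cite[Proposition~2.8, Theorem 3.4]{Le2022a} applied to the running-supremum process $A_u=\sup_{r\in[t_i,u]}|\delta V_{t_i,r}|$, which is again a BMO process with $\rho\le 11\rho_{t_i,t_{i+1}-}(V)$), expand the exponential in its Taylor series, and sum a geometric series; choosing $\alpha$ small makes $\lambda\cdot(\text{mean oscillation})$ small enough for convergence, uniformly in $i$ and $r$. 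This is the step I expect to require the most care, because the half-open interval $[t_i,t_{i+1})$ does not include its right endpoint, so one must either work with $p$-variation over $[t_i,t_{i+1})$ throughout (using the $\|\cdot\|_{p,\cdot,[s,t)}:=\lim_{u\nearrow t}$ convention of the paper) or insert the jump at $t_{i+1}$ by hand. The jump $\Delta V_{t_{i+1}}$ is controlled by $\|\Delta V_{t_{i+1}}\|_{L^\infty}\le\sup_{t\in(r,T]}\|\Delta V_t\|_{L^\infty}=:J$, which is precisely why the factor $(1+J)$ appears in the exponent.

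With the one-interval bound in hand I would chain the intervals by the conditional-expectation tower property. Writing $S_i:=\sup_{t\in[t_i,t_{i+1})}|\delta V_{t_i,t}|$ and noting the telescoping bound $\sup_{t\in[r,T]}|\delta V_{r,t}|\le\sum_{i=0}^{N}\big(S_i+|\Delta V_{t_{i+1}}|\big)$ (each increment across $[t_i,t_{i+1}]$ splits into an increment up to $t_{i+1}-$ plus a jump, and the supremum over $[r,T]$ is dominated by the sum of the per-block suprema), one peels off the blocks from the right: conditioning on $\cF_{t_N}$, the last factor $\exp(\lambda S_N)$ has conditional expectation $\le 2e^{\lambda J}$ by the one-interval estimate, and one proceeds inductively to get
\[
\Big\|\E_r\Big[\exp\Big(\lambda\sup_{t\in[r,T]}|\delta V_{r,t}|\Big)\Big]\Big\|_{L^\infty}\le\big(2e^{\lambda J}\big)^{N+1}.
\]
Taking logarithms and bounding $\log(2e^{\lambda J})\le C(1+J)$ gives the claimed exponent $C(N_{\alpha,[r,T]}(w)+1)(1+\sup_{t\in(r,T]}\|\Delta V_t\|_{L^\infty})$. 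Two technical remarks: the times $t_i$ here are deterministic (they depend only on the deterministic control $w$), so there is no subtlety in conditioning on $\cF_{t_i}$; and if one prefers a $\lambda$-free choice of $\alpha$, one first proves the estimate for $\lambda=1$ and then absorbs general $\lambda$ by rescaling (replacing $\alpha$ by $\alpha/\lambda^p$ or adjusting $C$), which only affects the constant. Finally, Theorem~\ref{theorem: John-Nirenberg inequality} is immediate: apply Theorem~\ref{theorem: John-Nirenberg inequality for p,q,infty} to the path $t\mapsto\delta V_{0,t}$ with $q=1$, and use Proposition~\ref{proposition: BMO-p-var is p,q,infty} (again with $q=1$) to replace $\|V\|_{p,1,\infty,[\cdot,\cdot)}^p+\|\Gamma\|_{p,\infty,(\cdot,\cdot)}^p$ by a constant multiple of $\|V\|_{\textup{BMO}^{p\textup{-var}},(\cdot,\cdot)}^p$, absorbing the constant into $\alpha$ via Lemma~\ref{lemma: N_alpha for w_1 leq w_2} and Lemma~\ref{lemma: N_alpha < (2 N_beta+1)}.
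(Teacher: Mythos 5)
Your proposal is correct and follows essentially the same strategy as the paper's proof: build the control $\bar w(s,t)=\|V\|_{p,1,\infty,[s,t)}^p+\|\Gamma\|_{p,\infty,(s,t)}^p$, stop it at level $\alpha$ to get blocks on which $\rho$ is small enough for the one-interval John--Nirenberg estimate (the paper invokes \cite[Theorem~1.3]{Le2022b} to get a uniform constant $M$ in place of your ``$\le 2$''), telescope across blocks handling the endpoint jumps $|\Delta V_{t_{i+1}}|$ separately, chain by the tower property, and finally pass from the ad hoc threshold to general $\alpha$ via Lemma~\ref{lemma: N_alpha < (2 N_beta+1)}. The only cosmetic difference is that where you say one must ``work on $[t_i,t_{i+1})$ throughout or insert the jump by hand,'' the paper does the former by introducing the auxiliary process $\tV^i$ agreeing with $V$ on $[t_i,t_{i+1})$ and set to $V_{t_{i+1}-}$ at the right endpoint, so that $\rho_{t_i,t_{i+1}}(\tV^i)$ is controlled on the \emph{closed} block; this is exactly the device you anticipated.
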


\begin{proof}
By \cite[Theorem~1.3]{Le2022b}, whenever $\lambda \rho_{s,t}(V) \leq e^{-3}$, we have that
\begin{equation*}
\Big\| \E_s \Big[ \exp \Big( \lambda \sup_{u \in [s,t]} |\delta V_{s,u}| \Big) \Big] \Big\|_{L^\infty} \leq M,
\end{equation*}
where $M := 1 + \sum_{m=1}^{\infty} \frac{(c_m m)^m}{m!} e^{-3m} < \infty$ and $c_m^m := m(1+\frac{1}{m})^{(m+1)^2}$.

With the convention that $\|\Gamma\|_{p,\infty,(s,s)} := 0$ for all $s \in [0,T]$, we have that
\begin{equation*}
\bar{w}(s,t) := \|V\|^p_{p,1,\infty,[s,t)} + \|\Gamma\|_{p,\infty,(s,t)}^p
\end{equation*}
defines a control $\bar{w}$ which is regular from the inside. We let $\gamma = (10 \lambda e^3)^{-p}$, and consider the partition $\{t_i(\gamma)\}_{i=0}^{N_{\gamma,[r,t]}(\bar{w})+1}$ of the interval $[r,t]$ as defined in \eqref{eq: definition tau_i(alpha)}. For notational simplicity, we will write $t_i = t_i(\gamma)$ and $N = N_{\gamma,[r,T]}(\bar{w})$.

We define a process $\tV^i = (\tV^i_t)_{t \in [t_i,t_{i+1}]}$ such that $\tV^i_t = V_t$ for $t \in [t_i,t_{i+1})$, and $\tV^i_{t_{i+1}} = V_{t_{i+1}-}$. Then $\|\tV^i\|_{p,1,\infty,[t_i,t_{i+1}]} = \|V\|_{p,1,\infty,[t_i,t_{i+1})}$ and, setting $\Delta \tV^i_{t_i} := 0$,
\[\Big\|\sup_{u \in [t_i,t_{i+1}]} |\Delta \tV^i_u| \Big\|_{L^\infty} = \Big\|\sup_{u \in (t_i,t_{i+1})} |\Delta V_u| \Big\|_{L^\infty} \leq \|\Gamma\|_{p,\infty,(t_i,t_{i+1})}.\]
Using \eqref{eq: bound on BMO-p-var}, and the fact that $\bar{w}(t_i,t_{t+1})^{\frac{1}{p}} \leq \gamma^{\frac{1}{p}} = (10 \lambda e^3)^{-1}$, we then have that
\[\lambda \rho_{t_i,t_{i+1}}(\tV^{i}) \leq 5 \lambda \big(\|V\|_{p,1,\infty,[t_i,t_{i+1})} + \|\Gamma\|_{p,\infty,(t_i,t_{i+1})}\big) \leq 10 \lambda \bar{w}(t_i,t_{t+1})^{\frac{1}{p}} \leq e^{-3}.\]
For any $r \in [0,T)$, there exists an $i_0 \in \{0, 1, \ldots, N\}$ such that $r \in [t_{i_0},t_{i_{0}+1})$. Then
\begin{equation*}
\sup_{t \in [r,T]} |\delta V_{r,t}| \leq \sup_{t \in [r,t_{i_0+1}]} |\delta \tV^{i_0}_{r,t}| + \sum_{i=i_0+1}^N \sup_{t \in [t_i,t_{i+1}]} |\delta \tV^i_{t_i,t}| + \sum_{i=i_{0}+1}^{N+1} |\Delta V_{t_i}|,
\end{equation*}
so that
\begin{align*}
&\Big\| \E_r \Big[ \exp \Big( \lambda \sup_{t \in [r,T]} |\delta V_{r,t}| \Big) \Big] \Big\|_{L^\infty}\\
&\leq \bigg\| \E_r \bigg[ \exp \bigg( \lambda \bigg( \sup_{t \in [r,t_{i_0+1}]} |\delta \tV^{i_0}_{r,t}| + \sum_{i=i_0+1}^N \sup_{t \in [t_i,t_{i+1}]} |\delta \tV^i_{t_i,t}| + \sum_{i=i_{0}+1}^{N+1} |\Delta V_{t_i}| \bigg) \bigg) \bigg] \bigg\|_{L^\infty}\\
&\leq \Big\| \E_r \Big[ \exp \Big( \lambda \sup_{t \in [r,t_{i_0+1}]} |\delta \tV^{i_0}_{r,t}| \Big) \Big] \Big\|_{L^\infty} \prod_{i=i_0+1}^N \Big\| \E_{t_i} \Big[ \exp \Big( \lambda \sup_{t \in [t_i,t_{i+1}]} |\delta \tV^i_{t_i,t}| \Big) \Big] \Big\|_{L^\infty}\\
&\quad \times \exp \bigg( \lambda \sum_{i=i_0+1}^{N+1} \|\Delta V_{t_i}\|_{L^\infty} \bigg)\\
&\leq M^{N+1} \exp \Big(\lambda (N + 1) \sup_{t \in (r,T]} \|\Delta V_t\|_{L^\infty}\Big),
\end{align*}
which implies that \eqref{eq: JN bound for VpLqinfty} holds with $\alpha = \gamma$, and hence also for general $\alpha$ by Lemma~\ref{lemma: N_alpha < (2 N_beta+1)}.
\end{proof}

In particular, the John--Nirenberg inequality implies the following result, the proof of which is given in Appendix~\ref{sec: proof of JN corollary}.

\begin{proposition}\label{proposition: exp(V) is Holder continuous}
Let $p \in [1,\infty)$. If $V \in \textup{BMO}^{p\textup{-var}}$ is a one-dimensional process with $V_0 \in L^\infty$, then $\exp(V) \in V^p L^{q,r}$ for any $q \in [1,\infty)$ and $r \in [q,\infty)$.

Moreover, if $1 \leq \tilde{q} \leq r < q < \infty$, and if $\tV \in \textup{BMO}^{p\textup{-var}}$ is another one-dimensional process with $\tV_0 \in L^\infty$, then we have the estimate
\begin{equation}\label{eq: exp(V) - exp(tV) estimate}
\big\|\exp(V) - \exp(\tV)\big\|_{p,\tilde{q},r,[0,T]} \leq C \big(\|V_0 - \tV_0\|_{L^q} + \|V - \tV\|_{p,q,\frac{qr}{\tilde{q}},[0,T]}\big),
\end{equation}
where the constant $C > 0$ depends only on $p, q, \tilde{q}, r, \|V_0\|_{L^\infty}, \|\tV_0\|_{L^\infty}, \|V\|_{\textup{BMO}^{p\textup{-var}},[0,T]}$ and on $\|\tV\|_{\textup{BMO}^{p\textup{-var}},[0,T]}$.
\end{proposition}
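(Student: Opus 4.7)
The plan is to prove both claims by exploiting the multiplicative factorisation of exponential differences together with the John--Nirenberg inequality (Theorem~\ref{theorem: John-Nirenberg inequality}), which furnishes uniform $L^s$ bounds on $\exp(V)$ and on $\E_u[\exp(s|\delta V_{u,v}|)]$ for every finite $s$.

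For the first claim, I will start from
\[\exp(V_v) - \exp(V_u) = \exp(V_u)\bigl(\exp(\delta V_{u,v}) - 1\bigr)\]
and use the elementary bound $|e^x - 1| \leq |x| e^{|x|}$ to majorise the right-hand side pointwise by $\exp(V_u)|\delta V_{u,v}|\exp(|\delta V_{u,v}|)$. Iterating the conditional H\"older inequality \eqref{eq: Holder's inequality for q,r,s norm}, together with an outer H\"older on the $L^r$ norm, I will isolate a single factor $\|\delta V_{u,v}\|_{q', r', u}$ for some finite $q', r'$, while absorbing the other two exponential factors into constants. Since $V_0 \in L^\infty$ and $V \in \textup{BMO}^{p\textup{-var}}$, Theorem~\ref{theorem: John-Nirenberg inequality} provides exactly the required uniform bounds $\sup_u \|\exp(V_u)\|_{L^s} < \infty$ and $\sup_{(u,v)} \|\E_u[\exp(s|\delta V_{u,v}|)]\|_{L^\infty} < \infty$. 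Summing over any partition, and invoking Remark~\ref{remark: V in VpLqinfty for all q} to guarantee that $V \in V^p L^{q', r'}$ for the chosen exponents, then yields $\|\exp(V)\|_{p,q,r,[0,T]} < \infty$.

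For the stability estimate \eqref{eq: exp(V) - exp(tV) estimate}, I will use the algebraic identity
\begin{align*}
\delta\bigl(\exp(V) - \exp(\tV)\bigr)_{u,v} &= \bigl(\exp(\tV_v) - \exp(\tV_u)\bigr)\bigl(\exp(V_v - \tV_v) - 1\bigr)\\
&\quad + \exp(V_u)\bigl(\exp(\delta(V - \tV)_{u,v}) - 1\bigr),
\end{align*}
which is verified by expansion. The second summand, after applying $|e^x - 1| \leq |x|e^{|x|}$ and H\"older, yields $\|\delta(V - \tV)_{u,v}\|_{q, r, u}$ multiplied by uniformly bounded exponential factors; here John--Nirenberg is applied to both $V$ and $V - \tV$ (which also lies in $\textup{BMO}^{p\textup{-var}}$ with bounded initial value), and summing in $p$-variation produces the $\|V - \tV\|_{p, q, r, [0, T]}$ contribution. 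For the first summand, I will further split
\[\exp(V_v - \tV_v) - 1 = \bigl(\exp(V_0 - \tV_0) - 1\bigr) + \exp(V_0 - \tV_0)\bigl(\exp(\delta(V - \tV)_{0, v}) - 1\bigr),\]
extracting the factors $|V_0 - \tV_0|$ and $|\delta(V - \tV)_{0, v}|$ to generate the $\|V_0 - \tV_0\|_{L^q}$ and a second $\|V - \tV\|_{p, q, r, [0, T]}$ contribution, respectively, while controlling $\exp(\tV_v) - \exp(\tV_u)$ in $p$-variation via the first part of the proposition applied to $\tV$.

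The main delicate point will be the bookkeeping of integrability exponents across the nested applications of H\"older's inequality. The strict inequality $\tilde q r < q$ is precisely the margin required: the conditional H\"older step exchanges $\tilde q$ for $q$ at the cost of a strictly larger dual exponent, and the outer H\"older then accommodates the residual exponential factors in a higher but still finite $L^s$ norm, whose finiteness is exactly what the John--Nirenberg inequality guarantees.
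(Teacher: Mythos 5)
Your proposal is correct in its essentials but takes a genuinely different algebraic route from the paper. The paper handles the double difference $\delta\exp(V)_{s,t} - \delta\exp(\tV)_{s,t}$ via the auxiliary Lemma~\ref{lemma: elementary estimate} (a two-function, second-order Taylor-type expansion), which yields the three summands $I_1, I_2, I_3$; you instead use the exact factorised identity
\begin{equation*}
\delta\bigl(\exp(V) - \exp(\tV)\bigr)_{u,v} = \bigl(\exp(\tV_v) - \exp(\tV_u)\bigr)\bigl(\exp(V_v - \tV_v) - 1\bigr) + \exp(V_u)\bigl(\exp(\delta(V - \tV)_{u,v}) - 1\bigr),
\end{equation*}
which indeed holds, followed by a further split of $\exp(V_v - \tV_v) - 1$ anchored at time zero. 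The treatment of the first part of the proposition is essentially the same in both, as is the role of $\tilde{q}r < q$: it appears exactly in the outer H\"older step that detaches the $L^q$ factor (whether $\|V_0 - \tV_0\|_{L^q}$ or $\|V_s - \tV_s\|_{L^q}$) from a factor that still carries the $p$-variation sum. One genuine advantage of your route is that it avoids Lemma~\ref{lemma: elementary estimate} entirely and is a little more elementary; one disadvantage, which you should fill in, is that the factor $\delta(V - \tV)_{0,v}$ appearing after your last split is anchored at the right endpoint of the partition interval, so to control it you must further decompose $\delta(V - \tV)_{0,v} = \delta(V - \tV)_{0,u} + \delta(V - \tV)_{u,v}$. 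The $\cF_u$-measurable piece $\delta(V - \tV)_{0,u}$ is then handled like $\|V_s - \tV_s\|_{L^q}$ in the paper's $I_2$ (using $\|\delta(V-\tV)_{0,u}\|_{L^q} \leq \|V - \tV\|_{p,q,r,[0,T]}$) while the increment piece $\delta(V - \tV)_{u,v}$ produces a product of two $p$-variation factors with $\delta\exp(\tV)_{u,v}$, which sums in $p$-variation via Cauchy--Schwarz on the partition; this extra split is not mentioned in your proposal but is needed for the bookkeeping to close. You should also note explicitly that $V - \tV \in \textup{BMO}^{p\textup{-var}}$ with $\|V - \tV\|_{\textup{BMO}^{p\textup{-var}}} \leq \|V\|_{\textup{BMO}^{p\textup{-var}}} + \|\tV\|_{\textup{BMO}^{p\textup{-var}}}$, so that the constant produced by applying Theorem~\ref{theorem: John-Nirenberg inequality} to $V - \tV$ is of the admitted form.
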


\begin{proposition}\label{proposition: exponential integrability of a rough stochastic integral}
Let $p \in [2,3)$, $q \in [2,\infty)$, $\bX \in \sV^p$, and let $(Y,Y') \in \cV^{p,q,\infty}_X$ such that $\|\bX\|_{p,[0,T]}$, $\sup_{s \in [0,T]} \|Y_s\|_{L^\infty}$, $\sup_{s \in [0,T]} \|Y'_s\|_{L^\infty}$, $\|Y\|_{p,q,\infty,[0,T]}$, $\|Y'\|_{p,q,\infty,[0,T]}$ and $\|\E_{\edot} R^Y\|_{\frac{p}{2},\infty,[0,T]}$ are all bounded by a constant $L > 0$. Then the rough stochastic integral $\int_0^\cdot Y_s \dd \bX_s \in \textup{BMO}^{p\textup{-var}}$. Moreover, for any $\alpha > 0$ and $\lambda > 0$, there exists a constant $C$, which depends only on $p, q, \alpha, \lambda$ and $L$, such that, for every $r \in [0,T]$, we have that
\begin{equation}\label{eq: exponential bound for rough stochastic integral}
\bigg\| \E_r \bigg[ \exp \bigg( \lambda \sup_{t \in [r,T]} \bigg| \int_r^t Y_s \dd \bX_s \bigg| \bigg) \bigg] \bigg\|_{L^\infty} \leq \exp \Big( C \Big(N_{\alpha,[r,T]}\big(\|\bX\|_{p,[\cdot,\cdot)}^p\big) + 1 \Big) \Big(1 + \sup_{t \in (r,T]} |\Delta \bX_t| \Big) \Big).
\end{equation}
\end{proposition}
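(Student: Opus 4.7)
The plan is to apply Theorem~\ref{theorem: John-Nirenberg inequality for p,q,infty} (with $q$ replaced by $1$) to the process $V_t := \int_0^t Y_s \dd \bX_s$, and then to identify the control appearing in the resulting bound with $\|\bX\|_{p,[\cdot,\cdot)}^p$ up to constants, via Lemmas~\ref{lemma: N_alpha for w_1 leq w_2} and \ref{lemma: N_alpha < (2 N_beta+1)}. The central step is a local estimate bounding both the $\|\cdot\|_{p,1,\infty}$-norm of $V$ and the size of its jumps on an arbitrary subinterval $[s,t]$ by $\|\bX\|_{p,[s,t]}^p$ up to a multiplicative constant depending only on $p, q$ and $L$.

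For the $\|\cdot\|_{p,1,\infty}$-bound I would use the decomposition
\begin{equation*}
\delta V_{u,v} = Y_u \delta X_{u,v} + Y'_u \X_{u,v} + r_{u,v}
\end{equation*}
underlying the construction of the rough stochastic integral in Lemma~\ref{lemma: rough stochastic integral}, where the sewing remainder $r$ admits a bound of order $\|\bX\|_{p,[u,v]}^{3/p}$ up to constants depending only on $L$. Summing over a partition $\cP$ of $[s,t]$ and taking the supremum then yields
\begin{equation*}
\|V\|_{p,1,\infty,[s,t)}^p \leq C(L) \, \|\bX\|_{p,[s,t]}^p.
\end{equation*}
In parallel, from $\Delta V_t = Y_{t-} \Delta X_t + Y'_{t-} \Delta \X_t$ and the $L^\infty$-bounds on $Y, Y'$ one obtains $\|\Delta V_t\|_{L^\infty} \leq L\,|\Delta \bX_t|$ pointwise, which directly controls $\sup_{t \in (r,T]} \|\Delta V_t\|_{L^\infty}$ by $L\sup_{t \in (r,T]} |\Delta \bX_t|$. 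Taking $p$-th powers of the jumps and summing (absorbing one factor of $|\Delta\X_u|^{p/2} \leq \|\bX\|_{p,[0,T]}^{p/2} \leq L^{p/2}$ to convert $|\Delta\X_u|^p$ into $|\Delta\X_u|^{p/2}$) likewise gives $\|\Gamma^V\|_{p,\infty,(s,t)}^p \leq C(L)\|\bX\|_{p,[s,t]}^p$, where $\Gamma^V$ is defined as in \eqref{eq: defn Gamma_st}. By Proposition~\ref{proposition: BMO-p-var is p,q,infty} this already establishes that $V \in \textup{BMO}^{p\textup{-var}}$.

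To finish, set $\bar w(s,t) := \|V\|_{p,1,\infty,[s,t)}^p + \|\Gamma^V\|_{p,\infty,(s,t)}^p$ and $w(s,t) := \|\bX\|_{p,[s,t)}^p$. Both are controls which are regular from the inside in the sense of Definition~\ref{definition: regular from the inside}, using the c\`adl\`ag structure of $\bX$ and the fact that the above quantities are defined through half-open intervals. The local estimate says $\bar w \leq C(L)\,w$ on $\Delta_{[0,T]}$, so Lemma~\ref{lemma: N_alpha for w_1 leq w_2} yields $N_{C(L)\alpha,[r,T]}(\bar w) \leq N_{\alpha,[r,T]}(w)$, and Lemma~\ref{lemma: N_alpha < (2 N_beta+1)} converts this to $N_{\alpha,[r,T]}(\bar w) \leq C'(\alpha,L)\bigl(N_{\alpha,[r,T]}(w)+1\bigr)$ for any $\alpha > 0$. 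Plugging these bounds, together with $\sup_{t\in(r,T]}\|\Delta V_t\|_{L^\infty} \leq L\sup_{t\in(r,T]}|\Delta\bX_t|$, into Theorem~\ref{theorem: John-Nirenberg inequality for p,q,infty} applied to $V$ produces exactly \eqref{eq: exponential bound for rough stochastic integral}. The principal obstacle is the local estimate in the second paragraph: while morally it just repackages the stochastic-sewing bounds used to construct the rough stochastic integral, some care is required both to make the dependence on $\|\bX\|_{p,[s,t]}$ explicit (so the control comparison works) and to verify the resulting controls are indeed regular from the inside, so that the machinery of Section~\ref{sec: John-Nirenberg} applies.
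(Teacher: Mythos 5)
Your proposal is correct and follows essentially the same route as the paper: both establish local bounds $\|V\|_{p,q,\infty,[s,t)} + \|\Gamma\|_{p,\infty,(s,t)} \lesssim \|\bX\|_{p,[s,t)}$, invoke Proposition~\ref{proposition: BMO-p-var is p,q,infty} to place the integral in $\textup{BMO}^{p\textup{-var}}$, and then feed the control comparison through Theorem~\ref{theorem: John-Nirenberg inequality for p,q,infty} together with Lemmas~\ref{lemma: N_alpha for w_1 leq w_2} and \ref{lemma: N_alpha < (2 N_beta+1)}. The only stylistic difference is that the paper simply cites \cite[Lemma~3.10]{AllanPieper2024} for the $\|\cdot\|_{p,q,\infty}$-bound on the integral, rather than re-deriving it from the sewing decomposition as you sketch.
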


\begin{proof}
For each $(s,t) \in \Delta_{[0,T]}$, we let
\begin{equation*}
\Gamma_{s,t} := \sup_{r \in [s,t]} \bigg|\Delta \bigg(\int_0^\cdot Y_u \dd \bX_u\bigg)_{\hspace{-2pt}r}\bigg| = \sup_{r \in [s,t]} \big|Y_{r-} \Delta X_r + Y'_{r-} \Delta \X_r\big|,
\end{equation*}
where the second equality follows from the jump structure of rough stochastic integrals; see \cite[Lemma~4.7]{AllanPieper2026}. Let us write $J_\bX \subset (0,T]$ for the (countable) set of jump times of $\bX = (X,\X)$. Then, for any partition $\cP$, we have that
\begin{align*}
\sum_{[u,v] \in \cP} \|\Gamma_{u,v}\|_{L^\infty}^p &= \sum_{[u,v] \in \cP} \sup_{r \in [u,v] \cap J_\bX} \big\|Y_{r-} \Delta X_r + Y'_{r-} \Delta \X_r\big\|_{L^\infty}^p\\
&\leq 2 \sum_{r \in J_\bX} \big(\|Y_{r-}\|_{L^\infty} + \|Y'_{r-}\|_{L^\infty}\big)^p |\Delta \bX_r|^p\\
&\lesssim \sup_{u \in [0,T]} \big(\|Y_u\|_{L^\infty} + \|Y'_u\|_{L^\infty}\big)^p \|\bX\|_{p,[0,T]}^p \leq 2^p L^p \|\bX\|_{p,[0,T]}^p.
\end{align*}
It follows that $\|\Gamma\|_{p,\infty,(s,t)} \lesssim \|\bX\|_{p,[s,t)}$ for any $(s,t) \in \Delta_{[0,T]}$, and it is also straightforward to see that
\begin{equation*}
\sup_{t \in (0,T]} \bigg\| \Delta \bigg(\int_0^\cdot Y_u \dd \bX_u\bigg)_{\hspace{-2pt}t} \bigg\|_{L^\infty} \lesssim \sup_{t \in (0,T]} |\Delta \bX_t|.
\end{equation*}
By \cite[Lemma~4.8]{AllanPieper2026}, we also have that
\begin{equation*}
\bigg\| \int_0^\cdot Y_u \dd \bX_u \bigg\|_{p,q,\infty,[s,t)} \lesssim \|\bX\|_{p,[s,t)}
\end{equation*}
for any $(s,t) \in \Delta_{[0,T]}$. It thus follows from Proposition~\ref{proposition: BMO-p-var is p,q,infty} that $\int_0^\cdot Y_u \dd \bX_u \in \textup{BMO}^{p\textup{-var}}$, and the bound in \eqref{eq: exponential bound for rough stochastic integral} follows from Theorem~\ref{theorem: John-Nirenberg inequality for p,q,infty}, combined with Lemma~\ref{lemma: N_alpha for w_1 leq w_2} and Lemma~\ref{lemma: N_alpha < (2 N_beta+1)}.
\end{proof}


\subsection{Locally affine maps}

We generalize the notion of locally linear maps in \cite[Section~4]{FrizRiedel2013} to the setting of rough stochastic analysis. Taking the It\^o--Lyons map as prime example, the presence of additional noise terms, not present in classical RDEs, necessitates a slightly different definition.

\begin{definition}
Let $p \in [2,3)$ and $q \in [2, \infty)$. We call a map $\Psi \colon \sV^p \to V^p L^{q,\infty}$ \emph{locally affine} if there exists an $R \in (0,\infty]$, and a control $\bar{w}$ which is regular from the inside (in the sense of Definition~\ref{definition: regular from the inside}), such that $\|\Psi\|_{R,\bar{w}} < \infty$, where
\begin{align*}
\|\Psi\|_{R,\bar{w}} := \inf \big\{ C > 0 \, : \ &\|\Psi(\bX)\|_{p,q,\infty,[s,t)}^p \leq C \bar{w}_{\bX}(s,t)\\
&\text{for all } \bX \in \sV^p \text{ and } (s,t) \in \Delta_{[0,T]} \text{ such that } \bar{w}_{\bX}(s,t) \leq R \big\}
\end{align*}
and $\bar{w}_{\bX}(s,t) := \|\bX\|_{p,[s,t)}^p + \bar{w}(s,t)$.
\end{definition}

With this definition, we easily conclude an analogue of \cite[Proposition~2]{FrizRiedel2013} in our context. The following result follows as a direct consequence of Lemmas~\ref{lemma: N_alpha for w_1 leq w_2} and \ref{lemma: bound on N_alpha(w_1+ w_2)}.

\begin{lemma}
Let $\Psi \colon \sV^p \to V^p L^{q,\infty}$ be a locally affine map, for some $R \in (0,\infty]$ and control $\bar{w}$. Then
\begin{equation*}
N_{\alpha \|\Psi\|_{R,\bar{w}},[s,t]}\big(\|\Psi(\bX)\|_{p,q,\infty,[\cdot,\cdot)}^p\big) \leq N_{\frac{\alpha}{2},[s,t]}\big(\|\bX\|_{p,[\cdot,\cdot)}^p\big) + N_{\frac{\alpha}{2},[s,t]}(\bar{w})
\end{equation*}
for every $\alpha \in (0,R]$.
\end{lemma}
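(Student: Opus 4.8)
The plan is to combine the defining inequality of a locally affine map with the two comparison results for the counting function $N_\alpha$, namely Lemmas~\ref{lemma: N_alpha for w_1 leq w_2} and~\ref{lemma: bound on N_alpha(w_1+ w_2)}. First I would introduce the two controls
\begin{equation*}
w_1(u,v) := \|\Psi(\bX)\|_{p,q,\infty,[u,v)}^p, \qquad w_2(u,v) := \bar{w}_{\bX}(u,v) = \|\bX\|_{p,[u,v)}^p + \bar{w}(u,v)
\end{equation*}
for $(u,v) \in \Delta_{[0,T]}$. Both are genuine controls: the maps $(u,v) \mapsto \|\bX\|_{p,[u,v]}^p$ (see the footnote following the definition of the rough path seminorm) and $(u,v) \mapsto \|\Psi(\bX)\|_{p,q,\infty,[u,v]}^p$ are superadditive (by concatenation of partitions), and their left-closed right-open restrictions are regular from the inside in the sense of Definition~\ref{definition: regular from the inside}, exactly as the control $\bar{w}$ appearing in the proof of Theorem~\ref{theorem: John-Nirenberg inequality for p,q,infty}; since $\bar{w}$ is regular from the inside by hypothesis, so is the sum $w_2$.

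Next I would record that, by the very definition of $\|\Psi\|_{R,\bar{w}}$ as an infimum, the defining inequality of a locally affine map holds with the infimal constant itself: indeed, fixing $\bX \in \sV^p$ and $(s,t) \in \Delta_{[0,T]}$ with $\bar{w}_{\bX}(s,t) \leq R$, the bound $\|\Psi(\bX)\|_{p,q,\infty,[s,t)}^p \leq C \bar{w}_{\bX}(s,t)$ holds for every admissible $C$, and taking the infimum over such $C$ on the right-hand side yields
\begin{equation*}
w_1(u,v) \leq \|\Psi\|_{R,\bar{w}} \, w_2(u,v) \qquad \text{whenever } w_2(u,v) \leq R.
\end{equation*}
Since $\alpha \leq R$, this inequality holds in particular for all $(u,v)$ with $w_2(u,v) \leq \alpha$, so Lemma~\ref{lemma: N_alpha for w_1 leq w_2}, applied with the constant $C = \|\Psi\|_{R,\bar{w}}$ and the level $\alpha$, gives
\begin{equation*}
N_{\alpha \|\Psi\|_{R,\bar{w}},[s,t]}\big(\|\Psi(\bX)\|_{p,q,\infty,[\cdot,\cdot)}^p\big) = N_{\|\Psi\|_{R,\bar{w}}\alpha,[s,t]}(w_1) \leq N_{\alpha,[s,t]}(w_2).
\end{equation*}

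Finally, I would apply Lemma~\ref{lemma: bound on N_alpha(w_1+ w_2)} to the sum $w_2 = \|\bX\|_{p,[\cdot,\cdot)}^p + \bar{w}$ to obtain $N_{\alpha,[s,t]}(w_2) \leq N_{\frac{\alpha}{2},[s,t]}(\|\bX\|_{p,[\cdot,\cdot)}^p) + N_{\frac{\alpha}{2},[s,t]}(\bar{w})$, and chaining this with the previous display yields the claim. There is no real obstacle here; the only points meriting a word of care are the passage to the infimal constant $\|\Psi\|_{R,\bar{w}}$ (immediate, as just explained) and the verification that $w_1$ and $w_2$ are controls regular from the inside, so that the counting function $N_\alpha$ and the two lemmas are genuinely applicable — and this is routine, following the same reasoning already used in Section~\ref{sec: John-Nirenberg}. (If $\|\Psi\|_{R,\bar{w}} = 0$ the process $\Psi(\bX)$ has vanishing $p$-variation norm on the relevant scales and the left-hand side is trivially zero, so we may assume $\|\Psi\|_{R,\bar{w}} > 0$.)
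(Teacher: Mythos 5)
Your proposal is correct and follows exactly the route the paper indicates: the paper states the lemma without proof, remarking only that it follows directly from Lemmas~\ref{lemma: N_alpha for w_1 leq w_2} and \ref{lemma: bound on N_alpha(w_1+ w_2)}, and your argument is precisely the intended chaining of those two lemmas, with the expected (and correctly handled) preliminary checks that $w_1$, $w_2$ are controls regular from the inside and that the infimal constant $\|\Psi\|_{R,\bar{w}}$ itself satisfies the defining inequality.
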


\begin{proposition}\label{proposition: The Ito-Lyons map is locally affine}
Under the assumptions of Theorem~\ref{theorem: existence and estimates for solutions to RSDEs with measures}, the It\^o--Lyons map
\begin{equation*}
\sV^p \ni \bX \mapsto Y \in V^p L^{q,\infty},
\end{equation*}
where $Y$ is the solution to the rough SDE \eqref{eq: RSDE with measure} driven by $\bX$, is locally affine, with any $R \in (0,\infty)$ and the control $\bar{w}$ given by
\[\bar{w}(s,t) := (t - s) + \|M\|_{p,q,\infty,[s,t)}^p + \|A\|_{\frac{p}{2},\frac{q}{2},\infty,[s,t)}^{\frac{p}{2}} + \|A\|_{\frac{p}{q},1,\infty,[s,t)}^{\frac{p}{q}}.\]

Moreover, for any $h \in C^2_b$, the map $\bX \mapsto \int_0^{\cdot} h(Y_s) \dd \bX_s$ is also locally affine, with any $R \in (0,\infty)$ and the same control $\bar{w}$. In particular, if $\bX \in \sV^p$ with $\|\bX\|_{p,[0,T]}, \bar{w}(0,T) \leq L$ for some $L > 0$, then for any $\alpha > 0$, $\lambda > 0$ and $r \in [0,T]$,
\begin{equation}\label{eq: exponential bound for rough stochastic integral of RSDE solutions}
\bigg\| \E_r \bigg[ \exp \bigg( \lambda \sup_{t \in [r,T]} \bigg| \int_r^t h(Y_s) \dd \bX_s \bigg| \bigg) \bigg] \bigg\|_{L^\infty} \leq \exp \Big( C \Big(N_{\alpha,[r,T]}\big(\|\bX\|_{p,[\cdot,\cdot)}^p\big) + 1 \Big) \Big(1 + \sup_{t \in (r,T]} |\Delta \bX_t| \Big) \Big),
\end{equation}
where the constant $C$ depends only on $p, q, \alpha, \lambda$ and $L$.
\end{proposition}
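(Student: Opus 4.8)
The plan is to prove Proposition~\ref{proposition: The Ito-Lyons map is locally affine} in three stages, each building on the stability and growth estimates already established.

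\textbf{Stage 1: the It\^o--Lyons map is locally affine.} Fix $\bX \in \sV^p$ and an interval $[s,t)$ with $\bar w_{\bX}(s,t) \leq R$. We apply the local growth estimate for the solution $Y$ of the rough SDE \eqref{eq: RSDE with measure}: from Step~1 of the proof of \cite[Theorem~4.1]{AllanPieper2024} (with the random-measure modification of Theorem~\ref{theorem: existence and estimates for solutions to RSDEs with measures}), there is a constant $C_1$ depending only on $p, q, \|b\|_{C^1_b}, \|\sigma\|_{C^1_b}, \|f\|_{C^3_b}, g$ and $\nu$ such that, on any interval $[s,t)$ on which the control $\bar w_{\bX}$ is at most some fixed $\epsilon \in (0,1]$,
\begin{equation*}
\|Y\|_{p,q,\infty,[s,t)} \lesssim |Y_s'| + \bar w_{\bX}(s,t)^{1/p} \lesssim \bar w_{\bX}(s,t)^{1/p},
\end{equation*}
using $Y_s' = f(Y_s)$ and $\|f\|_\infty < \infty$ to absorb the initial term (more precisely, one first shows $\|Y\|_{p,q,\infty,[s,t)} \lesssim \|f\|_\infty (\|\bX\|_{p,[s,t)} + \bar w(s,t)^{1/p}) + \ldots$, then raises to the $p$-th power). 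Raising to the $p$-th power gives $\|Y\|_{p,q,\infty,[s,t)}^p \leq C\,\bar w_{\bX}(s,t)$ for $\bar w_{\bX}(s,t) \leq \epsilon$; since the statement allows any $R \in (0,\infty)$, one patches together finitely many such subintervals using superadditivity of the control exactly as in the ``global estimate'' arguments already used in the paper (e.g.\ in the proof of Proposition~\ref{proposition: Lipschitz continuity of doubly stochastic DEs}), at the cost of enlarging the constant $C$ in a way that depends on $R$ through the number of pieces. This shows $\|\Psi\|_{R,\bar w} < \infty$ with the stated $\bar w$.

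\textbf{Stage 2: the composed integral map is locally affine.} For $h \in C^2_b$, the pair $(h(Y), \D h(Y)Y')$ is a stochastic controlled path relative to $X$, and by \cite[Lemma~3.10]{AllanPieper2024} the rough stochastic integral satisfies $\|\int_0^\cdot h(Y_s)\dd\bX_s\|_{p,q,\infty,[s,t)} \lesssim \|\bX\|_{p,[s,t)} \,\big(\text{controlled-path data of }(h(Y),\D h(Y)Y')\big)$. On a small interval the controlled-path data of $(h(Y),\D h(Y)Y')$ — namely the sup-norms of $h(Y),\D h(Y)Y'$, the $p$-variation $L^{q,\infty}$-norms, and $\|\E_\cdot R^{h(Y)}\|_{p/2,\infty}$ — are all bounded in terms of $\|h\|_{C^2_b}$, $\|f\|_{C^3_b}$ and the bounds on $Y$ from Stage~1, which on $[s,t)$ with $\bar w_{\bX}(s,t)\leq \epsilon$ are themselves $O(1)$. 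Hence $\|\int_0^\cdot h(Y_s)\dd\bX_s\|_{p,q,\infty,[s,t)}^p \lesssim \|\bX\|_{p,[s,t)}^p \leq \bar w_{\bX}(s,t)$, and again one patches finitely many subintervals; this gives local affineness with the same control $\bar w$.

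\textbf{Stage 3: the exponential bound.} Given $\bX$ with $\|\bX\|_{p,[0,T]}, \bar w(0,T) \leq L$, Stage~2 together with Proposition~\ref{proposition: BMO-p-var is p,q,infty} and the jump-structure computation in the proof of Proposition~\ref{proposition: exponential integrability of a rough stochastic integral} shows $\int_0^\cdot h(Y_s)\dd\bX_s \in \textup{BMO}^{p\textup{-var}}$, with $\|\int_0^\cdot h(Y_s)\dd\bX_s\|_{p,1,\infty,[\cdot,\cdot)}^p + \|\Gamma\|_{p,\infty,(\cdot,\cdot)}^p$ dominated, on intervals where $\bar w_{\bX}\leq R$, by a constant multiple of $\bar w_{\bX}(\cdot,\cdot) = \|\bX\|_{p,[\cdot,\cdot)}^p + \bar w(\cdot,\cdot)$. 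Then Theorem~\ref{theorem: John-Nirenberg inequality for p,q,infty} yields the exponential estimate with $N_{\alpha,[r,T]}$ evaluated at this control; applying Lemma~\ref{lemma: N_alpha for w_1 leq w_2} (to replace the control by $c\,\bar w_{\bX}$), then Lemma~\ref{lemma: bound on N_alpha(w_1+w_2)} to split $N(\|\bX\|_{p,[\cdot,\cdot)}^p + \bar w) \leq N_{\alpha/2}(\|\bX\|_{p,[\cdot,\cdot)}^p) + N_{\alpha/2}(\bar w)$, and finally Lemma~\ref{lemma: N_alpha < (2 N_beta+1)} together with $\bar w(0,T)\leq L$ to bound $N_{\alpha/2,[r,T]}(\bar w)$ by a constant depending only on $\alpha$ and $L$, collapses everything into $C(N_{\alpha,[r,T]}(\|\bX\|_{p,[\cdot,\cdot)}^p)+1)$. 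The jump term $\sup_{t\in(r,T]}\|\Delta(\int h(Y)\dd\bX)_t\|_{L^\infty} \lesssim \sup_{t}|\Delta\bX_t|$ is exactly as in Proposition~\ref{proposition: exponential integrability of a rough stochastic integral}, which produces \eqref{eq: exponential bound for rough stochastic integral of RSDE solutions}.

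The main obstacle I expect is Stage~1: one must be careful that the local growth bound for the rough SDE solution is genuinely of the form $\|Y\|_{p,q,\infty,[s,t)}^p \lesssim \bar w_{\bX}(s,t)$ with \emph{no} additive constant, which requires using boundedness of $f$ (so $Y_s' = f(Y_s)$ contributes no constant independent of the control) and checking that the integral against the compensated random measure contributes only the $\|A\|_{p/2,q/2,\infty,[s,t)}^{p/2} + \|A\|_{p/q,1,\infty,[s,t)}^{p/q}$ terms already present in $\bar w$, via part~(ii) of Lemma~\ref{lemma: application of conditional BDG for jump measures}. The patching argument over finitely many subintervals is routine but does introduce an $R$-dependence into the constants, which is consistent with the statement (``any $R\in(0,\infty)$'').
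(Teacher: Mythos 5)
Your three-stage plan matches the paper's proof almost exactly: the local estimate from Step~1 of the rough SDE existence proof, followed by a patching argument over finitely many subintervals where $\bar{w}_{\bX}$ is small, then the integral map via \cite[Lemma~3.10]{AllanPieper2024}, and finally the exponential bound through Proposition~\ref{proposition: BMO-p-var is p,q,infty}, Theorem~\ref{theorem: John-Nirenberg inequality for p,q,infty}, and Lemmas~\ref{lemma: N_alpha for w_1 leq w_2}--\ref{lemma: bound on N_alpha(w_1+ w_2)} to reduce to $N_{\alpha}(\|\bX\|_{p,[\cdot,\cdot)}^p)$. You have also correctly identified that the crux of Stage~1 is obtaining the local bound with no additive constant.

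One point worth sharpening. You describe the patching as ``routine superadditivity,'' and your first displayed inequality $\|Y\|_{p,q,\infty,[s,t)} \lesssim |Y'_s| + \bar{w}_{\bX}(s,t)^{1/p} \lesssim \bar{w}_{\bX}(s,t)^{1/p}$ is not correct as written (the constant $|Y'_s| \le \|f\|_\infty$ cannot be absorbed into a quantity that may be arbitrarily small; your parenthetical ``more precisely'' version is the right one, where the controlled-path structure makes $\|f\|_\infty$ appear as a multiplicative factor of $\|\bX\|_{p,[s,t)}$). More importantly, the local bound from Step~1 is on the half-open interval $[t_i,t_{i+1})$, whereas the patching sum requires the closed-interval norm $\|Y\|_{p,q,\infty,[t_i,t_{i+1}]}^p$, which also sees the jump of $Y$ at $t_{i+1}$. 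The paper's proof handles this explicitly by passing to the limit $v \nearrow t_{i+1}$ in $\bigl\|\int_v^{t_{i+1}} f(Y_r)\dd\bX_r\bigr\|_{q,\infty,v}$, combined with the conditional BDG inequality and part~(ii) of Lemma~\ref{lemma: application of conditional BDG for jump measures} for the martingale and random-measure contributions; this is what lets the jump at $t_{i+1}$ be bounded by $\|\bX\|_{p,[s,t)}^p$ and the other components of $\bar{w}_{\bX}(s,t)$ rather than by anything larger. The analogous argument for $\|\E_{\edot} R^Y\|_{\frac{p}{2},\infty,[\cdot,\cdot)}$ and $\|Y'\|_{p,q,\infty,[\cdot,\cdot)}$ is needed too (and suffices for Stage~2), but you have the right precedent to cite.
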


\begin{proof}
By Step~1 in the proof of \cite[Theorem~5.2]{AllanPieper2026} (suitably adapted to include the integral against a random measure), there exists an $\epsilon \in (0,1]$, which does not depend on $\bX$, such that, for every $(s,t) \in \Delta_{[0,T]}$ with $\bar{w}_\bX(s,t) \leq \epsilon$, we have that
\begin{equation}\label{eq: bound for Y and ER^Y in loc affine proof}
\|Y\|_{p,q,\infty,[s,t)}^p \vee \|\E_{\edot} R^Y\|_{\frac{p}{2},\infty,[s,t)}^{\frac{p}{2}} \lesssim \bar{w}_\bX(s,t).
\end{equation}
Since $Y' = f(Y)$ and $f$ is Lipschitz, we also have that $\|Y'\|_{p,q,\infty,[s,t)}^p \lesssim \|Y\|_{p,q,\infty,[s,t)}^p \lesssim \bar{w}_\bX(s,t)$.

We now fix any $\bX \in \sV^p$ and $(s,t) \in \Delta_{[0,T]}$ such that $w_{\bX}(s,t) \leq R$. By \cite[Lemma~1.5]{FrizZhang2018}, there exists a partition $\{t_i\}_{i=0}^M$ of the interval $[s,t]$ such that $w_{\bX}(t_i,t_{i+1}) \leq \epsilon$ for each $0 \leq i < M$. By the superadditivity of $\bar{w}_\bX$, we may also choose this partition such that $M \lesssim \bar{w}_\bX(s,t) \leq R$.

For $0 \leq i \leq M-2$, using \cite[Lemma~4.8]{AllanPieper2026}, and the fact that
\[\|f(Y)\|_{p,q,\infty,[v,t_{i+1})} + \|f(Y)'\|_{p,q,\infty,[v,t_{i+1})} + \|\E_{\edot} R^{f(Y)}\|_{\frac{p}{2},\infty,[v,t_{i+1})} \to 0\]
as $v \nearrow t_{i+1}$, we see that
\begin{equation*}
\lim_{v \nearrow t_{i+1}} \bigg\| \int_v^{t_{i+1}} f(Y_r) \dd \bX_r \bigg\|_{q,\infty,v}^p \lesssim \|\bX\|_{p,[t_i,t_{i+1}]}^p \leq \|\bX\|_{p,[s,t)}^p.
\end{equation*}
Using the bound in \eqref{eq: bound for Y and ER^Y in loc affine proof}, the conditional BDG inequality and Lemma~\ref{lemma: application of conditional BDG for jump measures}, we then have that
\begin{align*}
\|&Y\|_{p,q,\infty,[t_i,t_{i+1}]}^p \lesssim \|Y\|_{p,q,\infty,[t_i,t_{i+1})}^p + \lim_{v \nearrow t_{i+1}} \|\delta Y_{v,t_{i+1}}\|_{q,\infty,v}^p\\
&\lesssim \bar{w}_\bX(t_i,t_{i+1}) + \lim_{v \nearrow t_{i+1}} \bigg\| \int_v^{t_{i+1}} \sigma(Y_{r-}) \dd M_r \bigg\|_{q,\infty,v}^p + \lim_{v \nearrow t_{i+1}} \bigg\| \int_v^{t_{i+1}} f(Y_r) \dd \bX_r \bigg\|_{q,\infty,v}^p\\
&\quad + \lim_{v \nearrow t_{i+1}} \bigg\| \int_v^{t_{i+1}} \int_{\U} g(r,Y_{r-},u) \, \tN(\d u,\d r) \bigg\|_{q,\infty,v}^p\\
&\lesssim \bar{w}_\bX(t_i,t_{i+1}) + \|M\|_{p,q,\infty,[t_i,t_{i+1}]}^p + \|A\|_{\frac{p}{2},\frac{q}{2},\infty,[t_i,t_{i+1}]}^{\frac{p}{2}} + \|A\|_{\frac{p}{q},1,\infty,[t_i,t_{i+1}]}^{\frac{p}{q}} + \|\bX\|_{p,[s,t)}^p\\
&\lesssim \bar{w}_\bX(s,t).
\end{align*}
It is also clear that $\|Y\|_{p,q,\infty,[t_{M-1},t_M)}^p \lesssim \bar{w}_\bX(t_{M-1},t_M) \leq \bar{w}_\bX(s,t)$, and we can thus bound
\begin{equation*}
\|Y\|_{p,q,\infty,[s,t)}^p \leq M^{p-1} \bigg( \sum_{i=0}^{M-2} \|Y\|_{p,q,\infty,[t_i,t_{i+1}]}^p + \|Y\|_{p,q,\infty,[t_{M-1},t_M)}^p \bigg) \lesssim \bar{w}_\bX(s,t),
\end{equation*}
which implies that the map $\bX \mapsto Y$ is locally affine.

With the same line of argument, one can conclude similar bounds for $\|Y'\|_{p,q,\infty,[s,t)}^p$ and $\|\E_{\edot} R^Y\|_{\frac{p}{2},\infty,[s,t)}^{\frac{p}{2}}$, and the final claim then follows by combining these bounds with those in \cite[Lemma~4.8]{AllanPieper2026}, and arguing exactly as in the proof of Proposition~\ref{proposition: exponential integrability of a rough stochastic integral}.
\end{proof}

\section{Robust stochastic filtering with jumps}\label{sec: application to filtering}

On a filtered probability space $(\Omega,\cF,(\cF_t)_{t \in [0,T]},\P)$, we let $B$ and $W$ be independent Brownian motions. Moreover, we let $N_1$ and $N_2$ be integer-valued random measures, defined on Blackwell spaces $(\U_1,\cU_1)$, $(\U_2,\cU_2)$, and let $\tN_1$, $\tN_2$ denote the corresponding compensated random measures.

We consider a \emph{signal} process $X$, and an \emph{observation} process $Y$, governed by the SDEs
\begin{equation}\label{eq: SDEofX and Y}
\begin{split}
\d X_t &= b_1(t,X_t,Y_t) \dd t + \sigma_0(t,X_t,Y_t) \dd B_t + \sigma_1(t,X_t,Y_t) \dd W_t\\
&\quad + \int_{\U_1} f_1(t,X_{t-},Y_{t-},u) \, \tN_1(\d t,\d u) + \int_{\U_2} f_2(t,X_{t-},Y_{t-},u) \, \tN_2(\d t,\d u),\\
\d Y_t &= b_2(t,X_t,Y_t) \dd t + \sigma_2(t,Y_t) \dd W_t + \int_{\U_2} f_3(t,Y_{t-},u) \, \tN_2(\d t,\d u).
\end{split}
\end{equation}
We suppose that compensators of $N_1$ and $N_2$ are given by $\nu_1(\d u) \dd t$ and $\lambda(t,X_{t-},u) \, \nu_2(\d u) \dd t$ respectively, where $\nu_1$, $\nu_2$ are $\sigma$-finite measures on the respective Blackwell spaces, and $\lambda \colon [0,T] \times \R^{d_X} \times \U_2 \to (0,\infty)$ is a Borel measurable function. In particular, we note that $N_1$ is a Poisson random measure, but $N_2$ is not in general.

Here, the coefficients $b_1 \colon [0,T] \times \R^{d_X + d_Y} \to \R^{d_X}$, $b_2 \colon [0,T] \times \R^{d_X + d_Y} \to \R^{d_Y}$, $\sigma_0 \colon [0,T] \times \R^{d_X + d_Y} \to \R^{d_X \times d_B}$, $\sigma_1 \colon [0,T] \times \R^{d_X + d_Y} \to \R^{d_X \times d_Y}$, $\sigma_2 \colon [0,T] \times \R^{d_Y} \to \R^{d_Y \times d_Y}$, $f_1 \colon [0,T] \times \R^{d_X + d_Y} \times \U_1 \to \R^{d_X}$, $f_2 \colon [0,T] \times \R^{d_X + d_Y} \times \U_2 \to \R^{d_X}$ and $f_3 \colon [0,T] \times \R^{d_Y} \times \U_2 \to \R^{d_Y}$ are all Borel measurable, and $d_X, d_Y, d_B \in \N$ are the dimensions of the respective processes. We note that this setting includes, for instance, those in \cite{GermGyongy2025PartI} and \cite{Qiao2021}.

\emph{Stochastic filtering} is concerned with calculating the conditional law of $X_t$ given the filtration generated by $Y$ up to time $t$. All the characteristics of $X$ and $Y$ are assumed to be known by the observer; see also Remark~\ref{rem:reconstruction}. \emph{Robust stochastic filtering} establishes continuity properties of the conditional law with respect to the observation $Y$, which is of course crucial in applications.

\subsection{Preliminaries on stochastic filtering}

We require the following assumptions.

\begin{assumption}\label{assumption: on the coefficients for the SDE, linear growth}
The coefficients have at most linear growth. That is, there exists a constant $K > 0$ such that
\begin{align*}
|b_1(t,x,y)| + |b_2(t,x,y)| + |\sigma_0(t,x,y)| + |\sigma_1(t,x,y)| + |\sigma_2(t,y)| &\leq K (1 + |x| + |y|),\\
\|f_1(t,x,y,\cdot)\|_{L^2(\nu_1)} + \|f_2(t,x,y,\cdot)\|_{L^2(\nu_2)} + \|f_3(t,y,\cdot)\|_{L^2(\nu_2)} &\leq K(1 + |x| + |y|)
\end{align*}
for all $t \in [0,T]$, $x \in \R^{d_X}$ and $y \in \R^{d_Y}$.
\end{assumption}

\begin{assumption}\label{assumption: boundedness of Z_0}
The initial values $X_0$, $Y_0$ are $\cF_0$-measurable and square integrable.
\end{assumption}

\begin{assumption}
Almost surely, the random measures $N_1$ and $N_2$ do not jump at the same time, in the sense that $\P(\{\omega \in \Omega : (\omega,t) \in D_1 \cap D_2 \ \textup{ for any } \ t \in [0,T]\}) = 0$, where $D_i$ denotes the set of jump times of $N_i$, as defined in \eqref{eq: definition D_i and beta^i}.
\end{assumption}

\begin{assumption}\label{assumption: non-degeneracy of sigma and boundedness of h}
The map $\sigma_2 \colon [0,T] \times \R^{d_Y} \to \R^{d_Y \times d_Y}$ takes values in the space of invertible matrices, and
\begin{equation*}
\sup_{t \in [0,T], \, y \in \R^{d_Y}} |\sigma_2(t,y)^{-1}| < \infty.
\end{equation*}
Further the map $h \colon [0,T] \times \R^{d_X + d_Y} \to \R^{d_Y}$ given by
\begin{equation}\label{eq: defn function h}
h(t,x,y) := \sigma_2(t,y)^{-1} \bigg( b_2(t,x,y) + \int_{\U_2} f_3(t,y,u) (1 - \lambda(t,x,u)) \, \nu_2(\d u) \bigg)
\end{equation}
satisfies
\begin{equation*}
\sup_{(x,y) \in \R^{d_X + d_Y}} \int_0^T |h(s,x,y)|^2 \dd s < \infty.
\end{equation*}
\end{assumption}

\begin{assumption}\label{assumption: on lambda for the measure change}
The function $\lambda$ is uniformly bounded and uniformly bounded away from $0$. Further,
\begin{equation}\label{eq: assumption on lambda}
\sup_{t \in [0,T], \, x \in \R^{d_X}} \int_{\U_2} \frac{(1 - \lambda(t,x,u))^2}{\lambda(t,x,u)} \, \nu_2(\d u) < \infty.
\end{equation}
\end{assumption}

Conditions of the form in \eqref{eq: assumption on lambda} are standard for filtering models with L\'evy noise; see, e.g., \cite[Assumption~(6)]{Gertner1978}, \cite[Assumption~($\boldsymbol{\mathrm{H}}_\lambda$)]{Qiao2021}, \cite[Assumption~5]{Qiao2023} or \cite[Assumption~3.2]{QiaoDuan2015}.

\smallskip

The following lemma provides some required integrability. Since this result is rather standard (see, e.g., \cite[Lemma~16.1.4]{Cohen2015} or \cite[Remark~2.1]{GermGyongy2025PartI}), we omit its proof.

\begin{lemma}
Suppose that $X$ and $Y$ satisfy the SDEs in \eqref{eq: SDEofX and Y}. Then, under Assumptions~\ref{assumption: on the coefficients for the SDE, linear growth} and \ref{assumption: boundedness of Z_0}, we have that
\begin{equation*}
\E \Big[ \sup_{t \in [0,T]} \big(|X_t|^2 + |Y_t|^2\big) \Big] \leq C \big( 1 + \E \big[|X_0|^2\big] + \E \big[|Y_0|^2\big] \big) < \infty,
\end{equation*}
where the constant $C$ depends in particular on the constant $K$ in Assumption~\ref{assumption: on the coefficients for the SDE, linear growth}, which implies in particular that
\begin{equation*}
\E \bigg[ \int_0^T \int_{\U_i} |f_i(s,X_{s-},Y_{s-},u)|^2 \, \nu_i(\d u) \dd s \bigg] < \infty
\end{equation*}
for each $i = 1, 2, 3$ (with $\U_3 := \U_2$, $\nu_3 := \nu_2$ and without the dependence on $X$ when $i = 3$).
\end{lemma}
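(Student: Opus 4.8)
The plan is to derive the moment bound by a standard Gr\"onwall/BDG argument applied directly to the system \eqref{eq: SDEofX and Y}. First I would fix $t \in [0,T]$ and set $Z^*_t := \sup_{s \in [0,t]} (|X_s| + |Y_s|)$; the aim is to bound $\E[(Z^*_t)^2]$ and then close with Gr\"onwall. Writing out all the terms on the right-hand sides of \eqref{eq: SDEofX and Y}, I would use the elementary inequality $|a_1 + \cdots + a_k|^2 \leq k \sum |a_i|^2$ to split into the drift terms, the Brownian stochastic integrals against $B$ and $W$, and the compensated Poisson-type integrals against $\tN_1$ and $\tN_2$.

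For the drift terms I would apply the Cauchy--Schwarz inequality in time to get $\sup_{s \le t}|\int_0^s b_i(\cdot)\,\d r|^2 \le t \int_0^t |b_i(r,X_r,Y_r)|^2\,\d r$, and then invoke the linear growth bound in Assumption~\ref{assumption: on the coefficients for the SDE, linear growth}. For the Brownian integrals I would use the Burkholder--Davis--Gundy inequality: $\E[\sup_{s\le t}|\int_0^s \sigma_0(r,X_r,Y_r)\,\d B_r|^2] \lesssim \E[\int_0^t |\sigma_0(r,X_r,Y_r)|^2\,\d r]$, and similarly for the $\sigma_1$ and $\sigma_2$ terms, again using linear growth. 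For the jump integrals, the It\^o isometry for compensated integer-valued random measures gives $\E[\sup_{s\le t}|\int_0^s\int_{\U_i} f_i(r,X_{r-},Y_{r-},u)\,\tN_i(\d r,\d u)|^2] \lesssim \E[\int_0^t\int_{\U_i}|f_i(r,X_{r-},Y_{r-},u)|^2\,\nu_i(\d u)\,\d r]$ for $i=1$, and for $i=2$ one must be slightly careful since the compensator of $N_2$ is $\lambda(r,X_{r-},u)\,\nu_2(\d u)\,\d r$; here Assumption~\ref{assumption: on lambda for the measure change} (that $\lambda$ is uniformly bounded) lets one dominate $\int_{\U_2}|f_2|^2\lambda\,\nu_2(\d u) \lesssim \int_{\U_2}|f_2|^2\,\nu_2(\d u)$, and then the $L^2(\nu_2)$ linear growth bound in Assumption~\ref{assumption: on the coefficients for the SDE, linear growth} applies.

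Combining all these estimates and using $|f_i(r,X_r,Y_r,\cdot)| \le |X_r|+|Y_r| \le Z^*_r$ pointwise, I would arrive at
\begin{equation*}
\E\big[(Z^*_t)^2\big] \le C\Big(1 + \E[|X_0|^2] + \E[|Y_0|^2]\Big) + C\int_0^t \E\big[(Z^*_r)^2\big]\,\d r,
\end{equation*}
where $C$ depends on $T$ and the linear-growth constant $K$ (and the uniform bound on $\lambda$). Gr\"onwall's inequality then yields the stated bound, with the constant picking up a factor $e^{CT}$. The integrability statement $\E[\int_0^T\int_{\U_i}|f_i|^2\,\nu_i(\d u)\,\d s]<\infty$ for each $i$ is then immediate from the linear growth assumption once $\E[\sup_{t\le T}(|X_t|^2+|Y_t|^2)]<\infty$ is known.

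The only mild subtlety --- and the step I would flag as requiring the most care --- is a standard localisation: one does not know a priori that $\E[(Z^*_t)^2]$ is finite, so one should first run the argument with a sequence of stopping times $\tau_n := \inf\{t : |X_t|+|Y_t| \ge n\} \wedge T$, obtain the Gr\"onwall bound for $\E[(Z^*_{t\wedge\tau_n})^2]$ uniformly in $n$, and then pass to the limit by monotone convergence using $\tau_n \to T$ a.s. (which holds since $X$ and $Y$ have c\`adl\`ag paths and hence are locally bounded). Since the proof is entirely routine, I would simply cite \cite[Lemma~16.1.4]{Cohen2015} or \cite[Remark~2.1]{GermGyongy2025PartI} and omit the details, as the paper does.
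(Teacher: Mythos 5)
Your argument is correct and is precisely the standard localisation-plus-BDG-plus-Gr\"onwall proof that the paper omits, deferring to \cite[Lemma~16.1.4]{Cohen2015} and \cite[Remark~2.1]{GermGyongy2025PartI}.

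One substantive observation you make is worth underlining, because it bears on the lemma's stated hypotheses: the lemma lists only Assumptions~\ref{assumption: on the coefficients for the SDE, linear growth} and \ref{assumption: boundedness of Z_0}, yet your treatment of the $\tN_2$-integral genuinely uses the uniform boundedness of $\lambda$ from Assumption~\ref{assumption: on lambda for the measure change}. Under $\P$ the compensator of $N_2$ is $\lambda(t,X_{t-},u)\,\nu_2(\d u)\dd t$, so the BDG/Doob bound for $\int_0^{\cdot}\int_{\U_2}f_2\,\tN_2(\d s,\d u)$ produces the predictable bracket $\int_0^t\int_{\U_2}|f_2|^2\,\lambda\,\nu_2(\d u)\dd s$, whereas Assumption~\ref{assumption: on the coefficients for the SDE, linear growth} controls $\|f_2(t,x,y,\cdot)\|_{L^2(\nu_2)}$ only; passing from the former to the latter requires $\sup\lambda<\infty$. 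This appears to be a mild omission in the lemma's hypothesis list rather than a defect in your argument. Two small points of form: what you call ``the It\^o isometry for compensated integer-valued random measures'' together with the supremum inside the expectation is really the $p=2$ BDG inequality (equivalently, isometry plus Doob's $L^2$ maximal inequality); and the pointwise inequality ``$|f_i(r,X_r,Y_r,\cdot)|\le|X_r|+|Y_r|$'' should of course be read as a bound on the $L^2(\nu_i)$-norm, $\|f_i(r,X_r,Y_r,\cdot)\|_{L^2(\nu_i)}\le K(1+|X_r|+|Y_r|)$, as Assumption~\ref{assumption: on the coefficients for the SDE, linear growth} gives.
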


A standard approach in stochastic filtering, which we will utilize here, is the so-called \emph{reference measure method}. In the following, we will define a probability measure $\tP$, which is equivalent to $\P$ on $\cF_T$, under which the noises driving the observation process $Y$ become independent of the additional noises driving the signal process $X$. In our setting, this is achieved by setting
\begin{equation*}
\frac{\d \tP}{\d \P}\bigg|_{\cF_T} = \Lambda_T^{-1},
\end{equation*}
where
\begin{equation*}
\Lambda_t^{-1} := \exp(-I_t)
\end{equation*}
for $t \in [0,T]$, and the process $I$ is defined by
\begin{equation}\label{eq: defn I_t}
\begin{split}
I_t &= \int_0^t h(s,X_s,Y_s)^\top \dd W_s + \frac{1}{2} \int_0^t |h(s,X_s,Y_s)|^2 \dd s + \int_0^t \int_{\U_2} \log(\lambda(s,X_{s-},u)) \, \tN_2(\d s,\d u)\\
&\quad + \int_0^t \int_{\U_2} \big( 1 - \lambda(s,X_{s-},u) + \lambda(s,X_{s-},u) \log(\lambda(s,X_{s-},u)) \big) \, \nu_2(\d u) \dd s
\end{split}
\end{equation}
for $t \in [0,T]$, and the function $h$ was defined in \eqref{eq: defn function h}, so that
\begin{equation*}
h(s,X_s,Y_s) = \sigma_2(s,Y_s)^{-1} \bigg( b_2(s,X_s,Y_s) + \int_{\U_2} f_3(s,Y_{s-},u) (1 - \lambda(s,X_{s-},u)) \, \nu_2(\d u) \bigg).
\end{equation*}

Since $(v - 1)/v \leq \log(v) \leq v - 1$ for all $v > 0$, and $\lambda$ is uniformly bounded, we note that
\begin{equation*}
\big| 1 - \lambda(s,x,u) + \lambda(s,x,u) \log(\lambda(s,x,u)) \big| \leq (1 - \lambda(s,x,u))^2 \lesssim \frac{(1 - \lambda(s,x,u))^2}{\lambda(s,x,u)},
\end{equation*}
which, combined with Assumption~\ref{assumption: on lambda for the measure change}, ensures the existence of the final integral in \eqref{eq: defn I_t}.

\smallskip

The following few lemmas are also standard, and their proofs are rather technical, and are therefore omitted. See, for instance, \cite{Gertner1978}, \cite{Qiao2021}, \cite[Ch.~3]{BainCrisan2009} or \cite[Ch.~6.2]{MandrekarRudiger2014} for similar results in analogous settings.

\begin{lemma}
Under Assumptions~\ref{assumption: on the coefficients for the SDE, linear growth}, \ref{assumption: boundedness of Z_0}, \ref{assumption: non-degeneracy of sigma and boundedness of h} and \ref{assumption: on lambda for the measure change}, we have that $\Lambda^{-1} = (\Lambda^{-1}_t)_{t \in [0,T]}$ is the Dol\'eans--Dade exponential of the process given by
\[t \mapsto -\int_0^t h(s,X_s,Y_s)^\top \dd W_s + \int_0^t \int_{\U_2} \frac{1 - \lambda(s,X_{s-},u)}{\lambda(s,X_{s-},u)} \, \tN_2(\d s,\d u),\]
and thus in particular is an exponential martingale.
\end{lemma}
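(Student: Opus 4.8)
The plan is to verify directly that $\Lambda^{-1}_t = \exp(-I_t)$ agrees with the Dol\'eans--Dade exponential $\mathcal{E}(L)_t$ of the semimartingale
\begin{equation*}
L_t := -\int_0^t h(s,X_s,Y_s)^\top \dd W_s + \int_0^t \int_{\U_2} \frac{1 - \lambda(s,X_{s-},u)}{\lambda(s,X_{s-},u)} \, \tN_2(\d s,\d u),
\end{equation*}
and then to argue that this exponential is a true martingale (not merely a local one). First I would decompose $L$ into its continuous martingale part $L^c_t = -\int_0^t h(s,X_s,Y_s)^\top \dd W_s$, which has quadratic variation $[L^c]_t = \int_0^t |h(s,X_s,Y_s)|^2 \dd s$, and its purely discontinuous part $L^d_t = \int_0^t \int_{\U_2} \frac{1-\lambda}{\lambda}\, \tN_2(\d s,\d u)$, whose jumps are $\Delta L_t = \frac{1-\lambda(t,X_{t-},\beta^2_t)}{\lambda(t,X_{t-},\beta^2_t)}$ on the jump set $D_2$ (using the notation of \eqref{eq: definition D_i and beta^i}). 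Recall the standard formula
\begin{equation*}
\mathcal{E}(L)_t = \exp\Big(L_t - \tfrac12 [L^c]_t\Big) \prod_{0 < s \leq t} (1 + \Delta L_s) e^{-\Delta L_s}.
\end{equation*}
Since $\lambda > 0$ by Assumption~\ref{assumption: on lambda for the measure change}, we have $1 + \Delta L_s = 1/\lambda(s,X_{s-},\beta^2_s) > 0$, so $\mathcal{E}(L)$ is strictly positive and we may take logarithms termwise.

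The heart of the computation is then to match $-I_t$ against $\log \mathcal{E}(L)_t$. Writing $v = \lambda(s,X_{s-},u)$, the product-over-jumps contributes $\sum_{s \leq t, D_2} \big(\log(1/v) - (1/v - 1)\big)$, which by the compensation formula for the integer-valued measure $N_2$ (with compensator $\lambda(s,X_{s-},u)\nu_2(\d u)\dd s$) splits as a $\tN_2$-integral plus a compensator integral. Specifically, $\log(1/v) = -\log v$, so the jump sum over $\log(1/v)$ integrates against $N_2$ as $-\int_0^t\int_{\U_2} \log \lambda\, N_2(\d s,\d u) = -\int_0^t\int_{\U_2}\log\lambda\,\tN_2 - \int_0^t\int_{\U_2}\lambda\log\lambda\,\nu_2\dd s$; similarly the $-(1/v-1)$ term integrates against $N_2$, giving a $\tN_2$-integral plus $-\int_0^t\int_{\U_2}(1-\lambda)\,\nu_2\dd s$. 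Combining the $\tN_2$-integrals and noting that $-\log\lambda + (1 - \tfrac{1}{\lambda}) + \tfrac{1-\lambda}{\lambda} = -\log\lambda$ (the $\tN_2$-part from $L^d_t = \int \tfrac{1-\lambda}{\lambda}\tN_2$ combining with the two jump-sum $\tN_2$-parts), together with the continuous pieces $L^c_t - \tfrac12[L^c]_t = -\int_0^t h^\top \dd W_s - \tfrac12\int_0^t |h|^2\dd s$, should reproduce exactly the four terms of $-I_t$ in \eqref{eq: defn I_t}. The only delicate bookkeeping is the sign and compensator accounting in this jump computation; I expect this to be the main obstacle, though it is ultimately routine once one is careful with which integrand is compensated. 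All integrability needed to justify the compensation formula follows from Assumptions~\ref{assumption: on the coefficients for the SDE, linear growth}, \ref{assumption: non-degeneracy of sigma and boundedness of h} and \ref{assumption: on lambda for the measure change}, via the bound $|1 - \lambda + \lambda\log\lambda| \lesssim (1-\lambda)^2/\lambda$ already noted, and the analogous bound $|{-}\log\lambda - (1-\tfrac1\lambda)| \lesssim (1-\lambda)^2/\lambda$ (both from $\tfrac{v-1}{v}\le\log v\le v-1$), which also shows the compensator integrals are finite.

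Finally, to upgrade $\mathcal{E}(L)$ from a positive local martingale (hence supermartingale) to a true martingale, I would invoke a Novikov--type / Kazamaki criterion adapted to the jump setting: since $h$ is bounded in the sense $\sup_{x,y}\int_0^T|h(s,x,y)|^2\dd s < \infty$ and $\lambda$ is bounded and bounded away from $0$ with $\sup_{t,x}\int_{\U_2}\tfrac{(1-\lambda)^2}{\lambda}\nu_2(\d u) < \infty$, the continuous part has uniformly bounded Novikov exponent and the jump part has a uniformly integrable exponential moment; a stopping-time localisation argument (stopping when the quadratic variation plus compensator exceeds a level, applying Novikov on each piece, and patching) then yields $\E^\P[\mathcal{E}(L)_T] = 1$. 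Alternatively, one may cite the corresponding statement in \cite{Gertner1978} or \cite[Ch.~3]{BainCrisan2009} for this jump-diffusion filtering setup, which is the path the paper already signals by deferring the ``rather technical'' proofs. I would present the martingale property either way and conclude that $\Lambda^{-1} = \mathcal{E}(L)$ is an exponential martingale, as claimed.
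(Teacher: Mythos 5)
The paper does not actually supply a proof of this lemma; it explicitly defers to the literature (``The following few lemmas are also standard, and their proofs are rather technical, and are therefore omitted. See, for instance, \cite{Gertner1978}, \cite{Qiao2021}, \cite[Ch.~3]{BainCrisan2009} or \cite[Ch.~6.2]{MandrekarRüdiger2014}\ldots''). So there is no paper proof to compare your argument against; I can only assess whether your sketch is correct, and it is.

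Your computation of $\log\mathcal{E}(L)_t$ is right. Writing out the terms explicitly: the three contributions to the $\tN_2$-integrand are $\frac{1-\lambda}{\lambda}$ (from $L^d$), $-\log\lambda$ and $1-\frac{1}{\lambda}$ (from decomposing the jump product over $N_2$ into $\tN_2$ plus compensator), and these sum to $-\log\lambda$ since $1-\frac{1}{\lambda}=-\frac{1-\lambda}{\lambda}$; the compensator contributions, noting that the $\P$-compensator of $N_2$ is $\lambda\,\nu_2(\d u)\dd s$, are $-\int\lambda\log\lambda\,\nu_2\dd s$ and $-\int(1-\lambda)\nu_2\dd s$, which together recover the fourth term of $-I_t$; and $L^c_t-\tfrac12[L^c]_t$ gives the first two. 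The absolute summability needed to convert the jump product into a sum (and to justify the compensation) follows, as you say, from $0\leq\log v+\tfrac1v-1\leq\tfrac{(1-v)^2}{v}$ (both inequalities coming from $\tfrac{v-1}{v}\leq\log v\leq v-1$ applied to $v$ and to $1/v$), combined with Assumption~\ref{assumption: on lambda for the measure change}.

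Two small remarks. First, you describe your ``delicate bookkeeping'' as the ``main obstacle,'' but in fact your sign accounting is already correct as written, so this hedge is not needed. Second, for the martingale property, your appeal to a Novikov/Kazamaki criterion with ``stopping-time localisation, patching'' is more elaborate than the situation requires: under Assumptions~\ref{assumption: non-degeneracy of sigma and boundedness of h} and \ref{assumption: on lambda for the measure change} the predictable quadratic variation $\langle L\rangle_T=\int_0^T|h(s,X_s,Y_s)|^2\dd s+\int_0^T\int_{\U_2}\frac{(1-\lambda)^2}{\lambda}\,\nu_2(\d u)\dd s$ is \emph{deterministically} bounded, and the jumps $\Delta L_s=\tfrac{1-\lambda_s}{\lambda_s}$ are bounded and bounded away from $-1$, so a single application of the jump Novikov/Lépingle--Mémin criterion gives $\E^{\P}[\mathcal{E}(L)_T]=1$ directly, without any localisation argument. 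Citing \cite{Gertner1978} or \cite[Ch.~3]{BainCrisan2009}, as you offer as an alternative and as the paper itself does, is also acceptable.
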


We let $\tW$ be the process given by $\tW_t = W_t + \int_0^t h(s,X_s,Y_s) \dd s$ for $t \in [0,T]$, and we also let $\tilde{b}_1(t,x,y) := b_1(t,x,y) - \sigma_1(t,x,y) h(t,x,y) - \int_{\U_2} f_2(t,x,y,u) (1 - \lambda(t,x,u)) \, \nu_2(\d u)$.

\begin{lemma}\label{lemma: application of Girsanov for measure change}
Under Assumptions~\ref{assumption: on the coefficients for the SDE, linear growth}--\ref{assumption: on lambda for the measure change}, we have, under the measure $\tP$, that $B$ and $\tW$ are Brownian motions, $N_1$ and $N_2$ are Poisson random measures with compensators $\nu_1(\d u) \dd t$ and $\nu_2(\d u) \dd t$ respectively, and that $B, \tW, N_1$ and $N_2$ are all independent.

Moreover, writing $\tN$ for the compensated random measure associated with $N_2$ under $\tP$, i.e., $\tN(\d t,\d u) = N_2(\d t,\d u) - \nu_2(\d u) \dd t$, the signal $X$ and observation $Y$ satisfy
\begin{equation}\label{eq: SDEFilteringUpdated}
\begin{split}
\d X_t &= \tilde{b}_1(t,X_t,Y_t) \dd t + \sigma_0(t,X_t,Y_t) \dd B_t + \sigma_1(t,X_t,Y_t) \dd \tW_t\\
&\quad + \int_{\U_1} f_1(t,X_{t-},Y_{t-},u) \, \tN_1(\d t,\d u) + \int_{\U_2} f_2(t,X_{t-},Y_{t-},u) \, \tN(\d t,\d u),\\
\d Y_t &= \sigma_2(t,Y_t) \dd \tW_t + \int_{\U_2} f_3(t,Y_{t-},u) \, \tN(\d t,\d u).
\end{split}
\end{equation}
\end{lemma}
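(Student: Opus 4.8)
The plan is to invoke Girsanov's theorem for semimartingales with jumps (see, e.g., \cite[Ch.~III]{JacodShiryaev2003}), together with L\'evy's and Watanabe's characterizations, and then to simplify the resulting dynamics. Writing $L$ for the martingale
\[ L_t := -\int_0^t h(s,X_s,Y_s)^\top \dd W_s + \int_0^t \int_{\U_2} \frac{1 - \lambda(s,X_{s-},u)}{\lambda(s,X_{s-},u)} \, \tN_2(\d s,\d u), \]
the preceding lemma identifies $\Lambda^{-1} = \mathcal{E}(L)$, and since this is a true martingale, $\tP$ is a genuine probability measure equivalent to $\P$ on $\cF_T$, so Girsanov's theorem applies.

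For the continuous part: the continuous local martingale part of $L$ is $L^c = -\int_0^\cdot h(s,X_s,Y_s)^\top \dd W_s$, and since the components of $W$ are independent, $\langle W^i, L^c \rangle_t = -\int_0^t h^i(s,X_s,Y_s) \dd s$; also $B \perp W$ under $\P$ forces $\langle B^i, L^c \rangle = 0$. Girsanov's theorem thus identifies $\tW = W - \langle W, L^c \rangle = W + \int_0^\cdot h(s,X_s,Y_s) \dd s$ and $B$ itself as continuous local $\tP$-martingales. Since quadratic covariations are unaffected by an equivalent change of measure, $[\tW^i, \tW^j]_t = [B^i, B^j]_t = \delta_{ij} t$ and $[\tW^i, B^j]_t = [W^i, B^j]_t = 0$, so both $\tW$ and $B$ are $\tP$-Brownian motions by L\'evy's characterization. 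For the jump part: Girsanov's theorem for random measures gives that the $\tP$-compensator of an integer-valued random measure is its $\P$-compensator times the conditional jump density of $\mathcal{E}(L)$, which equals $1 + \frac{1 - \lambda(s,X_{s-},u)}{\lambda(s,X_{s-},u)} = \frac{1}{\lambda(s,X_{s-},u)}$ on the jumps of $N_2$ and equals $1$ on the jumps of $N_1$ (here using that, almost surely, $N_1$ and $N_2$ have no common jump time). Hence under $\tP$ the compensator of $N_2$ is $\frac{1}{\lambda(s,X_{s-},u)} \lambda(s,X_{s-},u)\, \nu_2(\d u) \dd s = \nu_2(\d u) \dd s$, and that of $N_1$ remains $\nu_1(\d u) \dd t$; since both are deterministic, $N_1$ and $N_2$ are Poisson random measures under $\tP$ by Watanabe's characterization.

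The main obstacle is the \emph{mutual} independence of $B$, $\tW$, $N_1$ and $N_2$ under $\tP$, which the correct marginal laws above do not by themselves yield. The plan is to conclude it by observing that, under $\tP$, the $\R^{d_B + d_Y}$-valued process $(B,\tW)$ together with the pair $(N_1,N_2)$ has deterministic semimartingale characteristics: zero drift, continuous covariation equal to the identity matrix (using $[\tW^i, B^j] = 0$), and jump compensator $\nu_1(\d u_1) \dd t + \nu_2(\d u_2) \dd t$ supported on the disjoint union $\U_1 \sqcup \U_2$ (again invoking the no-common-jump assumption). A semimartingale with deterministic characteristics has independent increments (\cite[Ch.~II]{JacodShiryaev2003}), and for such a process the Gaussian and Poisson parts are automatically independent, while block-diagonal characteristics give independence of the component processes; this delivers the asserted mutual independence, and in particular confirms that $(B,\tW)$ is a Brownian motion in $\R^{d_B + d_Y}$ under $\tP$.

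Finally, I would substitute $W = \tW - \int_0^\cdot h(s,X_s,Y_s) \dd s$ and $\tN_2(\d t,\d u) = \tN(\d t,\d u) + (1 - \lambda(t,X_{t-},u)) \, \nu_2(\d u) \dd t$ into \eqref{eq: SDEofX and Y}. In the $X$-equation, the term $\int_{\U_2} f_2 \, \tN_2$ becomes $\int_{\U_2} f_2 \, \tN + \int_{\U_2} f_2 (1 - \lambda) \, \nu_2(\d u) \dd t$, and collecting the finite-variation terms gives the new drift $b_1 - \sigma_1 h - \int_{\U_2} f_2 (1 - \lambda) \, \nu_2(\d u) = \tilde{b}_1$, while the $\tN_1$-term is unchanged since $\tN_1$ is still the $\tP$-compensated measure of $N_1$. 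In the $Y$-equation, the same substitution together with the identity $\sigma_2 h = b_2 + \int_{\U_2} f_3 (1 - \lambda) \, \nu_2(\d u)$ from \eqref{eq: defn function h} makes the drift collapse to $0$, yielding \eqref{eq: SDEFilteringUpdated}. All steps except the mutual independence argument are routine bookkeeping with the change of measure.
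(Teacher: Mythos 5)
Your proof is correct and follows precisely the standard route (Girsanov for the continuous and jump parts, followed by L\'evy's and Watanabe's characterizations, then substitution) that the paper delegates to the cited references when it declares the proof ``rather technical'' and omits it; in particular, the computation of the Girsanov kernel $1 + \frac{1-\lambda}{\lambda} = \frac{1}{\lambda}$ on jumps of $N_2$ (and $1$ on jumps of $N_1$, via the no-common-jump assumption), the resulting $\tP$-compensators, and the cancellation of the drift of $Y$ using \eqref{eq: defn function h} are all correct. The only step I would tighten is the mutual independence: rather than speaking loosely of ``block-diagonal characteristics'', it is cleanest to amalgamate $(B,\tW)$ as a continuous local martingale with deterministic bracket $I\,\dd t$ and $N_1,N_2$ into a single integer-valued random measure on $\U_1 \sqcup \U_2$ with deterministic compensator $\nu_1(\d u)\,\dd t + \nu_2(\d u)\,\dd t$ (using, again, that they a.s.\ have no common jump time); then \cite[Ch.~II, Theorem~4.15]{JacodShiryaev2003} gives that this joint semimartingale has independent increments, and the L\'evy--Khinchin representation of each increment factorizes into the Gaussian pieces for $B$ and $\tW$ and the Poisson pieces supported on $\U_1$ and $\U_2$, from which the asserted mutual independence of the four processes follows. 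Your sketch captures exactly this idea, merely a touch informally.
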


Under standard assumptions (see, e.g., \cite{DavieGermGyongy2024} or \cite[Ch.~16]{Cohen2015}), there exists a unique solution $X, Y$ to the SDEs in \eqref{eq: SDEFilteringUpdated}, which is then also the solution to the SDEs in \eqref{eq: SDEofX and Y}.

\smallskip

The conditional distribution for our filtering model is given by the following result, known as the \emph{Kallianpur--Striebel formula}. In the following, we will write $\widetilde{\E}$ for the expectation under $\tP$, and write $(\cF^Y_t)_{t \in [0,T]}$ for the observation filtration, i.e., for each $t \in [0,T]$, $\cF^Y_t$ is the $\P$-completion of $\sigma(Y_s, s \in [0,t])$.

\begin{lemma}\label{lemmaKallianpurStriebel}
For any bounded measurable function $f \colon \R^{d_X + d_Y} \to \R$, we have that
\begin{equation*}
\pi_t(f) := \E[f(X_t,Y_t) \, | \, \cF^Y_t] = \frac{\widetilde{\E}[f(X_t,Y_t) \Lambda_t \, | \, \cF^Y_t]}{\widetilde{\E}[\Lambda_t \, | \, \cF^Y_t]} =: \frac{\rho_t(f)}{\rho_t(1)}
\end{equation*}
for $t \in [0,T]$, where $\Lambda_t = \exp(I_t)$. Moreover, $\Lambda = (\Lambda_t)_{t \in [0,T]}$ is the Dol\'eans--Dade exponential of the process
\begin{equation*}
t \mapsto \int_0^t h(s,X_s,Y_s)^\top \dd \tW_s - \int_0^t \int_{\U_2} (1 - \lambda(s,X_{s-},u)) \, \tN(\d s,\d u),
\end{equation*}
and thus is itself an exponential martingale.
\end{lemma}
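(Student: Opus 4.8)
The plan is to obtain the Kallianpur--Striebel identity from the abstract Bayes formula, and then to identify $\Lambda$ as a Dol\'eans--Dade exponential by combining Yor's product formula with the representation of $\Lambda^{-1}$ supplied by the previous lemma. Since $\tP$ is equivalent to $\P$ on $\cF_T$ with $\frac{\d\tP}{\d\P}\big|_{\cF_T} = \Lambda_T^{-1}$, we have $\frac{\d\P}{\d\tP}\big|_{\cF_T} = \Lambda_T$ and $\widetilde\E[\Lambda_T] = \E_\P[\Lambda_T\Lambda_T^{-1}] = 1$. Applying the abstract Bayes formula with respect to the sub-$\sigma$-algebra $\cF^Y_t$ and the bounded random variable $f(X_t,Y_t)$ then gives
\[\pi_t(f) = \E[f(X_t,Y_t)\,|\,\cF^Y_t] = \frac{\widetilde\E[f(X_t,Y_t)\Lambda_T\,|\,\cF^Y_t]}{\widetilde\E[\Lambda_T\,|\,\cF^Y_t]}.\]

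Next I would replace $\Lambda_T$ by $\Lambda_t$. Since $\Lambda^{-1}$ is a $\P$-martingale (previous lemma), it is the density process of $\tP$ with respect to $\P$, and a standard reciprocal-density computation via the abstract Bayes formula gives, for $s \le t$, $\widetilde\E[\Lambda_t\,|\,\cF_s] = \big(\Lambda_s^{-1}\big)^{-1}\,\E_\P[\Lambda_t\Lambda_t^{-1}\,|\,\cF_s] = \Lambda_s$, so that $\Lambda$ is a $\tP$-martingale. As $f(X_t,Y_t)$ is $\cF_t$-measurable and $\cF^Y_t \subseteq \cF_t$, the tower property yields
\[\widetilde\E[f(X_t,Y_t)\Lambda_T\,|\,\cF^Y_t] = \widetilde\E\big[f(X_t,Y_t)\,\widetilde\E[\Lambda_T\,|\,\cF_t]\,|\,\cF^Y_t\big] = \widetilde\E[f(X_t,Y_t)\Lambda_t\,|\,\cF^Y_t] = \rho_t(f),\]
and the same computation with $f \equiv 1$ gives $\widetilde\E[\Lambda_T\,|\,\cF^Y_t] = \rho_t(1)$, which is strictly positive almost surely since $\Lambda_t > 0$. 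Combined with the previous display, this proves the Kallianpur--Striebel identity.

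For the Dol\'eans--Dade representation, write $L_t := \int_0^t h(s,X_s,Y_s)^\top \dd\tW_s - \int_0^t\int_{\U_2}(1-\lambda(s,X_{s-},u))\,\tN(\d s,\d u)$ for the process in the statement. By Lemma~\ref{lemma: application of Girsanov for measure change}, under $\tP$ the process $\tW$ is a Brownian motion and $\tN$ the compensated Poisson measure of $N_2$, and Assumption~\ref{assumption: non-degeneracy of sigma and boundedness of h} together with \eqref{eq: assumption on lambda} and the boundedness of $\lambda$ ensure that $L$ is a $\tP$-local martingale. From the previous lemma, $\Lambda^{-1} = \cE(\bar L)$ with $\bar L_t := -\int_0^t h^\top\dd W_s + \int_0^t\int_{\U_2}\tfrac{1-\lambda}{\lambda}\,\tN_2(\d s,\d u)$. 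Yor's product formula gives $\cE(L)\,\cE(\bar L) = \cE\big(L + \bar L + [L,\bar L]\big)$, and using $\tW = W + \int_0^\cdot h\,\dd s$, the identity $\tN = \tN_2 + (\lambda-1)\,\nu_2(\d u)\dd s$ relating the $\tP$- and $\P$-compensated measures of $N_2$, and the evaluation of $[L,\bar L]$ from its continuous part $-\int_0^\cdot|h|^2\dd s$ and its jump part $-\int_0^\cdot\int_{\U_2}\tfrac{(1-\lambda)^2}{\lambda}\,N_2(\d s,\d u)$, one checks that the Brownian, absolutely continuous and random-measure contributions of $L + \bar L + [L,\bar L]$ all cancel, so that $L + \bar L + [L,\bar L] \equiv 0$. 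Hence $\cE(L)\cE(\bar L) = \cE(0) = 1$, i.e.\ $\cE(L) = (\Lambda^{-1})^{-1} = \Lambda$, and together with the martingale property established above this shows $\Lambda$ is an exponential martingale. Alternatively, one may verify directly via It\^o's formula that $Z_t := \exp(I_t)$ solves $\d Z_t = Z_{t-}\,\dd L_t$ with $Z_0 = 1$, which characterises $\cE(L)$.

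The conceptual content here is standard, and essentially all the analytic effort (existence and martingale property of $\Lambda^{-1}$, the Girsanov theorem under $\tP$) has already been pushed into the preceding lemmas. The one point that requires genuine care is the cancellation in the last paragraph: one must correctly match the $\P$-compensator $\lambda(s,X_{s-},u)\,\nu_2(\d u)\dd s$ of $N_2$ against its $\tP$-compensator $\nu_2(\d u)\dd s$, and track the drift produced by passing from $W$ to $\tW$, so that the a priori nonzero finite-variation terms in $L + \bar L + [L,\bar L]$ sum exactly to zero; this is where an error would most easily creep in.
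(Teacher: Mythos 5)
The paper explicitly omits the proof of this lemma, referring the reader to \cite{Gertner1978}, \cite{Qiao2021}, \cite[Ch.~3]{BainCrisan2009} and \cite[Ch.~6.2]{MandrekarRüdiger2014} for analogous results, so there is no in-paper argument to compare against. Your reconstruction is correct and follows the standard route: the abstract Bayes formula for the Kallianpur--Striebel identity, the $\tP$-martingale property of $\Lambda$ (obtained as the reciprocal of the $\P$-density process $\Lambda^{-1}$) to pass from $\Lambda_T$ to $\Lambda_t$, and Yor's product formula with the pathwise cancellation $L + \bar{L} + [L,\bar{L}] \equiv 0$ to conclude $\Lambda = \cE(L)$. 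I checked that the cancellation goes through exactly as you describe: the drift from $W \mapsto \tW$, namely $\int_0^\cdot |h|^2\dd s$, is absorbed by $[L^c,\bar{L}^c] = -\int_0^\cdot |h|^2\dd s$; and the compensator drifts in $L^d + \bar{L}^d$ cancel, leaving the jump sum $\int_0^\cdot\!\int_{\U_2} \frac{(1-\lambda)^2}{\lambda}\,N_2(\d s,\d u)$, which is exactly $-[L,\bar{L}]^d$. The alternative verification via It\^o's formula applied to $\exp(I_t)$ is also sound, using \eqref{eq: defn I on product space} and the fact that the jumps of $I$ equal $\log\lambda$ at the jump times of $N_2$.
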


\subsection{Reformulation as a rough SDE}\label{subsection: reformulation as a rough SDE}

Similarly to the approach adopted in \cite{CrisanDiehlFrizOberhauser2013}, rather than working with the SDEs \eqref{eq: SDEFilteringUpdated} on a common probability space, we consider analogous SDEs on a product probability space, such that the additional noise terms included in the signal are defined on an independent part of the space to the observation. By the independence of these terms, as provided in Lemma~\ref{lemma: application of Girsanov for measure change}, the solution to the SDEs on the product space has the same distribution as the solution to the original SDEs in \eqref{eq: SDEFilteringUpdated}. Moreover, we can then lift all the $\cF^Y_t$-adapted processes to rough paths, and, given the consistency result of Section~\ref{sec: Consistency RSDEs and SDEs}, we can consider the resulting equation as a rough SDE, and exploit the stability properties of such equations to obtain robustness of the conditional distribution. To this end, we need the following assumption.

\begin{assumption}\label{assumption: FY= FW v FN}
We assume that
\begin{equation*}
\cF^Y_t = \cF^Y_0 \vee \cF^{\tW}_t \vee \cF^{\tN}_t
\end{equation*}
for every $t \in [0,T]$. Here, $(\cF^{\tW}_t)_{t \in [0,T]}$ denotes the natural filtration of $\tW$, and $\cF^{\tN}_t$ is the $\tP$-completion of the $\sigma$-algebra generated by the random variables $N_2((0,s] \times U)$ for all $s \in (0,t]$ and all $U \in \cU_2$ such that $\nu_2(U) < \infty$. We also assume that $\cF^Y_0$ is independent of $\cF^B_T \vee \cF^{\tN_1}_T$, where $(\cF^B_t)_{t \in [0,T]}$ is the natural filtration of $B$, and $\cF^{\tN_1}_t$ is defined analogously to $\cF^{\tN}_t$.
\end{assumption}

\begin{remark}\label{remark: some cases when FY= FW v FN}
Assumption~\ref{assumption: FY= FW v FN} is essentially an additional condition on the coefficient $f_3$.

Suppose, for instance, that $f_3(s,y,u) = h_3(s,y) g_3(s,u)$, where $h_3$ takes values in the space of invertible $d_Y \times d_Y$-matrices with $\sup_{s \in [0,T], \, y \in \R^{d_Y}} |h_3(s,y)^{-1}| < \infty$, and $g_3 \colon [0,T] \times \U_2 \to \R^{d_Y} \setminus \{0\}$ is such that $\sigma(\iota|_{[0,t] \times \U_2}) = \cB([0,t]) \otimes \cU_2$ for every $t \in [0,T]$, where the function $\iota \colon [0,T] \times \U_2 \to [0,T] \times (\R^{d_Y} \setminus \{0\})$ is given by $\iota(s,u) = (s,g_3(s,u))$. An extension of \cite[Lemma~4.2]{GermGyongy2025PartI} then shows that Assumption~\ref{assumption: FY= FW v FN} is satisfied.
\end{remark}

As indicated above, our intention is to fix a realization of certain noise terms in \eqref{eq: SDEFilteringUpdated}, and then lift these to a rough path. While this procedure is classical for It\^o integrals against Brownian motion, the Poisson random measures cannot immediately be lifted to a rough path; indeed, a priori there is no path to lift. We circumvent this issue by splitting the integrand into a product, to isolate the dependence on $X$ and $Y$ from the dependence on $u$.

\begin{assumption}\label{assumption: f= hg}
We suppose that $\tilde{b}_1, \sigma_0 \in C^1_b$, $h \in C^2_b$ and $\sigma_1, \sigma_2 \in C^3_b$. Further, we assume that $f_2(t,x,y,u) = h_2(t,x,y) g_2(t,u)$ and $f_3(t,y,u) = h_3(t,y) g_3(t,u)$, where $h_2, h_3 \in C^3_b$, and, for each $i = 2, 3$, the function $g_i$ is Borel measurable and satisfies
\begin{equation*}
\int_0^T \int_{\U_2} |g_i(s,u)|^2 \, \nu_2(\d u) \dd s < \infty.
\end{equation*}
Moreover, we assume that $\log(\lambda(t,x,u)) = \kappa(t,x) \gamma(t,u)$, where $\kappa \in C^2_b$ and $\gamma$ is Borel measurable, bounded, and satisfies
\begin{equation*}
\int_0^T \int_{\U_2} |\gamma(s,u)| \, \nu_2(\d u) \dd s < \infty.
\end{equation*}
\end{assumption}

\begin{remark}
The conditions on $f_2, f_3$ and $\lambda$ in Assumption~\ref{assumption: f= hg} may seem somewhat restrictive at first glance. However, we recall that finite sums of products of smooth functions, of the form $\sum_{j=1}^m h_{2,j}(t,x,y) g_{2,j}(t,u)$, are dense in the space of continuous compactly supported functions of $(t,x,y,u)$, by a straightforward application of the Stone--Weierstrass theorem. Indeed, all of our results below hold when the functions $f_2, f_3$ and $\lambda$ are given by finite sums of products, i.e., when $f_2(t,x,y,u) = \sum_{j=1}^m h_{2,j}(t,x,y) g_{2,j}(t,u)$ (and similarly for $f_3$), and $\log(\lambda(t,x,u)) = \sum_{j=1}^m \kappa_j(t,x) \gamma_j(t,u)$, where the functions $h_{2,j}, h_{3,j}, g_{2,j}, g_{3,j}, \kappa_j$ and $\gamma_j$ satisfy the relevant conditions in Assumption~\ref{assumption: f= hg}. It what follows we take singular products purely for notational simplicity.
\end{remark}

Under Assumption~\ref{assumption: f= hg}, and using the associativity of integrals against random measures (e.g., \cite[Ch.~II, Proposition~1.30]{JacodShiryaev2003}), we may rewrite \eqref{eq: SDEFilteringUpdated} as
\begin{equation}\label{eq: dynamics of X Y driven by G}
\begin{split}
\d X_t &= \tilde{b}_1(t,X_t,Y_t) \dd t + \sigma_0(t,X_t,Y_t) \dd B_t + \int_{\U_1} f_1(t,X_{t-},Y_{t-},u) \, \tN_1(\d t,\d u)\\
&\quad + (\sigma_1(t,X_{t-},Y_{t-}), h_2(t,X_{t-},Y_{t-}), 0, 0) \dd G_t,\\
\d Y_t &= (\sigma_2(t,Y_{t-}), 0, h_3(t,Y_{t-}), 0) \dd G_t,
\end{split}
\end{equation}
where we set
\begin{equation}\label{eq: definition G}
G_t = \bigg( \tW_t, \int_0^t \int_{\U_2} g_2(s,u) \, \tN(\d s,\d u), \int_0^t \int_{\U_2} g_3(s,u) \, \tN(\d s,\d u), \int_0^t \int_{\U_2} \gamma(s,u) \, \tN(\d s,\d u) \bigg)^{\hspace{-2pt}\top}
\end{equation}
for $t \in [0,T]$, which, by Assumption~\ref{assumption: FY= FW v FN}, defines an $\cF^Y_t$-adapted local martingale $G = (G_t)_{t \in [0,T]}$. By Lemma~\ref{lemma: application of Girsanov for measure change}, we also have that $G$ is independent of $B$ and $N_1$.

\smallskip

In the following, we will continue to consider $N_2, \tW$ and $G$ as being defined on $(\Omega,\cF,\tP)$. However, we now introduce a second probability space, which we denote by $(\bar{\Omega},\bar{\cF},\bar{\P})$. On this new space, we let $B$ be a Brownian motion, and let $N_1$ be a Poisson random measure with compensator $\nu_1(\d u) \dd t$. We also define the product space
\[(\hat{\Omega},\hat{\cF},\hat{\P}) := (\Omega \times \bar{\Omega},\cF \otimes \bar{\cF},\tP \otimes \bar{\P}).\]

Of course, we can consider all stochastic objects as also living on the product space, by simply letting, e.g., $B(\omega,\bar{\omega}) = B(\bar{\omega})$ for all $(\omega,\bar{\omega}) \in \hat{\Omega}$. Moreover, by the independence provided in Lemma~\ref{lemma: application of Girsanov for measure change}, it is clear that, after this change of framework, the law of these stochastic objects (under the measure $\hat{\P}$) is unchanged from how they were defined originally (under $\tP$).

Let $X$ and $Y$ be the solutions to the SDEs in \eqref{eq: dynamics of X Y driven by G}, now defined on $(\hat{\Omega},\hat{\cF},\hat{\P})$. We also define
\begin{equation}\label{eq: defn I on product space}
\begin{split}
I_t &:= \int_0^t h(s,X_s,Y_s)^\top \dd \tW_s - \frac{1}{2} \int_0^t |h(s,X_s,Y_s)|^2 \dd s + \int_0^t \int_{\U_2} \kappa(s,X_{s-}) \gamma(s,u) \, \tN(\d s,\d u)\\
&\quad + \int_0^t \big(1 - \lambda(s,X_{s-},u) + \log(\lambda(s,X_{s-},u))\big) \, \nu_2(\d u) \dd s\\
&= \int_0^t H(s,X_{s-},Y_{s-}) \dd G_s - \frac{1}{2} \int_0^t |h(s,X_s,Y_s)|^2 \dd s\\
&\quad + \int_0^t \big(1 - \lambda(s,X_{s-},u) + \log(\lambda(s,X_{s-},u))\big) \, \nu_2(\d u) \dd s,
\end{split}
\end{equation}
where
\begin{equation*}
H(t,x,y) := \big(h(t,x,y)^\top, 0, 0, \kappa(t,x)\big).
\end{equation*}
Again, it is clear that $(X,Y,I)$ has the same law under $\hat{\P}$ as it originally had under $\tP$.

\smallskip

For any (deterministic) c\`adl\`ag rough path $\boldsymbol{\eta} \in \sV^p$, we denote by $(X^{\boldsymbol{\eta}}, Y^{\boldsymbol{\eta}}) \in V^p L^{2,\infty}(\bar{\Omega})$ the solution to the rough SDE
\begin{equation}\label{eq: filtering RSDE}
\begin{split}
\d X^{\boldsymbol{\eta}}_t &= \tilde{b}_1(t,X^{\boldsymbol{\eta}}_t,Y^{\boldsymbol{\eta}}_t) \dd t + \sigma_0(t,X^{\boldsymbol{\eta}}_t,Y^{\boldsymbol{\eta}}_t) \dd B_t + \int_{\U_1} f_1(t,X^{\boldsymbol{\eta}}_{t-},Y^{\boldsymbol{\eta}}_{t-},u) \, \widetilde{N}_1(\d t,\d u)\\
&\quad + (\sigma_1(t,X^{\boldsymbol{\eta}}_t,Y^{\boldsymbol{\eta}}_t), h_2(t,X^{\boldsymbol{\eta}}_t,Y^{\boldsymbol{\eta}}_t), 0, 0) \dd \boldsymbol{\eta}_t,\\
\d Y^{\boldsymbol{\eta}}_t &= (\sigma_2(t,Y^{\boldsymbol{\eta}}_t), 0, h_3(t,Y^{\boldsymbol{\eta}}_t),0) \dd \boldsymbol{\eta}_t,
\end{split}
\end{equation}
defined on $(\bar{\Omega},\bar{\cF},\bar{\P})$, and we let $I^{\boldsymbol{\eta}} = I^{1,\boldsymbol{\eta}} + I^{2,\boldsymbol{\eta}}$, where
\begin{equation}\label{eq: definitio I^eta}
\begin{split}
I^{1,\boldsymbol{\eta}}_t &:= \int_0^t H(s,X^{\boldsymbol{\eta}}_s,Y^{\boldsymbol{\eta}}_s) \dd \boldsymbol{\eta}_s - \frac{1}{2} \int_0^t |h(s,X^{\boldsymbol{\eta}}_s,Y^{\boldsymbol{\eta}}_s)|^2 \dd s,\\
I^{2,\boldsymbol{\eta}}_t &:= \int_0^t \int_{\U_2} \big(1 - \lambda(s,X^{\boldsymbol{\eta}}_{s-},u) + \log(\lambda(s,X^{\boldsymbol{\eta}}_{s-},u))\big) \, \nu_2(\d u) \dd s
\end{split}
\end{equation}
for $t \in [0,T]$, noting that these equations are well-defined by Assumptions~\ref{assumption: f= hg} and \ref{assumption: on lambda for the measure change}, combined with the fact that, since $(v - 1)/v \leq \log(v) \leq v - 1$ for all $v > 0$, we have
\begin{equation}\label{eq: bound on 1-lambda+ log(lambda)}
\big| 1 - \lambda(s,x,u) + \log(\lambda(s,x,u)) \big| = \lambda(s,x,u) - 1 - \log(\lambda(s,x,u)) \leq \frac{(1 - \lambda(s,x,u))^2}{\lambda(s,x,u)}.
\end{equation}

\subsection{Robustness}

We now proceed to consider continuity of the conditional distribution, viewed as a function of the rough path driving the system \eqref{eq: filtering RSDE}. To this end, given a bounded measurable function $F \colon \R^{d_X + d_Y} \to \R$, we define functions $g^F$ and $\Theta^F$ on the space of rough paths $\sV^p$, such that
\begin{equation}\label{eq: defn g^F Theta^F}
g^F_t(\boldsymbol{\eta}) := \bar{\E} \big[ F(X^{\boldsymbol{\eta}}_t,Y^{\boldsymbol{\eta}}_t) \exp (I^{\boldsymbol{\eta}}_t) \big] 
\qquad \text{and} \qquad \Theta^F_t(\boldsymbol{\eta}) := \frac{g^F_t(\boldsymbol{\eta})}{g^1_t(\boldsymbol{\eta})}
\end{equation}
for each $\boldsymbol{\eta} \in \sV^p$ and $t \in [0,T]$.

We are now ready to present the first main result of this section.

\begin{theorem}\label{theorem: robustness result}
Suppose that $f_1$ satisfies Assumption~\ref{assumption: Regularity of measure integral for RSDE} for $q = 2$ and some $p \in [2,3)$, and that Assumptions~\ref{assumption: on lambda for the measure change} and \ref{assumption: f= hg} also hold. For any $F \in C^1_b$ and $\boldsymbol{\eta} \in \sV^p$, we have that $g^F(\boldsymbol{\eta}), \Theta^F(\boldsymbol{\eta}) \in D([0,T];\R)$ (the space of real-valued c\`adl\`ag paths). Moreover, the following hold.
\begin{itemize}
\item[(i)] If $F \in C^1_b$, and if $\sV^p$ is endowed with (rough path) $p$-variation topology, and $D([0,T];\R)$ with the uniform topology, then $g^F$ and $\Theta^F$ are both locally Lipschitz continuous.
\item[(ii)] If $F \in C^2_b$, then $g^F(\boldsymbol{\eta}), \Theta^F(\boldsymbol{\eta}) \in V^p([0,T];\R)$, and if $\sV^p$ and $V^p([0,T];\R)$ are both endowed with $p$-variation topology, then $g^F$ and $\Theta^F$ are both locally Lipschitz continuous.
\item[(iii)] For any $F \in C^1_b$ and any fixed $t \in (0,T]$, $g^F_t$ and $\Theta^F_t$ are continuous when $\sV^p$ is endowed with the $p$-variation J1-Skorokhod distance $\sigma_{p,[0,t]}$, as defined in \eqref{eq: defn skorokhod metric}.
\end{itemize}
\end{theorem}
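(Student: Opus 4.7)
The plan is to exploit three ingredients already at our disposal: the $L^2$-stability of rough SDE solutions from Theorem~\ref{theorem: existence and estimates for solutions to RSDEs with measures}, the uniform exponential moments of rough stochastic integrals from Propositions~\ref{proposition: exponential integrability of a rough stochastic integral} and \ref{proposition: The Ito-Lyons map is locally affine}, and the Lipschitz bound on exponentials of $\textup{BMO}^{p\textup{-var}}$ processes from Proposition~\ref{proposition: exp(V) is Holder continuous}. These will be combined via H\"older's inequality in the usual product-rule way.

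I would first verify the c\`adl\`ag property of $g^F(\boldsymbol{\eta})$ and $\Theta^F(\boldsymbol{\eta})$. By Theorem~\ref{theorem: existence and estimates for solutions to RSDEs with measures}, $(X^{\boldsymbol{\eta}},Y^{\boldsymbol{\eta}})$ has c\`adl\`ag sample paths $\bar{\P}$-almost surely, and the rough stochastic integral appearing in $I^{1,\boldsymbol{\eta}}$ is c\`adl\`ag by construction, while $I^{2,\boldsymbol{\eta}}$ is continuous and bounded in view of \eqref{eq: bound on 1-lambda+ log(lambda)} and Assumption~\ref{assumption: on lambda for the measure change}. The c\`adl\`ag property of $g^F(\boldsymbol{\eta})$ follows by dominated convergence once we bound $\exp(I^{\boldsymbol{\eta}}_t)$ uniformly in $L^r(\bar{\Omega})$, which is provided by Proposition~\ref{proposition: The Ito-Lyons map is locally affine} applied to the rough stochastic integral defining $I^{1,\boldsymbol{\eta}}$. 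Positivity $g^1_t(\boldsymbol{\eta}) > 0$ is immediate from $\exp(I^{\boldsymbol{\eta}}_t) > 0$ $\bar{\P}$-almost surely, so $\Theta^F(\boldsymbol{\eta})$ is well-defined and c\`adl\`ag.

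For parts~(i) and (ii), I fix $L > 0$ and restrict to $\boldsymbol{\eta}, \tilde{\boldsymbol{\eta}} \in \sV^p$ with $\|\boldsymbol{\eta}\|_{p,[0,T]} \vee \|\tilde{\boldsymbol{\eta}}\|_{p,[0,T]} \leq L$. Writing
\[
F(X^{\boldsymbol{\eta}}_t,Y^{\boldsymbol{\eta}}_t) \exp(I^{\boldsymbol{\eta}}_t) - F(X^{\tilde{\boldsymbol{\eta}}}_t,Y^{\tilde{\boldsymbol{\eta}}}_t) \exp(I^{\tilde{\boldsymbol{\eta}}}_t)
= \bigl(F(X^{\boldsymbol{\eta}}_t,Y^{\boldsymbol{\eta}}_t) - F(X^{\tilde{\boldsymbol{\eta}}}_t,Y^{\tilde{\boldsymbol{\eta}}}_t)\bigr) \exp(I^{\boldsymbol{\eta}}_t)
+ F(X^{\tilde{\boldsymbol{\eta}}}_t,Y^{\tilde{\boldsymbol{\eta}}}_t) \bigl(\exp(I^{\boldsymbol{\eta}}_t) - \exp(I^{\tilde{\boldsymbol{\eta}}}_t)\bigr),
\]
I would apply H\"older's inequality with a conjugate pair $(p',r')$ chosen so that $\|\exp(I^{\boldsymbol{\eta}}_t)\|_{L^{r'}(\bar{\Omega})}$ is finite (via Proposition~\ref{proposition: exponential integrability of a rough stochastic integral} and the hypothesis $\|\boldsymbol{\eta}\|_{p,[0,T]} \leq L$). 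The first piece is then bounded by $\|F\|_{C^1_b} \|X^{\boldsymbol{\eta}} - X^{\tilde{\boldsymbol{\eta}}}\|_{L^{p'}(\bar{\Omega})} + \|F\|_{C^1_b} \|Y^{\boldsymbol{\eta}} - Y^{\tilde{\boldsymbol{\eta}}}\|_{L^{p'}(\bar{\Omega})}$, which by Theorem~\ref{theorem: existence and estimates for solutions to RSDEs with measures} is controlled by $\|\boldsymbol{\eta} - \tilde{\boldsymbol{\eta}}\|_{p,[0,T]}$. For the second piece, Proposition~\ref{proposition: exp(V) is Holder continuous} reduces the problem to bounding $\|I^{\boldsymbol{\eta}} - I^{\tilde{\boldsymbol{\eta}}}\|_{p,q,r,[0,T]}$; expanding the definition of $I^{\boldsymbol{\eta}}$ in \eqref{eq: defn I on product space} and using stability of rough stochastic integrals together with the standard $\textup{BMO}^{p\textup{-var}}$ bound yields a bound of order $\|\boldsymbol{\eta} - \tilde{\boldsymbol{\eta}}\|_{p,[0,T]}$. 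Taking $\sup_{t \in [0,T]}$ gives part~(i); for part~(ii), the same estimate is applied to each increment $\delta g^F_{s,t}(\boldsymbol{\eta}) - \delta g^F_{s,t}(\tilde{\boldsymbol{\eta}})$, and one sums over a partition to recover the $p$-variation norm. Local Lipschitz continuity of $\Theta^F = g^F / g^1$ then follows from the quotient rule, provided one has a uniform lower bound on $g^1_t(\boldsymbol{\eta})$ on $\{\|\boldsymbol{\eta}\|_{p,[0,T]} \leq L\}$; this is obtained from Jensen's inequality $g^1_t(\boldsymbol{\eta}) \geq \exp(\bar{\E}[I^{\boldsymbol{\eta}}_t])$ and uniform $L^1(\bar{\Omega})$-bounds on $I^{\boldsymbol{\eta}}_t$.

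For part~(iii), assume $\sigma_{p,[0,t]}(\boldsymbol{\eta}_n,\boldsymbol{\eta}) \to 0$. Proposition~\ref{prop: Skorokhod continuity for RSDEs} applied to the rough SDE \eqref{eq: filtering RSDE} yields $L^2(\bar{\Omega})$-convergence $X^{\boldsymbol{\eta}_n}_t \to X^{\boldsymbol{\eta}}_t$ and $Y^{\boldsymbol{\eta}_n}_t \to Y^{\boldsymbol{\eta}}_t$, while Remark~\ref{remark: Skorokhod continuity for rough stochastic integrals} gives the corresponding convergence of the rough stochastic integral inside $I^{1,\boldsymbol{\eta}_n}_t$. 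The Lebesgue integrals in $I^{\boldsymbol{\eta}_n}_t$ converge by dominated convergence, so $I^{\boldsymbol{\eta}_n}_t \to I^{\boldsymbol{\eta}}_t$ in $L^2(\bar{\Omega})$. Combined with the uniform exponential moment bound from Proposition~\ref{proposition: The Ito-Lyons map is locally affine} on $\{\boldsymbol{\eta}_n\}$ (which is $p$-variation bounded since the Skorokhod metric dominates a bounded $p$-variation), dominated convergence yields $g^F_t(\boldsymbol{\eta}_n) \to g^F_t(\boldsymbol{\eta})$, and the same holds for $\Theta^F_t$.

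The main obstacle will be keeping the bookkeeping uniform: we need exponential moments of $I^{\boldsymbol{\eta}}$ at a sufficiently high integrability exponent to absorb, via H\"older, the $L^{p'}(\bar{\Omega})$-stability of $X^{\boldsymbol{\eta}}$. The locally affine structure of the It\^o--Lyons map, in tandem with Theorem~\ref{theorem: John-Nirenberg inequality}, gives exponential moments of \emph{any} order uniformly on $\{\|\boldsymbol{\eta}\|_{p,[0,T]} \leq L, \sup_t |\Delta \boldsymbol{\eta}_t| \leq L\}$ (noting that $p$-variation bounded sets also have bounded jumps), so this obstacle is surmountable at the cost of choosing the H\"older exponents large enough.
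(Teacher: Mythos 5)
Your high-level plan is the same as the paper's: reduce everything to the $L^q$-stability of the rough SDE \eqref{eq: Lipschitz continuity of solution map}, uniform exponential moments from Propositions~\ref{proposition: exponential integrability of a rough stochastic integral} and \ref{proposition: The Ito-Lyons map is locally affine}, the Lipschitz bound on $\exp$ of $\textup{BMO}^{p\textup{-var}}$ processes from Proposition~\ref{proposition: exp(V) is Holder continuous}, and then H\"older's inequality in a product-rule decomposition. Parts~(i) and (iii) are essentially correct and match the paper. The c\`adl\`ag claim at the top is also fine, though the paper obtains it directly from Proposition~\ref{proposition: exp(V) is Holder continuous} (which already gives $\exp(I^{\boldsymbol{\eta}}) \in V^p L^{1,1}$, so c\`adl\`ag in $L^1$) rather than via dominated convergence; either route works.

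The genuine gap is in part~(ii). You write that ``the same estimate is applied to each increment $\delta g^F_{s,t}(\boldsymbol{\eta}) - \delta g^F_{s,t}(\tilde{\boldsymbol{\eta}})$ and one sums over a partition,'' but the part~(i) estimate is a \emph{pointwise} bound on $|g^F_t(\boldsymbol{\eta}) - g^F_t(\tilde{\boldsymbol{\eta}})|$ that is uniform in $t$ and does \emph{not} shrink as $t - s \to 0$; raising it to the $p$-th power and summing over a partition does not give a finite $p$-variation. What you actually need is a product-rule decomposition at the level of \emph{increments}, i.e.\ split
\[
\delta g^F_{s,t}(\boldsymbol{\eta}) - \delta g^F_{s,t}(\tilde{\boldsymbol{\eta}})
= \bar{\E}\bigl[\,\delta F(X^{\boldsymbol{\eta}},Y^{\boldsymbol{\eta}})_{s,t}\,\exp(I^{\boldsymbol{\eta}}_t) - \delta F(X^{\tilde{\boldsymbol{\eta}}},Y^{\tilde{\boldsymbol{\eta}}})_{s,t}\,\exp(I^{\tilde{\boldsymbol{\eta}}}_t)\,\bigr]
+ \bar{\E}\bigl[\,F(X^{\boldsymbol{\eta}},Y^{\boldsymbol{\eta}})_s\,\delta\exp(I^{\boldsymbol{\eta}})_{s,t} - F(X^{\tilde{\boldsymbol{\eta}}},Y^{\tilde{\boldsymbol{\eta}}})_s\,\delta\exp(I^{\tilde{\boldsymbol{\eta}}})_{s,t}\,\bigr],
\]
further cross-split each bracket, and bound each resulting term by a product of a $p$-variation quantity localised on $[s,t]$ and a globally bounded factor. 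The crucial ingredient you are missing is the estimate
\[
\|F(X^{\boldsymbol{\eta}},Y^{\boldsymbol{\eta}}) - F(X^{\tilde{\boldsymbol{\eta}}},Y^{\tilde{\boldsymbol{\eta}}})\|_{p,2,[s,t],\bar{\Omega}} \lesssim \|X^{\boldsymbol{\eta}} - X^{\tilde{\boldsymbol{\eta}}}\|_{p,2,[s,t],\bar{\Omega}} + \|Y^{\boldsymbol{\eta}} - Y^{\tilde{\boldsymbol{\eta}}}\|_{p,2,[s,t],\bar{\Omega}},
\]
which controls the $p$-variation of the \emph{difference of composed paths} and requires $F \in C^2_b$ (it is a rough-path-style stability estimate, cf.\ \cite[Lemma~3.13]{AllanPieper2024}, not a mere pointwise Lipschitz bound). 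You state the $C^2_b$ hypothesis but never invoke it; this is precisely the step where it is indispensable and where your argument, as written, does not go through.

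A smaller imprecision: the Jensen-type lower bound you would want for $g^1_t(\boldsymbol{\eta})$ reads $g^1_t(\boldsymbol{\eta}) = \bar{\E}[\exp(I^{\boldsymbol{\eta}}_t)] \geq \exp(\bar{\E}[I^{\boldsymbol{\eta}}_t]) \geq \exp(-\bar{\E}[|I^{\boldsymbol{\eta}}_t|])$, not $\exp(\bar{\E}[I^{\boldsymbol{\eta}}_t])$ alone, which may be negative without the absolute value; you should record the lower bound via $\bar{\E}[|I^{\boldsymbol{\eta}}_t|]$ as in \eqref{eq: bound on g^1(eta)}.
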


\begin{proof}
We first note that $I^{\boldsymbol{\eta}} \in \textup{BMO}^{p\textup{-var}}$ by Proposition~\ref{proposition: exponential integrability of a rough stochastic integral}, and hence that $\exp(I^{\boldsymbol{\eta}})$ has finite moments of all orders and is also c\`adl\`ag in $L^1$ by Proposition~\ref{proposition: exp(V) is Holder continuous}. Since $F$ is Lipschitz, it is also clear that $F(X^{\boldsymbol{\eta}}, Y^{\boldsymbol{\eta}})$ is c\`adl\`ag in $L^2$. Since, for any $(s,t) \in \Delta_{[0,T]}$,
\begin{align*}
|g^F_t(\boldsymbol{\eta}) - g^F_s(\boldsymbol{\eta})| &\leq \bar{\E} \big[ \big|F(X^{\boldsymbol{\eta}}_t, Y^{\boldsymbol{\eta}}_t) \exp(I^{\boldsymbol{\eta}}_t) - F(X^{\boldsymbol{\eta}}_s, Y^{\boldsymbol{\eta}}_s) \exp(I^{\boldsymbol{\eta}}_s)\big| \big]\\
&\leq \bar{\E} \big[ \big| \delta F(X^{\boldsymbol{\eta}}, Y^{\boldsymbol{\eta}})_{s,t} \exp(I^{\boldsymbol{\eta}}_t) \big| \big] + \bar{\E} \big[ \big| F(X^{\boldsymbol{\eta}}_s, Y^{\boldsymbol{\eta}}_s) \delta \exp(I^{\boldsymbol{\eta}})_{s,t} \big| \big]\\
&\lesssim \|\delta F(X^{\boldsymbol{\eta}}, Y^{\boldsymbol{\eta}})_{s,t}\|_{L^2(\bar{\Omega})} + \|\delta \exp(I^{\boldsymbol{\eta}})_{s,t}\|_{L^1(\bar{\Omega})},
\end{align*}
it follows that $g^F(\boldsymbol{\eta})$, and hence also $\Theta^F(\boldsymbol{\eta})$, are right-continuous. Similarly, we deduce that, whenever $s_n \nearrow s$, the sequence $(g^F_{s_n}(\boldsymbol{\eta}))_{n \in \N}$ is Cauchy, so that we also have the existence of left-limits.

\emph{(i):}
Let $\boldsymbol{\eta}, \tilde{\boldsymbol{\eta}} \in \sV^p$ with $\|\boldsymbol{\eta}\|_{p,[0,T]}, \| \tilde{\boldsymbol{\eta}}\|_{p,[0,T]} \leq L$ for some constant $L > 0$. Then, by H\"older's inequality and the inequality $|\exp(x) - \exp(y)| \leq |x-y| (\exp(x) \vee \exp(y))$, we have, for any $t \in [0,T]$,
\begin{equation}\label{eq: first Lipschitz bound on g^F}
\begin{split}
|g^F_t(\boldsymbol{\eta}) - g^F_t(\tilde{\boldsymbol{\eta}})| &\leq \bar{\E} \big[ \big| F(X^{\boldsymbol{\eta}}_t, Y^{\boldsymbol{\eta}}_t) \exp(I^{\boldsymbol{\eta}}_t) - F(X^{\tilde{\boldsymbol{\eta}}}_t, Y^{\tilde{\boldsymbol{\eta}}}_t) \exp(I^{\tilde{\boldsymbol{\eta}}}_t) \big| \big]\\
&\leq \big\| F(X^{\boldsymbol{\eta}}_t, Y^{\boldsymbol{\eta}}_t) - F(X^{\tilde{\boldsymbol{\eta}}}_t, Y^{\tilde{\boldsymbol{\eta}}}_t) \big\|_{L^2(\bar{\Omega})} \| \exp(I^{\tilde{\boldsymbol{\eta}}}_t) \|_{L^2(\bar{\Omega})}\\
&\quad + \|F\|_\infty \big\| \exp(I^{\boldsymbol{\eta}}_t) \vee \exp(I^{\tilde{\boldsymbol{\eta}}}_t) \big\|_{L^2(\bar{\Omega})} \| I^{\boldsymbol{\eta}}_t - I^{\tilde{\boldsymbol{\eta}}}_t \|_{L^2(\bar{\Omega})}.
\end{split}
\end{equation}
Using the Lipschitz continuity of $F$, and the stability of solutions to rough SDEs \eqref{eq: Lipschitz continuity of solution map}, we have that
\begin{equation*}
\big\| F(X^{\boldsymbol{\eta}}_t, Y^{\boldsymbol{\eta}}_t) - F(X^{\tilde{\boldsymbol{\eta}}}_t, Y^{\tilde{\boldsymbol{\eta}}}_t) \big\|_{L^2(\bar{\Omega})} \lesssim \| X^{\boldsymbol{\eta}}_t - X^{\tilde{\boldsymbol{\eta}}}_t \|_{L^2(\bar{\Omega})} + \| Y^{\boldsymbol{\eta}}_t - Y^{\tilde{\boldsymbol{\eta}}}_t \|_{L^2(\bar{\Omega})} \lesssim \|\boldsymbol{\eta} - \tilde{\boldsymbol{\eta}}\|_{p,[0,T]}.
\end{equation*}
The difference $\|I^{1,\boldsymbol{\eta}}_t - I^{1,\tilde{\boldsymbol{\eta}}}_t\|_{L^2(\bar{\Omega})}$ may be similarly bounded using the stability of rough stochastic integration (\cite[Lemma~4.9]{AllanPieper2026}) and of solutions to rough SDEs again. Finally, by Assumption~\ref{assumption: f= hg}, we have that
\begin{equation}\label{eq: Lipschitz continuity of lambda}
\begin{split}
|&\lambda(s,X^{\boldsymbol{\eta}}_{s-},u) - \lambda(s,X^{\tilde{\boldsymbol{\eta}}}_{s-},u)|\\
&\leq |\lambda(s,X^{\boldsymbol{\eta}}_{s-},u) \vee \lambda(s,X^{\tilde{\boldsymbol{\eta}}}_{s-},u)| |\log(\lambda(s,X^{\boldsymbol{\eta}}_{s-},u)) - \log(\lambda(s,X^{\tilde{\boldsymbol{\eta}}}_{s-},u))|\\
&= |\lambda(s,X^{\boldsymbol{\eta}}_{s-},u) \vee \lambda(s,X^{\tilde{\boldsymbol{\eta}}}_{s-},u)| |\kappa(s,X^{\boldsymbol{\eta}}_{s-}) - \kappa(s,X^{\tilde{\boldsymbol{\eta}}}_{s-})| |\gamma(s,u)|\\
&\lesssim \|\kappa\|_{C^1_b} |X^{\boldsymbol{\eta}}_{s-} - X^{\tilde{\boldsymbol{\eta}}}_{s-}| |\gamma(s,u)|,
\end{split}
\end{equation}
so that
\begin{equation}\label{eq: I^2 eta bound}
\|I^{2,\boldsymbol{\eta}}_t - I^{2,\tilde{\boldsymbol{\eta}}}_t\|_{L^2(\bar{\Omega})} \lesssim \bigg(\int_0^T \int_{\U_2} |\gamma(s,u)| \, \nu_2(\d u) \dd s\bigg) \|X^{\boldsymbol{\eta}} - X^{\tilde{\boldsymbol{\eta}}}\|_{p,2,[0,T],\bar{\Omega}} \lesssim \|X^{\boldsymbol{\eta}} - X^{\tilde{\boldsymbol{\eta}}}\|_{p,2,[0,T],\bar{\Omega}},
\end{equation}
and it follows that $\sup_{t \in [0,T]} |g^F_t(\boldsymbol{\eta}) - g^F_t(\tilde{\boldsymbol{\eta}})| \lesssim \|\boldsymbol{\eta} - \tilde{\boldsymbol{\eta}}\|_{p,[0,T]}$. By Jensen's inequality, we have that
\begin{equation}\label{eq: bound on g^1(eta)}
g^1_t(\boldsymbol{\eta}) = \bar{\E} [\exp(I^{\boldsymbol{\eta}}_t)] \geq \exp(-\bar{\E} [|I^{\boldsymbol{\eta}}_t|]),
\end{equation}
from which we infer by Proposition~\ref{proposition: exponential integrability of a rough stochastic integral} that $g^1_t(\boldsymbol{\eta})$ is bounded from both above and below (with bounds which depend on $L$). It follows that $g^F$ and $\Theta^F$ are both locally Lipschitz.

\emph{(ii):}
Let $\boldsymbol{\eta}, \tilde{\boldsymbol{\eta}} \in \sV^p$ such that $\|\boldsymbol{\eta}\|_{p,[0,T]}, \|\tilde{\boldsymbol{\eta}}\|_{p,[0,T]} \leq L$ for some constant $L > 0$. For $(s,t) \in \Delta_{[0,T]}$, we have that
\begin{align*}
|\delta g^F_{s,t}(\boldsymbol{\eta}) - \delta g^F_{s,t}(\tilde{\boldsymbol{\eta}})| &\leq \bar{\E} \big[ \big| \delta F(X^{ \boldsymbol{\eta}},Y^{ \boldsymbol{\eta}})_{s,t} \exp(I^{\boldsymbol{\eta}}_t) - \delta F(X^{\tilde{\boldsymbol{\eta}}},Y^{\tilde{\boldsymbol{\eta}}})_{s,t} \exp(I^{\tilde{\boldsymbol{\eta}}}_t) \big| \big]\\
&\quad + \bar{\E} \big[ \big| F(X^{\boldsymbol{\eta}},Y^{\boldsymbol{\eta}})_s \delta \exp(I^{\boldsymbol{\eta}})_{s,t} - F(X^{\tilde{\boldsymbol{\eta}}},Y^{\tilde{\boldsymbol{\eta}}})_s \delta \exp(I^{\tilde{\boldsymbol{\eta}}})_{s,t} \big| \big]\\
&\leq \bar{\E} \big[ \big| \delta F(X^{\boldsymbol{\eta}},Y^{\boldsymbol{\eta}})_{s,t} - \delta F(X^{\tilde{\boldsymbol{\eta}}},Y^{\tilde{\boldsymbol{\eta}}})_{s,t} \big| \big| \exp(I^{\boldsymbol{\eta}}_t) \big| \big]\\
&\quad + \bar{\E} \big[ \big| \delta F(X^{\tilde{\boldsymbol{\eta}}},Y^{\tilde{\boldsymbol{\eta}}})_{s,t} \big| \big| \delta \exp(I^{\boldsymbol{\eta}})_{s,t} - \delta \exp(I^{\tilde{\boldsymbol{\eta}}})_{s,t} \big| \big]\\
&\quad + \bar{\E} \big[ \big| \delta F(X^{\tilde{\boldsymbol{\eta}}},Y^{\tilde{\boldsymbol{\eta}}})_{s,t} \big| \big| \exp(I^{\boldsymbol{\eta}}_s) - \exp(I^{\tilde{\boldsymbol{\eta}}}_s) \big| \big]\\
&\quad + \bar{\E} \big[ \big| F(X^{\boldsymbol{\eta}},Y^{\boldsymbol{\eta}})_s - F(X^{\tilde{\boldsymbol{\eta}}},Y^{\tilde{\boldsymbol{\eta}}})_s \big| \big| \delta \exp(I^{\boldsymbol{\eta}})_{s,t} \big| \big]\\
&\quad + \bar{\E} \big[ \big| F(X^{\tilde{\boldsymbol{\eta}}},Y^{\tilde{\boldsymbol{\eta}}})_s \big| \big| \delta \exp(I^{\boldsymbol{\eta}})_{s,t} - \delta \exp(I^{\tilde{\boldsymbol{\eta}}})_{s,t} \big| \big].
\end{align*}
By H\"older's inequality, Proposition~\ref{proposition: The Ito-Lyons map is locally affine}, and an application of Proposition~\ref{proposition: exp(V) is Holder continuous} to $I^{\boldsymbol{\eta}}, I^{\tilde{\boldsymbol{\eta}}}$ (with $\tilde{q} = r = 1$ and $q = 2$), we then have that
\begin{align*}
|\delta g^F_{s,t}(\boldsymbol{\eta}) - \delta g^F_{s,t}(\tilde{\boldsymbol{\eta}})| &\lesssim \|X^{\boldsymbol{\eta}} - X^{\tilde{\boldsymbol{\eta}}}\|_{p,2,[s,t],\bar{\Omega}} + \|Y^{\boldsymbol{\eta}} -  Y^{\tilde{\boldsymbol{\eta}}}\|_{p,2,[s,t],\bar{\Omega}}\\
&\quad + \|F\|_\infty \|I^{\boldsymbol{\eta}} - I^{\tilde{\boldsymbol{\eta}}}\|_{p,2,[s,t],\bar{\Omega}}\\
&\quad +\big(\|X^{\boldsymbol{\eta}}\|_{p,2,\infty,[s,t],\bar{\Omega}} + \|Y^{\boldsymbol{\eta}}\|_{p,2,\infty,[s,t],\bar{\Omega}}\big) \|I^{\boldsymbol{\eta}} - I^{\tilde{\boldsymbol{\eta}}}\|_{p,2,[0,T],\bar{\Omega}}\\
&\quad + \big( \|X^{\boldsymbol{\eta}} - X^{\tilde{\boldsymbol{\eta}}}\|_{p,2,[0,T],\bar{\Omega}} + \|Y^{\boldsymbol{\eta}} - Y^{\tilde{\boldsymbol{\eta}}}\|_{p,2,[0,T],\bar{\Omega}} \big) \|\exp(I^{\boldsymbol{\eta}})\|_{p,2,[s,t],\bar{\Omega}}\\
&\quad + \|F\|_\infty \|I^{\boldsymbol{\eta}} - I^{\tilde{\boldsymbol{\eta}}}\|_{p,2,[s,t],\bar{\Omega}},
\end{align*}
where we also used the first bound in \cite[Lemma~4.13]{AllanPieper2026} (which requires $F \in C^2_b$) to bound $\|F(X^{\boldsymbol{\eta}}, Y^{\boldsymbol{\eta}}) - F(X^{\tilde{\boldsymbol{\eta}}}, Y^{\tilde{\boldsymbol{\eta}}})\|_{p,2,[s,t],\bar{\Omega}} \lesssim \|X^{\boldsymbol{\eta}} - X^{\tilde{\boldsymbol{\eta}}}\|_{p,2,[s,t],\bar{\Omega}} + \|Y^{\boldsymbol{\eta}} - Y^{\tilde{\boldsymbol{\eta}}}\|_{p,2,[s,t],\bar{\Omega}}$.

We can bound $\|I^{1,\boldsymbol{\eta}} - I^{1,\tilde{\boldsymbol{\eta}}}\|_{p,2,[0,T],\bar{\Omega}} \lesssim \|\boldsymbol{\eta} - \tilde{\boldsymbol{\eta}}\|_{p,[0,T]}$ using the stability of rough integration (\cite[Lemma~4.9]{AllanPieper2026}) and of solutions to rough SDEs \eqref{eq: Lipschitz continuity of solution map}, and we can use \eqref{eq: Lipschitz continuity of lambda} again to see that
\begin{equation*}
\|I^{2,\boldsymbol{\eta}} - I^{2,\tilde{\boldsymbol{\eta}}}\|_{p,2,[0,T],\bar{\Omega}} \lesssim \bigg( \int_0^T \int_{\U_2} |\gamma(r,u)| \, \nu_2(\d u) \dd r \bigg) \|X^{\boldsymbol{\eta}} - X^{\tilde{\boldsymbol{\eta}}}\|_{p,2,[0,T],\bar{\Omega}} \lesssim \|\boldsymbol{\eta} - \tilde{\boldsymbol{\eta}}\|_{p,[0,T]}.
\end{equation*}
It follows that $\|g^F(\boldsymbol{\eta}) - g^F(\tilde{\boldsymbol{\eta}})\|_{p,[0,T]} \lesssim \|\boldsymbol{\eta} - \tilde{\boldsymbol{\eta}}\|_{p,[0,T]}$.

A straightforward calculation, using the fact that $g^1_t(\boldsymbol{\eta})$ is bounded below by \eqref{eq: bound on g^1(eta)}, shows that
\begin{align*}
|\delta \Theta^F_{s,t}(\boldsymbol{\eta}) - \delta \Theta^F_{s,t}(\tilde{\boldsymbol{\eta}})| &\lesssim |\delta g^F_{s,t}(\boldsymbol{\eta}) - \delta g^F_{s,t}(\tilde{\boldsymbol{\eta}})| + |g^1_t(\boldsymbol{\eta}) - g^1_t(\tilde{\boldsymbol{\eta}})| |\delta g^F_{s,t}(\tilde{\boldsymbol{\eta}})|\\
&\quad + |g^1_t(\tilde{\boldsymbol{\eta}}) g^1_s(\tilde{\boldsymbol{\eta}}) g^F_s(\boldsymbol{\eta}) \delta g^1_{s,t}(\boldsymbol{\eta}) - g^1_t(\boldsymbol{\eta}) g^1_s(\boldsymbol{\eta}) g^F_s(\tilde{\boldsymbol{\eta}}) \delta g^1_{s,t}(\tilde{\boldsymbol{\eta}})|.
\end{align*}
We have already shown how to bound the first term on the right-hand side. For the second term, we may use the result of part~(i), and the fact that $\|g^F(\tilde{\boldsymbol{\eta}})\|_{p,[0,T]}$ is uniformly bounded (by a constant which depends in particular on $F$ and $L$), and we can similarly bound the final term by further splitting it up by adding zeros in the obvious way. Putting this all together, we deduce that $\|\Theta^F(\boldsymbol{\eta}) - \Theta^F(\tilde{\boldsymbol{\eta}})\|_{p,[0,T]} \lesssim \|\boldsymbol{\eta} - \tilde{\boldsymbol{\eta}}\|_{p,[0,T]}$.

\emph{(iii):}
Let $\boldsymbol{\eta} \in \sV^p$ and $t \in (0,T]$, and let $(\boldsymbol{\eta}^n)_{n \in \N} \subset \sV^p$ be a sequence of rough paths such that $\sigma_{p,[0,t]}(\boldsymbol{\eta}^n,\boldsymbol{\eta}) \to 0$ as $n \to \infty$. In the proof of part~(i) above, we showed in particular that
\begin{equation*}
|g^F_t(\boldsymbol{\eta}^n) - g^F_t(\boldsymbol{\eta})| \lesssim \| X^{\boldsymbol{\eta}^n}_t - X^{\boldsymbol{\eta}}_t \|_{L^2(\bar{\Omega})} + \| Y^{\boldsymbol{\eta}^n}_t - Y^{\boldsymbol{\eta}}_t \|_{L^2(\bar{\Omega})} + \| I^{\boldsymbol{\eta}^n}_t - I^{\boldsymbol{\eta}}_t \|_{L^2(\bar{\Omega})}.
\end{equation*}
It follows from Proposition~\ref{prop: Skorokhod continuity for RSDEs} and Remark~\ref{remark: Skorokhod continuity for rough stochastic integrals} that the right-hand side above tends to zero as $n \to \infty$, and we deduce that $g^F_t$, and hence also $\Theta^F_t$, are continuous.
\end{proof}

As highlighted by, e.g., Crisan et al.~\cite{CrisanDiehlFrizOberhauser2013}, a benefit of obtaining a robust representation of the conditional distribution is that in real-world applications the model chosen for the observation process may be an imperfect one, but a robust representation ensures that, provided the chosen model is close in some weak sense to the real one, our estimate of the conditional distribution should still be close to the true distribution. The following result can be interpreted as a novel and concrete formulation of this idea.

\begin{theorem}\label{theorem: Lipschitz continuity of Theta in L^m}
Let $m \in [1,\infty)$, $\epsilon > 0$, $\alpha > 0$, $L > 0$ and $F \in C^1_b$, and let us adopt the assumptions of Theorem~\ref{theorem: robustness result}. There exist constants $\beta > 0$ and $C > 0$, which depend on $p, T, m, \epsilon, \alpha$ and $\|F\|_{C^1_b}$, as well as the constants in Assumption~\ref{assumption: Regularity of measure integral for RSDE} for $f_1$, and on the other coefficients and constants specified in Assumption~\ref{assumption: f= hg} (and $C$ also depends on $L$), such that, whenever $\bV$ and $\tbV$ are two random c\`adl\`ag rough paths on $(\Omega,\cF,\tP)$ which satisfy
\begin{equation}\label{eq: exp needs to be L1}
\Big\| \exp \Big( \beta \Big( N_{\alpha,[0,T]}\big(\|\bV\|_{p,[\cdot,\cdot)}^p\big) + 1 \Big) \Big( 1 + \sup_{t \in (0,T]} |\Delta \bV_t| \Big) \Big) \Big\|_{L^1(\Omega)} \leq L,
\end{equation}
and the same inequality with $\bV$ replaced by $\tbV$, we then have that
\begin{equation}\label{eq: local Lipschitz continuity in L^m for Theta}
\Big\| \sup_{t \in [0,T]} \big|\Theta^F_t(\bV) - \Theta^F_t(\tbV)\big| \Big\|_{L^m(\Omega)} \leq C \big\| \|\bV - \tbV\|_{p,[0,T]} \big\|_{L^{m+\epsilon}(\Omega)}.
\end{equation}
\end{theorem}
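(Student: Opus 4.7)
The plan is to reduce the claim to the pathwise local Lipschitz estimate of Theorem~\ref{theorem: robustness result}(i), applied for each $\omega$ to the deterministic rough paths $\bV(\omega), \tbV(\omega) \in \sV^p$, and then combine this with H\"older's inequality and the John--Nirenberg bound of Proposition~\ref{proposition: The Ito-Lyons map is locally affine} to exploit the exponential integrability hypothesis \eqref{eq: exp needs to be L1}.

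\textbf{Step 1 (Pathwise estimate).} Fix $\omega \in \Omega$ and revisit the chain of inequalities underlying Theorem~\ref{theorem: robustness result}(i); namely, splitting
\begin{equation*}
\Theta^F_t(\boldsymbol{\eta}) - \Theta^F_t(\tilde{\boldsymbol{\eta}}) = \frac{g^F_t(\boldsymbol{\eta}) - g^F_t(\tilde{\boldsymbol{\eta}})}{g^1_t(\tilde{\boldsymbol{\eta}})} - \frac{g^F_t(\boldsymbol{\eta})\,(g^1_t(\boldsymbol{\eta}) - g^1_t(\tilde{\boldsymbol{\eta}}))}{g^1_t(\boldsymbol{\eta})\,g^1_t(\tilde{\boldsymbol{\eta}})},
\end{equation*}
and using \eqref{eq: first Lipschitz bound on g^F}, the rough SDE stability \eqref{eq: Lipschitz continuity of solution map}, the rough-integral stability of \cite[Lemma~3.11]{AllanPieper2024}, and the estimate \eqref{eq: I^2 eta bound} for $I^{2,\boldsymbol{\eta}}$, one obtains a pathwise bound
\begin{equation*}
\sup_{t \in [0,T]} \big|\Theta^F_t(\bV(\omega)) - \Theta^F_t(\tbV(\omega))\big| \leq \Phi(\omega)\,\|\bV(\omega) - \tbV(\omega)\|_{p,[0,T]},
\end{equation*}
where $\Phi(\omega) = \Xi(\bV(\omega))\,\Xi(\tbV(\omega))$ is a product of $L^q(\bar{\Omega})$-norms of $\exp(\pm I^{\boldsymbol{\eta}})$ (entering through upper bounds on $g^F$ and lower bounds on $g^1$) and polynomial factors in $\|\boldsymbol{\eta}\|_{p,[0,T]}$ coming from the Lipschitz constants of the rough SDE solution map.

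\textbf{Step 2 (Bounding $\Phi$).} Applying Proposition~\ref{proposition: The Ito-Lyons map is locally affine} both to $H$ and to $-H$ (which share the same $C^2_b$-norm), and noting that $|I^{2,\boldsymbol{\eta}}|$ is pathwise bounded by a deterministic constant via Assumption~\ref{assumption: on lambda for the measure change} and \eqref{eq: bound on 1-lambda+ log(lambda)}, one has, for any $q \in [1,\infty)$,
\begin{equation*}
\sup_{t \in [0,T]}\|\exp(\pm I^{\boldsymbol{\eta}}_t)\|_{L^q(\bar{\Omega})} \leq \exp\bigl(C_q\,(N_{\alpha,[0,T]}(\|\boldsymbol{\eta}\|_{p,[\cdot,\cdot)}^p) + 1)(1 + \sup_{t \in (0,T]}|\Delta \boldsymbol{\eta}_t|)\bigr).
\end{equation*}
By Jensen's inequality, $1/g^1_t(\boldsymbol{\eta}) \leq \exp(\bar{\E}[|I^{\boldsymbol{\eta}}_t|])$, and this is controlled by the same exponential form (using $|x| \leq \lambda^{-1}(e^{\lambda x} + e^{-\lambda x})$ and the preceding display). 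Polynomial factors in $\|\boldsymbol{\eta}\|_{p,[0,T]}$ are absorbed by using $x^k \leq k!\,e^x$ together with the elementary domination of $\|\boldsymbol{\eta}\|_{p,[0,T]}^p$ by an affine function of $N_{\alpha,[0,T]}(\|\boldsymbol{\eta}\|_{p,[\cdot,\cdot)}^p)$ and $\sup_t|\Delta \boldsymbol{\eta}_t|^p$. Collecting these yields
\begin{equation*}
\Xi(\boldsymbol{\eta}) \leq \exp\bigl(C_1\,(N_{\alpha,[0,T]}(\|\boldsymbol{\eta}\|_{p,[\cdot,\cdot)}^p)+1)(1+\sup_{t}|\Delta\boldsymbol{\eta}_t|)\bigr)
\end{equation*}
for some $C_1$ depending only on $p, \alpha, T, \|F\|_{C^1_b}$ and the coefficients listed.

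\textbf{Step 3 (H\"older).} Set $a := m(m+\epsilon)/\epsilon$, so $\tfrac{1}{m} = \tfrac{1}{a} + \tfrac{1}{m+\epsilon}$. H\"older's and Cauchy--Schwarz's inequalities give
\begin{equation*}
\Big\|\sup_{t \in [0,T]} \big|\Theta^F_t(\bV) - \Theta^F_t(\tbV)\big|\Big\|_{L^m(\Omega)} \leq \|\Xi(\bV)\|_{L^{2a}(\Omega)}\,\|\Xi(\tbV)\|_{L^{2a}(\Omega)}\,\big\|\|\bV-\tbV\|_{p,[0,T]}\big\|_{L^{m+\epsilon}(\Omega)}.
\end{equation*}
Choosing $\beta := 2 a C_1$, the hypothesis \eqref{eq: exp needs to be L1} ensures $\|\Xi(\bV)\|_{L^{2a}(\Omega)}^{2a} \leq L$ and likewise for $\tbV$, so that the prefactor is bounded by $L^{1/a}$ and the result follows with $C := L^{1/a}$.

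\textbf{Main obstacle.} The delicate point is Step~2: every factor entering $\Phi(\omega)$ must be placed in the exact exponential form $\exp(C(N_{\alpha,[0,T]}+1)(1+\sup|\Delta\bV|))$ appearing in \eqref{eq: exp needs to be L1}. The exponential moments of $\exp(\pm I^{\boldsymbol{\eta}})$ are handled directly by Proposition~\ref{proposition: The Ito-Lyons map is locally affine}, but the reciprocal $1/g^1_t(\boldsymbol{\eta})$ in the denominator and the polynomial-in-$\|\boldsymbol{\eta}\|_{p,[0,T]}$ factors from rough SDE stability each require an additional reduction (via Jensen, respectively via domination of polynomials by exponentials of $N_{\alpha,[0,T]}$) to match the target exponential form, and it is here that one must be careful not to incur any doubly-exponential dependence.
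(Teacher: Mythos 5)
Your overall plan — reduce to the deterministic Lipschitz estimate of Theorem~\ref{theorem: robustness result}(i), control the random Lipschitz constant via the exponential-moment bound of Proposition~\ref{proposition: The Ito-Lyons map is locally affine}, and then conclude with H\"older's inequality — matches the structure of the paper's proof. The main cosmetic difference is that the paper bounds $g^F$ in $L^{m+\epsilon/2}(\Omega)$ first, then passes to $\Theta^F$ in $L^m(\Omega)$ with a second H\"older, and it uses the product-space stability estimate (Proposition~\ref{proposition: Lipschitz continuity of doubly stochastic DEs}) rather than your purely pathwise formulation; both formulations are equivalent modulo the bookkeeping of the polynomial factors in $\|\bV\|_{p,[0,T]}$, which you handle correctly via Lemma~\ref{lemma: |bX|_p,[0,T] leq N_alpha + jumps} and $x^k \leq k!e^x$.

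However, there is a genuine gap in Step~2, at precisely the point you flag as the obstacle, and your proposed repair does not work. You write that by Jensen $1/g^1_t(\boldsymbol{\eta}) \leq \exp(\bar{\E}[|I^{\boldsymbol{\eta}}_t|])$ and then that $\bar{\E}[|I^{\boldsymbol{\eta}}_t|]$ is controlled ``using $|x| \leq \lambda^{-1}(e^{\lambda x} + e^{-\lambda x})$ and the preceding display''. But that chain gives $\bar{\E}[|I^{\boldsymbol{\eta}}_t|] \lesssim \lambda^{-1} \exp\bigl(C(N_{\alpha,[0,T]}+1)(1+\sup_t|\Delta \boldsymbol{\eta}_t|)\bigr)$, i.e. an \emph{exponential}, not a linear, bound; exponentiating again one is left with $1/g^1_t(\boldsymbol{\eta}) \leq \exp\bigl(\lambda^{-1}\exp(\ldots)\bigr)$, a double exponential that cannot be integrated over $\Omega$ under the hypothesis \eqref{eq: exp needs to be L1}. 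The correct reduction is the one the paper uses: from $g^1_t(\boldsymbol{\eta}) = \bar{\E}[\exp(I^{\boldsymbol{\eta}}_t)] \geq \bar{\E}[\exp(-\sup_s|I^{\boldsymbol{\eta}}_s|)]$ and two applications of Jensen with the convex map $\exp$ one obtains
\begin{equation*}
\frac{1}{g^1_t(\boldsymbol{\eta})} \leq \exp\bigl(\bar{\E}[\textstyle\sup_s|I^{\boldsymbol{\eta}}_s|]\bigr) \leq \bar{\E}\bigl[\exp(\textstyle\sup_s|I^{\boldsymbol{\eta}}_s|)\bigr],
\end{equation*}
which is a \emph{single} exponential moment of $I^{\boldsymbol{\eta}}$ and is therefore directly bounded by the John--Nirenberg estimate in Proposition~\ref{proposition: The Ito-Lyons map is locally affine}. (Equivalently, taking logarithms in $\exp(\lambda\,\bar{\E}[\sup_s|\delta I^{\boldsymbol{\eta}}_{0,s}|]) \leq \bar{\E}[\exp(\lambda\sup_s|\delta I^{\boldsymbol{\eta}}_{0,s}|)]$ gives $\bar{\E}[|I^{\boldsymbol{\eta}}_t|] \lesssim (N_{\alpha,[0,T]}+1)(1+\sup_t|\Delta\boldsymbol{\eta}_t|)$ directly, without ever passing through an exponential of an exponential.) With this substitution, the rest of your argument goes through and yields the same $\beta$, $C$ as the paper.
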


\begin{proof}
Let $n > 1$ such that $\frac{1}{n} + \frac{1}{m+\epsilon} = \frac{1}{m+\frac{\epsilon}{2}}$. Recalling the bound in \eqref{eq: first Lipschitz bound on g^F}, we have that
\begin{equation*}
\begin{split}
&\Big\| \sup_{t \in [0,T]} \big|g^F_t(\bV) - g^F_t(\tbV)\big| \Big\|_{L^{m+\frac{\epsilon}{2}}(\Omega)}\\
&\lesssim \Big\| \sup_{t \in [0,T]} \big\| F(X^{\bV}_t,Y^{\bV}_t) - F(X^{\tbV}_t,Y^{\tbV}_t) \big\|_{L^2(\bar{\Omega})} \Big\|_{L^{m+\epsilon}(\Omega)} \Big\| \sup_{t \in [0,T]} \|\exp(I^{\tbV}_t)\|_{L^2(\bar{\Omega})} \Big\|_{L^n(\Omega)}\\
&\quad + \Big\| \sup_{t \in [0,T]} \|\exp(I^{\bV}_t) \vee \exp(I^{\tbV}_t)\|_{L^2(\bar{\Omega})} \Big\|_{L^n(\Omega)} \Big\| \sup_{t \in [0,T]} \|I^{\bV}_t - I^{\tbV}_t\|_{L^2(\bar{\Omega})} \Big\|_{L^{m+\epsilon}(\Omega)}.
\end{split}
\end{equation*}
Let $I^{\bV} = I^{1,\bV} + I^{2,\bV}$, as in \eqref{eq: definitio I^eta}. By the estimate in \eqref{eq: exponential bound for rough stochastic integral of RSDE solutions} and the assumption in \eqref{eq: exp needs to be L1}, for any $q \in [1,\infty)$, there exists a constant $\zeta > 0$ such that
\begin{equation*}
\Big\| \exp \Big( q \sup_{t \in [0,T]} |I^{1,\bV}_t| \Big) \Big\|_{L^1(\bar{\Omega})} \leq \exp \Big( \zeta \Big( N_{\alpha,[0,T]}\big(\|\bV\|_{p,[\cdot,\cdot)}^p\big) + 1 \Big) \Big( 1 + \sup_{t \in (0,T]} |\Delta \bV_t| \Big) \Big),
\end{equation*}
and so by choosing $\beta = \zeta$ in \eqref{eq: exp needs to be L1}, we have that $\| \| \exp ( q \sup_{t \in [0,T]} |I^{1,\bV}_t| ) \|_{L^1(\bar{\Omega})} \|_{L^1(\Omega)} \leq L$. Recalling \eqref{eq: bound on 1-lambda+ log(lambda)} and Assumption~\ref{assumption: on lambda for the measure change}, we also have that
\begin{equation*}
\Big\| \exp \Big( q \sup_{t \in [0,T]} |I^{2,\bV}_t| \Big) \Big\|_{L^\infty(\bar{\Omega})} \leq \exp \bigg( q T \sup_{t \in [0,T], \, x \in \R^{d_X}} \int_{\U_2} \frac{(1 - \lambda(t,x,u))^2}{\lambda(t,x,u)} \, \nu_2(\d u) \bigg) < \infty,
\end{equation*}
and the above also holds for $I^{\tbV} = I^{1,\tbV} + I^{2,\tbV}$. We then have that
\begin{align*}
\Big\| \sup_{t \in [0,T]} \big|g^F_t(\bV) - g^F_t(\tbV)\big| \Big\|_{L^{m+\frac{\epsilon}{2}}(\Omega)} &\lesssim \Big\| \sup_{t \in [0,T]} \big\|F(X^{\bV}_t,Y^{\bV}_t) - F(X^{\tbV}_t,Y^{\tbV}_t)\big\|_{L^2(\bar{\Omega})} \Big\|_{L^{m+\epsilon}(\Omega)}\\
&\quad + \Big\| \sup_{t \in [0,T]} \|I^{\bV}_t - I^{\tbV}_t\|_{L^2(\bar{\Omega})} \Big\|_{L^{m+\epsilon}(\Omega)}.
\end{align*}
The first term above can be bounded by the right-hand side of \eqref{eq: local Lipschitz continuity in L^m for Theta} by the Lipschitz continuity of $F$, and an application of Proposition~\ref{proposition: Lipschitz continuity of doubly stochastic DEs}, which is applicable since the combination of Lemma~\ref{lemma: |bX|_p,[0,T] leq N_alpha + jumps} with the assumption in \eqref{eq: exp needs to be L1} ensures that $\|\bV\|_{p,[0,T]}$ and $\|\tbV\|_{p,[0,T]}$ have finite moments of all orders. For the second term, we again split $I^{\bV} - I^{\tbV}$ into the sum of $I^{1,\bV} - I^{1,\tbV}$ and $I^{2,\bV} - I^{2,\tbV}$, where the former may be treated using \cite[Lemma~4.9]{AllanPieper2026} and Proposition~\ref{proposition: Lipschitz continuity of doubly stochastic DEs}, while the latter may be treated by recalling \eqref{eq: I^2 eta bound} and again using Proposition \ref{proposition: Lipschitz continuity of doubly stochastic DEs}. We thus obtain
\begin{equation*}
\Big\| \sup_{t \in [0,T]} \big|g^F_t(\bV) - g^F_t(\tbV)\big| \Big\|_{L^{m+\frac{\epsilon}{2}}(\Omega)} \lesssim \big\| \|\bV - \tbV\|_{p,[0,T]} \big\|_{L^{m+\epsilon}(\Omega)}.
\end{equation*}

Let $\tilde{n} > 1$ such that $\frac{1}{\tilde{n}} + \frac{1}{m+\frac{\epsilon}{2}} = \frac{1}{m}$. By H\"older's inequality, it is straightforward to see that
\begin{equation}\label{eq: Theta V - Theta tV bound}
\begin{split}
\bigg\| &\sup_{t \in [0,T]} \big|\Theta^F_t(\bV) - \Theta^F_t(\tbV)\big| \bigg\|_{L^m(\Omega)}\\
&\leq \bigg\| \sup_{t \in [0,T]} \big|g^F_t(\bV) - g^F_t(\tbV)\big| \bigg\|_{L^{m+\frac{\epsilon}{2}}(\Omega)} \bigg\| \sup_{t \in [0,T]} \frac{1}{g^1_t(\bV)} \bigg\|_{L^{\tilde{n}}(\Omega)}\\
&\quad + \bigg\| \sup_{t \in [0,T]} \big|g^1_t(\bV) - g^1_t(\tbV)\big| \bigg\|_{L^{m+\frac{\epsilon}{2}}(\Omega)} \bigg\| \sup_{t \in [0,T]} \frac{|g^F_t(\tbV)|}{g^1_t(\bV) g^1_t(\tbV)} \bigg\|_{L^{\tilde{n}}(\Omega)}.
\end{split}
\end{equation}
By Jensen's inequality, we have that
\begin{equation*}
\sup_{t \in [0,T]} \frac{1}{g^1_t(\bV)} \leq \frac{1}{\bar{\E}[\exp(-\sup_{t \in [0,T]} |I^{\bV}_t|)]} \leq \frac{1}{\exp(\bar{\E}[-\sup_{t \in [0,T]} |I^{\bV}_t|])} \leq \bar{\E} \Big[ \exp \Big( \sup_{t \in [0,T]} |I^{\bV}_t| \Big) \Big],
\end{equation*}
and since $F$ is bounded we also have that $g^F_t(\bV) \leq \bar{\E}[\exp(\sup_{t \in [0,T]} |I^{\bV}_t|)]$, and of course these inequalities also hold for $\tbV$. By the bounds on $I^{1,\bV}$ and $I^{2,\bV}$ established above, it follows that the norms on the right-hand side of \eqref{eq: Theta V - Theta tV bound} are finite, and we thus obtain the estimate in \eqref{eq: local Lipschitz continuity in L^m for Theta}.
\end{proof}

\begin{example}
If $\bV$ is continuous and satisfies
\begin{equation*}
\big\| \exp \big( \gamma N_{\alpha,[0,T]}\big(\|\bV\|_{p,[\cdot,\cdot)}^p\big)^q \big) \big\|_{L^1(\Omega)} < \infty
\end{equation*}
for some $\alpha > 0$, $\gamma > 0$ and $q > 1$, then, for any $\beta > 0$, one can find an $L > 0$ for which \eqref{eq: exp needs to be L1} holds. To see this we simply note that
\begin{align*}
\exp \big( \beta N_{\alpha,[0,T]}\big(\|\bV\|_{p,[\cdot,\cdot)}^p\big) \big) &\lesssim \exp \big( \beta N_{\alpha,[0,T]}\big(\|\bV\|_{p,[\cdot,\cdot)}^p\big) \1_{\{N_{\alpha,[0,T]}(\|\bV\|_{p,[\cdot,\cdot)}^p)^{q-1} > \frac{\beta}{\gamma}\}} \big)\\
&\leq \exp \big( \gamma N_{\alpha,[0,T]}\big(\|\bV\|_{p,[\cdot,\cdot)}^p\big)^q \big),
\end{align*}
where the implicit multiplicative constant depends only on $\gamma, q$ and $\beta$.

By \cite[Corollary~21]{DiehlOberhauserRiedel2015}, we thus conclude that, if $V$ is a centered Gaussian process with covariance of finite mixed $(1,\rho)$-variation for some $\rho < 2$, then the enhanced Gaussian process $\bV$ (as defined in \cite[Section~15.3.3]{FrizVictoir2010}) satisfies \eqref{eq: exp needs to be L1}. In particular, this includes the case of fractional Brownian motion with Hurst parameter $H > \frac{1}{4}$; see \cite[Section~2.3]{FrizGessGulisashviliRiedel2016}. In \cite[Proposition~4.10]{CassOgrodnik2017}, the authors provide another example of a random rough path fulfilling \eqref{eq: exp needs to be L1}, namely certain Markovian rough paths emerging from specific Dirichlet forms.
\end{example}

\begin{remark}
Suppose that $(\bV^n)_{n \in \N}$ is a sequence of approximations of the random rough path $\bV$, such that convergence rates for $\bV^n \to \bV$ are known in a suitable Lebesgue space $L^{m+\epsilon}$. By Theorem~\ref{theorem: Lipschitz continuity of Theta in L^m}, those convergence rates then carry over to the $L^m$ approximation of $\Theta^F_t(\bV)$ by $\Theta^F_t(\bV^n)$. For example, if $\bV$ is (Stratonovich) Brownian rough path, and $\bV^n$ is the piecewise linear approximation thereof on the $n$\textsuperscript{th} dyadic partition, then it follows by \cite[Corollary~13.21]{FrizVictoir2010} that, for any $\eta \in (0,\frac{1}{2})$, we have the convergence rate
\begin{equation*}
\Big\| \sup_{t \in [0,T]} \big|\Theta^F_t(\bV^n) - \Theta^F_t(\bV)\big| \Big\|_{L^m(\Omega)} \lesssim \big\| \|\bV^n - \bV\|_{p,[0,T]} \big\|_{L^{m+\epsilon}(\Omega)} \lesssim 2^{-\frac{\eta n}{2}}.
\end{equation*}
\end{remark}

\subsection{Consistency}

We recall the local martingale $G$ defined in \eqref{eq: definition G}. Let us now denote by $\bG = (G,\G)$ the It\^o rough path lift of $G$, so that $\G_{s,t} = \int_s^t \delta G_{s,u} \otimes \d G_u$, defined as an It\^o integral on $(\Omega,\cF,\tP)$ for each $(s,t) \in \Delta_{[0,T]}$. Of course, $\bG$ is then a random c\`adl\`ag rough path, such that $\bG(\omega) \in \sV^p$ for $\tP$-almost every $\omega \in \Omega$ and any $p \in (2,3)$.

As before, we let $(X,Y)$ be the solution to the (doubly) SDE \eqref{eq: dynamics of X Y driven by G}, and let $I$ be as defined in \eqref{eq: defn I on product space}, both defined on the product probability space $(\hat{\Omega},\hat{\cF},\hat{\P})$. By Theorem~\ref{theorem: consistency between doubly stochastic SDEs and RSDEs} (and Remark~\ref{remark: consistency also works with random measures}), we have that $(X,Y)$ is also the solution to the corresponding random rough SDE. That is, for $\tP$-almost every $\omega \in \Omega$, we have that $(X(\omega,\cdot), Y(\omega,\cdot))$ is the solution to the rough SDE in \eqref{eq: filtering RSDE} driven by $\boldsymbol{\eta} = \bG(\omega)$, i.e.,
\begin{equation}\label{eq: consistency of X and Y}
(X(\omega,\cdot), Y(\omega,\cdot)) = (X^{\bG(\omega)}, Y^{\bG(\omega)}).
\end{equation}
By Proposition~\ref{proposition: consistency rough and stochastic integrals for semimartingales}, we then also have that $I$ is the randomised version of the process $I^{\boldsymbol{\eta}}$ in \eqref{eq: definitio I^eta}. That is, for $\tP$-almost every $\omega \in \Omega$, we have that $I(\omega,\cdot) = I^{\bG(\omega)}$.

\smallskip

We recall from Section~\ref{subsection: reformulation as a rough SDE} (with a slight abuse of notation) that $(X,Y,I)$ has the same law on $(\Omega,\cF)$ under $\tP$ as it did on $(\hat{\Omega},\hat{\cF})$ under $\hat{\P}$. Hence, for any $t \in [0,T]$ and $A \in \cF^Y_t$, using Fubini's theorem, and the fact that $\cF^Y_t$ and $\cF^B_t \vee \cF^{\tN_1}_t$ are independent by Assumption~\ref{assumption: FY= FW v FN}, we have that
\begin{align*}
\widetilde{\E} \big[F(X_t,Y_t) \exp(I_t) \1_A\big] = \hat{\E} \big[F(X_t,Y_t) \exp(I_t) \1_A\big] = \widetilde{\E} \big[\bar{\E} \big[F(X_t,Y_t) \exp(I_t)\big] \1_A\big].
\end{align*}
By another use of Fubini's theorem, we also have that $\bar{\E}[F(X_t,Y_t) \exp(I_t)]$ is $\cF^Y_t$-measurable, and thus coincides with $\widetilde{\E}[F(X_t,Y_t) \exp(I_t) \,|\, \cF^Y_t]$. By the Kallianpur--Striebel formula (Lemma~\ref{lemmaKallianpurStriebel}), we then have that the conditional distribution on the product space can be represented as
\begin{equation*}
\pi_t(F) = \frac{\rho_t(F)}{\rho_t(1)} = \frac{\bar{\E}[F(X_t,Y_t) \exp(I_t)]}{\bar{\E}[\exp(I_t)]}
\end{equation*}
for all bounded measurable functions $F$.

\begin{proposition}\label{prop: consistency for the filter}
Suppose that $f_1$ satisfies Assumption~\ref{assumption: Regularity of measure integral for RSDE} for $q = 2$ and some $p \in (2,3)$, and that Assumptions~\ref{assumption: on the coefficients for the SDE, linear growth}--\ref{assumption: on lambda for the measure change}, \ref{assumption: FY= FW v FN} and \ref{assumption: f= hg} also hold. Let us recall the functions $g^F$ and $\Theta^F$ defined in \eqref{eq: defn g^F Theta^F}. For any bounded measurable function $F$, we have that $g^F(\bG)$ and $\Theta^F(\bG)$ are $\cF^Y_t$-adapted, and that, for any $t \in [0,T]$,
\begin{equation}\label{eq: consistency result}
g^F_t(\bG) = \rho_t(F) \quad \text{and} \quad \Theta^F_t(\bG) = \pi_t(F)
\end{equation}
hold $\tP$-almost surely.
\end{proposition}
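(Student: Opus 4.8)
The statement is essentially a matter of combining the consistency results of Section~\ref{sec: Consistency RSDEs and SDEs} with the product-space representation of the conditional distribution derived in the paragraphs immediately preceding the statement, so the plan is mostly one of careful bookkeeping. First I would fix the two inputs. By Theorem~\ref{theorem: consistency between doubly stochastic SDEs and RSDEs} (in the form of Remark~\ref{remark: consistency also works with random measures}, applied to the stacked process $(X,Y)$ viewed as a solution of a single doubly stochastic DE driven by the martingale $B$, the Poisson measure $N_1$, and the semimartingale $G$) together with Proposition~\ref{proposition: consistency rough and stochastic integrals for semimartingales}, there is a single $\tP$-null set $N\subset\Omega$ such that, for every $\omega\notin N$, one has $\bG(\omega)\in\sV^p$ and the processes $(X(\omega,\cdot),Y(\omega,\cdot),I(\omega,\cdot))$ and $(X^{\bG(\omega)},Y^{\bG(\omega)},I^{\bG(\omega)})$ are $\bar\P$-indistinguishable; this is exactly \eqref{eq: consistency of X and Y} and the identity $I(\omega,\cdot)=I^{\bG(\omega)}$ already recorded there. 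On the other hand, by the Kallianpur--Striebel formula (Lemma~\ref{lemmaKallianpurStriebel}) and the Fubini/independence argument recalled just above the statement, the conditional distribution on the product space satisfies $\rho_t(F)=\bar\E[F(X_t,Y_t)\exp(I_t)]$ and $\pi_t(F)=\rho_t(F)/\rho_t(1)$ for every bounded measurable $F$, with $\rho_t(1)=\bar\E[\exp(I_t)]>0$ since $\exp(I)$ is a strictly positive exponential martingale.

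With these in hand, I would argue as follows. For $\omega\notin N$ the rough path $\bG(\omega)$ lies in $\sV^p$, and by Proposition~\ref{proposition: exponential integrability of a rough stochastic integral} (together with the boundedness of the second component $I^{2,\cdot}$ of $I^{\boldsymbol{\eta}}$, which follows from \eqref{eq: bound on 1-lambda+ log(lambda)} and Assumption~\ref{assumption: on lambda for the measure change}) the process $I^{\bG(\omega)}$ lies in $\textup{BMO}^{p\textup{-var}}$, so $\exp(I^{\bG(\omega)}_t)$ has finite moments of all orders and, as $F$ is bounded, the quantity $g^F_t(\bG(\omega))=\bar\E[F(X^{\bG(\omega)}_t,Y^{\bG(\omega)}_t)\exp(I^{\bG(\omega)}_t)]$ in \eqref{eq: defn g^F Theta^F} is finite. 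Using the $\bar\P$-indistinguishability above, for every $\omega\notin N$ we obtain
\[ g^F_t(\bG)(\omega)=\bar\E\big[F(X_t(\omega,\cdot),Y_t(\omega,\cdot))\exp(I_t(\omega,\cdot))\big]. \]
Since $(X,Y,I)$ is a jointly measurable process on $\hat\Omega\times[0,T]$, an application of Fubini's theorem shows the right-hand side is a well-defined $\cF$-measurable random variable; combined with the independence and $\cF^Y_t$-measurability statements used above, this random variable is $\cF^Y_t$-measurable and equals $\rho_t(F)$ (and similarly $g^1_t(\bG)=\rho_t(1)$). Hence $g^F_t(\bG)=\rho_t(F)$ holds $\tP$-a.s. and $g^F(\bG)$ is $\cF^Y_t$-adapted. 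Finally, since $g^1_t(\bG)(\omega)=\bar\E[\exp(I^{\bG(\omega)}_t)]>0$ for $\omega\notin N$, the ratio defining $\Theta^F_t(\bG)$ is meaningful and
\[ \Theta^F_t(\bG)=\frac{g^F_t(\bG)}{g^1_t(\bG)}=\frac{\rho_t(F)}{\rho_t(1)}=\pi_t(F)\qquad\tP\text{-a.s.}, \]
which is $\cF^Y_t$-measurable by construction; this gives \eqref{eq: consistency result} and the adaptedness claims.

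I do not expect a genuine obstacle here: the two hard results — the consistency of the doubly stochastic DE with the randomised rough SDE, and that of It\^o with rough stochastic integrals against It\^o lifts — have already been proved, and everything else is assembling known identities. The one point that needs a little care, and the one I would be explicit about, is that the map $g^F_t$ is defined on $\sV^p$ without an a priori $\sigma$-algebra, so that it is not immediate that $g^F_t(\bG)$ is a bona fide random variable; this is precisely why I route the argument through the jointly measurable process $(X,Y,I)$ and the Fubini argument already set up in the text, rather than attempting to equip $\sV^p$ with a suitable measurable structure directly. A second minor point to be stated is that the exponential integrability of $\exp(I^{\bG(\omega)}_t)$, needed to make sense of $g^F_t(\bG(\omega))$ for each $\omega\notin N$, is furnished pointwise by the $\textup{BMO}^{p\textup{-var}}$ bound rather than by any uniform (in $\omega$) estimate, which suffices since only the a.s.\ identity at a fixed $t$ is claimed.
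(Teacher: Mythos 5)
Your proposal is correct and follows essentially the same route as the paper: both invoke the consistency identity $(X(\omega,\cdot),Y(\omega,\cdot),I(\omega,\cdot))=(X^{\bG(\omega)},Y^{\bG(\omega)},I^{\bG(\omega)})$ from Theorem~\ref{theorem: consistency between doubly stochastic SDEs and RSDEs} (with Remark~\ref{remark: consistency also works with random measures}) and Proposition~\ref{proposition: consistency rough and stochastic integrals for semimartingales}, and then combine it with the Fubini/independence computation of $\rho_t(F)=\bar\E[F(X_t,Y_t)\exp(I_t)]$ to conclude pointwise that $g^F_t(\bG(\omega))=\rho_t(F)(\omega)$. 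You are simply more explicit than the published proof about two small points it leaves implicit: the measurability of $\omega\mapsto g^F_t(\bG(\omega))$ (which you correctly obtain via the joint measurability of $(X,Y,I)$ and Fubini, and which the paper gets for free from the identification with $\rho_t(F)$) and the integrability/positivity of $\exp(I^{\bG(\omega)}_t)$ via the $\textup{BMO}^{p\textup{-var}}$ bound. Neither point changes the argument.
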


\begin{proof}
For $\tP$-almost any $\omega \in \Omega$, using \eqref{eq: consistency of X and Y}, we have that
\begin{align*}
g^F_t(\bG(\omega)) &= \bar{\E} \big[ F\big(X^{\bG(\omega)}_t,Y^{\bG(\omega)}_t\big) \exp \big(I^{\bG(\omega)}_t\big) \big]\\
&= \bar{\E} \big[ F(X_t(\omega,\cdot), Y_t(\omega,\cdot)) \exp (I_t(\omega,\cdot)) \big] = \rho_t(F)(\omega).
\end{align*}
It follows that the equalities in \eqref{eq: consistency result} hold. In particular, we infer that $g^F_t(\bG)$ and $\Theta^F_t(\bG)$ are $\cF^Y_t$-measurable random variables.
\end{proof}

\begin{remark}
For any $F \in C^1_b$, it follows from Theorem~\ref{theorem: robustness result} and Proposition~\ref{prop: consistency for the filter} that $\Theta^F(\bG)$ is a c\`adl\`ag $\cF^Y_t$-adapted (and hence $\cF^Y_t$-optional) process. Thus, although we only defined $\pi_t(F)$ for fixed times $t \in [0,T]$, we immediately deduce that $\pi(F)$ has an $\cF^Y_t$-optional version (without needing to take an optional projection, as in classical filtering theory).
\end{remark}

\subsection{Robust filtering with additive noise}

To express the conditional distribution of the signal as a function of the observable quantities, it was necessary to isolate these terms via a change of measure. As a result, the corresponding Radon--Nikodym derivative involves stochastic integrals against both the continuous and jump components of the observation noise, each with a distinct integrand.

In the continuous additive noise setting of \cite{CrisanDiehlFrizOberhauser2013}, the role of the process $G$ (as defined in \eqref{eq: definition G}) is played by the observation process $Y$, and the robust representation is written as a function of its rough path lift. However, as we have seen, contrary to the continuous setting, obtaining a robust representation in the presence of additional jump terms requires us to observe \emph{all} the components of $G$, and to express the filter as a function of the joint lift of these components. In particular, in general it is necessary to be able to distinguish the continuous and jump components of the observation process. This requirement corresponds to Assumption~\ref{assumption: FY= FW v FN}, which is commonly adopted in the study of stochastic filtering for jump-diffusion models (see, e.g., \cite{GermGyongy2025PartI}, \cite{Qiao2021} or \cite{QiaoDuan2015}).

While in general one needs to consider the separate jump components of the process $G$, in the special case when the observation is perturbed by additive noise, it is sufficient to observe only the continuous and jump parts of $Y$. This is made precise in the following corollary of Theorem~\ref{theorem: robustness result} and Proposition~\ref{prop: consistency for the filter}.

\begin{corollary}\label{corollary: additive noise robustness requires only continuous and jump part}
Suppose that $X$ and $Y$ satisfy
\begin{equation*}
\begin{split}
\d X_t &= b_1(t,X_t,Y_t) \dd t + \sigma_0(t,X_t,Y_t) \dd B_t + \sigma_1(t,X_t,Y_t) \dd W_t\\
&\quad + \int_{\U_1} f_1(t,X_{t-},Y_{t-},u) \, \tN_1(\d t,\d u) + \int_{\U_2} f_2(t,X_{t-},Y_{t-},u) \, \tN_2(\d t,\d u),\\
\d Y_t &= b_2(t,X_t,Y_t) \dd t + \d W_t + \int_{\U_2} g(t,u) \, \tN_2(\d t,\d u)
\end{split}
\end{equation*}
where, under the measure $\P$, $B$ and $W$ are independent Brownian motions, $N_1$ and $N_2$ are random measures with compensators $\nu_1(\d u) \dd t$ and $\lambda(t,X_{t-},u) \, \nu_2(\d u) \dd t$ respectively, and we suppose that $\log(\lambda(t,x,u)) = \kappa(t,x) g(t,u)$. Let us write $Y^c$ and $Y^d$ for the continuous and purely discontinuous local martingale parts of $Y$ under $\tP$, and let us adopt the assumptions of Theorem~\ref{theorem: robustness result} with $g = g_2 = g_3 = \gamma$, except with Assumption~\ref{assumption: FY= FW v FN} replaced by the assumption that $\cF^Y_t = \cF^Y_0 \vee \cF^{Y^c}_t \vee \cF^{Y^d}_t$ for every $t \in [0,T]$.

Then, for every bounded measurable function $F$ and every $t \in [0,T]$, there exists a function $\Theta^F_t$ on the space of c\`adl\`ag rough paths $\sV^p$, such that
\begin{equation*}
\pi_t(F) = \Theta^F_t\big(\bY^{c,d}\big)
\end{equation*}
holds $\P$-almost surely, where we write $\bY^{c,d}$ for the It\^o rough path lift of the pair $(Y^c, Y^d)^\top$. Moreover, the function $\Theta^F$ satisfies the same continuity properties as established for the corresponding function in Theorem~\ref{theorem: robustness result}.
\end{corollary}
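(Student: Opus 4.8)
The plan is to reduce the claim to Theorem~\ref{theorem: robustness result} and Proposition~\ref{prop: consistency for the filter} by exhibiting the driving martingale $G$ of \eqref{eq: definition G} as the image of the pair $(Y^c,Y^d)^\top$ under a fixed linear map, and then invoking the functoriality of the It\^o rough path lift under linear maps.

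First I would run the change of measure as in Lemma~\ref{lemma: application of Girsanov for measure change}, noting that here $\sigma_2 \equiv \mathrm{Id}$, $f_3(t,y,u) = g(t,u)$ and $\gamma = g_2 = g_3 = g$, so that under $\tP$ the observation satisfies $\d Y_t = \d\tW_t + \int_{\U_2} g(t,u)\,\tN(\d t,\d u)$; in particular $Y$ is a $\tP$-local martingale with continuous part $Y^c = \tW$ and purely discontinuous part $Y^d_t = \int_0^t \int_{\U_2} g(s,u)\,\tN(\d s,\d u)$. Since all three jump-driven coordinates of $G$ in \eqref{eq: definition G} now coincide with $Y^d$, we obtain
\[
G_t = \big(Y^c_t, Y^d_t, Y^d_t, Y^d_t\big)^\top = \Phi\big(Y^c_t,Y^d_t\big), \qquad t \in [0,T],
\]
for the fixed linear map $\Phi(a,b) := (a,b,b,b)$, with matrix $A$ say. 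The It\^o lift $\bG = (G,\G)$ of $G = A(Y^c,Y^d)^\top$ then satisfies $\G_{s,t} = (A \otimes A)\mathbf{Y}^{(2)}_{s,t}$ for all $(s,t) \in \Delta_{[0,T]}$, where $\bY^{c,d} = ((Y^c,Y^d)^\top,\mathbf{Y}^{(2)})$ is the It\^o lift of $(Y^c,Y^d)^\top$; this is immediate from the definition $\G_{s,t} = \int_s^t \delta G_{s,u-}\otimes \d G_u$ and the identity $(Av)\otimes(Aw) = (A\otimes A)(v\otimes w)$, and Chen's relation together with $\Delta\G \equiv 0$ are preserved. Thus $\bG = \Phi_*(\bY^{c,d})$, where $\Phi_*$ sends a rough path $(Z,\Z)$ to $(AZ,(A\otimes A)\Z)$ and defines a map $\sV^p \to \sV^p$ which is Lipschitz for the rough path $p$-variation metric, sends $V^p$-paths to $V^p$-paths, and, since it commutes with time changes ($\Phi_*(\bX\circ\lambda) = (\Phi_*\bX)\circ\lambda$), is continuous for the Skorokhod metric $\sigma_{p,[0,t]}$.

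I would then define the function $\Theta^F_t$ appearing in the statement by $\boldsymbol\zeta \mapsto \Theta^F_t(\Phi_*\boldsymbol\zeta)$, where on the right $\Theta^F_t$ denotes the function from \eqref{eq: defn g^F Theta^F} constructed in Theorem~\ref{theorem: robustness result} (applied with $g_2 = g_3 = \gamma = g$); by that theorem and the continuity properties of $\Phi_*$ just recorded, $\Theta^F$ inherits all the asserted continuity properties, and likewise for $g^F$. For the identity $\pi_t(F) = \Theta^F_t(\bY^{c,d})$ I would reproduce the argument preceding and contained in Proposition~\ref{prop: consistency for the filter}: by Theorem~\ref{theorem: consistency between doubly stochastic SDEs and RSDEs} together with Remark~\ref{remark: consistency also works with random measures} and Proposition~\ref{proposition: consistency rough and stochastic integrals for semimartingales}, for $\tP$-almost every $\omega$ the triple $(X(\omega,\cdot),Y(\omega,\cdot),I(\omega,\cdot))$ equals $(X^{\bG(\omega)},Y^{\bG(\omega)},I^{\bG(\omega)})$ (with $I^{\boldsymbol\eta}$ as in \eqref{eq: definitio I^eta}, noting that $H$ has vanishing $g_2$- and $g_3$-coordinates so this is consistent with \eqref{eq: defn I on product space}); and the Fubini/independence step goes through because, under the standing hypothesis, $\cF^Y_t = \cF^Y_0 \vee \cF^{Y^c}_t \vee \cF^{Y^d}_t \subseteq \cF^Y_0 \vee \cF^{\tW}_t \vee \cF^{\tN}_t$, which by Lemma~\ref{lemma: application of Girsanov for measure change} is independent under $\tP$ of $\cF^B_t \vee \cF^{\tN_1}_t$. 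Hence $\pi_t(F) = g^F_t(\bG)/g^1_t(\bG) = \Theta^F_t(\bG) = \Theta^F_t(\Phi_*\bY^{c,d}) = \Theta^F_t(\bY^{c,d})$ holds $\tP$-a.s., and therefore $\P$-a.s. since $\P \sim \tP$ on $\cF_T$.

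The one step requiring care is this last one: checking that replacing Assumption~\ref{assumption: FY= FW v FN} by $\cF^Y_t = \cF^Y_0 \vee \cF^{Y^c}_t \vee \cF^{Y^d}_t$ still delivers exactly what the proof of Proposition~\ref{prop: consistency for the filter} uses, namely that $G$ is $\cF^Y_t$-adapted and that $\cF^Y_t$ is independent of the signal-only noise $\cF^B_t \vee \cF^{\tN_1}_t$. Both reduce to the inclusion $\cF^{Y^c}_t \vee \cF^{Y^d}_t \subseteq \cF^{\tW}_t \vee \cF^{\tN}_t$ displayed above, together with the independence structure of the reference measure. Everything else (the covariance $\G = (A\otimes A)\mathbf{Y}^{(2)}$ of the lift and the transfer of the continuity statements through $\Phi_*$) is routine.
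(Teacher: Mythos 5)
Your proof is correct and reconstructs exactly the argument the authors leave implicit (the paper states this as a corollary without a written-out proof): with $\sigma_2 \equiv \mathrm{Id}$, $f_3 = g$ and $g_2 = g_3 = \gamma = g$, the driving martingale $G$ of \eqref{eq: definition G} is the fixed linear image $G = (Y^c, Y^d, Y^d, Y^d)^\top = \Phi(Y^c,Y^d)^\top$, and the It\^o rough path lift is functorial under linear maps ($\bG = \Phi_*\bY^{c,d}$ with $\Phi_*(Z,\Z) = (AZ,(A\otimes A)\Z)$), so one simply precomposes $\Theta^F$ from \eqref{eq: defn g^F Theta^F} with the Lipschitz, time-change-equivariant map $\Phi_*$ and re-runs the Fubini/independence step using $\cF^{Y^c}_t \vee \cF^{Y^d}_t \subseteq \cF^{\tW}_t \vee \cF^{\tN}_t$. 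Your handling of the replaced Assumption~\ref{assumption: FY= FW v FN} — that both the $\cF^Y_t$-adaptedness of $G$ and the independence from $\cF^B_t \vee \cF^{\tN_1}_t$ reduce to that inclusion plus the reference-measure independence structure — is exactly the point that justifies the substitution.
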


\begin{remark}
We recall from Remark~\ref{remark: some cases when FY= FW v FN} that, when the function $g$ is suitably regular, the condition $\cF^Y_t = \cF^Y_0 \vee \cF^{Y^c}_t \vee \cF^{Y^d}_t$ is indeed satisfied. This includes, for instance, the case of additive L\'evy noise, corresponding to the choice $g(s,u) = u$ with $\U_2 = \R^{d_Y} \setminus \{0\}$.
\end{remark}

\begin{remark}
In light of Corollary~\ref{corollary: additive noise robustness requires only continuous and jump part}, it may be tempting to ask whether it would be sufficient to lift each of $Y^c$ and $Y^d$ to rough paths $\bY^c$ and $\bY^d$ individually, and seek to represent the filter as a function of the pair $(\bY^c, \bY^d)$. However, even in the additive noise case, the Radon--Nikodym derivative includes the exponential of a stochastic integral of the form
\begin{equation*}
\int_0^t h(s,X_s,Y^c_s + Y^d_s)^\top \dd Y^c_s,
\end{equation*}
which in general is not continuous with respect to $(\bY^c, \bY^d)$, but only with respect to $\bY^{c,d}$.
\end{remark}

\begin{remark}
Let us recall the setting of Corollary~\ref{corollary: additive noise robustness requires only continuous and jump part}, and let us further suppose that the function $b_2$ is independent of $y$, and that $\kappa$ is a solution to the integral equation
\begin{equation*}
\kappa(t,x)^\top = b_2(t,x) + \int_{\U_2} g(t,u) \big( 1 - \exp \big(\kappa(t,x) g(t,u)\big) \big) \, \nu_2(\d u).
\end{equation*}
In this case, we see from \eqref{eq: defn function h} that $h^\top = \kappa$, and hence that the stochastic integral in \eqref{eq: defn I on product space} is given by
\begin{equation*}
\int_0^t H(s,X_{s-},Y_{s-}) \dd G_s = \int_0^t \kappa(s,X_{s-}) \dd Y_s.
\end{equation*}
It follows that, in this (very) special case, it is not necessary to consider the continuous and jump parts of the observation separately, and there exists a robust representation of the conditional distribution which is a continuous function of $\bY$, the It\^o rough path lift of $Y$.
\end{remark}

\begin{remark}\label{rem:reconstruction}
Filtering assumes that the characteristics of the signal and noise are known to the observer, including in particular the compensator $\nu_2(\d u) \dd t$. It is therefore possible to reconstruct the purely discontinuous process $Y^{d,n} := \int_0^\cdot \int_{\U_2} g(s,u) \1_{\{|g(s,u)| \geq \frac{1}{n}\}} \, \tN(\d s,\d u)$ using discrete observations of $Y$ path by path (see, e.g., part A or D of \cite[Theorem~3.3.1]{JacodProtter2012}). Hence, writing $Y^{c,n} := Y - Y^{d,n}$, we can find consistent estimators of $Y^d$ and $Y^c$ which in turn can be arbitrarily well approximated using discrete observations of $Y$. Therefore, Corollary~\ref{corollary: additive noise robustness requires only continuous and jump part} justifies the use of discrete approximations of $Y^d$ and $Y^c$, as long as there is a coherent way of lifting all these pairs of approximations to common rough paths which converge in the rough path topology to the lift of their corresponding limit.
\end{remark}

\appendix

\section{Further technical results}

\subsection{Proof of Proposition~\ref{proposition: exp(V) is Holder continuous}}\label{sec: proof of JN corollary}

For a normed vector space $(E,|\cdot|)$, a function $f \colon E \to \R$, and any $a, b, c, d \in E$, we write
\begin{equation}\label{eq: |f|_1,a,b,c,d}
\begin{split}
|f|_{1,\{a,b\}} &:= \sup_{\theta \in [0,1]} \big| f ( \theta a + (1 - \theta) b ) \big|,\\
|f|_{2,\{a,b,c,d\}} &:= \sup_{(\theta, \eta) \in [0,1]^2} \big| f \big( \eta ( \theta a + (1 - \theta) b ) + (1 - \eta) ( \theta c + (1 - \theta) d ) \big) \big|.
\end{split}
\end{equation}

\begin{lemma}\label{lemma: elementary estimate}
For some Banach space $(E,|\cdot|)$, let $g \in C^2(E;\R)$ and $a, b, c, d \in E$. (We consider standard Fr\'echet differentiability here.) Then
\begin{equation}\label{elementary estimate}
\begin{split}
&|g(a) - g(b) - g(c) + g(d)|\\
&\leq \big(|\D g|_{1,\{a,c\}} + |\D g|_{1,\{b,d\}} + |\D^2 g|_{2,\{a,b,c,d\}}\big) \big(|a-c| + |b-d|\big) \big( |c-d| \wedge 1 \big)\\
&\quad + |\D g|_{1,\{a,b\}} |a - b - c + d|.
\end{split}
\end{equation}
\end{lemma}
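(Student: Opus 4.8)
The plan is to reduce the four-point second difference to something controlled by first and second derivatives via two applications of the mean value theorem (in integral/Fréchet form), organised so that the ``mixed'' term $|a-b-c+d|$ is isolated. First I would introduce the auxiliary function $\phi(\theta) := g(\theta a + (1-\theta)c) - g(\theta b + (1-\theta)d)$ for $\theta \in [0,1]$, so that $g(a) - g(b) - g(c) + g(d) = \phi(1) - \phi(0) = \int_0^1 \phi'(\theta)\,\mathrm{d}\theta$, where $\phi'(\theta) = \D g(\theta a + (1-\theta)c)(a-c) - \D g(\theta b + (1-\theta)d)(b-d)$. The idea is then to split $\phi'(\theta)$ into a term measuring the difference of the two \emph{arguments} of $\D g$ (this will produce the $\D^2 g$ contribution together with the factors $|a-c|+|b-d|$) and a term measuring the difference of the two \emph{directions} $(a-c)$ and $(b-d)$, i.e.\ the vector $(a-c)-(b-d) = a-b-c+d$, which is precisely the mixed term.

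Concretely, I would write
\[
\phi'(\theta) = \big[\D g(\theta a + (1-\theta)c) - \D g(\theta b + (1-\theta)d)\big](a-c) + \D g(\theta b + (1-\theta)d)\big((a-c)-(b-d)\big).
\]
The second piece integrates to something bounded by $|\D g|_{1,\{b,d\}}\,|a-b-c+d|$; to match the statement, which has $|\D g|_{1,\{a,b\}}$ there instead, one should instead symmetrise the choice of interpolation — e.g.\ take $\psi(\theta) := g(\theta a + (1-\theta)b) - g(\theta c + (1-\theta)d)$ and differentiate in $\theta$, which naturally produces the direction $(a-b)-(c-d)=a-b-c+d$ paired with $\D g$ evaluated on the segment $[a,b]$, yielding exactly the last term of \eqref{elementary estimate}. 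For the first piece, apply the fundamental theorem of calculus once more to $s \mapsto \D g\big(s(\theta a + (1-\theta)c) + (1-s)(\theta b + (1-\theta)d)\big)$: its derivative is $\D^2 g$ evaluated at a point of the double interpolation (hence bounded by $|\D^2 g|_{2,\{a,b,c,d\}}$) applied to the displacement $\theta(a-b) + (1-\theta)(c-d)$, whose norm is at most $|a-b| + |c-d|$. One also has the cruder bound where one does \emph{not} expand the $\D g$ difference but simply estimates it by $|\D g|_{1,\{a,c\}} + |\D g|_{1,\{b,d\}}$; combining with the $\min(\cdot,1)$ truncation (valid since each term separately admits the non-truncated bound, and the factor $|c-d|$ may always be replaced by $|c-d|\wedge 1$ after also bounding the difference trivially) gives the stated inequality.

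The only genuinely fiddly point — and the step I expect to need the most care — is bookkeeping the interpolation arguments so that they fall into exactly the sets $\{a,c\}$, $\{b,d\}$, $\{a,b\}$ and $\{a,b,c,d\}$ appearing in the definitions \eqref{eq: |f|_1,a,b,c,d}, rather than some other combination, and correctly justifying the replacement of $|c-d|$ by $|c-d|\wedge 1$ (one bounds $|g(a)-g(b)-g(c)+g(d)|$ in two ways: the derivative-expansion way giving the factor $|c-d|$, and a direct way — writing it as $[g(a)-g(c)] - [g(b)-g(d)]$ and using that each bracket is controlled, so the whole is controlled without the small factor — and then takes the minimum). Everything else is a routine application of the mean value theorem in Banach spaces, so I would not grind through the constants explicitly beyond indicating which derivative bound controls which term.
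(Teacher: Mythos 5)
Your overall strategy is exactly the paper's: interpolate so that $g(a)-g(b)-g(c)+g(d) = \psi(1)-\psi(0)$ with $\psi(\theta)=g(\theta a+(1-\theta)b)-g(\theta c+(1-\theta)d)$, add and subtract $\int_0^1 \D g(\theta a+(1-\theta)b)(c-d)\,\d\theta$ to peel off the term $|\D g|_{1,\{a,b\}}\,|a-b-c+d|$, hit the remaining $\D g$ difference with a second fundamental-theorem-of-calculus expansion to bring in $\D^2 g$, and take the minimum of this bound with the crude bound $|g(a)-g(c)|+|g(b)-g(d)| \leq |\D g|_{1,\{a,c\}}|a-c|+|\D g|_{1,\{b,d\}}|b-d|$ to justify the $|c-d|\wedge 1$ truncation. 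You also correctly diagnosed why the $\phi$-interpolation (pairing $\{a,c\}$ and $\{b,d\}$) produces the wrong derivative bound $|\D g|_{1,\{b,d\}}$ and that $\psi$ is the decomposition that matches the statement. This is precisely the paper's argument.

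The one place where your writeup goes astray is the second FTC step: having switched to $\psi$, you then apply FTC to $s\mapsto \D g\bigl(s(\theta a+(1-\theta)c)+(1-s)(\theta b+(1-\theta)d)\bigr)$, which is the $\phi$-interpolation again. With $\psi$ the remaining term is $\int_0^1\bigl[\D g(\theta a+(1-\theta)b)-\D g(\theta c+(1-\theta)d)\bigr](c-d)\,\d\theta$, and the correct inner interpolation is $s\mapsto\D g\bigl(s(\theta a+(1-\theta)b)+(1-s)(\theta c+(1-\theta)d)\bigr)$, whose argument-displacement is $\theta(a-c)+(1-\theta)(b-d)$ with norm at most $|a-c|+|b-d|$, applied in the residual direction $(c-d)$ — exactly the factor $(|a-c|+|b-d|)\,|c-d|$ the statement demands. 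Your mixed version would instead produce $(|a-b|+|c-d|)\,|a-c|$, which does not match. As you anticipated, this bookkeeping of which points get paired at each level of interpolation is the one place where the argument genuinely requires care; the remainder of your plan, including the role of the two-way bound in producing $|c-d|\wedge 1$, coincides with the proof given in the paper.
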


\begin{proof}
We first observe the straightforward bound
\begin{align*}
|g(a) - g(b) - g(c) + g(d)| &\leq |g(a) - g(c)| + |g(b) - g(d)|\\
&\leq |\D g|_{1,\{a,c\}} |a - c| + |\D g|_{1,\{b,d\}} |b - d|.
\end{align*}
We also have that
\begin{align*}
&g(a) - g(b) - g(c) + g(d)\\
&= \int_0^1 \D g \big( b + \theta (a-b) \big) (a-b) \dd \theta - \int_0^1 \D g \big( b + \theta (a-b) \big) (c-d) \dd \theta\\
&\quad + \int_0^1 \D g \big( b + \theta (a-b) \big) (c-d) \dd \theta - \int_0^1 \D g \big( d + \theta (c-d) \big) (c-d) \dd \theta\\
&= \int_0^1 \D g \big( b + \theta (a-b) ) (a - b - c + d) \dd \theta\\
&\quad + \int_0^1 \Big( \D g \big(b + \theta (a-b) \big) - \D g \big(d + \theta (c-d) \big) \Big) (c-d) \dd \theta,
\end{align*}
from which we infer that
\begin{align*}
&|g(a) - g(b) - g(c) + g(d)|\\
&\leq |\D g|_{1,\{a,b\}} |a - b - c + d| + |\D^2 g|_{2,\{a,b,c,d\}} \big(|a - c| + |b - d|\big) |c - d|.
\end{align*}
Combining the bounds above, we obtain \eqref{elementary estimate}.
\end{proof}

\begin{proof}[Proof of Proposition~\ref{proposition: exp(V) is Holder continuous}]
We note that, for any $\lambda > 0$ and any $m \in [1,\infty)$, we have
\begin{equation}\label{eq: sup V_u has exponential moments}
\Big\| \exp \Big( \lambda \sup_{u \in [0,T]} |V_u| \Big) \Big\|_{L^m} \leq \exp (\lambda \|V_0\|_{L^\infty}) \Big\| \E_0 \Big[ \exp \Big( m \lambda \sup_{u \in [0,T]} |\delta V_{0,u}| \Big) \Big] \Big\|_{L^\infty}^{\frac{1}{m}} < \infty,
\end{equation}
where, by Theorem~\ref{theorem: John-Nirenberg inequality}, the right-hand side may be bounded by a constant which depends only on $\lambda, m, p, \|V_0\|_{L^\infty}$ and $\|V\|_{\textup{BMO}^{p\textup{-var}},[0,T]}$.

By the elementary estimate $|\exp(x) - \exp(y)| \leq |x-y| (\exp(x) \vee \exp(y))$, and the version of H\"older's inequality in \eqref{eq: Holder's inequality for q,r,s norm} (with $\ell = \frac{r}{q}$), for any $(s,t) \in \Delta_{[0,T]}$, we have, for any $q \in [1,\infty)$ and $r \in [q,\infty)$, that
\begin{align*}
&\|\delta \exp(V)_{s,t}\|_{q,r,s} \leq \Big\| \E_s \Big[ \exp \Big( q \sup_{u \in [s,T]} |V_u| \Big) |\delta V_{s,t}|^q \Big]^{\frac{1}{q}} \Big\|_{L^r}\\
&= \Big\| \E_s \Big[ \exp \Big( q \sup_{u \in [s,T]} |V_u| \Big) |\delta V_{s,t}|^q \Big] \Big\|_{L^{\frac{r}{q}}}^{\frac{1}{q}} \leq \Big\| \exp \Big( 2q \sup_{u \in [s,T]} |V_u| \Big) \Big\|_{L^{\frac{r}{q}}}^{\frac{1}{2q}} \|\delta V_{s,t}\|_{2q,2r,s}.
\end{align*}
By \eqref{eq: sup V_u has exponential moments}, this implies that $\|\exp(V)\|_{p,q,r,[0,T]} \lesssim \|V\|_{p,2q,2r,[0,T]}$, which is finite by Corollary~\ref{corollary: V in VpLqinfty for all q}.

We now let $1 \leq \tilde{q} \leq r < q < \infty$. Adopting the notation introduced in \eqref{eq: |f|_1,a,b,c,d}, we let $\Lambda = |\exp|_{2, \{V_t, \tV_t, V_s, \tV_s\}} + |\exp|_{1, \{V_t, \tV_t\}} + |\exp|_{1, \{V_s, \tV_s\}} + |\exp|_{1, \{V_t, V_s\}}$.

Applying Lemma~\ref{lemma: elementary estimate}, for any $(s,t) \in \Delta_{[0,T]}$, we have that
\begin{align*}
&\big\|\delta \exp(V)_{s,t} - \delta \exp(\tV)_{s,t}\big\|_{\tilde{q},r,s}\\
&\leq \big\| \E_s \big[ \Lambda^{\tilde{q}} \big(|V_t - \tV_t| + |V_s - \tV_s|\big)^{\tilde{q}} |\delta \tV_{s,t}|^{\tilde{q}} \big]^{\frac{1}{\tilde{q}}} \big\|_{L^r} + \big\| \E_s \big[ \Lambda^{\tilde{q}} |\delta V_{s,t} - \delta \tV_{s,t}|^{\tilde{q}} \big]^{\frac{1}{\tilde{q}}} \big\|_{L^r}\\
&\lesssim \big\| \E_s \big[ \Lambda^{\tilde{q}} \big(|\delta V_{s,t} - \delta \tV_{s,t}|\big)^{\tilde{q}} |\delta \tV_{s,t}|^{\tilde{q}} \big]^{\frac{1}{\tilde{q}}} \big\|_{L^r} + \big\| \E_s \big[ \Lambda^{\tilde{q}} |V_s - \tV_s|^{\tilde{q}} |\delta \tV_{s,t}|^{\tilde{q}} \big]^{\frac{1}{\tilde{q}}} \big\|_{L^r}\\
&\quad + \big\| \E_s \big[ \Lambda^{\tilde{q}} |\delta V_{s,t} - \delta \tV_{s,t}|^{\tilde{q}} \big]^{\frac{1}{\tilde{q}}} \big\|_{L^r} =: I_1 + I_2 + I_3.
\end{align*}

By H\"older's inequality and \eqref{eq: sup V_u has exponential moments}, it is straightforward to see that $\|\Lambda\|_{L^m} < \infty$ for any $m \in [1,\infty)$, and it is also clear that $\|\delta \tV_{s,t}\|_{L^m} < \infty$ for any $m \in [1,\infty)$. We then have that
\begin{equation*}
I_1 \leq \big\| \E_s \big[ \Lambda^{\frac{\tilde{q}q}{q-\tilde{q}}} |\delta \tV_{s,t}|^{\frac{\tilde{q}q}{q-\tilde{q}}} \big] \big\|_{L^{\frac{r}{\tilde{q}}}}^{\frac{q-\tilde{q}}{\tilde{q}q}} \|\delta V_{s,t} - \delta \tV_{s,t}\|_{q,\frac{qr}{\tilde{q}},s} \lesssim \|\delta V_{s,t} - \delta \tV_{s,t}\|_{q,\frac{qr}{\tilde{q}},s},
\end{equation*}
and $I_3$ may be treated similarly. Finally, using H\"older's inequality again, we have that
\begin{align*}
I_2 &\leq \|V_s - \tV_s\|_{L^q} \big\| \E_s \big[ \Lambda^{\tilde{q}} |\delta \tV_{s,t}|^{\tilde{q}} \big]^{\frac{1}{\tilde{q}}} \big\|_{L^{\frac{qr}{q-r}}} \leq \|V_s - \tV_s\|_{L^q} \|\Lambda\|_{2\tilde{q},\frac{2qr}{q-r},s} \|\delta \tV_{s,t}\|_{2\tilde{q},\frac{2qr}{q-r},s}\\
&\lesssim \big(\|V_0 - \tV_0\|_{L^q} + \|V - \tV\|_{p,q,\frac{qr}{\tilde{q}},[0,T]}\big) \|\delta \tV_{s,t}\|_{2\tilde{q},\frac{2qr}{q-r},s}.
\end{align*}
Combining the above, we obtain the estimate in \eqref{eq: exp(V) - exp(tV) estimate}.
\end{proof}

\subsection{Some results on integrals against random measures}

The following result is a generalized conditional BDG inequality.

\begin{lemma}\label{lemma: conditional extended BDG}
Let $q \in [2,\infty)$, and let $M = (M_t)_{t \in [0,T]}$ be an $L^q$-integrable c\`adl\`ag real-valued local martingale with respect to a filtration $(\cF_t)_{t \in [0,T]}$, and let $\cG$ be a sub-$\sigma$-algebra of $\cF_0$. There exist constants $c_q, C_q$, which depend only on $q$, such that
\begin{equation}\label{eq: extended conditional BDG}
c_q \E \Big[ \langle M \rangle_t^{\frac{q}{2}} \vee A^{(\frac{q}{2})}_t \,\Big|\, \cG\Big] \leq \E \Big[ \sup_{s \in [0,t]} |M_s|^q \,\Big|\, \cG\Big] \leq C_q \E \Big[ \langle M \rangle_t^{\frac{q}{2}} \vee A^{(\frac{q}{2})}_t \,\Big|\, \cG\Big]
\end{equation}
holds almost surely for every $t \in [0,T]$, where $\langle M \rangle$ denotes the predictable quadratic variation of $M$, and, in the notation of \cite{HernandezHernandezJackal2022}, we write
\begin{equation*}
A^{(\ell)} := \Pi^\ast_p \bigg( \sum_{s \leq \cdot} |\Delta M_s|^{2\ell} \bigg),
\end{equation*}
where $\Pi^\ast_p$ denotes the dual predictable projection.
\end{lemma}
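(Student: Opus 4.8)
The plan is to reduce the conditional BDG inequality to the unconditional one by a standard change-of-measure (or rather, change-of-expectation) trick, and to handle the jump term via the generalised (Davis-type / $p \geq 2$) Burkholder--Davis--Gundy inequality of, e.g., \cite{HernandezHernandezJackal2022}, which already produces the term $A^{(\ell)}$ as the dual predictable projection of $\sum_{s \leq \cdot} |\Delta M_s|^{2\ell}$. Concretely: fix $t \in [0,T]$, let $G \in \cG$ be arbitrary, and observe that it suffices to prove
\[
\E\Big[ \1_G \sup_{s \in [0,t]} |M_s|^q \Big] \leq C_q \, \E\Big[ \1_G \big( \langle M \rangle_t^{q/2} \vee A^{(q/2)}_t \big) \Big].
\]
If $\P(G) = 0$ this is trivial; otherwise set $\Q := \P(\,\cdot \mid G)$ (equivalently, reweight $\P$ by $\1_G/\P(G)$). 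Since $G \in \cG \subseteq \cF_0$, the process $M$ remains a local martingale with respect to $(\cF_t)$ under $\Q$, its predictable quadratic variation $\langle M \rangle$ is unchanged (predictable quadratic variation is a purely measure-theoretic/pathwise-compensator notion invariant under an $\cF_0$-measurable reweighting), its jumps $\Delta M_s$ are unchanged, and the dual predictable projection $\Pi^\ast_p$ is likewise unchanged. Then the unconditional generalised BDG inequality applied under $\Q$ gives $\E^\Q[\sup_{s\leq t}|M_s|^q] \leq C_q \E^\Q[\langle M\rangle_t^{q/2} \vee A^{(q/2)}_t]$, which upon multiplying by $\P(G)$ is exactly the displayed inequality. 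Taking the supremum/union over a generating $\pi$-system of $\cG$ (or invoking the defining property of conditional expectation) upgrades this to the almost-sure inequality \eqref{eq: extended conditional BDG}.

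The steps, in order, are: (1) reduce the a.s.\ conditional inequality to the family of integrated inequalities over $G \in \cG$ via the characterisation of conditional expectation; (2) for each $G$ with positive probability, introduce the conditioned measure $\Q = \P(\,\cdot\mid G)$ and verify that $M$ is still a local martingale under $\Q$ and that $\langle M\rangle$, the jump process, and $\Pi^\ast_p$ are invariant (this uses only that $\1_G$ is $\cF_0$-measurable, so the density process of $\Q$ with respect to $\P$ is constant in time on $G$ and zero off $G$); (3) apply the unconditional $L^q$ BDG inequality (the $p \geq 2$ version that retains the $A^{(\ell)}$ term) under $\Q$, citing \cite{HernandezHernandezJackal2022}; (4) unwind: multiply through by $\P(G)$ and conclude. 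A localisation remark may be needed in step (3) if one wants to start from $M$ merely a local martingale — one applies the inequality to $M^{\tau_n}$ for a reducing sequence $(\tau_n)$ and passes to the limit by monotone convergence on both sides, noting $\langle M^{\tau_n}\rangle_t = \langle M\rangle_{t\wedge\tau_n} \uparrow \langle M\rangle_t$ and similarly for $A^{(q/2)}$.

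I expect the only genuine subtlety to be step (2): carefully justifying that conditioning on an $\cF_0$-measurable event does not disturb the predictable quadratic variation nor the dual predictable projection. This is intuitively clear — the compensator of an increasing process is determined by the filtration and the process, and reweighting by a time-constant $\cF_0$-measurable density changes neither — but stating it cleanly requires the observation that for any predictable increasing $V$ and any $\P$-integrable $\cF_0$-measurable $\geq 0$ density $Z$ with $\E[Z]=1$, the $Z\cdot\P$-dual predictable projection of a raw increasing process coincides with its $\P$-dual predictable projection (this follows from $\E[Z \int_0^T H_s \, dV_s] = \E[Z]\,\E[\int_0^T H_s\,dV_s \mid \cF_0 \text{-reweighted}]$ and the defining property $\E[\int H\,dV] = \E[\int H\, d\,{}^p V]$ for predictable $H$, since $Z H$ is again predictable). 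Everything else is routine: the reduction in step (1) is the standard "integrate against indicators of the conditioning $\sigma$-algebra" argument, and step (3) is a direct citation. As an alternative to the change-of-measure route, one could instead prove \eqref{eq: extended conditional BDG} directly by redoing the proof of the generalised BDG inequality with conditional expectations throughout (this is how \cite[Proposition~2.2]{AllanPieper2024}-style conditional BDG estimates are often established), but the measure-change reduction is shorter and avoids re-deriving the combinatorial heart of the Davis inequality.
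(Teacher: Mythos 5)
Your proof is correct and takes essentially the same approach as the paper: reduce to the integrated inequality $\E[\1_G \sup_{s\le t}|M_s|^q] \le C_q\,\E[\1_G(\langle M\rangle_t^{q/2}\vee A_t^{(q/2)})]$ for $G\in\cG$, exploit $\cF_0$-measurability of $\1_G$, and invoke the extended BDG inequality of the cited reference. The only cosmetic difference is that you reweight the measure to $\P(\,\cdot\mid G)$ and argue that $\langle M\rangle$ and $\Pi^\ast_p$ are invariant under the reweighting, whereas the paper multiplies the process by $\1_G$ directly, observing that $\tM := M\1_G$ remains a local martingale with $\langle\tM\rangle = \langle M\rangle\1_G$ and $\Pi^\ast_p\big(\sum_{s\le\cdot}|\Delta\tM_s|^q\big) = A^{(q/2)}\1_G$, which sidesteps both the $\P(G)=0$ edge case and the measure-change justification.
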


\begin{proof}
This is a consequence of the extended BDG inequality in \cite{HernandezHernandezJackal2022}. More precisely, for any $G \in \cG$, the process $(\tM_t)_{t \in [0,T]} := (M_t \1_G)_{t \in [0,T]}$ is a local martingale, and $\langle \tM \rangle = \langle M \rangle \1_G$. Also, $\widetilde{A}^{(\frac{q}{2})} := \Pi^\ast_p(\sum_{s \leq \cdot} |\Delta \tM_s|^q) = \Pi^\ast_p(\sum_{s \leq \cdot} |\Delta M_s|^q \1_G) = A^{(\frac{q}{2})} \1_G$. Then, by the standard BDG inequality combined with \cite[Theorem~2.1]{HernandezHernandezJackal2022}, we have that
\begin{equation*}
\E \Big[ \Big( \sup_{s \in [0,t]} |M_s|^q \Big) \1_G \Big] = \E \Big[ \sup_{s \in [0,t]} |\tM_s|^q \Big] \leq C_q \E \Big[ \langle \tM \rangle_t^{\frac{q}{2}} \vee \widetilde{A}^{(\frac{q}{2})}_t \Big] = C_q \E \Big[ \Big( \langle M \rangle_t^{\frac{q}{2}} \vee A^{(\frac{q}{2})}_t \Big) \1_G \Big].
\end{equation*}
Since this holds for all $G \in \cG$, we infer the second inequality in \eqref{eq: extended conditional BDG}, and the first inequality may be obtained similarly.
\end{proof}

\begin{remark}
Let $N$ be an integer-valued random measure on $\Omega \times [0,T] \times \U$ with compensator $\nu$. Let $\zeta \in G_{\textup{loc}}(N)$ be a real-valued function such that $\E [\int_0^T \int_{\U} |\zeta(s,u)|^q \, \nu(\d s,\d u)] < \infty$ for some $q \in [2,\infty)$, and let $M = \int_0^\cdot \int_{\U} \zeta(s,u) \, \tN(\d s,\d u)$. Then, with $\beta$ and $D$ defined as in \eqref{eq: definition D_i and beta^i}, we have by, e.g., \cite[Ch.~II, 1.15]{JacodShiryaev2003}, that
\begin{equation*}
\sum_{s \leq \cdot} |\Delta M_s|^q = \sum_{s \leq \cdot} |\zeta(s,\beta_s)|^q \1_D(s) = \int_0^\cdot \int_{\U} |\zeta(s,u)|^q \, N(\d s,\d u),
\end{equation*}
and hence by, e.g., \cite[Theorem~13.2.21]{Cohen2015}, that its dual predictable projection is given by
\begin{equation}\label{eq: dual predictable projection for Poisson martingale integral}
\Pi^\ast_p \bigg( \sum_{s \leq \cdot} |\Delta M_s|^q \bigg) = \int_0^\cdot \int_{\U} |\zeta(s,u)|^q \, \nu(\d s,\d u).
\end{equation}
\end{remark}

\begin{lemma}\label{lemma: application of conditional BDG for jump measures}
Let $N$ be an integer-valued random measure on $\Omega \times [0,T] \times \U$, such that its compensator $\nu$ satisfies $\nu(\{t\} \times \U) = 0$ for every $t \in [0,T]$, and write $\tN = N - \nu$ for the corresponding compensated random measure. Then the following hold.
\begin{itemize}
\item[(i)] Let $q \in [2,\infty)$ and $r \in [q,\infty]$. There exists a constant $C$, which depends only on $q$, such that, for any $\zeta \in G_{\textup{loc}}(N)$ with $\E [\int_0^T \int_{\U} |\zeta(s,u)|^q \, \nu(\d s,\d u)] < \infty$ and any $(s,h) \in \Delta_{[0,T]}$,
\begin{equation}\label{eq: quadratic variation bound for compensated poisson martingale integrals using BDG}
\begin{split}
\bigg\| &\sup_{v \in [s,h]} \bigg| \int_s^v \int_{\U} \zeta(t,u) \, \tN(\d t,\d u) \bigg| \bigg\|_{q,r,s}\\
&\leq C \bigg\| \E_s \bigg[ \bigg( \int_s^h \int_{\U} |\zeta(t,u)|^2 \, \nu(\d t,\d u) \bigg)^{\hspace{-2pt}\frac{q}{2}} \vee \int_s^h \int_{\U} |\zeta(t,u)|^q \, \nu(\d t,\d u) \bigg]^{\frac{1}{q}} \bigg\|_{L^r}.
\end{split}
\end{equation}
\item[(ii)] Let $\nu(\d t,\d u) = K(t,\d u) \dd A_t$ as in \eqref{eq: decomposition of nu}, and suppose that $A \in V^{\frac{p}{2}} L^{\frac{q}{2},\frac{r}{2}} \cap V^{\frac{p}{q}} L^{1,\frac{r}{q}}$ for some $2 \leq q \leq p < \infty$ and $r \in [q,\infty]$, and let $\zeta \in G_{\textup{loc}}(N)$ such that
\[\bigg\| \sup_{t \in [0,T]} \int_{\U} \big( |\zeta(t,u)|^2 \vee |\zeta(t,u)|^q \big) \, K(t,\d u) \bigg\|_{L^\infty} \leq L\]
for some finite constant $L > 0$. Then there exists a constant $C$, which depends only on $q$ and $L$, such that
\begin{equation*}
\bigg\| \int_0^\cdot \int_{\U} \zeta(t,u) \, \tN(\d t,\d u) \bigg\|_{p,q,r,[0,T]} \leq C \Big( \|A\|_{\frac{p}{2},\frac{q}{2},\frac{r}{2},[0,T]}^{\frac{1}{2}} + \|A\|_{\frac{p}{q},1,\frac{r}{q},[0,T]}^\frac{1}{q} \Big).
\end{equation*}
\end{itemize}
\end{lemma}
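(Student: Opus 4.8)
The plan is to prove part~(i) via the conditional extended BDG inequality of Lemma~\ref{lemma: conditional extended BDG}, and then bootstrap to part~(ii) by a standard chaining/superadditivity argument. For part~(i), fix $(s,h) \in \Delta_{[0,T]}$ and write $M^s_v := \int_s^v \int_{\U} \zeta(t,u) \, \tN(\d t,\d u)$ for $v \in [s,h]$. This is an $L^q$-integrable c\`adl\`ag local martingale (indeed a martingale, by the integrability hypothesis $\E[\int_0^T \int_\U |\zeta|^q \, \nu] < \infty$ together with \eqref{eq: dual predictable projection for Poisson martingale integral}), started at $0$ at time $s$. Applying Lemma~\ref{lemma: conditional extended BDG} with $\cG = \cF_s$, together with the identity for the predictable quadratic variation $\langle M^s \rangle_h = \int_s^h \int_\U |\zeta(t,u)|^2 \, \nu(\d t,\d u)$ (which holds since $\nu$ charges no deterministic time, so $M^s$ is quasi-left-continuous and $\langle M^s\rangle = \Pi^*_p[\sum |\Delta M^s|^2]$) and the dual predictable projection formula \eqref{eq: dual predictable projection for Poisson martingale integral} with exponent $q$ giving $A^{(q/2)}_h = \int_s^h \int_\U |\zeta(t,u)|^q \, \nu(\d t,\d u)$, yields
\begin{equation*}
\E_s\Big[\sup_{v\in[s,h]}|M^s_v|^q\Big] \leq C_q\, \E_s\bigg[\bigg(\int_s^h\!\!\int_\U |\zeta|^2\,\nu\bigg)^{\!\frac q2} \vee \int_s^h\!\!\int_\U |\zeta|^q\,\nu\bigg].
\end{equation*}
Taking the $L^r$ norm of both sides and raising to the power $1/q$ gives \eqref{eq: quadratic variation bound for compensated poisson martingale integrals using BDG}.

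For part~(ii), I first convert the right-hand side of \eqref{eq: quadratic variation bound for compensated poisson martingale integrals using BDG} into increments of the control $A$. Substituting $\nu(\d t,\d u) = K(t,\d u)\,\d A_t$ and using the pointwise bound $\int_\U |\zeta(t,u)|^\ell K(t,\d u) \leq L$ for $\ell \in \{2,q\}$ (uniformly in $t$, $\P$-a.s.), we estimate $\int_s^h \int_\U |\zeta|^2\,\nu \leq L\,\delta A_{s,h}$ and $\int_s^h \int_\U |\zeta|^q\,\nu \leq L\,\delta A_{s,h}$, so the right-hand side of \eqref{eq: quadratic variation bound for compensated poisson martingale integrals using BDG} is bounded by $C\,\big\|\E_s[(\delta A_{s,h})^{q/2} \vee \delta A_{s,h}]\big\|_{L^r}^{1/q}$, which in turn is at most $C\big(\|\delta A_{s,h}\|_{q/2,r,s}^{1/2} + \|\delta A_{s,h}\|_{1,r,s}^{1/q}\big)$ after splitting the maximum and using that $\|\cdot\|_{q/2,r,s}$ scales appropriately. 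Writing $M_\cdot := \int_0^\cdot \int_\U \zeta(t,u)\,\tN(\d t,\d u)$, this shows $\|\delta M_{s,h}\|_{q,r,s} \leq C\big(\|\delta A_{s,h}\|_{q/2,r,s}^{1/2} + \|\delta A_{s,h}\|_{1,r,s}^{1/q}\big)$ for every $(s,h)$.

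It remains to pass from this two-parameter bound to the $p$-variation norm $\|M\|_{p,q,r,[0,T]}$. For any partition $\cP$ of $[0,T]$, I sum over $[u,v]\in\cP$: using $(a+b)^p \lesssim a^p + b^p$ and then $\|\delta A_{u,v}\|_{q/2,r,u}^{p/2} = (\|\delta A_{u,v}\|_{q/2,r,u}^{p/2})$ which sums (under the sup over $\cP$) to $\|A\|_{p/2,q/2,r,[0,T]}^{p/2}$, and likewise $\|\delta A_{u,v}\|_{1,r,u}^{p/q}$ sums to $\|A\|_{p/q,1,r,[0,T]}^{p/q}$, I obtain $\sum_{[u,v]\in\cP} \|\delta M_{u,v}\|_{q,r,u}^p \lesssim \|A\|_{p/2,q/2,r,[0,T]}^{p/2} + \|A\|_{p/q,1,r,[0,T]}^{p/q}$. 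Taking the supremum over $\cP$ and the $p$-th root gives $\|M\|_{p,q,r,[0,T]} \leq C\big(\|A\|_{p/2,q/2,r,[0,T]}^{1/2} + \|A\|_{p/q,1,r,[0,T]}^{1/q}\big)$, as claimed. The main technical point to handle with care is the quasi-left-continuity of $M$ needed to identify $\langle M \rangle$ with $\int |\zeta|^2\,\nu$ (so that one may use $\langle M\rangle$ rather than the optional quadratic variation in Lemma~\ref{lemma: conditional extended BDG}); this follows from the standing assumption $\nu(\{t\}\times\U)=0$, equivalently continuity of $A$, which rules out fixed jump times. Everything else is a routine application of the $\|\cdot\|_{q,r,s}$ calculus and the superadditivity of controls.
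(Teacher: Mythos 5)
Your proof is correct and follows essentially the same route as the paper: part~(i) via the conditional extended BDG inequality (Lemma~\ref{lemma: conditional extended BDG}) together with the identification of the predictable quadratic variation $\langle \int \zeta \, \d\tN \rangle$ with $\int |\zeta|^2 \, \d\nu$ (which the paper obtains by citing \cite[Ch.~II, Theorem~1.33]{JacodShiryaev2003}, while you re-derive it from quasi-left-continuity under the hypothesis $\nu(\{t\}\times\U)=0$, amounting to the same thing), and part~(ii) by substituting $\nu = K\,\d A$, applying the uniform bound on $\int_\U(|\zeta|^2 \vee |\zeta|^q)\,K$, and summing the resulting two-parameter estimate over partitions using superadditivity. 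The parenthetical remark that quasi-left-continuity lets one ``use $\langle M\rangle$ rather than the optional quadratic variation'' is a slight misstatement---Lemma~\ref{lemma: conditional extended BDG} is already phrased for $\langle M\rangle$ and holds for general local martingales---but this does not affect the argument.
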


\begin{proof}
By considering the integral $\int_0^\cdot \int_{\U} \zeta(t,u) \, \tN(\d t,\d u)$ componentwise, we may assume without loss of generality that $\zeta$ is real-valued.

\emph{(i):}
By the conditional BDG inequality in Lemma~\ref{lemma: conditional extended BDG}, together with \eqref{eq: dual predictable projection for Poisson martingale integral}, we have that
\begin{align*}
&\bigg\| \E_s \bigg[ \sup_{v \in [s,h]} \bigg| \int_s^v \int_{\U} \zeta(t,u) \, \tN(\d t,\d u) \bigg|^q \bigg]^{\frac{1}{q}} \bigg\|_{L^r}\\
&\lesssim \bigg\| \E_s \bigg[ \bigg\langle \int_s^\cdot \int_{\U} \zeta(t,u) \, \tN(\d t,\d u) \bigg\rangle_{\hspace{-2pt}h}^{\hspace{-2pt}\frac{q}{2}} \vee \int_s^h \int_{\U} |\zeta(t,u)|^q \, \nu(\d t,\d u) \bigg]^{\frac{1}{q}} \bigg\|_{L^r},
\end{align*}
and the estimate in \eqref{eq: quadratic variation bound for compensated poisson martingale integrals using BDG} then follows by \cite[Ch.~II, Theorem~1.33]{JacodShiryaev2003}.

\emph{(ii):}
Using the bound in \eqref{eq: quadratic variation bound for compensated poisson martingale integrals using BDG}, for any $(s,v) \in \Delta_{[0,T]}$, we have that
\begin{align*}
&\bigg\| \int_s^v \int_{\U} \zeta(t,u) \, \tN(\d t,\d u) \bigg\|_{q,r,s}\\
&\lesssim \bigg\| \E_s \bigg[ \bigg( \int_s^v \int_{\U} |\zeta(t,u)|^2 \, K(t,\d u) \dd A_t \bigg)^{\hspace{-2pt}\frac{q}{2}} \vee \int_s^v \int_{\U} |\zeta(t,u)|^q \, K(t,\d u) \dd A_t \bigg]^{\frac{1}{q}} \bigg\|_{L^r}\\
&\leq L \Big( \|\delta A_{s,v}\|_{\frac{q}{2},\frac{r}{2},s}^{\frac{1}{2}} + \|\delta A_{s,v}\|_{1,\frac{r}{q},s}^{\frac{1}{q}} \Big) \leq L \Big( \|A\|_{\frac{p}{2},\frac{q}{2},\frac{r}{2},[s,v]}^{\frac{1}{2}} + \|A\|_{\frac{p}{q},1,\frac{r}{q},[s,v]}^{\frac{1}{q}} \Big)
\end{align*}
and, since $q \leq p$, the desired estimate then follows by superadditivity.
\end{proof}

\subsection{Some results on rough paths}\label{appendix: rough paths}

\begin{proof}[Proof of Proposition \ref{proposition: consistency rough and stochastic integrals for semimartingales}]
Let $t \in [0,T]$. By, e.g., \cite[Ch.~II, Theorem~21]{Protter2005}, there exists a sequence of partitions $(\cP^k)_{k \in \N}$ of the interval $[0,t]$ with vanishing mesh size, such that the limit
\begin{equation}\label{eq: lim Yu delta Xuv int Ys- dXs}
\lim_{k \to \infty} \sum_{[u,v] \in \cP^k} Y_u \delta X_{u,v} = \int_0^t Y_{s-} \dd X_s
\end{equation}
holds $(\P \otimes \bar{\P})$-almost surely.

By \cite[Lemma~4.35]{ChevyrevFriz2019}, we have that $\sum_{[u,v] \in \cP^k} Y'_{u-} \X_{u,v} \to 0$ as $k \to \infty$ as a limit in probability with respect to $\P \otimes \bar{\P}$, and by switching to a subsequence if necessary, we may assume that the limit
\begin{equation}\label{eq: lim sum Y'u- Xuv = 0}
\lim_{k \to \infty} \sum_{[u,v] \in \cP^k} Y'_{u-} \X_{u,v} = 0
\end{equation}
holds $(\P \otimes \bar{\P})$-almost surely.

We have by assumption that $(Y(\omega, \cdot),Y'(\omega, \cdot)) \in \cV_{X(\omega)}^{p,q,r,\bar{\Omega}}$ for $\P$-almost every $\omega \in \Omega$. For any such $\omega \in \Omega$, letting $\Xi_{s,t} = \Delta Y'_s(\omega, \cdot) \X_{s,t}(\omega)$, we have that
\begin{equation*}
\|\Xi_{s,t}\|_{L^q(\bar{\Omega})} \leq \delta(s)^{\frac{1}{p}} w(s,t)^{\frac{2}{p}},
\end{equation*}
where $\delta(s) := \|\Delta Y'_s(\omega, \cdot)\|_{L^q(\bar{\Omega})}^p$ and $w(s,t) := \|\X(\omega)\|_{\frac{p}{2},[s,t]}^{\frac{p}{2}}$. It then follows from \cite[Theorem~2.11]{FrizZhang2018} that $\sum_{[u,v] \in \cP^k} \Delta Y'_u(\omega, \cdot) \X_{u,v}(\omega) \to 0$ as $k \to \infty$ in probability with respect to $\bar{\P}$. Since this convergence holds for $\P$-almost every $\omega \in \Omega$, it follows that $\sum_{[u,v] \in \cP^k} \Delta Y'_u \X_{u,v} \to 0$ as $k \to \infty$ in probability with respect to $\P \otimes \bar{\P}$. By switching to a further subsequence if necessary, we may then assume that the limit
\begin{equation}\label{eq: lim Delta Y'_u Xuv = 0}
\lim_{k \to \infty} \sum_{[u,v] \in \cP^k} \Delta Y'_u \X_{u,v} = 0
\end{equation}
holds $\P \otimes \bar{\P}$-almost surely, and since $Y'_u = Y'_{u-} + \Delta Y'_u$, combining the limits in \eqref{eq: lim sum Y'u- Xuv = 0} and \eqref{eq: lim Delta Y'_u Xuv = 0}, we have that
\begin{equation}\label{eq: lim sum Y'u Xuv = 0}
\lim_{k \to \infty} \sum_{[u,v] \in \cP^k} Y'_u \X_{u,v} = 0
\end{equation}
also holds $\P \otimes \bar{\P}$-almost surely.

Combining \eqref{eq: lim Yu delta Xuv int Ys- dXs} and \eqref{eq: lim sum Y'u Xuv = 0}, we have in particular, by Fubini's theorem (e.g., \cite[Theorem~3.4.1]{Bogachev2007}), that for $\P$-almost every $\omega \in \Omega$, the limit
\begin{equation*}
\lim_{k \to \infty} \sum_{[u,v] \in \cP^k} Y_u(\omega, \cdot) \delta X_{u,v}(\omega) + Y'_u(\omega, \cdot) \X_{u,v}(\omega) = \bigg( \int_0^t Y_{s-} \dd X_s \bigg) (\omega, \cdot)
\end{equation*}
holds $\bar{\P}$-almost surely. However, by the definition of the rough stochastic integral (see Lemma~\ref{lemma: rough stochastic integral}), we also have that, for $\P$-almost every $\omega \in \Omega$,
\begin{equation*}
\lim_{k \to \infty} \sum_{[u,v] \in \cP^k} Y_u(\omega, \cdot) \delta X_{u,v}(\omega) + Y'_u(\omega, \cdot) \X_{u,v}(\omega) = \int_0^t Y_s(\omega, \cdot) \dd \bX_s(\omega)
\end{equation*}
exists as a limit in probability with respect to $\bar{\P}$. By the $\bar{\P}$-almost sure uniqueness of limits, for $\P$-almost every $\omega \in \Omega$, we have that
\begin{equation*}
\bigg( \int_0^t Y_{u-} \dd X_u \bigg)(\omega, \cdot) = \int_0^t Y_u(\omega, \cdot) \dd \bX_u(\omega),
\end{equation*}
$\bar{\P}$-almost surely. Since this holds for every $t \in [0,T]$, and both It\^o and rough stochastic integrals have almost surely c\`adl\`ag sample paths, we deduce the result.
\end{proof}

\begin{lemma}\label{lemma: |bX|_p,[0,T] leq N_alpha + jumps}
Let $p \in [2,3)$ and let $\bX \in \sV^p$ be a rough path. For any $\alpha > 0$, we have that
\begin{equation}\label{eq: bound on rough path norm in terms of N_alpha and jumps}
\|\bX\|_{p,[0,T]} \leq C \Big( N_{\alpha,[0,T]}\big(\|\bX\|_{p,[\cdot,\cdot)}^p\big) + 1 \Big)^{\hspace{-1pt}3} \Big( 1 + \sup_{t \in (0,T]} |\Delta \bX_t| \Big)^{\hspace{-1pt}2},
\end{equation}
where the constant $C$ depends only on $p$ and $\alpha$.
\end{lemma}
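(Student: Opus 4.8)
The plan is to bound $\|\bX\|_{p,[0,T]}^p = \|X\|_{p,[0,T]}^p + \|\X\|_{p/2,[0,T]}^p$ by reducing to a finite sum over the intervals $[t_i,t_{i+1})$ provided by the definition of $N := N_{\alpha,[0,T]}(\|\bX\|_{p,[\cdot,\cdot)}^p)$, where $t_0 = 0 < t_1 < \dots < t_N < t_{N+1} = T$ are the times from \eqref{eq: definition tau_i(alpha)} applied to the control $w(s,t) := \|\bX\|_{p,[s,t)}^p$. On each half-open interval $[t_i,t_{i+1})$ we have $w(t_i,t_{i+1}) \le \alpha$ by construction, so $\|\bX\|_{p,[t_i,t_{i+1})} \le \alpha^{1/p}$. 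The issue is that this controls the rough path norm only on the \emph{open} (or half-open) pieces; to recover $\|\bX\|_{p,[0,T]}$ we must also account for the $N+1$ jumps of $\bX$ at the times $t_1,\dots,t_{N+1}$, whose sizes are bounded by $\sup_{t} |\Delta \bX_t|$. To be careful one needs to note that $w$ here need not be continuous, but it \emph{is} regular from the inside in the sense of Definition~\ref{definition: regular from the inside} — that $\lim_{t\searrow s}\|\bX\|_{p,[s,t)}^p = 0$ follows from $\bX$ being a càdlàg $p$-variation path — so the sequence $(t_i)$ is well-defined and $t_N < T$.

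The key algebraic steps are as follows. First, for the first-level path: writing $M := \sup_{t\in(0,T]}|\Delta X_t|$, one estimates $\|X\|_{p,[0,T]}^p \lesssim \sum_{i=0}^{N} \|X\|_{p,[t_i,t_{i+1}]}^p$ with an implicit constant of order $(N+1)^{p-1}$ (by the standard superadditivity/concavity inequality for $p$-variation over a partition), and then on each piece $\|X\|_{p,[t_i,t_{i+1}]} \le \|X\|_{p,[t_i,t_{i+1})} + |\Delta X_{t_{i+1}}| \le \alpha^{1/p} + M$. This gives $\|X\|_{p,[0,T]} \lesssim (N+1)(\alpha^{1/p} + M) \lesssim (N+1)(1+M)$ up to a constant depending only on $p$ and $\alpha$. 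Second, for the second level, Chen's relation $\X_{s,t} = \X_{s,u} + \X_{u,t} + \delta X_{s,u}\otimes \delta X_{u,t}$ must be used to patch the $\X$ increments across the partition points; this produces cross terms $\delta X_{t_i, t_{i+1}-}\otimes \Delta X_{t_{i+1}}$ and telescoping products. Concretely one writes $\X_{0,T}$ (and more generally $\X_{s,t}$ for any $s<t$) as a sum over partition cells of $\X$-increments plus products of $X$-increments over disjoint cells; bounding the latter by $\|X\|_{p,[0,T]}^2$ and the former cellwise by $\|\X\|_{p/2,[t_i,t_{i+1})} + |\Delta\X_{t_{i+1}}| \le \alpha^{2/p} + M$ (using $|\Delta\bX_t| = |\Delta X_t| + |\Delta\X_t|$). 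Summing in $p/2$-variation over at most $N+1$ cells contributes another factor $(N+1)^{2/p}$ from the cell count, and the $X$-product terms contribute a further $(N+1)^2(1+M)^2$ via the first-level bound. Collecting powers, $\|\X\|_{p/2,[0,T]}^{p/2} \lesssim (N+1)^{?}(1+M)^{2}$; taking $p$-th roots of $\|\bX\|_{p,[0,T]}^p = \|X\|_{p,[0,T]}^p + \|\X\|_{p/2,[0,T]}^p$ and being generous with the exponents yields the stated bound with exponents $3$ on $(N+1)$ and $2$ on $(1+M)$.

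The main obstacle is the bookkeeping for the second level: one must carefully handle Chen's relation across the partition points $t_i$, where both $\X$ itself and the increments $\delta X$ can jump, and track how the number of cells $N+1$ enters the $p/2$-variation sum versus how it enters through the quadratic $X$-terms. Getting the precise exponent $3$ (rather than, say, $2$ or $4$) requires combining a factor $(N+1)^{2/p} \le (N+1)$ from the cell count in the $p/2$-variation sum with the factor $(N+1)^2$ coming from $\|X\|_{p,[0,T]}^2$ in the cross terms — but since we only need \emph{an} inequality with the displayed exponents, it suffices to bound all intermediate exponents crudely from above by $3$ and $2$ respectively, which is harmless. A clean way to organize this is to first prove the first-level bound $\|X\|_{p,[0,T]} \le C(N+1)(1+M)$, then bound $\|\X\|_{p/2,[0,T]} \le C(N+1)^{4/p}(1+M)^2 \le C(N+1)^2(1+M)^2$ using Chen plus the first-level bound, and finally combine. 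I would also remark that, as elsewhere in the paper, the passage from the open-interval control $w$ to the closed-interval estimate is exactly the mechanism already used in the proof of Theorem~\ref{theorem: John-Nirenberg inequality for p,q,infty} and in Proposition~\ref{proposition: The Ito-Lyons map is locally affine}, so the argument is routine modulo the Chen-relation care.
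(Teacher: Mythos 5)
Your proposal follows essentially the same route as the paper's proof: both use the stopping-time partition $\{t_i\}_{i=0}^{N+1}$ generated by the control $w(s,t)=\|\bX\|_{p,[s,t)}^p$, bound the first level via a refined partition together with $\|X\|_{p,[t_i,t_{i+1}]}\le \|X\|_{p,[t_i,t_{i+1})}+|\Delta X_{t_{i+1}}|$ to get $\|X\|_{p,[0,T]}\lesssim (N+1)(1+\sup|\Delta X|)$, and then handle the second level by the analogous jump-splitting inequality for $\X$ together with Chen's relation to absorb the $\delta X\otimes\delta X$ cross-terms, being crude with exponents since only an upper bound with exponents $3$ and $2$ is claimed. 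The bookkeeping you sketch for $\|\X\|_{p/2}$ (cell count plus the first-level estimate entering quadratically) matches the paper's omitted calculation, and your observation that $w$ is regular from the inside, so the times $t_i$ are well-defined, is the correct technical point underpinning the whole construction.
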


\begin{proof}
Let us write $N = N_{\alpha,[0,T]}(\|\bX\|_{p,[\cdot,\cdot)}^p)$ and $\{t_i\}_{i=0}^{N+1} = \{t_i(\alpha)\}_{i=0}^{N+1}$ for the partition defined in \eqref{eq: definition tau_i(alpha)}, with $t_{N+1} = T$. Let $\cP$ be an arbitrary partition of the interval $[0,T]$, and let $\bar{\cP} = \cP \cup \{t_i\}_{i=0}^{N+1}$. Similarly to the proof of \cite[Lemma~4]{FrizRiedel2013}, we bound
\begin{align*}
\sum_{[u,v] \in \cP} |\delta X_{u,v}|^p &\leq (N+1)^{p-1} \sum_{[s,t] \in \bar{\cP}} |\delta X_{s,t}|^p \leq (N+1)^{p-1} \sum_{i=0}^N \|X\|_{p,[t_i,t_{i+1}]}^p\\
&\leq (N+1)^{p-1} \sum_{i=0}^{N-1} \big( \|X\|_{p,[t_i,t_{i+1})} + |\Delta X_{t_{i+1}}| \big)^p \lesssim (N+1)^p \Big(\alpha + \sup_{t \in (0,T]} |\Delta X_t|^p \Big),
\end{align*}
where we used the fact that $\|X\|_{p,[t_i,t_{i+1}]} \leq \|X\|_{p,[t_i,t_{i+1})} + |\Delta X_{t_{i+1}}|$ (see \cite[Lemma~2.6]{AllanLiuProemel2021}), which implies that
\begin{equation*}
\|X\|_{p,[0,T]} \lesssim \Big( N_{\alpha,[0,T]}\big(\|\bX\|_{p,[\cdot,\cdot)}^p\big) + 1 \Big) \Big( 1 + \sup_{t \in (0,T]} |\Delta \bX_t| \Big).
\end{equation*}
A similar argument can then be used to bound $\|\X\|_{\frac{p}{2},[0,T]}$. This is more tedious, since $\X$ is not additive, and using Chen's relation introduces additional terms. In particular, we use the inequality $\|\X\|_{\frac{p}{2},[s,t]} \lesssim \|\X\|_{\frac{p}{2},[s,t)} + |\Delta \X_t| + \|X\|_{p,[s,t)}^2 + |\Delta X_t|^2$, which follows from a straightforward adaptation of the proof of \cite[Lemma~2.6]{AllanLiuProemel2021}. After some calculation, which we omit for brevity, we arrive at the estimate in \eqref{eq: bound on rough path norm in terms of N_alpha and jumps}.
\end{proof}

\subsection{Proof of Theorem~\ref{theorem: a p-variation kolmogorov result}}\label{section: Kolmogorov result}

\begin{proof}[Proof of Theorem~\ref{theorem: a p-variation kolmogorov result}]
Let us first consider the latter estimate. We note that when $q = p$ the claim follows easily by the linearity of expectation.

When $q < p$ the result essentially follows from Minkowski's integral inequality (see, e.g., \cite[Exercise~3.10.46]{Bogachev2007}), but for completeness we spell out the argument here. We first fix a (deterministic) partition $\cP = \{t_i\}_{i=0}^N$ of $[0,T]$. Letting $f(i,\omega) := |\delta Y_{t_i,t_{i+1}}(\omega)|$, we have that the function $f \colon \{0, 1, \ldots, N-1\} \times \Omega \to [0,\infty)$ is jointly measurable. Let us also write $\mu$ for the counting measure on $\{0, 1, \ldots, N-1\}$. By Minkowski's integral inequality, we then have that
\begin{equation*}
\begin{split}
\bigg( &\sum_{i=0}^{N-1} \|\delta Y_{t_i,t_{i+1}}\|_{L^q}^p \bigg)^{\hspace{-2pt}\frac{1}{p}} = \bigg( \int_{\{0, \ldots, N-1\}} \bigg( \int_{\Omega} |f(i,\omega)|^q \dd \P(\omega) \bigg)^{\hspace{-2pt}\frac{p}{q}} \dd \mu(i) \bigg)^{\hspace{-2pt}\frac{1}{p}}\\
&\leq \bigg( \int_{\Omega} \bigg( \int_{\{0, \ldots, N-1\}} |f(i,\omega)|^p \dd \mu(i) \bigg)^{\hspace{-2pt}\frac{q}{p}} \dd \P(\omega) \bigg)^{\hspace{-2pt}\frac{1}{q}} = \bigg\| \bigg( \sum_{i=0}^{N-1} |\delta Y_{t_i,t_{i+1}}|^p \bigg)^{\hspace{-2pt}\frac{1}{p}} \bigg\|_{L^q},
\end{split}
\end{equation*}
which implies the desired estimate.

\smallskip

Let us now take $p < q$ and consider the first estimate. Let $w$ be the control function given by $w(s,t) = \|Y\|_{p,q,[s,t]}^p$ for $(s,t) \in \Delta_{[0,T]}$. We may assume without loss of generality that $\tp \leq q$, and that $\Delta Y_T = 0$.

Let $d^n_i$ for $n \in \N$ and $i = 0, 1, \ldots, 2^n$ be the $w(\cdot, \cdot-)$-midpoints of the interval $[0,T]$, in the sense of \cite[Section~3.2]{Le2023}. For convenience, let us refer to $D_n = \{[d^n_i,d^n_{i+1}] : i = 0, 1, \ldots, 2^n-1\}$ as the $w$-dyadic partition at level $n$, and write $\cD = \cup_{n \in \N} D_n$ for the set of all $w$-dyadic points.

We may assume without loss of generality that the mesh size $|D_n| \to 0$ as $n \to \infty$. (Indeed, if not, we may simply add the function $(s,t) \mapsto \epsilon (t - s)$ to the control $w$, which will ensure that $|D_n| \to 0$, and then take $\epsilon \to 0$ at the end of the proof.) This means in particular that $\cD$ is dense in $[0,T]$.

Since $p < \tp$, we have, by a standard property of such midpoints (see, e.g., \cite[Lemma~7.8]{AllanPieper2026}), that for every $n \in \N$,
\begin{equation}\label{eq: midpoints bound for sum alt.}
\sum_{i=0}^{2^n-1} w(d^n_i,d^n_{i+1}-)^{\frac{\tp}{p}} \leq 2^{-(\frac{\tp}{p} - 1) n} w(0,T-)^{\frac{\tp}{p}}.
\end{equation}

Given arbitrary points $s, t \in \cD$ with $s < t$, the interval $[s,t]$ can be expressed as the finite union of essentially disjoint intervals of the form $[d^n_i,d^n_{i+1}]$, where no three intervals are in the same level $n$. In other words, for each $n \in \N$, there exists an index set $I_n^{s,t} \subset \{0, 1, \ldots, 2^n-1\}$, with $|I_n^{s,t}| \in \{0, 1, 2\}$ and $I_n^{s,t} = \emptyset$ for sufficiently large $n$, such that
\[ [s,t] = \bigcup_{n=1}^\infty \bigcup_{i \in I_n^{s,t}} [d^n_i,d^n_{i+1}], \]
where all the intervals are disjoint except for their endpoints. In particular, we can write
\[ \delta Y_{s,t} = \sum_{n=1}^\infty \sum_{i \in I_n^{s,t}} \delta Y_{d^n_i,d^n_{i+1}}. \]

Let $\gamma > \tp - 1$. Using the fact that $(\sum_{n=1}^\infty a_n)^{\tp} \leq C \sum_{n=1}^\infty n^\gamma a_n^{\tp}$ for any sequence $(a_n)_{n \in \N}$ of non-negative numbers, where the constant $C$ depends only on $\tp$ and $\gamma$ (see the proof of \cite[Lemma~2]{LedouxLyonsQian2002}), we then have that
\[ |\delta Y_{s,t}|^{\tp} \leq C \sum_{n=1}^\infty n^\gamma \bigg( \sum_{i \in I^{s,t}_n} |\delta Y_{d^n_i,d^n_{i+1}}| \bigg)^{\hspace{-2pt}\tp} \leq C 2^{\tp-1} \sum_{n=1}^\infty n^\gamma \sum_{i \in I^{s,t}_n} |\delta Y_{d^n_i,d^n_{i+1}}|^{\tp}. \]

Let us now take a partition $\{0 = t_0 < t_1 < \cdots < t_m = T\} \subset \cD$ of the interval $[0,T]$. We can apply the procedure above to each of the intervals $[t_j,t_{j+1}]$ in this partition. Noting that each dyadic point $d^n_{i+1} \in \cD$ appears at most once as the right-endpoint of one of the considered intervals (i.e., we need sum each jump $\Delta Y_{d^n_{i+1}}$ at most once), we see that
\begin{equation}\label{eq: pathwise bound for sums over partitions alt.}
\begin{split}
\sum_{j=0}^{m-1} |\delta Y_{t_j,t_{j+1}}|^{\tp} &\leq C 2^{2(\tp-1)} \bigg( \sum_{n=1}^\infty n^\gamma \sum_{i=0}^{2^n-1} |\delta Y_{d^n_i,d^n_{i+1}-}|^{\tp} +  \sum_{n =1}^{\infty} n^{\gamma}\sum_{i=0}^{2^{n-1}-1} |\Delta Y_{d^n_{2i+1}}|^{\tp} \bigg)\\
&=: C 2^{2(\tp-1)} \big( Z_1^{\tp} + Z_2^{\tp} \big).
\end{split}
\end{equation}
Using the bound in \eqref{eq: midpoints bound for sum alt.}, we have that
\begin{equation*}
\sum_{i=0}^{2^n-1} \|\delta Y_{d^n_i,d^n_{i+1}-}\|_{L^q}^{\tp} \leq \sum_{i=0}^{2^n-1} w(d^n_i,d^n_{i+1}-)^{\frac{\tp}{p}} \leq 2^{-(\frac{\tp}{p} - 1) n} w(0,T)^{\frac{\tp}{p}} = 2^{-(\frac{\tp}{p} - 1) n} \|Y\|_{p,q,[0,T]}^{\tp},
\end{equation*}
and hence, since $p < \tp \leq q$, we have that
\begin{equation}\label{eq: Z_1 estimate}
\begin{split}
\| Z_1 \|_{L^q}^{\tp} &= \bigg\| \sum_{n=1}^\infty n^\gamma \sum_{i=0}^{2^n-1} |\delta Y_{d^n_i,d^n_{i+1}-}|^{\tp} \bigg\|_{L^{\frac{q}{\tp}}} \leq \sum_{n=1}^\infty n^\gamma \sum_{i=0}^{2^n-1} \big\| |\delta Y_{d^n_i,d^n_{i+1}-}|^{\tp} \big\|_{L^{\frac{q}{\tp}}}\\
&= \sum_{n=1}^\infty n^\gamma \sum_{i=0}^{2^n-1} \| \delta Y_{d^n_i,d^n_{i+1}-} \|_{L^q}^{\tp} \leq \|Y\|_{p,q,[0,T]}^{\tp} \sum_{n=1}^\infty n^\gamma 2^{-(\frac{\tp}{p} - 1) n} < \infty.
\end{split}
\end{equation}

Noting that $d^n_{2i+1} \in (d^{n-1}_i, d^{n-1}_{i+1})$ for each $i \in \{0, \ldots, 2^{n-1}-1\}$, we have that
\begin{equation*}
\|\Delta Y_{d^n_{2i+1}}\|^{\tp}_{L^q} = \lim_{s \nearrow d^n_{2i+1}} \|\delta Y_{s,d^n_{2i+1}}\|_{L^q}^{\tp} \leq w(d^{n-1}_i, d^{n-1}_{i+1}-)^{\frac{\tp}{p}},
\end{equation*}
and hence that
\begin{equation}\label{eq: Z_2 estimate}
\begin{split}
\| Z_2 \|_{L^q}^{\tp} &\leq \sum_{n=1}^\infty n^\gamma \sum_{i=0}^{2^{n-1}-1} \| \Delta Y_{d^n_{2i+1}} \|_{L^q}^{\tp} \leq \sum_{n=1}^\infty n^\gamma \sum_{i=0}^{2^{n-1}-1} w(d^{n-1}_i, d^{n-1}_{i+1}-)^{\frac{\tp}{p}}\\
&\leq \sum_{n=1}^\infty n^\gamma 2^{-(\frac{\tp}{p} - 1) (n-1)} w(0,T)^{\frac{\tp}{p}} = \|Y\|_{p,q,[0,T]}^{\tp} \sum_{n=1}^\infty n^\gamma 2^{-(\frac{\tp}{p} - 1) (n-1)} < \infty.
\end{split}
\end{equation}
We thus have in particular that the right-hand side of \eqref{eq: pathwise bound for sums over partitions alt.} is almost surely finite.

It follows that almost every sample path of $Y$ has finite $\tp$-variation on the $w$-dyadic times $\cD$. In particular, these sample paths are regulated (in the sense that their left and right-limits exist at every point). We also recall that the dyadic times $\cD$ are dense in $[0,T]$. Thus, for every non-dyadic time $t \in (0,T) \setminus \cD$, the limit $\tY_t := \lim_{k \to \infty} Y_{t_k}$ exists almost surely whenever $(t_k)_{k \in \N} \subset \cD$ is a decreasing sequence of dyadic times with $t_k \searrow t$, and the limit does not depend on the choice of the sequence $(t_k)_{k \in \N}$. Setting $\tY_t = Y_t$ for all dyadic times $t \in \cD$, we obtain a process $\tY$ on $[0,T]$.

It is straightforward to see that, by construction, the sample paths of $\tY$ are almost surely c\`adl\`ag and have finite $\tp$-variation on $[0,T]$. Since $Y_{t_k} \to \tY_t$ almost surely, and $Y_{t_k} \to Y_t$ in probability as $k \to \infty$, we have that $\tY_t = Y_t$ almost surely for every $t \in [0,T]$, so that $\tY$ is indeed a modification of $Y$.

Finally, by combining \eqref{eq: pathwise bound for sums over partitions alt.} with the bounds in \eqref{eq: Z_1 estimate} and \eqref{eq: Z_2 estimate}, we deduce the estimate in \eqref{eq: Kolmogorov estimate}.
\end{proof}

\begin{example}\label{example: p<q is sharp for kolmogorov}
Let $(\Omega,\cF,\P)$ be a probability space, and let $p, \tp, q \geq 1$. For each $k \geq 1$, let $(E_{k,i})_{i = 1, \ldots, k} \subset \cF$ be a collection of events which form a partition of the sample space $\Omega$, such that $\P(E_{k,i}) = \frac{1}{k}$ for each $i = 1, \ldots, k$. We define random variables $(\zeta_{k,i})_{i = 1, \ldots, k}$ by
\[ \zeta_{k,i} = k^{-\frac{1}{\tp}} \1_{E_{k,i}} \]
for each $i = 1, \ldots, k$, and let $Z$ be the sequence of random variables given by
\[ Z := (0, \zeta_{1,1}, 0, \zeta_{2,1}, 0, \zeta_{2,2}, 0, \zeta_{3,1}, 0, \zeta_{3,2}, 0, \zeta_{3,3}, 0, \zeta_{4,1}, \ldots). \]
Then, for every $\omega \in \Omega$, we have that
\[ \|Z(\omega)\|_{\tp}^{\tp} = 2 \sum_{k=1}^\infty k^{-1} = \infty. \]

For any $k \leq m$, any $i = 1, \ldots, k$ and any $j = 1, \ldots, m$, noting that $\P(\zeta_{k,i} - \zeta_{m,j} \neq 0) \leq \frac{1}{k} + \frac{1}{m} \leq \frac{2}{k}$, we see that
\[ \|\zeta_{k,i} - \zeta_{m,j}\|_{L^q} \leq k^{-\frac{1}{\tp}} \Big(\frac{2}{k}\Big)^{\frac{1}{q}} = 2^{\frac{1}{q}} k^{-\frac{1}{q} - \frac{1}{\tp}}. \]
Hence, we can bound
\[ \|Z\|_{p,q}^p \leq \sum_{k=1}^\infty 2k \cdot 2^{\frac{p}{q}} k^{-\frac{p}{q} - \frac{p}{\tp}} \leq 2^{1+\frac{p}{q}} \sum_{k=1}^\infty k^{1 - \frac{p}{q} - \frac{p}{\tp}}. \]
The sum above is finite when
\begin{equation}\label{eq: pqtp condition}
\frac{1}{q} + \frac{1}{\tp} > \frac{2}{p}.
\end{equation}
Thus, whenever the condition in \eqref{eq: pqtp condition} is satisfied, we conclude that
\[ \|Z\|_{p,q} < \infty \qquad \text{but} \qquad \|Z(\omega)\|_{\tp} = \infty \text{~~for every~~} \omega \in \Omega. \]
In particular, if $p > q$ then we can take $\tp > p$ above, which implies that the estimate in \eqref{eq: Kolmogorov estimate} does not hold in general without the assumption that $p < q$.

Moreover, if $p < q$ then \eqref{eq: pqtp condition} requires that $\tp < p$, but if $p \approx q$ then we can also take $\tp \approx p$, indicating that in general \eqref{eq: Kolmogorov estimate} also cannot hold for $\tp < p$.
\end{example}

\bibliographystyle{plain}
\bibliography{refs}

\end{document}